\definecolor{linkred}{rgb}{0.7,0.2,0.2}
\definecolor{linkblue}{rgb}{0,0.2,0.6}
\numberwithin{figure}{section}
\newcommand{\Zar}{\mathrm{Zar}}
\newcommand{\cdh}{\mathrm{cdh}}
\newcommand{\cdp}{\mathrm{cdp}}
\newcommand{\sdh}{\mathrm{sdh}}
\newcommand{\h}{\mathrm{h}}
\newcommand{\eh}{\mathrm{eh}}
\newcommand{\rh}{\mathrm{rh}}
\newcommand{\et}{\mathrm{et}}
\newcommand{\tensor}{\otimes}
\newcommand{\Frac}{\mathsf{Frac}} 
\newcommand{\RZ}{\mathsf{RZ}} 
\newcommand{\Gal}{\mathsf{Gal}} 
\DeclareMathOperator{\rank}{rk}
\DeclareMathOperator{\trdeg}{tr{.}d}
\newcommand{\Sch}[1][k]{\mathsf{Sch}_{#1}}
\newcommand{\Schft}[1][k]{\mathsf{Sch}^{\mathsf{ft}}_{#1}}
\newcommand{\Shv}{\mathsf{Shv}}
\newcommand{\PreShv}{\mathsf{PreShv}}
\newcommand{\dvr}{\mathrm{dvr}}
\newcommand{\Spec}{\mathrm{Spec}}
\newcommand{\red}{\mathrm{red}}
\newcommand{\sn}{\mathrm{sn}}
\newcommand{\wn}{\mathrm{wn}}
\newcommand{\Ker}{\mathrm{ker}}
\newcommand{\an}{\mathrm{an}}
\newcommand{\reg}{\mathrm{reg}}
\renewcommand{\mod}{\textrm{-mod}}
\newcommand{\tor}{\mathrm{tor}}
\newcommand{\av}[1][\cdot]{|#1|}
\newcommand{\Th}{\mathcal{T}}
\newcommand{\rep}[2]{%
h_{#1\ifthenelse{\equal{#2}{}}{}{,#2}}%
}
\theoremstyle{theorem}
\newtheorem*{theoUn}{Theorem}
\newtheorem{thm}{Theorem}[section]
\newtheorem{coro}[thm]{Corollary}
\newtheorem{cor}[thm]{Corollary}
\newtheorem*{corUn}{Corollary}
\newtheorem{lemm}[thm]{Lemma}
\newtheorem{lemma}[thm]{Lemma}
\newtheorem{key}[thm]{Key Lemma}
\newtheorem{prop}[thm]{Proposition}
\newtheorem*{schoUn}{Scholium}
\theoremstyle{definition}
\newtheorem{defi}[thm]{Definition}
\newtheorem{defn}[thm]{Definition}
\newtheorem{rema}[thm]{Remark}
\newtheorem{rem}[thm]{Remark}
\newtheorem{ques}[thm]{Question}
\DeclareSymbolFontAlphabet{\scr}{rsfs}
\newcommand{\Fh}{\mathcal{F}}
\newcommand{\cK}{\mathcal{K}}
\newcommand{\Oh}{\mathcal{O}}
\newcommand{\OO}{\mathcal{O}}
\newcommand{\Hs}{\mathscr{H}}
\newcommand{\Z}{\mathbb{Z}}
\newcommand{\RR}{\mathbb{R}}
\renewcommand{\AA}{\mathbb{A}}
\newcommand{\Q}{\mathbb{Q}}
\newcommand{\m}{\mathfrak{m}}
\newcommand{\p}{\mathfrak{p}}
\newcommand{\q}{\mathfrak{q}}
\newcommand{\val}{\mathrm{val}}
\newcommand{\valone}{{\mathrm{val}^{\leq 1}}}
\newcommand{\valr}[1]{{\mathrm{val}^{\leq {#1}}}}
\newcommand{\valbig}{{\mathrm{val}^{\mathrm{big}}}}
\newcommand{\hval}{\mathrm{hval}}
\newcommand{\shval}{\mathrm{shval}}
\newcommand{\rval}{\mathrm{rval}}
\newcommand{\loc}{\mathrm{loc}}
\DeclareMathOperator{\eq}{eq}
\DeclareMathOperator{\coeq}{coeq}
\DeclareMathOperator{\image}{Image}
\DeclareMathOperator{\uSpec}{\underline{Spec}}
\title[Differential forms in positive characteristic II]{Differential forms in positive characteristic II: cdh-descent via functorial Riemann--Zariski spaces}
\author{Annette Huber}
\author{Shane Kelly}
\begin{document}

\vspace*{-5\baselineskip}

\maketitle

\begin{abstract}
This paper continues our study of the sheaf associated to Kähler differentials in the $\cdh$-topology and its cousins, in positive characteristic, without assuming resolution of singularities. The picture for the sheaves themselves is now fairly complete. 
We give a calculation $\OO_{\cdh}(X) \cong \OO(X^{\sn})$ in terms of the seminormalisation. We observe that the category of representable $\cdh$-sheaves is equivalent to the category of seminormal varieties. We conclude by proposing some possible connections to Berkovich spaces, and $F$-singularities in the last section.
The tools developed for the case of differential forms also apply in other contexts and should be of independent interest.
\end{abstract}



\section{Introduction}
This paper continues the program started in \cite{HJ} (characteristic $0$) and
\cite{HKK} (positive characteristic). For a survey see also \cite{porto}. 

\subsection*{Programme}

Let us quickly summarise the main idea. 
Sheaves of differential forms are very rich sources of invariants in the study of algebraic invariants of smooth algebraic varieties. However, they are much 
less well-behaved for singular varieties. In characteristic $0$, the use of the $\h$-topology---replacing Kähler differentials with their sheafification in this Grothendieck topology---is very successful. It unifies several ad-hoc notions 
and simplifies arguments. In positive characteristic, resolution of singularities would imply that the $\cdh$-sheafification could be used in a very similar 
way. 
Together with the results of \cite{HKK}, we now have a fairly complete unconditional picture, at least for the sheaves themselves. We refer to the follow-up
\cite{HK} for results on cohomological descent, where, however, many questions remain open.


\subsection*{Results} 

There are a number of weaker cousins of the $\h$-topology in the literature. They exclude Frobenius but still allow abstract blowups. The $\cdh$-topology (see Section~\ref{sec:top}) is the most well-established, appearing prominently in work on motives, $K$-theory, and having connections to rigid geometry, cf. \cite{FV, Voev00, SV00, CD15, CHSW, GH10, Cis13, KST, Mor16} for example. 
We write $\Omega^n_\tau$ for the sheafification  of the presheaf $X {\mapsto} \Gamma(X, \Omega^n_{X/k})$ with respect to the topology~$\tau$. 

\begin{theoUn}
Suppose $k$ is a perfect field. 
\begin{enumerate}

 \item \label{item:smooth}(\cite{HKK}, also Thm~\ref{thm:compare}) For a smooth $k$-scheme $X$, and $n \geq 0$
\begin{equation*} 
 \Gamma(X, \Omega^n_{X/k}) = \Omega^n_\cdh(X). 
 \end{equation*}

 \item (Thm~\ref{thm:coheren}, Thm~\ref{prop:etcoh}) For any finite type separated $k$-scheme $X$, the restriction $\Omega^n_\cdh|_{X_\Zar}$ (resp. $\Omega^n_\cdh|_{X_\et}$) to the small Zariski (resp. étale) site is a coherent $\OO_X$-module.

 \item (Prop.~\ref{prop:ehSN}) For functions, we have the explicit computation,
\begin{equation*} \label{equa:cdhsn}
\Oh_\cdh(X)=\Oh(X^\sn)
\end{equation*}
where $X^\sn$ is the seminormalisation of the variety $X$, see Section~\ref{sec:seminormal}. %
\item \label{item:top}(Prop.~\ref{prop:top}) For top degree differentials we have
\begin{equation*} 
 \Omega^d_\cdh(X ) \cong \varinjlim_{\substack{X' \to X \\\textrm{proper birational}}} \Gamma(X', \Omega^d_{X' / k}), \qquad \dim X = d. 
 \end{equation*}

\end{enumerate}
\end{theoUn}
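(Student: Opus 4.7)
The plan is to reduce the statement to a vanishing theorem for $\Omega^d_\cdh$ in strictly smaller dimension, use this to establish birational invariance of $\Omega^d_\cdh$ on $d$-dimensional varieties, and then read off the colimit formula.

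First I would establish the key vanishing $\Omega^d_\cdh(W) = 0$ whenever $\dim W < d$. For smooth $W$ this is immediate from item~(1) of the theorem, since $\Omega^d_{W/k} = 0$ when $\dim W < d$. For general reduced $W$ of dimension $d' < d$, perfectness of $k$ gives $\Omega^d_{k(\eta)/k} = 0$ at each generic point $\eta$ (as $\trdeg_k k(\eta) = d' < d$), so the coherent sheaf $\Omega^d_{W/k}$ is supported in a closed subscheme of dimension strictly less than $d'$. By Noetherian induction on $\dim W$ combined with $\cdh$-descent for the inclusion of the singular locus, one reduces to the smooth case. This is where I expect the main technical difficulty, since without resolution of singularities in positive characteristic the inductive step must be handled carefully using items~(1) and~(2) of the theorem together with the $\cdh$-descent machinery developed earlier in the paper.

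With the vanishing in hand, for any proper birational $f \colon X' \to X$ with $X$ integral of dimension $d$, take the closed locus $Z \subsetneq X$ where $f$ is not an isomorphism and set $E = f^{-1}(Z)$, so $\dim Z, \dim E < d$. The abstract blow-up square
\[
\begin{array}{ccc} E & \to & X' \\ \downarrow & & \downarrow f \\ Z & \to & X \end{array}
\]
gives by $\cdh$-descent a Cartesian square
\[
\Omega^d_\cdh(X) = \Omega^d_\cdh(X') \times_{\Omega^d_\cdh(E)} \Omega^d_\cdh(Z).
\]
The vanishing of Step~1 forces $\Omega^d_\cdh(Z) = \Omega^d_\cdh(E) = 0$, yielding an isomorphism $f^* \colon \Omega^d_\cdh(X) \xrightarrow{\sim} \Omega^d_\cdh(X')$.

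These isomorphisms combine with the sheafification maps $\Gamma(X', \Omega^d_{X'/k}) \to \Omega^d_\cdh(X')$ to give, for each proper birational $X' \to X$, a natural map $\Gamma(X', \Omega^d_{X'/k}) \to \Omega^d_\cdh(X)$, and hence a comparison map $\Phi$ from the filtered colimit on the right-hand side. To show $\Phi$ is an isomorphism I would use the standard structure result that every $\cdh$-cover of $X$ admits a refinement by a Nisnevich cover of some proper birational $X' \to X$, together with the fact that $\Omega^d_{-/k}$ is already a Nisnevich sheaf. Surjectivity then follows because any class in $\Omega^d_\cdh(X)$ is represented, after such a refinement, by an honest section of $\Omega^d_{X'/k}$ for some $X'$; injectivity follows because a section of $\Gamma(X', \Omega^d_{X'/k})$ that vanishes $\cdh$-locally must vanish Nisnevich-locally on some further proper birational $X'' \to X'$, and hence becomes zero in the colimit.
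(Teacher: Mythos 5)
Your proposal addresses only the fourth item (the top-degree colimit formula, Proposition~\ref{prop:top}), taking items (1)--(3) as inputs; on that item there are two genuine gaps, both at exactly the points where the paper's valuation-theoretic machinery is indispensable. The first is the vanishing $\Omega^d_\cdh(W)=0$ for $\dim W<d$. Your Noetherian induction via $\cdh$-descent along the singular locus does not close: the abstract blow-up square you would need is $(E,W',W_{\sing},W)$ with $W'\to W$ proper and an isomorphism off $W_{\sing}$, and induction only kills $\Omega^d_\cdh(W_{\sing})$ and $\Omega^d_\cdh(E)$; you still need $\Omega^d_\cdh(W')=0$ for \emph{some} proper modification $W'$, and without resolution of singularities no smooth $W'$ is available (normalisation does not suffice). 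You flag this as "the main technical difficulty", but it is not a technicality: coherence does not help ($\Omega^n_\cdh$ is not torsion-free in general), and the support argument for the presheaf $\Omega^d_{W/k}$ says nothing about the sheafification. The paper's route is entirely different: $\Omega^d_\cdh=\Omega^d_\val$ (Theorem~\ref{thm:compare}) plus Lemma~\ref{lemm:rsF}, by which a section of $\Omega^d_\val(W)$ is determined by its components in $\Omega^d(\kappa(x))$ for $x\in W$, all of which vanish since $\trdeg(\kappa(x)/k)<d$; this rests on torsion-freeness of $\Omega^n$ on valuation rings (Theorem~\ref{thm:hypV}, the Gabber--Ramero input).

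The second gap is the final step. It is false that every $\cdh$-cover of $X$ is refined by a Nisnevich cover of a proper \emph{birational} $X'\to X$: the correct normal form (Remark~\ref{rem:defi:rhetc}) is a Nisnevich cover of a $\cdp$-morphism, and $\cdp$-morphisms typically contain components over proper closed subschemes (e.g.\ $Z\sqcup X'\to X$ for an abstract blow-up); moreover, in positive characteristic a proper birational morphism need not be completely decomposed at all (Proposition~\ref{prop:counterexample}), so it need not be a $\cdh$-cover. On the lower-dimensional components the \emph{presheaf} $\Omega^d$ does not vanish (only its sheafification does), so your \v{C}ech-style representation of classes in $\Omega^d_\cdh(X)$ by honest forms on proper birational modifications is unjustified, and in any case representing sheafified classes by sections on covers presupposes a separatedness statement you have not proved. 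The paper instead shows that $\widetilde\Omega^d(X)=\varinjlim\Omega^d(X')$ is itself an $\rh$-sheaf on schemes of dimension $\le d$ (using birational invariance, vanishing in lower dimension, and Zariski descent), obtains a map $\Omega^d_\rh\to\widetilde\Omega^d$ from the universal property of sheafification, builds the inverse from $\Omega^d(X')\to\Omega^d_\rh(X')\cong\Omega^d_\rh(X)$, and checks the two are mutually inverse using torsion-freeness of both sides --- where torsion-freeness of $\widetilde\Omega^d$ again requires Theorem~\ref{theo:A3} combined with Theorem~\ref{thm:hypV}. Your middle step (birational invariance from the vanishing via the blow-up sequence) is correct and agrees with the paper, but both ends of your argument are missing the essential input.
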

By combining (\ref{item:smooth}) and (\ref{item:top}) we deduce a corollary that involves only Kähler differentials. To our knowledge the formula is new:
\begin{corUn}(Cor.~\ref{coro:mainDim}) Let $k$ be a perfect field and $X$ a smooth $k$-scheme. 
\begin{equation*} 
 \Gamma(X, \Omega^d_{X / k}) \cong \varinjlim_{\substack{X' \to X \\\textrm{proper birational}}} \Gamma(X', \Omega^d_{X' / k}), \qquad \dim X = d. 
 \end{equation*}
\end{corUn}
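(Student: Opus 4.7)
The plan is to derive the corollary by chaining two already-established facts from the main theorem; no essentially new ingredient is required beyond observing that both sides compute the same $\cdh$-sheafification.

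First, I apply item~(\ref{item:smooth}) (Thm~\ref{thm:compare}, originally from \cite{HKK}) in degree $n = d$ to the smooth scheme $X$ to identify
\begin{equation*}
\Gamma(X, \Omega^d_{X/k}) \cong \Omega^d_\cdh(X).
\end{equation*}
Next, I apply item~(\ref{item:top}) (Prop.~\ref{prop:top}) in top degree $d = \dim X$ to rewrite
\begin{equation*}
\Omega^d_\cdh(X) \cong \varinjlim_{\substack{X' \to X \\ \textrm{proper birational}}} \Gamma(X', \Omega^d_{X'/k}).
\end{equation*}
Composing the two isomorphisms yields the claimed formula. The only thing to check is compatibility: that the identification in item~(\ref{item:smooth}) is induced by pullback of Kähler differentials (it is, by construction of the $\cdh$-sheafification), so that on the right we genuinely obtain the colimit over proper birational modifications along pullback of forms.

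The main obstacle, therefore, is not in the corollary but in its two inputs. Item~(\ref{item:smooth}) is the core result of \cite{HKK}: $\cdh$-descent for Kähler differentials on smooth varieties in positive characteristic, proved without assuming resolution of singularities. Item~(\ref{item:top}) is Prop.~\ref{prop:top}, which one expects to hinge on the fact that top-degree forms are especially rigid under birational modifications — pullback is generically an isomorphism, and the exceptional locus has dimension strictly less than $d$, leaving no room for extra top-degree sections supported there. What makes the corollary striking is that its statement mentions only classical objects (Kähler differentials on smooth varieties and proper birational modifications over them), yet the natural proof route detours through the $\cdh$-topology; this sidesteps any attempt to find a cofinal or even a final object in the tower of proper birational modifications, which would otherwise appear to be the principal difficulty.
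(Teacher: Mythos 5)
Your proof is correct and is precisely the paper's own argument: the corollary is stated immediately after Proposition~\ref{prop:top} and is obtained, exactly as you do, by composing the smooth-case identification $\Gamma(X,\Omega^d_{X/k})\cong\Omega^d_{\cdh}(X)$ from Theorem~\ref{thm:compare} with the colimit description of $\Omega^d_{\cdh}$ in top degree. Your added remark that the identification is induced by pullback of forms, so the two isomorphisms genuinely compose, is a reasonable (if routine) compatibility check that the paper leaves implicit.
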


In contrast to the case of characteristic $0$, the sheaves $\Omega^n_{\cdh}$ are
not torsion-free (this was shown in \cite[Example 3.6]{HKK} by ``pinching'' along the Frobenius of a closed subscheme). So not only does lacking access to resolution of singularities cause proofs to become harder, the existence of inseparable field extensions actually changes some of the results.

\subsection*{Main tool} 

Our main tool, taking the role of a desingularisation of a variety $X$, is the category
\[ \val(X), \]
a functorial variant of the Riemann-Zariski space which we now discuss. Recall that the Rieman-Zariski space of an integral variety $X$ is (as a set) given by the set of (all, not necessarily discrete) $X$-valuation rings of $k(X)$. The Riemann-Zariski space is only functorial for dominant morphisms of integral varieties. 
We replace it by the category $\val(X)$ (see Definition~\ref{defn:val}) of
$X$-schemes of the form $\Spec(R)$ with $R$ either a field of finite transcendence degree over $k$ or a valuation ring of such a field. In \cite{HKK}, we were focussing on the discrete valuation rings---this turned out to be useful, but allowing non-noetherian valuation rings yields the much better tool. 

We define $\Omega^n_\val(X)$ as the set of global sections of the presheaf
$\Spec(R) {\mapsto} \Omega^n_{R/k}$ on $\val(X)$; that is
\[ \Omega^n_\val(X) = \varprojlim_{R \in \val(X)} \Omega^n_{R/k}. \]
 A global section admits the following very explicit description (Lemma~\ref{lemm:rsF}). It is uniquely determined by specifying an element $\omega_x\in \Omega^n_{\kappa(x)/k}$ for every point $x\in X$ subject to two compatibility conditions: If $R$ is an $X$-valuation ring of a residue field $\kappa(x)$, then
$\omega_x$ has to be integral, i.e., contained in $\Omega^n_{R/k} \subseteq \Omega^n_{\kappa(x)/k}$. If $\xi$ is the image of its special point in $X$, then
the $\omega_R|_{\Spec(R/\m)}$ has to agree with $\omega_\xi|_{\Spec(R/\m)}$ in $\Omega^n_{(R/\m)/k}$. 
The above mentioned results
are deduced by establishing $\eh$-descent (and therefore $\cdh$- and $\rh$-descent) for $\Omega^n_\val$. This is used to show:
\begin{theoUn}[{Theorem~\ref{thm:compare}}]
\[ \Omega^n_\rh = \Omega^n_\cdh = \Omega^n_\eh =\Omega^n_\val,\]
\end{theoUn}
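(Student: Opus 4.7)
The plan is to first establish $\eh$-descent for $\Omega^n_\val$, and then to identify $\Omega^n_\val$ with the $\eh$-sheafification of $\Omega^n$; since $\rh \subseteq \cdh \subseteq \eh$, any $\eh$-sheaf is automatically a $\cdh$- and $\rh$-sheaf, so the three sheafifications of $\Omega^n$ will coincide with one another and with $\Omega^n_\val$.

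\textbf{Step 1 ($\eh$-descent for $\Omega^n_\val$).} I would exploit the pointwise description of sections (Lemma~\ref{lemm:rsF}): an element of $\Omega^n_\val(X)$ is a compatible family $(\omega_x \in \Omega^n_{\kappa(x)/k})_{x \in X}$ subject to integrality at $X$-valuation rings and compatibility with specialisation. This reduces descent to bookkeeping on points and valuations. The $\eh$-topology is generated by étale covers and abstract blowup squares, so it suffices to verify descent in these two cases. For étale covers the argument is essentially formal: étale morphisms induce étale residue-field extensions along which valuation data transports uniquely, so compatible families on the cover glue. For an abstract blowup square with $Y \to X$ proper birational having non-isomorphism locus $Z \subseteq X$ and exceptional fibre $E = Y \times_X Z$, the valuative criterion of properness lifts every $X$-valuation ring uniquely to a $Y$-valuation ring (when its generic point lies outside $Z$) or places it on $Z$; agreement on $E$ is exactly the condition needed for the integrality and specialisation constraints to glue.

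\textbf{Step 2 (Identification).} The pullbacks $\Omega^n(X) \to \Omega^n_{R/k}$ for $R \in \val(X)$ assemble into a morphism of presheaves $\Omega^n \to \Omega^n_\val$; by Step 1 this factors as
\[ \Omega^n \to \Omega^n_\rh \to \Omega^n_\cdh \to \Omega^n_\eh \to \Omega^n_\val. \]
I would then show the composed map $\Omega^n_\rh \to \Omega^n_\val$ is an isomorphism, which by the chain of factorisations forces each of the three intermediate comparison maps to be an isomorphism as well. Surjectivity is addressed via a $\rh$-refinement argument: a compatible family in $\Omega^n_\val(X)$ is, after passage to a proper birational modification $X' \to X$, induced by an honest Kähler form on $X'$, hence lies in $\Omega^n_\rh(X)$. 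Injectivity follows from the observation that a local section vanishing in $\Omega^n_{R/k}$ for every $R \in \val(X)$ vanishes in particular at every point of every proper birational modification, and so vanishes $\rh$-locally.

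\textbf{Main obstacle.} The delicate step is abstract-blowup descent in Step 1: one must verify that the glued family satisfies integrality at \emph{every} $X$-valuation ring, including the many non-discrete ones centred on $Z$ that arise when $X$ is singular. This is precisely where using the full category $\val(X)$ of all (possibly non-noetherian) valuation rings, rather than only discrete valuations as in \cite{HKK}, becomes essential: the general valuative criterion of properness handles all such valuations uniformly, whereas the discrete analogue would miss the valuations needed to detect gluing failures on singular loci.
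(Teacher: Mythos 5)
Your overall architecture matches the paper's: establish $\eh$-descent for $\Omega^n_\val$, obtain the chain $\Omega^n_\rh \to \Omega^n_\cdh \to \Omega^n_\eh \to \Omega^n_\val$, and show the composite is bijective. (For Step 1 the paper does not check étale and blowup descent by hand on the pointwise description; it proves the formal statement that $\Fh_\shval$ is an $\eh$-sheaf using the fact that strictly henselian valuation rings are the points of the $\eh$-topos, Proposition~\ref{prop:eh_val}, and then invokes $\Omega^n_\val=\Omega^n_\shval$ from Proposition~\ref{prop:reduce_shval}. That is a cleaner route than yours, but yours is not where the problem lies.)

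The genuine gap is in Step 2, and you have located the ``main obstacle'' in the wrong place. Your surjectivity argument says a section of $\Omega^n_\val(X)$ is induced by a Kähler form on a proper birational modification $X'\to X$, ``hence lies in $\Omega^n_\rh(X)$.'' That ``hence'' fails in positive characteristic: a proper birational morphism is \emph{not} in general an $\rh$-cover, because it need not be completely decomposed over the non-isomorphism locus --- this is exactly the content of Proposition~\ref{prop:counterexample}, where a blowup of a normal variety has only inseparable residue-field extensions over a point $\xi$. So a form on $X'$ does not descend by cover descent; one must use the abstract blow-up ($\cdp$) sequence $0\to\Omega^n_\rh(X)\to\Omega^n_\rh(X')\oplus\Omega^n_\rh(Z)\to\Omega^n_\rh(E)$ and therefore also produce, by noetherian induction on dimension, a matching class on the exceptional locus $Z$. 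Moreover, producing even the single modification $X'$ is nontrivial: the paper needs quasi-compactness of the Riemann--Zariski space to cover $\RZ(X)$ by finitely many finite-type models $A_i$, the compactness Lemma~\ref{lemm:valRingConstOne} on inverse limits of noetherian spaces (via Lemma~\ref{lem:kill_torsion}) to find a blow-up on which the candidate form agrees with the given section, and Nagata compactification to glue the local models. Your injectivity sketch has the same two missing ingredients: passing from ``vanishes on all valuation rings'' to ``vanishes on a proper birational modification'' is Theorem~\ref{theo:A3}, whose applicability rests on the Gabber--Ramero torsion-freeness theorem (Theorem~\ref{thm:hypV}) --- a deep input you do not identify --- and then one again needs noetherian induction to conclude vanishing $\rh$-locally, since torsion classes supported on the exceptional locus genuinely exist (\cite[Example~3.6]{HKK}).
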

\noindent giving a very useful characterisation of $\Omega^n_\cdh$, and answering the open question \cite[Prop.5.13]{HKK}.
We also show that using other classes of valuation rings (e.g., rank one or strictly henselian or removing the transcendence degree bound) produces the same sheaf, cf. Proposition~\ref{prop:omega_equal}.

\subsection*{Special cases and the $\sdh$-topology} %
There are two special cases, both of particular importance and with better 
properties: the case of $0$-forms, i.e., functions and the case of the ``canonical sheaf'', i.e., $d$-forms on the category of $k$-schemes of dimension at most $d$. In both cases, 
the resulting sheaves even have descent for the $\sdh$-topology introduced in \cite{HKK}, (Remark~\ref{rem:all_n=0} and Proposition~\ref{prop:sdh_top}):
\[ \Oh_\cdh=\Oh_\sdh,\hspace{4ex} \Omega^d_\cdh=\Omega^d_\sdh.\]
Recall that every variety is locally smooth in the $\sdh$-topology by de Jong's theorem on alterations, and the hope was that requiring morphisms to be separably decomposed would prohibit pathologies caused by purely inseparable extensions.  Unfortunately, $\Omega^n(X) \neq \Omega_{\sdh}^n(X)$ for general $n$, see \cite{HKK}.

\subsection*{Seminormalisation} %
For functions, we have the following explicit computation, see Proposition~\ref{prop:ehSN},
\begin{equation} \label{equa:cdhsn}
\Oh_\cdh(X)=\Oh(X^\sn)
\end{equation}
where $X^\sn$ is the seminormalisation of the variety $X$.
Here, the seminormalisation $X^\sn {\to} X$ is the universal %
morphism which induces isomorphisms of topological spaces and residue fields, see Section~\ref{sec:seminormal}. 
In fact, we have this result for all representable presheaves.

\begin{theoUn}[Proposition~\ref{prop:representable}] 
Suppose $S$ is a noetherian scheme and the normalisation of every finite type $S$-scheme is also finite type (i.e., that $S$ is Nagata cf.~Remark~\ref{rema:Nagata}), e.g., $S$ might be the spectrum of a perfect field. %
Then for all separated finite type $S$-schemes $X, Y$, the canonical morphisms
\begin{equation} \label{equa:asgddbgaszxcv}
h^Y_\rh(X)=
h^Y(X^{\sn})
\end{equation}
are isomorphisms, where $h^Y(-) = \hom_S(-, Y)$. 
The natural maps
\begin{equation} \label{equa:jksadfewrhqwe}
 h^Y_\rh\to h^Y_\cdh\to h^Y_\eh\to h^Y_\sdh\to h^Y_\val 
 \end{equation}
are isomorphisms of presheaves on $\Schft[S]$.
\end{theoUn}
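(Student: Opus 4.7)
The plan is to prove both displayed formulas simultaneously by exhibiting a single candidate for the common sheafification. Define the presheaf $F$ on $\Schft[S]$ by $F(X) = \hom_S(X^{\sn}, Y)$, with the natural map $h^Y \to F$ induced by composition with the seminormalisation $\pi\colon X^{\sn} \to X$. I will show (a) that $F$ is a sheaf for the finest topology $\val$ in the chain, and (b) that $h^Y \to F$ becomes an isomorphism after $\rh$-sheafification. Since $\rh, \cdh, \eh, \sdh$ are all coarser than $\val$, (a) makes $F$ a $\tau$-sheaf for each such $\tau$. Using $h^Y_\tau = (h^Y_\rh)_\tau$ and $F_\tau = F$, combining with (b) then yields $h^Y_\tau \cong F$ for every $\tau \in \{\rh, \cdh, \eh, \sdh, \val\}$, which is the content of (\ref{equa:asgddbgaszxcv}) and (\ref{equa:jksadfewrhqwe}).

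Step (b) is the straightforward half. Under the Nagata hypothesis, $\pi \colon X^{\sn} \to X$ is finite and surjective, and it is a universal homeomorphism with trivial residue-field extensions, so in particular an $\rh$-cover. Since $(X^{\sn})^{\sn} = X^{\sn}$, the map $h^Y \to F$ is already an isomorphism when restricted to seminormal schemes, so in particular $h^Y(X^{\sn}) = F(X^{\sn})$ for every $X$. From this, $h^Y \to F$ is $\rh$-locally surjective (any $s \in F(X) = h^Y(X^{\sn})$ lifts through the $\rh$-cover $\pi$) and $\rh$-locally injective (if $f_1, f_2 \colon X \to Y$ satisfy $f_1\circ\pi = f_2\circ\pi$, they already coincide after pullback along $\pi$). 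Combined with the fact that $F$ is an $\rh$-sheaf (granted by (a)), this identifies $h^Y_\rh$ with $F$.

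The main obstacle is (a). After unpacking, it reduces to the identification
\[
 \hom_S(X^{\sn}, Y) \;=\; \varprojlim_{R \in \val(X)} \hom_S(\Spec(R), Y),
\]
using that every valuation ring is integrally closed in its fraction field, hence normal and therefore seminormal, so $F(\Spec(R)) = h^Y(\Spec(R))$. This is the analogue for $Y$-valued maps of the concrete description in Lemma~\ref{lemm:rsF}. Because $\pi$ is a universal homeomorphism with trivial residue extensions, $\val(X)$ and $\val(X^{\sn})$ are canonically equivalent, so I may reduce to $X$ itself seminormal. In the affine case $X = \Spec(A)$ with $A$ seminormal, a morphism to $Y$ should be determined by its values at the generic points together with the integrality conditions along all valuation rings centred at higher-codimension points and the specialisation compatibilities of Lemma~\ref{lemm:rsF}---this is precisely the data of a compatible family in the $\val$-limit. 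Zariski gluing, which is already part of the $\rh$- and $\val$-topologies, extends this to non-affine $X$. The core difficulty is therefore a commutative-algebraic statement: the coordinate ring of a seminormal affine variety is cut out inside the product of its residue fields by the integrality conditions imposed by all its $\val$-data.
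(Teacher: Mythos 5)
Your overall architecture is sensible, but the proposal reduces the theorem to its hardest ingredient and then stops. The ``core difficulty'' you isolate in step (a) --- the identification $\hom_S(X^{\sn},Y)=\varprojlim_{R\in\val(X)}\hom_S(\Spec(R),Y)$ --- \emph{is} essentially the theorem (it is literally the composite of the two displayed isomorphisms), and you offer no argument for it beyond ``a morphism to $Y$ should be determined by its values at generic points together with the integrality conditions.'' Two genuinely nontrivial things are hidden there. First, even for $Y=\AA^1$ this is Proposition~\ref{prop:ehSN}, whose proof requires showing that $A\to A_{\val}$ is subintegral (integrality via the description of the normalisation as an intersection of valuation rings, plus a bijectivity argument on residue fields); it is not a formal unpacking. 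Second, and more seriously for general $Y$: surjectivity of $h^Y(X)\to\varprojlim_{\val(X)}h^Y(R)$ for $X$ seminormal requires actually \emph{constructing} a morphism $X\to Y$ from a compatible family of morphisms out of valuation rings. The paper does this (for arbitrary presheaves satisfying (Co) and (Et)) in Proposition~\ref{lemm:rhValEpi} using quasi-compactness of the Riemann--Zariski space, Nagata compactification, and noetherian induction; nothing in your sketch substitutes for that. The paper's actual proof of Proposition~\ref{prop:representable} deliberately avoids your route: it proves $h^Y_{\rh}(X)=h^Y(X^{\sn})$ by showing $h^Y_{\red}$ is $\rh$-separated, computing $h^Y_{\rh}$ as a colimit of \v{C}ech $H^0$'s over $\cdp$-covers, and collapsing that colimit to completely decomposed homeomorphisms via Stein factorisation and Lemma~\ref{lemm:cdpushout}; the comparison with $h^Y_{\val}$ is then quoted from Theorem~\ref{thm:compare}.

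A second gap is the $\sdh$ term. You justify $F$ being an $\sdh$-sheaf by saying $\sdh$ is ``coarser than $\val$,'' but $\val$ is not a Grothendieck topology --- $h^Y_{\val}$ is a right Kan extension from a category of local rings, not a sheafification --- so there is no formal implication. That $h^Y_{\val}$ satisfies $\sdh$-descent is a real statement requiring the Galois-descent argument on residue fields and the extension of valuations used in Proposition~\ref{prop:sdh_val}; the paper instead handles $\sdh$ by sandwiching it between $\eh$ and $\val$ and proving injectivity of $h^Y_{\sdh}\to h^Y_{\val}$ as in Corollary~\ref{cor:injective}. Step (b) of your proposal is fine as far as it goes, but without a proof of (a) and of $\sdh$-descent the argument is incomplete.
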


In characteristic zero, Equation~(\ref{equa:asgddbgaszxcv}) is already formulated for the $\h$-topology by Voevodsky, see \cite[Section~3.2]{Voe96}, and generalised to algebraic spaces by Rydh, \cite{Ryd10}.  The theorem confirms that we have identified the correct analogy in positive characteristic.

As Equation~(\ref{equa:asgddbgaszxcv}) confirms, the $\cdh$-topology is not 
subcanonical. In fact, the full subcategory spanned by those $\cdh$-sheaves which are sheafifications of representable sheaves is equivalent to the 
category of seminormal schemes (Corollary~\ref{coro:hcdhleftAdjoint}). In particular, the subcategory of smooth or even normal varieties remains unchanged, however, we 
lose information about certain singularities, e.g., cuspidal singularities are
 smoothened out. Depending on the question, this might be considered an advantage or a disadvantage. We strongly argue that it is an advantage for the natural
 questions of birational algebraic geometry, where differential forms are a
 main tool.

Recall that if $X$ is integral, global sections of the normalisation $\widetilde{X}$ is the intersection of all $X$-valuation rings of the function field,
\[ \Gamma(\widetilde{X}, \OO_{\widetilde{X}}) = \bigcap_{\textrm{val.rings } R \textrm{ of } k(X)} R. \]
In light of $\OO_{\cdh} \cong \OO_{\val}$, Equation~(\ref{equa:cdhsn}) has the following neat interpretation:

\begin{schoUn}
If $X$ is reduced, global sections of the seminormalisation $X^{sn}$ is the ``intersection'' of all $X$-valuation rings,
\[ \Gamma(X^{sn}, \OO_{X^{sn}}) = \varprojlim_{\Spec(R) \to X, \textrm{ with } R \textrm{ a val.ring}} R. \]
\end{schoUn}

%


The seminormalisation was introduced and studied quite some time ago, see for example \cite{traverso} and \cite{swan}; for a historical survey see 
\cite{Vit11}. The original motivation for considering the seminormalisation (or rather, the closely related and equivalent in characteristic $0$ concept of
 weak normalisation) was to make the moduli space of positive analytic $d$-cycles on a projective variety ``more normal'' without changing its topology, 
i.e., without damaging too much the way that it solved its moduli
 problem, \cite{AN67}. Clearly, the $\cdh$- and $\eh$-topology are relevant to these moduli questions. Indeed, the $\cdh$-topology already appears in the 
study of moduli of cycles in \cite[Thm.4.2.11]{SV00}. In relation to this, 
let us point out that basically all of the present paper works for arbitrary 
unramified étale sheaves commuting with filtered limits of schemes, and in 
particular applying $(-)_\val$ to various Hilbert presheaves provides alternative constructions for Suslin and Voevodsky's relative cycle presheaves $z(X/S, r), z^{eff}(X/S, r), c(X/S, r), c^{eff}(X/S, r)$ introduced in \cite{SV00}, and heavily used in their work on motivic cohomology, but such 
matters go beyond the scope of this current paper.

\subsection*{Outline of the paper} %
We start in Section~\ref{sec:basics} by collecting basic notation and facts on the Grothendieck topologies that we use, seminormality and valuation rings. In Section~\ref{ssec:loc} we introduce our main tool: different categories of local rings above a given $X$ and discuss the relation to the Riemann-Zariski space.

In Section~\ref{sec:presheaves} we discuss and compare the presheaves on schemes of finite type over the base induced from presheaves on our categories of local rings. 
Everything is then applied to the case of differential forms.

In Section~\ref{sec:descent} we  verify sheaf conditions on these presheaves. In Theorem~\ref{thm:compare}, this culminates in our main comparison theorem on differential forms.

Section~\ref{sec:quasi-coherence} establishes coherence of $\Omega^n_\rh$ locally on the Zariski or even small \'etale site of any $X$.
In Section~\ref{sec:specialCases} we turn to the special examples of $\Oh$, the canonical sheaf, and more generally the category of representable sheaves.
Finally, in Section~\ref{sec:future} we outline interesting open connections to the theory of Berkovich spaces and to $F$-singularities.

\noindent {\bf Acknowledgements:} It is a pleasure to thank Manuel Blickle, Fumiharu Kato, Stefan Kebekus, Simon Pepin-Lehalleur, Kay R{\"u}lling, David Rydh, and Vasudevan Srinivas for inspiring discussions and answers to our mathematical questions.

\section{Commutative algebra and general definitions}\label{sec:basics}
\subsection{Notation}\label{ssec:not}
Throughout $k$ is assumed to be a perfect field. The case of interest is
the case of positive characteristic. Sometimes we will use a separated noetherian base scheme $S$. This includes the case $S = \Spec(k)$ of course.

The valuation rings we use are \emph{not} assumed to be noetherian!

We denote by 
 $\Schft[S]$ the category of separated schemes of finite type over $S$, and write $\Schft$ when considering the case $S = \Spec(k)$. 

We write $\Omega^n(X)$ for the vector space of $k$-linear $n$-differential forms on a $k$-scheme
$X$, often denoted elsewhere by $\Gamma(X, \Omega^n_{X/S})$. Note, the assignment $X\mapsto \Omega^n(X)$ is functorial in $X$.

Following \cite[Tag 01RN]{stacks-project}, we call a morphism of schemes with finitely many irreducible components $f:X\to Y$ \emph{birational} if it induces a bijection between the sets of irreducible components and an isomorphism on the residue fields of generic points. In the case of varieties this is equivalent to the existence of a dense open subsets $U\subseteq X$ and $V\subset Y$ such that $f$ induces an isomorphism $U\to V$.

\subsection{Topologies}\label{sec:top}\label{sec:top}

\begin{defi}[$\cdp$-morphism]\label{defi:cdp}
  A morphism $f : Y\to X$ is called a \emph{$\cdp$-morphism} if it is proper and
  completely decomposed, where by ``completely decomposed'' we mean that for
  every---not necessarily closed---point $x \in X$ there is a point $y \in Y$ with $f(y) = x$ and $[k(y): k(x)]
  = 1$.
\end{defi}

These morphisms are also referred to as \emph{proper $\cdh$-covers} (e.g., by Suslin-Voevodsky in \cite{SV00}), or
\emph{envelopes} (e.g., by Fulton in \cite{Ful}). 

\begin{rema}[$\rh$, $\cdh$ and $\eh$-topologies]\label{rem:defi:rhetc}\ 
\begin{enumerate}
 \item Recall that the $\rh$-topology on $\Schft[S]$ is generated by the Zariski
  topology and $\cdp$-morphisms, \cite{GL}.  In a similar vein, the
  $\cdh$-topology is generated by the Nisnevich topology and $\cdp$-morphisms,
  \cite[§~5]{SV00}.  The $\eh$-topology is generated by the étale topology and
  $\cdp$-morphisms, \cite{Gei06}.
  
  \item \label{rem:defi:rhetc:refine} We even have a stronger statement: Every $\rh$- (resp. $\cdh$-, $\eh$-) covering $U {\to} X$ in $\Schft[S]$ admits a refinement of the form $W {\to} V {\to} X$ where $V {\to} X$ is a $\cdp$-morphism, and $W \to V$ is a Zariski (resp. Nisnevich, étale) covering, \cite[(Proof of)Prop.5.9]{SV00}.
\end{enumerate}
\end{rema}

\subsection{Seminormality}\label{sec:seminormal}

A special case of a $\cdp$-morphism is the seminormalisation.

\begin{defn}[\cite{swan}]\label{defn:sn} 
A reduced ring $A$ is \emph{seminormal} if for all $b, c \in A$ satisfying $b^3 = c^2$, there is an $a \in A$ with $a^2 = b, a^3 = c$. Equivalently, every morphism $\Spec(A) \to \Spec(\Z[t^2, t^3])$ factors through $\Spec(\Z[t]) \to \Spec(\Z[t^2, t^3])$.
\end{defn}



\begin{defi}
Recall that an inclusion of rings $A \subseteq B$ is called \emph{subintegral} if $\Spec(B) \to \Spec(A)$ is a completely decomposed homeomorphism, \cite[§2]{swan}. In other words, if it induces an isomorphism on topological spaces, and residue fields.
\end{defi}

\begin{rema}
Any (not necessarily injective) ring map $\phi: A \to B$ inducing a completely decomposed homeomorphism is \emph{integral} in the sense that for every $b \in B$ there is a monic $f(x) \in A[x]$ such that $b$ is a solution to the image of $f(x)$ in $B[x]$ \cite[Tag 04DF]{stacks-project}. Consequently, a postiori, subintegral inclusions are contained in the normalisation.
\end{rema}

We have the following nice properties. 

\begin{lemm} \label{lemm:seminormalProperties}
Let $A$, resp. $(A_i)_{I}$, be a reduced ring, resp. (not necessarily filtered) diagram of reduced rings.
\begin{enumerate}
 \item If $A$ is normal, it is seminormal.
 \item \label{lemm:seminormalProperties:limits} \cite[Cor.3.3]{swan} If all $A_i$ are seminormal, then so is $\varprojlim_{i \in I} A_i$.
 \item \label{lemm:seminormalProperties:subInt} \cite[Cor.3.4]{swan} If the total ring of fractions $Q(A)$ is a product of fields, then $A$ is seminormal if and only if for every subintegral extension $A \subseteq B \subseteq Q(A)$ we have $A = B$. In particular, this holds if $A$ is noetherian or $A$ is valuation ring.
 \item \label{lemm:seminormalProperties:universal} \cite[§2, Thm~4.1]{swan} There exists a universal morphism $A \to A^{\sn}$ with target a seminormal reduced ring. The morphism $\Spec(A^{\sn}) \to \Spec(A)$ is subintegral. 
 \item \label{lemm:seminormalProperties:localise} \cite[Cor.4.6]{swan} For any multiplicative set $S \subseteq A$ we have $A^{\sn}[S^{-1}] {=} A[S^{-1}]^{\sn}$, in particular, if $A$ is seminormal, so is $A[S^{-1}]$.
\end{enumerate}
\end{lemm}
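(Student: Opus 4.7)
The plan is to derive parts (2)--(5) from the cited results in Swan's paper \cite{swan} and to give a short direct argument for part (1). Since (2)--(5) are quoted from existing references, the task is to sketch why the statements work and to point out where care is needed.

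For (1), suppose $A$ is normal with $b, c \in A$ satisfying $b^3 = c^2$. The candidate element is $a := c/b$, computed in the total ring of fractions $Q(A)$. When $b$ is a nonzerodivisor this is well-defined, and then $a^2 = c^2/b^2 = b$ and $a^3 = a \cdot b = c$, while $a$ is integral over $A$ via $x^2 - b = 0$, hence lies in $A$ by normality. The zerodivisor case is handled by localising at each minimal prime: either $b$ becomes a unit there (reducing to the previous case) or $b$ becomes zero, in which case $c^2 = 0$ forces $c = 0$ by reducedness and $a = 0$ does the job. One then checks that the local solutions come from a genuine global element, either by normality or, in the noetherian case, by working on connected components of $\Spec(A)$ where $A$ is a finite product of normal domains.

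For (2), one invokes uniqueness of the seminormal root: in a reduced seminormal ring, the $a$ with $a^2 = b$, $a^3 = c$ is uniquely determined by $b$ and $c$, so the local solutions $a_i \in A_i$ automatically assemble into a compatible family in the limit. For (3), the key input is Swan's structural description of subintegral extensions in $Q(A)$ as iterated one-step extensions $A \subseteq A[a]$ with $a^2, a^3 \in A$: seminormality of $A$ in the sense of Definition~\ref{defn:sn} then forces $a \in A$ at each step, collapsing any such extension. The converse uses the same construction to build a subintegral extension $A[c/b]$ from a given pair $(b, c)$ with $b^3 = c^2$. For (4), $A^{\sn}$ is constructed as the maximal subintegral extension of $A$ inside the normalisation of $A^{\red}$; its existence follows from (3) together with the fact (essentially (2)) that filtered unions of subintegral extensions remain subintegral. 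For (5), localisation preserves bijections of $\Spec$ and isomorphisms of residue fields, hence preserves subintegrality, so the constructed seminormalisation commutes with localisation by direct inspection.

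The main obstacles throughout are non-domain phenomena: zero divisors rule out the naive ``$a = c/b$'' argument in (1) and (3), and uniqueness of the seminormal root used in (2) requires a genuine argument in reduced rings that are not domains. All of these technicalities are developed carefully in Swan's paper, so the role of this lemma here is simply to collect precisely the consequences that will be used later in the paper.
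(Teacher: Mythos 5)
Your proposal is correct and matches the paper's treatment, which gives no proof of this lemma at all: parts (2)--(5) are simply quoted from Swan, exactly as you do, and your direct argument for (1) (take $a = c/b$, note it is integral over $A$ via $x^2 - b = 0$, and handle zerodivisors using uniqueness of the element $a$ in a reduced ring) is the standard one. The only imprecision is in your sketch of (4): Swan's construction of $A^{\sn}$ for an arbitrary reduced ring does not go through the normalisation --- that is Traverso's construction, which requires the normalisation to be well behaved --- but since the result is being cited rather than reproved, this does not affect anything.
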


\begin{rema} \label{rema:Nagata}
Recall that a scheme $S$ is called \emph{Nagata} if it is locally noetherian, and if for every $X \in \Schft[S]$, the normalisation $X^n {\to} X$, cf. \cite[Tag 035E]{stacks-project}, is finite. Well-known examples are fields or the ring of integers $\Z$, or more generally, quasi-excellent rings are Nagata, \cite[Tag 07QV]{stacks-project}.

Actually, the definition of Nagata \cite[Tag 033S]{stacks-project} is: for every point $x \in X$, there is an open $U \ni x$ such that $R = \OO_X(U)$ is a Nagata ring \cite[Tag 032R]{stacks-project}, i.e., noetherian and for every prime $\p$ of $R$ the quotient $R / \p$ is N-2 \cite[Tag 032F]{stacks-project}, i.e., for every finite field extension $L / \Frac(R/\p)$, the integral closure of $R / \p$ in $L$ is finite over $R / \p$. However, Nagata proved \cite[Tag 0334]{stacks-project} that Nagata rings are characterised by being noetherian and universally Japanese \cite[Tag 032R]{stacks-project}, but being universally Japanese is the same as: for every finite type ring morphism $R \to R'$ with $R'$ a domain, the integral closure of $R'$ in its fraction field is finite over $R'$, \cite[Tag 032F, Tag 0351]{stacks-project}. Since we can assume the $U$ from above is affine, and finiteness of the normalisation is detected locally, one sees that our definition above is equivalent to the standard one.
\end{rema}

\begin{prop} \label{prop:sn}
Let $X$ be a scheme. There exists a universal morphism 
\[ X^\sn\to X\]
from a scheme whose structure sheaf is a sheaf of seminormal reduced rings, called the \emph{seminormalisation} of $X$. 
\begin{enumerate}
 \item It is a homeomorphism of topological spaces.
 \item If $X = \Spec(A)$ then $X^{\sn} = \Spec(A^{\sn})$.
 \item If the normalisation $X^n {\to} X$ is finite (e.g., if $X \in \Schft$ or more generally, if $X \in \Schft[S]$ with $S$ a Nagata scheme), then $X^\sn {\to} X$ is a finite $\cdp$-morphism.
\end{enumerate}
\end{prop}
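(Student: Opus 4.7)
The plan is to construct $X^{\sn}$ by gluing the affine seminormalisations provided by Lemma~\ref{lemm:seminormalProperties}(\ref{lemm:seminormalProperties:universal}), then read off each of (1), (2), (3) from the corresponding affine statements. All of the hard commutative algebra has already been absorbed into Lemma~\ref{lemm:seminormalProperties}; the content of the proposition is essentially globalisation.

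For each affine open $U = \Spec(A) \subseteq X$, set $U^{\sn} := \Spec(A^{\sn})$ equipped with its natural map to $U$. The gluing is powered by the localisation identity $A^{\sn}[S^{-1}] = A[S^{-1}]^{\sn}$ from Lemma~\ref{lemm:seminormalProperties}(\ref{lemm:seminormalProperties:localise}): for any $f \in A$, the basic open $\Spec(A_f) \hookrightarrow \Spec(A)$ has preimage canonically $\Spec((A^{\sn})_f) = \Spec((A_f)^{\sn})$. Thus the $U^{\sn}$ agree on pairwise intersections of a basic-open cover and glue to a scheme $X^{\sn}$ with a morphism $X^{\sn} \to X$; all sections of $\OO_{X^{\sn}}$ are seminormal reduced, being limits of seminormal reduced rings (Lemma~\ref{lemm:seminormalProperties}(\ref{lemm:seminormalProperties:limits})). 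The universal property is verified locally: given $Y \to X$ with $Y$ having seminormal reduced structure sheaf, one covers $X$ by affines $\Spec(A_i)$, pulls back to an affine cover of $Y$, factors each resulting ring map through $A_i^{\sn}$ via Lemma~\ref{lemm:seminormalProperties}(\ref{lemm:seminormalProperties:universal}), and uses uniqueness of the local factorisations to glue them to a global map $Y \to X^{\sn}$.

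Property~(2) holds by construction. For property~(1), $\Spec(A^{\sn}) \to \Spec(A)$ is subintegral by Lemma~\ref{lemm:seminormalProperties}(\ref{lemm:seminormalProperties:universal}), so in particular a homeomorphism on underlying topological spaces; this is local on the target and therefore lifts to a homeomorphism $X^{\sn} \to X$. For property~(3), recall from the remark following the definition of subintegrality that any subintegral inclusion $A \subseteq A^{\sn}$ is integral and lands inside the normalisation $A^n$. Under the Nagata hypothesis, $A^n$ is a finite $A$-module, and since $A$ is noetherian the submodule $A^{\sn} \subseteq A^n$ is finite over $A$. Finiteness being local on the target, $X^{\sn} \to X$ is finite, hence proper; combined with subintegrality (which supplies the complete decomposition), this gives a finite $\cdp$-morphism.

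The main obstacle is really only the gluing step; once Lemma~\ref{lemm:seminormalProperties}(\ref{lemm:seminormalProperties:localise}) is in hand the whole argument is formal. A minor point to be careful with is that ``seminormal reduced structure sheaf'' must be checked on arbitrary opens, not just affines, which is handled by the stability of seminormality under limits (Lemma~\ref{lemm:seminormalProperties}(\ref{lemm:seminormalProperties:limits})).
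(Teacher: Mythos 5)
Your proof is correct, and it uses the same key inputs as the paper (the localisation identity $A^{\sn}[S^{-1}]=A[S^{-1}]^{\sn}$, stability of seminormality under limits, the ring-level universal property, and the factorisation through the normalisation for finiteness), but it organises the construction in the opposite order. The paper first builds $X^{\sn}$ as a \emph{ringed space} on the underlying topological space of $X_{\red}$, with structure sheaf the sheafification of $U\mapsto \OO(U)^{\sn}$; the universal property and property~(1) are then immediate, and the work goes into showing $\Spec(A^{\sn})\to\Spec(A)^{\sn}$ is an isomorphism (via compatibility of $(-)^{\sn}$ with localisation and stalks), which simultaneously yields scheme-ness and property~(2). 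You instead glue the affine models $\Spec(A^{\sn})$ directly, which makes (2) true by construction but shifts the burden onto the gluing data: you should say a word about covering $U_i\cap U_j$ by opens that are simultaneously standard opens of both affines and about the cocycle condition on triple overlaps (both follow formally from the uniqueness in the universal property, but they are the only place your argument could go wrong, so they deserve a sentence). Two further small points: (i) like the paper, you should first replace $X$ by $X_{\red}$, since Lemma~\ref{lemm:seminormalProperties} is stated for reduced rings; (ii) your finiteness argument for (3) invokes noetherianness of $A$ to conclude that the submodule $A^{\sn}\subseteq A^{n}$ is finite --- this is fine in the cases of interest (locally noetherian $X$) and is in fact slightly more careful than the paper's one-line deduction from the factorisation $X^{n}\to X^{\sn}\to X$.
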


\begin{proof}
Replacing $X$ with its associated reduced scheme, $X_{\red}$ we can assume that $X$ is reduced. Define $X^{\sn}$ to be the ringed space with the same 
underlying topological space as $X$, and structure sheaf the sheaf obtained from $U \mapsto \OO(U)^{\sn}$. It satisfies the appropriate universality by the 
universal property of $(-)^\sn$ for rings.

Since $(-)^{\sn}$ commutes with inverse limits and localisation, 
Lemma~\ref{lemm:seminormalProperties}\eqref{lemm:seminormalProperties:limits},\eqref{lemm:seminormalProperties:localise}, for any reduced ring $A$ the structure sheaf of $\Spec(A^{\sn})$ is a sheaf of seminormal rings, 
and we obtain a canonical morphism $\Spec(A^{\sn}) \to \Spec(A)^{\sn}$ of ringed spaces. By the universal properties, $(-)^{\sn}$ commutes with colimits, 
so for any point $x \in X$ we have $\OO_{X^{\sn}, x} = \OO_{X, x}^{\sn}$, and 
consequently, $\Spec(A^{\sn}) \to \Spec(A)^{\sn}$ is an isomorphism of
 ringed spaces. From this we deduce that in general $X^{\sn}$ is a 
scheme, and $X^{\sn} \to X$ is a completely decomposed morphism of schemes, cf. Lemma~\ref{lemm:seminormalProperties}\eqref{lemm:seminormalProperties:universal}.

Finally, by the universal property, there is a factorisation $X^n \to X^{\sn} \to X$. So if the normalisation if finite, then so is the seminormalisation.
\end{proof}

The following well-known property explains the significance of
the seminormalisation in our context.

\begin{lemma}\label{lem:sn-descent}
Let $\Fh$ be an $\rh$-sheaf on $\Schft[S]$, let $X \in \Schft[S]$ and $X' \to X$ a completely decomposed homeomorphism. Then
\[ \Fh(X)\to \Fh(X^\red)\to \Fh(X')\]
are isomorphisms. In particular, 
\[ \Fh(X) \cong \Fh(X^{\sn}) \]
if $X^{\sn} \in \Schft[S]$ (for example if $S$ is Nagata).
\end{lemma}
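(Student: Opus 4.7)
My plan is to verify the $\rh$-sheaf condition directly on explicit $\cdp$-covers, in two steps.

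\emph{Auxiliary identity.} First, I will show $\Fh(Y) = \Fh(Y^{\red})$ for every $Y \in \Schft[S]$. Indeed, $Y^{\red} \hookrightarrow Y$ is a surjective closed immersion, hence a $\cdp$-cover; and since closed immersions are monomorphisms, $Y^{\red} \times_Y Y^{\red} = Y^{\red}$ with both projections being the identity, so the $\rh$-sheaf equaliser collapses to $\Fh(Y^{\red})$. Applied to $Y = X$ this already gives the first isomorphism in the statement, and applied to $Y = X'$ it reduces the second isomorphism to the case where both $X$ and $X'$ are reduced.

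\emph{Main argument.} The morphism $X' \to X$ is itself a $\cdp$-cover: completely decomposed homeomorphisms are integral (as recalled just after Definition~\ref{defn:sn}), and being of finite type over $S$, an integral morphism is finite, hence proper. The key observation concerns the diagonal $\Delta: X' \to X' \times_X X'$. Since $X' \to X$ is separated (being finite), $\Delta$ is a closed immersion. Moreover, both projections $p_i: X' \times_X X' \to X'$ are base changes of the universal homeomorphism $X' \to X$, hence themselves universal homeomorphisms, and in particular bijections on topological spaces; since $p_1 \circ \Delta = \id_{X'}$, the section $\Delta$ must itself be surjective on points. Thus $\Delta$ is a surjective closed immersion of noetherian schemes, so it induces an isomorphism $(X')^{\red} \xrightarrow{\sim} (X' \times_X X')^{\red}$. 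Combining this with the auxiliary identity gives an isomorphism $\Delta^*: \Fh(X' \times_X X') \xrightarrow{\sim} \Fh(X')$, and the relations $p_1 \circ \Delta = p_2 \circ \Delta = \id$ then force $p_1^* = p_2^* = (\Delta^*)^{-1}$. The $\rh$-sheaf equaliser for the $\cdp$-cover $X' \to X$ therefore yields $\Fh(X) = \Fh(X')$.

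The most delicate point is the diagonal analysis: it relies on the stability of the universal-homeomorphism property under base change (so that each $p_i$ is bijective on points), combined with noetherianness (so that the surjective closed immersion $\Delta$ cuts out a nilpotent ideal, hence induces an isomorphism on reductions). The final assertion concerning $X^{\sn}$ is then immediate, since when $X^{\sn} \in \Schft[S]$ Proposition~\ref{prop:sn} guarantees that $X^{\sn} \to X$ is a completely decomposed homeomorphism, so the main claim applies.
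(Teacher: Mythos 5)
Your proof is correct and follows essentially the same route as the paper: first collapse the equaliser for the $\cdp$-cover $Y^{\red}\to Y$ using that closed immersions are monomorphisms, then observe that $X'\to X$ is a finite $\cdp$-cover whose diagonal becomes an isomorphism after reduction, so the equaliser for $X'\to X$ collapses as well. Your justification of the diagonal step (projections are universal homeomorphisms, hence $\Delta$ is a surjective closed immersion) is a slightly more explicit version of the paper's remark that the projections are themselves finite completely decomposed homeomorphisms, but the argument is the same in substance.
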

\begin{proof}
The map $X^\red\to X$ is $\cdp$ and we have
$(X^\red \times_X X^\red)=X^\red$. The isomorphism for $X^\red$ follows
from the sheaf sequence. 
The same argument applies to $X'$: Since $\Fh$ is an $\rh$-sheaf we can assume $X'$ is reduced. Then since $X' \to X$ is a finite, cf.~\cite[Tag 04DF]{stacks-project}, completely decomposed homeomorphisms, so are the projections $X' {\times_X} X' \to X'$, and it follows that the diagonal induces an isomorphism $X' \stackrel{\sim}{\to} (X' {\times_X} X')^{\red}$.
\end{proof}

\begin{lemm} \label{lemm:sncdpsubcan}
Suppose that 
\[ \xymatrix{
E \ar[r]^j \ar[d]_q & X' \ar[d]^p \\
Z \ar[r]_i & X 
} \]
is a commutative square in $\Schft[S]$, with $i, j$ a closed immersions, $p, q$ finite surjective, and $p$ an isomorphism outside $i(Z)$.

Then the pushout $Z \sqcup_E X'$ exists in $\Schft[S]$, is reduced if both $Z$ and $X'$ are reduced, and the canonical morphism $Z \sqcup_E X' \to X$ is a finite completely decomposed homeomorphism. In particular, 
\[ Z \sqcup_E X' \cong X \]
if $Z, X'$ are reduced and $X$ is seminormal.
\end{lemm}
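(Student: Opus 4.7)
The plan is to construct $Y := Z \sqcup_E X'$ via a pinching construction, verify its topological and residue-field properties, and deduce the ``in particular'' clause from the subintegral criterion Lemma~\ref{lemm:seminormalProperties}\eqref{lemm:seminormalProperties:subInt}.

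Since $i$ is a closed immersion and $p$ is finite, both are affine, so the candidate for $Y$ is the relative spectrum over $X$ of the quasi-coherent $\OO_X$-algebra $\Fh := i_*\OO_Z \times_{(iq)_*\OO_E} p_*\OO_{X'}$. Affine-locally, with $X = \Spec A$, $Z = \Spec(A/I)$, $X' = \Spec B$, $E = \Spec(B/J)$, and $A \to B$ finite inducing a surjection $A/I \twoheadrightarrow B/J$, this $\Fh$ corresponds to the finite $A$-algebra $C := A/I \times_{B/J} B$, which is a subring of $A/I \times B$. The fibre product commutes with localisation in $A$ (by flatness of localisation), so the affine constructions glue; and the universal property of fibre products of rings translates directly into the universal property of a pushout of affine schemes under the given constraints. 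Hence $Y$ exists as the pushout in $\Schft[S]$, and is reduced whenever $A/I$ and $B$ are, since then $A/I \times B$ is reduced and $C$ is a subring.

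That $Y \to X$ is a finite completely decomposed homeomorphism is then verified as follows: finiteness is built into the construction. The hypothesis that $p$ restricts to an isomorphism $X' \setminus j(E) \to X \setminus i(Z)$, together with $i$ being a closed immersion, ensures that every $x \in X$ has exactly one preimage in $Y$---from $X' \setminus j(E)$ if $x \notin i(Z)$, or from $Z$ if $x \in i(Z)$---with matching residue field in each case (closed immersions and the local isomorphism part of $p$). Since a finite morphism is closed, this continuous bijection is a homeomorphism.

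For the ``in particular'' statement, assume $Z, X'$ are reduced and $X$ is seminormal, so in particular $A$ is reduced. Surjectivity of $p$ combined with reducedness of $A$ forces $\ker(A \to B) = 0$, hence the composite $A \hookrightarrow A/I \times B$ factors through $A \hookrightarrow C$ and is injective. By the previous paragraph this inclusion is subintegral, and $C$ is reduced with the same generic points and residue fields as $A$, so $Q(C) \cong Q(A)$ and we obtain a chain $A \hookrightarrow C \hookrightarrow Q(A)$. Since $A$ is noetherian and reduced, $Q(A)$ is a product of fields, and Lemma~\ref{lemm:seminormalProperties}\eqref{lemm:seminormalProperties:subInt} then forces $A = C$; gluing yields $Y \cong X$. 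I expect the most delicate step to be the point-set analysis of $\Spec(A/I \times_{B/J} B)$, verifying that the fibre product of rings really does produce the pushout on underlying topological spaces; all other ingredients reduce to standard properties of fibre products of rings and of the seminormalisation.
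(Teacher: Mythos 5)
Your proposal follows essentially the same route as the paper: build $Z\sqcup_E X'$ affine-locally as $\Spec$ of the fibre product of rings $A/I\times_{B/J}B$ (the paper invokes Ferrand, \cite[Sco.4.3, Thm.5.1, Thm.7.1]{Fer03}, for exactly this pinching construction), check that the result is a finite completely decomposed homeomorphism over $X$ by a point-set argument, and conclude in the seminormal case from the subintegral criterion of Lemma~\ref{lemm:seminormalProperties}\eqref{lemm:seminormalProperties:subInt}. Two points deserve more care. First, the claim that the universal property of fibre products of rings ``translates directly'' into the pushout property is only immediate for maps into \emph{affine} test schemes; the lemma asserts a pushout in $\Schft[S]$, so universality must be verified against arbitrary separated finite type $S$-schemes, and this is precisely the nontrivial content of Ferrand's theorem (the paper checks his condition (ii) by pulling back affine opens of $X$), not a formal consequence of the ring-level statement. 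Second, in the point-set step you silently upgrade the hypothesis ``$p$ is an isomorphism outside $i(Z)$'' to ``$p$ restricts to an isomorphism $X'\setminus j(E)\to X\setminus i(Z)$''; this stronger reading (equivalently, $p^{-1}(i(Z))\subseteq j(E)$ on points) is what guarantees that every point of $p^{-1}(i(Z))$ gets identified with a point of $Z$ in the pushout, and hence that $Z\sqcup_E X'\to X$ is injective. The paper's own argument makes the same implicit use of this, so this is a reading of the hypothesis rather than an error, but you should state it. The remaining steps (reducedness via $C\subseteq A/I\times B$, injectivity of $A\to C$ from surjectivity of $p$ and reducedness of $A$, $Q(C)\cong Q(A)$, and the application of the subintegral criterion) are correct and match the paper.
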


\begin{proof}
First consider the case where $X$ is affine (so all four schemes are affine). The pushout exists by \cite[Sco.4.3, Thm.5.1]{Fer03} and is given explicitly by the spectrum of the pullback of the underlying rings. By construction it is reduced as $Z$ and $X'$ are. 
The underlying set of the pushout is the pushout of the underlying sets cf. \cite[Sco.4.3]{Fer03}.
As $p$ is an isomorphism outside $i(Z)$ and $q$ is surjective, it follows that $Z \sqcup_E X' \to X$ is a bijection on the underlying sets. %
The existence of the (continuous) map (of topological spaces) $Z \sqcup_E X' \to X$ shows that every open set of $X$ is an open set of $Z \sqcup_E X'$. 
Conversely, if $W$ is a closed set of $Z \sqcup_E X'$ then its 
preimage in $X'$ is closed. As $p$ is proper, this implies that $W$ is also closed in $X$. In other 
words, every open set of $Z \sqcup_E X'$ is an open set of $X$.
 Hence $Z \sqcup_E X' {\to} X$ is a homeomorphism with reduced source. %
 Now 
$Z \sqcup X' \to X$ is finite and completely decomposed, so $Z \sqcup_E X' \to X$ is also finite and completely decomposed. To summarise, $Z \sqcup_E X' \to X$ is a finite completely decomposed homeomorphism. That is, if both are reduced it comes from a subintegral extension of rings. If in addition $X$ is seminormal, this implies that it is an isomorphism, Lemma~\ref{lemm:seminormalProperties}\eqref{lemm:seminormalProperties:subInt}.

For a general $X \in \Schft[S]$, the pushout exists by \cite[Thm.7.1]{Fer03} (one checks the condition (ii) easily by pulling back an open affine of $X$ to $X'$). Then the properties of $Z \sqcup_E X' \to X$ claimed in the statement can be verified on an open affine cover of $X$. As long as $U {\times_X} (Z \sqcup_E X') \cong (U {\times_X}Z) \sqcup_{U {\times_X}E} (U {\times_X}X')$ for every open affine $U \subseteq X$, it follows from the case $X$ is affine. This latter isomorphism can be checked in the category of locally ringed spaces using the explicit description of \cite[Sco.4.3]{Fer03}; it is also a special case of \cite[Lem.4.4]{Fer03}.
\end{proof}


\subsection{Valuation rings}

Recall that an integral domain $R$ is called a \emph{valuation ring} if for all $x \in K = \Frac(R)$, at least one of $x$ or $x^{-1}$ is in $R \subseteq K$. If $R$ contains a field $k$, we will say that $R$ is a \emph{$k$-valuation ring}. We will say $R$ is a \emph{valuation ring of $K$} to emphasise that $K = \Frac(R)$.

The name valuation ring comes from the fact that the abelian group $\Gamma_R = K^* / R^*$ equipped with the relation ``$a \leq b$ if and only if $b / a \in (R{-}\{0\}) / R^*$'' is a totally ordered group, and the canonical homomorphism $v: K^* \to K^* / R^*$ is a valuation, in the sense that $v(a + b) \geq \min(v(a), v(b))$ for all $a, b \in K^*$. Conversely, for any valuation on a field, the set of elements with non-negative value are a valuation ring in the above sense. If $\Gamma_R = K^* / R^*$ is isomorphic to $\Z$ we say that $R$ is a \emph{discrete valuation ring}. Every noetherian valuation ring is either a discrete valuation ring or a field.

One of the many, varied characterisations of valuation rings is the following.

\begin{prop}[{\cite[Ch.VI §1, n.2, Thm.1]{Bou64}}] \label{prop:val_ordered}Let $R \subseteq K$ be a subring of a field. Then $R$ is a valuation ring if and only if its set of ideals is totally ordered.
\end{prop}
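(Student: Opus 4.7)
The plan is to prove the two implications separately, with both being essentially one-step arguments once the right elements are chosen. The key observation, used in both directions, is that for nonzero $a,b \in R$ with fraction field $K$, membership $a/b \in R$ is equivalent to the containment of principal ideals $(a) \subseteq (b)$.

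For the forward implication, assume $R$ is a valuation ring, and let $I, J \subseteq R$ be ideals. I would argue by contradiction: suppose neither $I \subseteq J$ nor $J \subseteq I$, and pick $a \in I \setminus J$ and $b \in J \setminus I$ (both nonzero, otherwise containment is trivial). By the valuation ring property, at least one of $a/b$ or $b/a$ lies in $R$. If $a/b \in R$, then $a = (a/b)\cdot b \in J$, contradicting $a \notin J$; the symmetric conclusion follows in the other case. Hence the ideals of $R$ are totally ordered by inclusion.

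For the reverse implication, assume the ideals of $R$ are totally ordered, and let $x \in K^\times$. Write $x = a/b$ with $a, b \in R$ and $b \neq 0$; necessarily $a \neq 0$ as well. By hypothesis either $(a) \subseteq (b)$ or $(b) \subseteq (a)$: in the first case $a = rb$ for some $r \in R$ so $x = r \in R$, and in the second case $b = sa$ for some $s \in R$ so $x^{-1} = s \in R$. Thus $R$ is a valuation ring.

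There is no real obstacle here; both arguments are essentially immediate from the definitions once one reduces the problem to a question about principal ideals generated by the numerator and denominator of a representative of $x \in K$. Since the Bourbaki reference is cited in the statement, a full proof could also just defer to the citation, but the argument above is short enough to include if desired.
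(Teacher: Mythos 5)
Your proof is correct and is the standard argument; the paper itself offers no proof of this proposition, deferring entirely to the Bourbaki citation, and your two one-line implications are exactly the argument in that reference. One cosmetic point: since the proposition only assumes $R \subseteq K$ is a subring of a field (so $K$ may be strictly larger than $\Frac(R)$), in the reverse direction you should quantify over $x \in \Frac(R)^{\times}$ rather than $x \in K^{\times}$ --- which is in any case what your step ``write $x = a/b$ with $a, b \in R$'' silently does, and is all that the paper's definition of a valuation ring requires.
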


In particular, this implies that the maximal ideal is unique, that is, every valuation ring is a local ring. The cardinality of the set of non-zero prime ideals of a valuation ring is called its \emph{rank}. As the set of primes is totally ordered, the rank agrees with the Krull dimension. 

\begin{coro}\label{coro:is_valuation}
Let $R$ be a valuation ring. If $\p \subseteq R$ is a prime ideal, then both the quotient $R / \p$ and the localisation $R_\p$ are again, valuation rings.
\end{coro}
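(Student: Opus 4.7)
The plan is to apply the characterisation in Proposition~\ref{prop:val_ordered} (valuation ring $\iff$ totally ordered set of ideals) in each case, though for the localisation it is actually more direct to just apply the original definition.

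For $R_\p$, note that since $R$ is a domain and we are inverting non-zero elements, we have inclusions $R \subseteq R_\p \subseteq \Frac(R)$, so $\Frac(R_\p) = \Frac(R) =: K$. Now for any $x \in K$, by the valuation-ring property of $R$, at least one of $x, x^{-1}$ lies in $R$, hence in $R_\p$. This shows $R_\p$ is a valuation ring of $K$.

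For $R / \p$, first note it is a domain since $\p$ is prime. The ideals of $R/\p$ are in order-preserving bijection with the ideals of $R$ containing $\p$; since the ideals of $R$ are totally ordered by Proposition~\ref{prop:val_ordered}, so is this subset. Hence the ideals of $R/\p$ are totally ordered, and another application of Proposition~\ref{prop:val_ordered} (applied inside $\Frac(R/\p)$) shows that $R/\p$ is a valuation ring.

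There is no real obstacle; the result is essentially immediate from the cited characterisation together with the standard ideal-correspondences under quotient and localisation. The only mild subtlety worth pointing out is the identification $\Frac(R_\p) = \Frac(R)$, which makes the valuation-ring condition on $R_\p$ reduce tautologically to the one on $R$.
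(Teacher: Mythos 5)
Your proof is correct and is essentially the argument the paper intends: the corollary is stated without proof as an immediate consequence of Proposition~\ref{prop:val_ordered}, and your ideal-correspondence argument for $R/\p$ together with the direct verification of the definition for $R_\p$ (using $\Frac(R_\p)=\Frac(R)$) is exactly the expected reasoning.
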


\begin{coro}\label{coro:locIsVal}
Let $R$ be a valuation ring and $S \subseteq R {-} \{0\}$ a multiplicative set. Then $R[S^{-1}]$ is a valuation ring. In fact, $\p = \bigcup_{S \cap \q = \varnothing} \q$ is a prime and $R[S^{-1}]{ =} R_\p$.
\end{coro}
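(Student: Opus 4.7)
The plan is to identify the prime $\p$ first, establish the identification $R[S^{-1}] = R_\p$ directly, and then deduce that $R[S^{-1}]$ is a valuation ring from Corollary~\ref{coro:is_valuation}.

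By Proposition~\ref{prop:val_ordered} all ideals of $R$ are totally ordered by inclusion, so in particular the primes of $R$ disjoint from $S$ form a chain. A union of a chain of prime ideals is prime (if $ab$ lies in the union, then it lies in some member of the chain, which is prime); hence $\p$ is a prime, and by construction it is the unique maximal prime of $R$ among those disjoint from $S$.

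Since $S \subseteq R \setminus \p$, we have $R[S^{-1}] \subseteq R_\p$ at once. For the reverse inclusion it suffices to show every $b \in R \setminus \p$ is already invertible in $R[S^{-1}]$. Total ordering of ideals forces $\p \subsetneq (b)$, and a short valuation-theoretic check shows that $\sqrt{(b)}$ is itself a prime of $R$, necessarily strictly containing $\p$. By the maximality of $\p$, the prime $\sqrt{(b)}$ must meet $S$: choose $s \in S$ and $n \geq 1$ with $s^n \in (b)$, write $s^n = b r$, and observe that $r s^{-n}$ is an inverse of $b$ in $R[S^{-1}]$.

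Having shown $R[S^{-1}] = R_\p$, Corollary~\ref{coro:is_valuation} applied to the prime $\p \subseteq R$ concludes the proof that $R[S^{-1}]$ is a valuation ring. I do not see any real obstacle here; the only technical point requiring care is the primality of $\sqrt{(b)}$, which is immediate from the valuation $v$ on $\Frac(R)$: if $(xy)^n \in (b)$ and $v(x) \leq v(y)$, then $v(y^{2n}) \geq v((xy)^n) \geq v(b)$, so $y^{2n} \in (b)$ and hence $y \in \sqrt{(b)}$.
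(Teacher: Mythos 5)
Your proof is correct, and it reaches the identity $R[S^{-1}] = R_\p$ by a genuinely different route than the paper. The paper's proof is shorter and more abstract: it invokes the standard fact that $R \to R[S^{-1}]$ identifies $\Spec(R[S^{-1}])$ with the subspace $\{\q : \q \cap S = \varnothing\} \subseteq \Spec(R)$ of \emph{locally ringed spaces}, observes that by total ordering of primes this subspace has a maximal element $\p$ and hence coincides with $\Spec(R_\p)$, and then takes global sections. You instead argue at the level of elements: the easy inclusion $R[S^{-1}] \subseteq R_\p$, and for the converse you show each $b \in R \setminus \p$ becomes a unit in $R[S^{-1}]$ by proving $\sqrt{(b)}$ is a prime strictly containing $\p$ and invoking the maximality of $\p$ among primes disjoint from $S$. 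Your valuation-theoretic check that $\sqrt{(b)}$ is prime is sound ($2n\,v(y) \geq n\,v(x)+n\,v(y) \geq v(b)$, and $v(c)\geq v(b)$ iff $c\in(b)$ in a valuation ring). The only cosmetic point is the case where $b$ is a unit of $R$: then $\sqrt{(b)} = R$ is not prime, but $b$ is trivially invertible in $R[S^{-1}]$, so nothing breaks. What your approach buys is self-containedness --- it uses only Proposition~\ref{prop:val_ordered} and first principles --- at the cost of a longer argument; the paper's approach buys brevity by outsourcing the work to the general description of the spectrum of a localisation.
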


\begin{proof}
By Corollary~\ref{coro:is_valuation} it suffices to show the second claim. Clearly $\p$ is prime. Recall, that the canonical $R{\to}R[S^{-1}]$ induces an isomorphism of \emph{locally ringed spaces} between $\Spec(R[S^{-1}])$, and $\{\q : \q \cap S {=} \varnothing\} \subseteq \Spec(R)$. Since this set has a maximal element, namely $\p$, the morphism $\Spec(R_\p) {\to} \Spec(R[S^{-1}])$ is an isomorphism of \emph{locally ringed spaces}. Applying $\Gamma(-, \OO_-)$ gives the the desired ring isomorphism.
\end{proof}

Another one of the many characterisation of valuation rings is the following.

\begin{prop}[{\cite[Tag 092S]{stacks-project}, \cite{Oli83}}] \label{prop:sub_flat}
A local ring $R$ is a valuation ring if and only if every submodule of every flat $R$-module is again a flat $R$-module.
\end{prop}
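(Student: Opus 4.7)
The plan is to treat the two directions separately. For the forward direction, I would establish the classical fact that over a valuation ring $R$ a module is flat if and only if it is torsion-free. Since submodules of torsion-free modules are plainly torsion-free, this at once gives that submodules of flat $R$-modules are flat. To verify that torsion-free implies flat, I would invoke the equational criterion for flatness: given a relation $\sum_{i=1}^n r_i m_i = 0$ in a torsion-free module $M$, the finitely generated ideal $(r_1, \ldots, r_n)$ is principal by Proposition~\ref{prop:val_ordered}, say $r_i = r s_i$ with $s_k = 1$ and $r = r_k$; then $r \sum_i s_i m_i = 0$, and torsion-freeness forces $\sum_i s_i m_i = 0$, so $m_k = -\sum_{i \ne k} s_i m_i$ and the required factorisation is obtained by taking $m'_j := m_j$ for $j \ne k$ with $a_{ij} = \delta_{ij}$ for $i \ne k$ and $a_{kj} = -s_j$.

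For the converse, applying the hypothesis to $R$ itself shows that every ideal of $R$ is flat. I would first show that $R$ is a domain. Given nonzero $a \in R$ and $s \in \mathrm{Ann}(a)$, the equational criterion applied to the relation $s \cdot a = 0$ in the flat module $(a)$ yields $t_j, c_j \in R$ with $a = \sum_j t_j c_j \cdot a$ and $s t_j = 0$ for all $j$. Set $e = \sum_j t_j c_j$; then $1 - e \in \mathrm{Ann}(a) \subsetneq R$, and since $R$ is local, $1 - e$ lies in the maximal ideal $\m$, so $e$ is a unit. But $s e = \sum_j (s t_j) c_j = 0$, so $s = 0$, establishing $\mathrm{Ann}(a) = 0$ and hence that $R$ is a domain.

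Next, given nonzero $a, b \in R$, it suffices by Proposition~\ref{prop:val_ordered} to show that either $a \in (b)$ or $b \in (a)$. Applying the equational criterion to the relation $b \cdot a + (-a) \cdot b = 0$ in the flat ideal $(a,b)$ produces elements $y_j = u_j a + v_j b \in (a,b)$ and $r_j, s_j \in R$ satisfying $a = \sum_j r_j y_j$, $b = \sum_j s_j y_j$, and $b r_j = a s_j$. Expanding gives $(1 - \alpha) a = \beta b$ and $\gamma a = (1 - \delta) b$, where $\alpha = \sum_j r_j u_j$, $\beta = \sum_j r_j v_j$, $\gamma = \sum_j s_j u_j$, $\delta = \sum_j s_j v_j$. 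Since $R$ is a domain, $b r_i = a s_i$ forces $r_i s_j = r_j s_i$ for all $i, j$ (immediate when some $r_i = 0$, and otherwise obtained by comparing in $\Frac(R)$), and a direct computation then yields $\alpha \delta = \beta \gamma$. A case analysis using locality concludes: if $1 - \alpha$ (respectively $1 - \delta$) is a unit, then $a \in (b)$ (respectively $b \in (a)$); otherwise $\alpha, \delta \in R^\times$, hence $\beta \gamma = \alpha \delta \in R^\times$, making $\beta$ a unit, and $b = \beta^{-1}(1 - \alpha) a \in (a)$. The main obstacle is this converse direction: the interplay of the equational criterion with locality must be engineered carefully, and in particular the bilinear relation $\alpha \delta = \beta \gamma$ is what ultimately allows the final case analysis to close.
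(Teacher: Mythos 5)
The paper does not actually prove this proposition---it is quoted verbatim from the Stacks Project (Tag 092S) and Olivier's paper, so there is no in-text argument to compare yours against. Your proof is correct and self-contained. The forward direction (valuation ring $\Rightarrow$ torsion-free $=$ flat, via principality of finitely generated ideals from Proposition~\ref{prop:val_ordered} and the equational criterion) is the standard argument, modulo the trivial degenerate case where all coefficients $r_i$ vanish, which the identity matrix handles. The converse is the substantive part, and your two steps both check out: the idempotent-style argument $(1-e)a=0$ with $e$ forced to be a unit by locality correctly kills $\mathrm{Ann}(a)$, and in the comparability step the identity $r_is_j=r_js_i$ (valid in a domain from $br_i=as_i$) does yield $\alpha\delta=\beta\gamma$ by the index swap, after which the three-way case analysis on units closes cleanly; comparability of all pairs $(a),(b)$ gives the valuation property directly from the definition. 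The one thing worth noting is that your argument only uses the hypothesis for submodules of $R$ itself, i.e., that every ideal of $R$ is flat---a formally weaker assumption---so you have in effect proved a slightly sharper statement than the one cited, which is a feature rather than a defect.
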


From this we immediately deduce the following, crucial for Corollary~\ref{coro:etaleValCover}.

\begin{coro} \label{coro:flatUnramifiedValuation}
Let $R$ be a valuation ring. %
If $R {\to} A$ is a flat $R$-algebra with $A {\otimes}_RA {\to} A$ also flat. Then for every prime ideal $\p \subset A$, the localisation $A_\p$ is again a valuation ring.
\end{coro}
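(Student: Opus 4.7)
The plan is to apply Proposition~\ref{prop:sub_flat}: $A_\p$ is a valuation ring as soon as every submodule of every flat $A_\p$-module is itself flat. Setting $\q = \p \cap R$, I would first localise and replace $R$ by $R_\q$ (still a valuation ring by Corollary~\ref{coro:is_valuation}) and $A$ by $A_\p$; both flatness hypotheses are preserved since $A_\p \otimes_{R_\q} A_\p$ is a localisation of $A \otimes_R A$ and $\mu$ is the corresponding localisation of the original multiplication map. It then suffices to show: for any $N \subseteq M$ with $M$ flat over $A$, the submodule $N$ is flat over $A$.

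The first, easier, half uses only the flatness of $R \to A$. Namely, $M$ is flat over $R$ by transitivity, hence so is $N$ by Proposition~\ref{prop:sub_flat} applied to the valuation ring $R$. The remaining (and main) task is to upgrade ``$N$ flat over $R$'' to ``$N$ flat over $A$'', and this is where the flatness of $\mu$ enters.

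The key technical input is the identification
\[
X \otimes_A P \;\cong\; (X \otimes_R P) \otimes_B A,
\]
valid for any $A$-modules $X, P$, where $B = A \otimes_R A$ and $A$ is viewed as a $B$-module via $\mu$; the $B$-structure on $X \otimes_R P$ is $(a \otimes a')(x \otimes p) = ax \otimes a' p$. This holds because $\ker \mu$ is generated by the elements $a \otimes 1 - 1 \otimes a$, whose action on $X \otimes_R P$ reproduces precisely the $A$-balancing relations. Granted this, for any injection $X \hookrightarrow Y$ of $A$-modules, tensoring over $R$ with $N$ remains injective (by $R$-flatness of $N$) and is a map of $B$-modules, so tensoring further over $B$ with $A$---exact by the flatness of $\mu$---yields the required injection $X \otimes_A N \hookrightarrow Y \otimes_A N$. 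The main obstacle is setting up this bimodule viewpoint that recasts $\otimes_A$ as a flat base change along $\mu$ of $\otimes_R$; once it is in place, flatness of $\mu$ is invoked exactly once to promote $R$-flatness to $A$-flatness.
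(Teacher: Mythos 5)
Your proposal is correct and follows essentially the same route as the paper: reduce to the local ring $A_\p$, use flatness of $R\to A$ together with Proposition~\ref{prop:sub_flat} over $R$ to get $R$-flatness of the submodule, and then use the identification $X\otimes_A P\cong (X\otimes_R P)\otimes_{A\otimes_R A}A$ together with flatness of $\mu$ to promote this to $A$-flatness. The paper packages the last two steps as ``the forgetful functor $U:A\mod\to R\mod$ preserves and detects flatness,'' but the underlying isomorphisms and the single invocation of flatness of $\mu$ are exactly yours.
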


\begin{proof}
If $A$ satisfies the hypotheses, then so does $A_\p$, so we can assume $A$ is local, and it suffices to show that every sub-$A$-module of a flat $A$-module is a flat $A$-module. Recall that flatness of $R\to A$ implies the forgetful functor $U:A\mod \to R\mod$ preserves flatness, because $- \otimes_A A \otimes_R - \cong (U-) \otimes_R-$. Recall also that flatness of $A{\otimes}_RA {\to}A$ implies that $U$ \emph{detects} flatness, because we have isomorphisms $- \otimes_A - \cong (- \otimes_A (A \otimes_R A) \otimes_A -) \otimes_{(A \otimes_R A)} A$ and $(- \otimes_A (A \otimes_R A) \otimes_A -) {\cong} (U-) \otimes_R (U-)$. Since $U$ preserves and detects flatness, and preserves monomorphisms, the claim now follows from Proposition~\ref{prop:sub_flat}.
\end{proof}

As one might expect, the rank is bounded by the transcendence degree.

\begin{prop}[{\cite[Ch.VI §10, n.3, Cor.1]{Bou64}, \cite[6.1.24]{GR03}, \cite[Cor.3.4.2]{EP05}}] \label{valuations}
Suppose that $R'$ is a valuation ring, $K' = \Frac(R')$, let $K' / K$ be a field extension. Note that $R = K \cap R'$ is again a valuation ring, and the inclusion $R \subseteq R'$ induces a field extension of residue fields $\kappa \subseteq \kappa'$, and a morphism $\Gamma_R \to \Gamma_{R'}$ of totally ordered groups. With this notation, we have 
\[ \trdeg (K'/K) + \rank R \geq \trdeg (\kappa'/\kappa) + \rank R'. \]
In particular, if $R'$ is a $k$-valuation ring for some field $k$, we have
\[ \trdeg (K'/k) \geq \trdeg (\kappa'/k) {+} \rank R', \]
and finiteness of $\trdeg (K'/k)$ implies finiteness of $\rank R'$.
\end{prop}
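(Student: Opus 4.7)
The plan is to combine two classical ingredients: an Abhyankar-type lifting inequality, and a combinatorial comparison between valuation ranks and the rational rank of the quotient value group.

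First I would prove the intermediate inequality
\[ \trdeg(K'/K) \;\geq\; \trdeg(\kappa'/\kappa) + \dim_\Q\bigl((\Gamma_{R'}/\Gamma_R) \otimes_\Z \Q\bigr) \]
by a standard independence argument. Choose $\bar y_1, \ldots, \bar y_s \in \kappa'$ algebraically independent over $\kappa$ and $x_1, \ldots, x_r \in (K')^\times$ whose values are $\Q$-linearly independent modulo $\Gamma_R$, and lift the $\bar y_i$ to units $y_i \in (R')^\times$. A hypothetical polynomial relation $\sum_{I, J} c_{I J}\, \underline{x}^I \underline{y}^J = 0$ over $K$ can be rescaled so that all $c_{IJ}$ lie in $R$ with at least one a unit. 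The independence of the $v'(x_j)$ modulo $\Gamma_R$ means that distinct monomials $\underline{x}^I$ have distinct values in $\Gamma_{R'}/\Gamma_R$, so among the terms of minimal $v'$-value only a single multi-index $I_0$ appears; reducing modulo $\m_{R'}$ then produces a nontrivial algebraic relation among the $\bar y_i$ over $\kappa$, contradicting their algebraic independence. Taking the sup over $r, s$ gives the inequality.

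Next I would establish the rank comparison
\[ \rank R' \;\leq\; \rank R + \dim_\Q\bigl((\Gamma_{R'}/\Gamma_R) \otimes_\Z \Q\bigr). \]
Let $0 = \Delta_0 \subsetneq \Delta_1 \subsetneq \cdots \subsetneq \Delta_n = \Gamma_{R'}$ be the chain of proper convex subgroups, so $n = \rank R'$. The contractions $\Delta_i \cap \Gamma_R$ form an increasing chain of convex subgroups of $\Gamma_R$, hence are stationary at all but at most $\rank R$ indices. At each stationary index the modular identity $\Delta_i \cap (\Delta_{i-1} + \Gamma_R) = \Delta_{i-1} + (\Delta_i \cap \Gamma_R) = \Delta_{i-1}$ shows that the nonzero rank-one ordered group $\Delta_i/\Delta_{i-1}$ embeds into the subquotient $(\Delta_i + \Gamma_R)/(\Delta_{i-1} + \Gamma_R)$ of $\Gamma_{R'}/\Gamma_R$. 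Exactness of $- \otimes_\Z \Q$ and additivity of $\Q$-dimension in filtrations then show that each stationary index contributes at least one to the rational rank of $\Gamma_{R'}/\Gamma_R$, yielding the bound. Adding the two inequalities yields the main claim, and the final assertion is the special case $K = k$: then $R = k$ has rank zero and $\kappa = k$, so the inequality specialises to $\trdeg(K'/k) \geq \trdeg(\kappa'/k) + \rank R'$ and finiteness of the left-hand side forces $\rank R' < \infty$.

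The main obstacle is the second step. Because $\Gamma_R$ need not be convex in $\Gamma_{R'}$, the quotient $\Gamma_{R'}/\Gamma_R$ inherits only a partial ordering and cannot be handled directly as an ordered group; the trick is to analyse the intersection chain $\Delta_i \cap \Gamma_R$ and invoke the modular law to isolate the ``invisible'' jumps of $\Gamma_{R'}/\Gamma_R$.
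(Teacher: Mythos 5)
Your argument is correct. Note that the paper does not prove this proposition at all: it is quoted as a classical result with references to Bourbaki (Ch.~VI \S10, n.~3, Cor.~1), Gabber--Ramero and Engler--Prestel, so there is no in-paper proof to compare against. What you have written is essentially the standard proof from those sources: the Abhyankar-type inequality $\trdeg(K'/K)\geq\trdeg(\kappa'/\kappa)+\dim_\Q((\Gamma_{R'}/\Gamma_R)\otimes\Q)$ via the usual ``minimal-value monomial'' independence argument, combined with the lattice-theoretic bound $\rank R'\leq\rank R+\dim_\Q((\Gamma_{R'}/\Gamma_R)\otimes\Q)$ obtained from the chain of convex subgroups. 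Both halves check out: the modular-law computation correctly identifies $\Delta_i\cap(\Delta_{i-1}+\Gamma_R)=\Delta_{i-1}$ at stationary indices, and the count of strict jumps of $\Delta_i\cap\Gamma_R$ is bounded by $\rank R$ as claimed. Two small points you should make explicit. First, $\Gamma_R\to\Gamma_{R'}$ is injective (because $R=K\cap R'$ gives $R^*=K^*\cap (R')^*$), which is what licenses speaking of values ``modulo $\Gamma_R$'' inside $\Gamma_{R'}$. Second, in the first step, after isolating the unique minimal multi-index $I_0$ you must divide the relation by an element of value equal to that minimum before reducing modulo $\m_{R'}$ (the rescaling making all $c_{IJ}\in R$ with one a unit does not by itself put the minimal terms in $(R')^*$); and in the second step the rank may a priori be infinite, so the chain argument should be read as bounding the length of an arbitrary finite chain of convex subgroups of $\Gamma_{R'}$, which your argument does verbatim. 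With those clarifications the proof is complete and self-contained, which is arguably a gain over the paper's bare citation.
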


Recall that a local ring $R$ is called \emph{strictly henselian} if every faithfully flat étale morphism $R \to A$ admits a retraction.
Every local ring $R$ admits a ``smallest'' local morphism towards a strictly henselian local ring, which is unique up to non-unique isomorphism. The target of any such morphism is called \emph{the strict henselisation} and is denoted by $R^{sh}$. There are various ways to construct this. One standard construction is to choose a separable closure $\kappa^s$ of the residue field $\kappa$ and take the colimit
\begin{equation} \label{equa:strictHenselisation} 
R^{sh} = \varinjlim\limits_{R \to A \to \kappa^s} A
\end{equation}
 over factorisations such that $R \to A$ is étale. 

From Corollary~\ref{coro:flatUnramifiedValuation} we deduce the following.

\begin{coro} \label{coro:etaleValCover}
For any valuation ring $R$, the strict henselisation is again a valuation ring. If $R$ is of finite rank then: 
\begin{enumerate}
 \item \label{locaIsEtale} If $R \to A$ is an étale algebra, $R \to A_\p$ is also an étale algebra for all $\p \subset A$.

 \item \label{coro:etaleValCover:shval} In the colimit \eqref{equa:strictHenselisation}, it suffices to consider those étale $R$-algebras $A$ which are valuation rings.
 
 \item Every étale covering $U \to \Spec(R)$ admits a Zariski covering $V \to U$ such that $V$ is a disjoint union of spectra of valuation rings.

\end{enumerate}
\end{coro}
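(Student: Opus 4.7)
The overall strategy is to apply Corollary~\ref{coro:flatUnramifiedValuation} uniformly: any étale $R$-algebra $A$ meets its hypotheses since étale morphisms are flat, and the diagonal of an étale map is an open immersion, making $A \otimes_R A \to A$ a localisation and hence flat. Consequently every localisation $A_\p$ of an étale algebra over a valuation ring $R$ is again a valuation ring — this is the single input driving all four assertions.

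For the main assertion that $R^{sh}$ itself is a valuation ring, write $R^{sh} = \varinjlim A_{\p_A}$ where $(A, \p_A)$ runs over étale $R$-algebras $A$ equipped with a factorisation $A \to \kappa^s$, and $\p_A = \ker(A \to \kappa^s)$. Each $A_{\p_A}$ is a valuation ring by the previous paragraph. Since valuation rings are domains and the transition maps are local and flat (hence injective on nonzero elements), one has $\Frac(R^{sh}) = \varinjlim \Frac(A_{\p_A})$. Given any $x \in \Frac(R^{sh})$, filteredness places $x$ in some $\Frac(A_{\p_A})$, where either $x$ or $x^{-1}$ lies in the valuation ring $A_{\p_A}$, and therefore in $R^{sh}$.

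Now assume $R$ has finite rank. For item (1), étale morphisms are quasi-finite and $\Spec R$ has only $\rank R + 1$ points, so $\Spec A$ is finite. For any prime $\p \subset A$, the complement $\{\q : \q \not\subseteq \p\}$ of $\Spec A_\p$ in $\Spec A$ is finite and closed under specialisation — hence Zariski-closed — and equals $V(I)$ for $I = \bigcap_{\q \not\subseteq \p} \q$. No $\q$ in this finite intersection is contained in $\p$, so prime avoidance produces $f \in I \setminus \p$; a short check shows $D(f) = \{\q : \q \subseteq \p\}$, so $A_\p = A[f^{-1}]$ is a principal localisation of $A$, still étale over $R$. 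Item (2) is then a cofinality argument: the assignment $(A, \p_A) \mapsto A_{\p_A}$ lands in valuation rings (item (1) and Corollary~\ref{coro:flatUnramifiedValuation}) and is cofinal. For item (3), cover $U$ by affine opens $\Spec A_i$ with $A_i$ étale over $R$; for each maximal $\m \subset A_i$, item (1) presents $A_{i,\m} = A_i[f_\m^{-1}]$ as a principal open, which is a valuation ring. Every prime lies below some maximal, so these principal opens cover $\Spec A_i$, yielding the required Zariski refinement.

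The main obstacle is item (1): without finite rank, $A_\p$ need not be a principal localisation of $A$ — already $R_\p$ for a non-maximal $\p$ in a rank-$\geq 2$ valuation ring $R$ is not finitely presented over $R$. Finite rank, via quasi-finiteness of étale, ensures $\Spec A$ is finite, which is exactly what enables prime avoidance to produce the required element $f$.
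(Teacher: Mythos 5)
Your proof is correct and follows essentially the same route as the paper: the driving input is Corollary~\ref{coro:flatUnramifiedValuation} applied to étale $R$-algebras (whose diagonal is an open immersion), and finite rank is used exactly as in the paper to make $\Spec A$ finite so that $A_\p$ is a principal (hence étale) localisation. The only cosmetic difference is that for the headline claim the paper applies Corollary~\ref{coro:flatUnramifiedValuation} directly to $R^{sh}$ (the hypotheses passing to the filtered colimit), whereas you verify the valuation-ring axiom termwise and pass to the colimit by hand; both are fine.
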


This corollary actually holds whenever the primes of $R$ are well-ordered by \cite[Ch.VI §8, n.3, Thm.1]{Bou64}, and might very well be true in general, but finite rank suffices for our purposes.

\begin{proof}
Recall that the diagonal is open immersion in the case of an unramified morphism. Hence the assumptions of Corollary~\ref{coro:flatUnramifiedValuation} are
satisfied for all \'etale $R$-algebras and their cofiltered limits.
In particular,  $R^{sh}$ is a valuation ring.
\begin{enumerate}
  \item It suffices to show that $A \to A_\p$ is of finite type. First note that since $\Spec(A) \to \Spec(R)$ is étale, it is quasi-finite, and so since $R$ has finite rank, $A$ has finitely many primes. For every prime $\q \subset A$ such that $\q \not\subseteq \p$, choose one $\pi_\q \in \q \backslash (\q \cap \p)$. Then $A_\p = A[\pi_{\q_1}^{-1}, \dots, \pi_{\q_n}^{-1}]$, cf. Corollary~\ref{coro:locIsVal}.

 \item We can replace each $A$ with $A_\p$ without affecting the colimit, cf. Part~\eqref{locaIsEtale}.
 
\item Part~\eqref{locaIsEtale} also implies that $\Spec(A_\p)\to \Spec(A)$ is an open immersion. Each of the finitely many $A_\p$ is valuation ring by Corollary~\ref{coro:flatUnramifiedValuation} \qedhere
\end{enumerate}
\end{proof}

Just as strictly henselian rings are ``local rings'' for the étale topology, strictly henselian valuation rings are the ``local rings'' for the $\eh$-topology.

\begin{prop}[{\cite[Thm.2.6]{GabKel}}] \label{prop:eh_val}
A $k$-ring $R$ is a valuation ring (resp. henselian valuation ring, resp. strictly henselian valuation ring) if and only if for every $\rh$-covering (resp. $\cdh$-covering, resp. $\eh$-covering) $\{U_i \to X\}$ in $\Schft$, the morphism of sets of $k$-scheme morphisms
\[ \coprod_{i \in I} \hom(\Spec(R), U_i) \to \hom(\Spec(R), X) \]
is surjective.
\end{prop}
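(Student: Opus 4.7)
The central reduction is Remark~\ref{rem:defi:rhetc}\eqref{rem:defi:rhetc:refine}: every $\rh$- (resp.\ $\cdh$-, $\eh$-) covering in $\Schft$ admits a refinement of the form $W \to V \to X$ with $V \to X$ a proper $\cdp$-morphism and $W \to V$ a Zariski (resp.\ Nisnevich, \'etale) cover. This decouples the lifting problem into two independent steps, each controlled by a separate feature of $R$.

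\textbf{Forward direction.} Assume $R$ has the asserted type. For a given $\Spec(R) \to X$, I first lift across $V \to X$. Let $\eta \in \Spec(R)$ be the generic point with image $x \in X$. Complete decomposition of $V \to X$ provides a point $v \in V$ above $x$ with $k(v) \cong k(x) \hookrightarrow k(\eta)$, hence a morphism $\Spec(k(\eta)) \to V$ compatible with $\Spec(R) \to X$. Since $R$ is a valuation ring, the valuative criterion of properness uniquely extends this to $\Spec(R) \to V$. Next I lift across $W \to V$: the base change $W \times_V \Spec(R) \to \Spec(R)$ is a Zariski (resp.\ Nisnevich, \'etale) cover of $\Spec(R)$, and admits a section by the defining property of a local (resp.\ henselian local, resp.\ strictly henselian local) ring.

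\textbf{Backward direction.} Assume $R$ satisfies the lifting property. Testing it against the Zariski cover $\{D(t), D(1-t)\}$ of $\AA^1_k$ forces every $a \in R$ to satisfy $a \in R^\times$ or $1-a \in R^\times$, so $R$ is local. To obtain the valuation property, I test against the blowup $\mathrm{Bl}_0 \AA^2_k \to \AA^2_k$, which is proper and $\cdp$ (the exceptional $\mathbb{P}^1_k$ has $k$-rational points since $k$ is perfect): for any $a,b \in R$ with $b \neq 0$, a lift of the map $(x,y) \mapsto (a,b)$ lands in one of the two affine charts, yielding $a/b \in R$ or $b/a \in R$; applied to an arbitrary $x = a/b \in \Frac(R)$ this is precisely the valuation axiom. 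For the henselian (resp.\ strictly henselian) case, I additionally spread out any Nisnevich (resp.\ \'etale) cover of $\Spec(R)$ to a Nisnevich (resp.\ \'etale) cover of some finite type $k$-subscheme through which $\Spec(R)$ factors---possible because $R$ is a filtered colimit of its finite type $k$-subalgebras---and then invoke the hypothesis to produce the required section on the closed (resp.\ on any) point.

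\textbf{Main obstacle.} The delicate point in the backward direction is verifying that $R$ is an integral domain, as required by the definition of valuation ring; the blowup argument yields a valuation-like condition on $\Frac(R)$ but a priori does not exclude nilpotents or zero-divisors in $R$. Resolving this likely needs a more refined test $\cdp$-morphism---for instance, one built from the normalisation of a cuspidal or nodal curve, where scheme-theoretic lifts to the normalisation detect the presence of zero-divisors. A secondary technical challenge is the spreading-out step in the (strictly) henselian case: translating between an arbitrary \'etale $R$-algebra and a genuine \'etale cover of a finite type $k$-model requires noetherian approximation compatible with the \'etale structure, which one expects to handle via a limit argument similar in flavour to Corollary~\ref{coro:etaleValCover}.
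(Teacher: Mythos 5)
First, a caveat on the comparison itself: the paper does not prove this proposition—it is quoted directly from \cite[Thm.2.6]{GabKel}—so there is no internal argument to measure your proposal against. On its own terms, your strategy is the standard one for this result. The forward direction is complete: refine via Remark~\ref{rem:defi:rhetc}\eqref{rem:defi:rhetc:refine}, lift across the $\cdp$-part by complete decomposition at the image of the generic point plus the valuative criterion of properness, and lift across the Zariski/Nisnevich/\'etale part by locality/henselianness/strict henselianness (noting that $\rh\subseteq\cdh\subseteq\eh$ so the valuation-ring part of the argument is available in all three cases).

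The backward direction, as written, does have the gap you flag, but it closes exactly along the lines you suggest, with two further test $\cdp$-morphisms. For nilpotents: if $r^2=0$, apply the hypothesis to $r:\Spec(R)\to\Spec(k[t]/(t^2))$ and the $\cdp$-cover $\Spec(k)\to\Spec(k[t]/(t^2))$ (a finite completely decomposed homeomorphism); a lift forces $r=0$, so $R$ is reduced. For zero-divisors: if $ab=0$, apply it to $(a,b):\Spec(R)\to\Spec(k[x,y]/(xy))$ and the $\cdp$-cover given by the disjoint union of the two irreducible components (the normalisation of the node); since you have already shown $R$ is local, $\Spec(R)$ is connected, so any lift lands in a single component and forces $a=0$ or $b=0$. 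Hence $R$ is a domain and the blow-up argument applies. Note that the same connectedness observation—an open subset of $\Spec(R)$ containing the closed point is all of $\Spec(R)$—is what you implicitly use, and should state, when you claim a lift to $\mathrm{Bl}_0\AA^2_k$ lands in one affine chart. Finally, the spreading-out step for the (strictly) henselian cases is genuinely needed and only sketched: one must descend a faithfully flat \'etale (resp.\ Nisnevich) $R$-algebra to a finite-type model $R_i\subseteq R$ through which $\Spec(R)\to\Spec(R_i)$ factors, and check that the completely-decomposed-over-the-closed-point condition is achieved at a finite stage (the relevant residue-field isomorphism involves only finitely many elements). This is a routine limit argument in the spirit of Corollary~\ref{coro:etaleValCover}, but as written your text is an outline of it rather than a proof.
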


The key input into the present paper is the same as for \cite{HKK}.

\begin{thm}\label{thm:hypV}For every  finitely generated extension
$K/k$ and every $k$-valuation ring $R$ of $K$ the map  
\[ \Omega^n(R)\to \Omega^n(K)\]
 is injective for all
$n\geq 0$, i.e., $\Omega^n$ is torsion-free on $\val(k)$.
\end{thm}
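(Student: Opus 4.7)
The plan is to reduce the statement to flatness of $\Omega^1_{R/k}$ as an $R$-module, and then obtain this flatness from an approximation of $R$ by smooth sub-$k$-algebras.

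First, since $K = \Frac(R)$ is obtained from $R$ by inverting all nonzero elements, localisation of Kähler differentials gives $\Omega^n_{K/k} \cong \Omega^n_{R/k} \otimes_R K$. Injectivity of the map $\Omega^n_{R/k} \to \Omega^n_{K/k}$ is therefore equivalent to $\Omega^n_{R/k}$ being torsion-free as an $R$-module. Over a valuation ring, torsion-freeness coincides with flatness (Proposition~\ref{prop:sub_flat}). Since $\Omega^n_{R/k} \cong \bigwedge^n_R \Omega^1_{R/k}$ and exterior powers of flat modules are flat (exterior powers commute with filtered colimits and produce finite free modules from finite free modules, so one can invoke Lazard's theorem), the problem reduces to the case $n = 1$.

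For flatness of $\Omega^1_{R/k}$, the hypothesis that $K/k$ is finitely generated, together with Proposition~\ref{valuations}, yields that both $\rank R$ and $\trdeg(K/k)$ are finite. The strategy I would pursue is to exhibit $R$ as a filtered union $R = \bigcup_\alpha A_\alpha$ of essentially of finite type sub-$k$-algebras $A_\alpha \subseteq R$ that are smooth over $k$. Since $\Omega^1$ commutes with filtered colimits of rings, this gives $\Omega^1_{R/k} = \varinjlim_\alpha \Omega^1_{A_\alpha/k}$. Each $\Omega^1_{A_\alpha/k}$ is projective (hence flat) over $A_\alpha$ by smoothness, and so $\Omega^1_{A_\alpha/k} \otimes_{A_\alpha} R$ is a flat $R$-module. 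A filtered colimit of flat modules is flat, giving the desired conclusion.

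The technical heart is the construction of the approximating system $\{A_\alpha\}$. Given a finite subset $T \subseteq R$, one chooses a finitely generated sub-$k$-algebra $A \subseteq R$ containing $T$ and then applies generic smoothness---this is precisely where perfectness of $k$ is essential---to produce $s \in A$ with $A[s^{-1}]$ smooth over $k$. However, for $A[s^{-1}]$ to sit inside $R$ one needs $s^{-1} \in R$, i.e., the centre $\m \cap A$ of $R$ on $\Spec A$ must lie in the smooth locus of $\Spec A$. This is a form of local uniformisation, and is the main obstacle. I would try to circumvent it by inducting on $\rank R$, using that for an intermediate prime $\p$ of $R$ both $R/\p$ and $R_\p$ are valuation rings of strictly smaller rank (Corollary~\ref{coro:is_valuation}) living over lower-dimensional residue data, and on $\trdeg(K/k)$, reassembling the approximations at each step. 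Executing this inductive construction is precisely the key input shared with \cite{HKK}.
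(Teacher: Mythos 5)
Your reduction to $n=1$ is correct and is precisely how the paper's reference \cite[Lemma~A.4]{HKK} proceeds: over a valuation ring torsion-free equals flat, a flat module is a filtered colimit of finite free modules by Lazard's theorem, exterior powers preserve this, and hence flatness of $\Omega^1_{R/k}$ gives flatness (so torsion-freeness) of $\Omega^n_{R/k}$ for all $n$. The gap is in the case $n=1$, which is the entire content of the theorem. The statement you place at the ``technical heart'' --- that $R$ is a filtered union of smooth sub-$k$-algebras --- is not something the inductions you sketch can deliver: for a valuation ring of a finitely generated extension $K/k$ it is essentially equivalent to Zariski local uniformisation of the valuation, a well-known open problem in positive characteristic (known only in low dimensions, e.g.\ $\trdeg(K/k)\leq 3$ by Abhyankar and Cossart--Piltant). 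Induction on $\rank R$ does not circumvent it, since the classical d\'evissage already reduces local uniformisation to the rank-one case, so rank one \emph{is} the hard case; and induction on $\trdeg(K/k)$ does not produce the required smooth models, because the obstruction you correctly identify --- the centre $\m\cap A$ of $R$ on a finitely generated model $\Spec(A)$ may lie in the singular locus, with no element of $A\setminus(\m\cap A)$ available to clear it --- reappears at every stage. Even the weaker assertion that $R$ is a filtered colimit of smooth $k$-algebras (dropping the requirement that they be subalgebras) is open.

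The actual proof of the $n=1$ case, \cite[Cor.~6.5.21]{GR03}, is of a different nature: Gabber and Ramero establish flatness of $\Omega^1_{R/k}$ by a direct valuation-theoretic analysis of differentials, with no smooth approximation of $R$ and no appeal to resolution-type statements. (Indeed, a main point of the present paper and of \cite{HKK} is to obtain these results \emph{without} resolution of singularities or local uniformisation.) So your outline correctly isolates the target, flatness of $\Omega^1_{R/k}$, and correctly reproduces the formal reduction from $n$ to $1$, but the proposed proof of the $n=1$ case routes through an open problem; the ``key input shared with \cite{HKK}'' is not an inductive construction of smooth subalgebras but the Gabber--Ramero theorem itself.
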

This is due to Gabber and Ramero for $n=1$, see  \cite[Corollary~6.5.21]{GR03}.
The general case is deduced in  \cite[Lemma~A.4]{HKK}.

\subsection{Presheaves on categories of local rings}\label{ssec:loc}
We fix a base scheme $S$. The case of main interest for the present paper
is $S=\Spec(k)$ with $k$ a field.

\begin{defn}\label{defn:val}
Let $X/S$ be a scheme of finite type. We will use the following notation.
\begin{enumerate}
\item[$\valbig(X)$] is the category
of $X$-schemes $\Spec(R)$ with $R$ either a 
field extension of $k$ or a $k$-valuation ring of such a field.

\item[$\val(X)$] is the full subcategory of $\valbig(X)$ of those $\pi: \Spec(R){\to} X$ such that the transcendence degree $\trdeg (k(\eta) / k(\pi(\eta))) $ is finite (but we do not demand the field extension to be finitely generated), where $\eta$ is the generic point of $\Spec(R)$.

\item[${\valr{r}(X)}$] is the full subcategory of $\val(X)$ of those $\Spec(R){\to} X$ such that rank of the valuation ring $R$ is $\leq r$.

\item[$\dvr(X)$] is the full subcategory of $\val(X)$ of those $\Spec(R){\to} X$ such that $R = \OO_{Y, y}$ for some $Y \in \Schft[X]$ and some point $y \in Y$ of codimension $\leq 1$ which is regular.

\item[$\shval(X)$] is the full subcategory of $\val(X)$ of those $\Spec(R){\to} X$ such that $R$ is strictly henselian.

\item[$\rval(X)$] is the full subcategory of $\val(X)$ of those $\Spec(R) {\to} X$ such that $R$ is a valuation ring of a residue field of $X$.
\end{enumerate}

We generically denote $\loc(X)$ one of the above categories of local rings.
We also write $\loc$ for $\loc(S)$.
\end{defn}

Note that the morphisms in the above categories are not required to be induced by local homomorphisms of local rings. All $X$-morphisms are allowed.

\begin{defn}\label{defn:torsion-free}Let $\Fh$ be a presheaf on $\loc$. 
We say that $\Fh$ is \emph{torsion-free}, if 
\[ \Fh(R)\to \Fh(\Frac(R)) \] is injective
for all valuation rings $R$ in $\loc$ and their field of fractions
$\Frac(R)$.
\end{defn}

We could have also called this property \emph{separated}, since when $\Fh$ is representable, it is the valuative criterion for separatedness.

\begin{lemma}\label{lem:val_sn}
For any $X \in \Schft[S]$, let $X'{\to}X$ be any completely decomposed homeomorphism (in particular, if $X^{\sn} \in \Schft[S]$, for example if $S$ is Nagata, we can take $X' = X^{sn}$), then
\[ \loc(X)=\loc(X')\]
for all of the above categories of local rings.
\end{lemma}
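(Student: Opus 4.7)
The plan is to exhibit the composition functor $F : \loc(X') \to \loc(X)$, $(\Spec R \to X') \mapsto (\Spec R \to X' \to X)$, as an equivalence of categories, compatible with each of the defining conditions. I would begin by observing that $X' \to X$ is a universal homeomorphism: by the Remark following Definition~\ref{defn:sn}, a completely decomposed homeomorphism is induced by an integral ring map, and combining integrality with bijectivity on points and trivial residue-field extensions yields universal homeomorphism. Since both $X$ and $X'$ are separated of finite type over $S$, the map is in fact even finite, which will be useful in the $\dvr$ case.

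The main technical step is lifting objects. Given $\Spec R \to X$ with $R$ a valuation ring or field, I would base change to obtain an affine completely decomposed homeomorphism $T = \Spec B \to \Spec R$. Since $R$ is a domain and $\Spec B \to \Spec R$ is surjective, the map $R \to B$ is injective; moreover, $B^{\red}$ has the same spectrum as $R$ (a single chain of primes), so it is a domain, and the residue-field identification at the generic point gives $\Frac(B^{\red}) = \Frac R$. Hence $R \subseteq B^{\red} \subseteq \Frac R$ is subintegral, and the seminormality of valuation rings (Lemma~\ref{lemm:seminormalProperties}(\ref{lemm:seminormalProperties:subInt})) forces $R = B^{\red}$. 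The quotient $B \twoheadrightarrow B^{\red} = R$ is then a section of $R \to B$, yielding the desired lift $\Spec R \to T \to X'$; any other section factors through $B \to B^{\red}$ since $R$ is reduced, so the lift is unique. Fully faithfulness of $F$ follows automatically: for any $X$-morphism $\Spec R_1 \to \Spec R_2$, its composite with the chosen lift of $\Spec R_2 \to X'$ must agree with the unique lift of $\Spec R_1 \to X$.

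It then remains to verify that each auxiliary condition on $R$ is preserved. The conditions defining $\valbig$, $\valr{r}$, and $\shval$ depend only intrinsically on $R$. For $\val$ and $\rval$ the point-and-residue-field bijection $X' \to X$ transports the transcendence-degree bound and the ``residue field of $X$'' condition respectively. The only delicate case is $\dvr$: given $R = \OO_{Y,y}$ with $Y \in \Schft[X]$ and $y$ regular of codimension $\leq 1$, I would set $Y' := (Y \times_X X')^{\red} \in \Schft[X']$, locate the unique $y' \in Y'$ above $y$ using the base-changed completely decomposed homeomorphism $Y' \to Y^{\red}$, and apply the lifting step to $\Spec R \to Y$ to identify $\OO_{Y', y'}$ with $R$. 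Codimension and regularity transfer since $Y' \to Y^{\red}$ is a universal homeomorphism, and conversely any $Y' \in \Schft[X']$ lies in $\Schft[X]$ by the finiteness of $X' \to X$.

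The main obstacle is the lifting step, which converts the ``completely decomposed homeomorphism'' hypothesis into an actual section; this is precisely where the seminormality of valuation rings is indispensable. Once that is in place, the remainder is formal bookkeeping.
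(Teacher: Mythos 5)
Your proof is correct, but it takes a genuinely more explicit route than the paper's, whose entire argument is one sentence: valuation rings and fields are normal, hence seminormal, so by the universal property of $(-)^{\sn}$ every $\Spec(R)\to X$ in $\loc(X)$ factors uniquely through the seminormalisation (the case of a general completely decomposed homeomorphism $X'\to X$ is left implicit). You instead build the unique lift by hand: base change $X'\to X$ along $\Spec(R)\to X$ to get $\Spec(B)\to\Spec(R)$, identify $B^{\red}$ with $R$ via the subintegral-extension characterisation of seminormality (Lemma~\ref{lemm:seminormalProperties}\eqref{lemm:seminormalProperties:subInt}), and use the retraction $B\twoheadrightarrow B^{\red}=R$ as the lift. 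Both arguments rest on the same fact---valuation rings admit no proper subintegral extensions inside their fraction field---but yours buys two things the paper's terseness obscures: it treats an arbitrary completely decomposed homeomorphism directly rather than only $X^{\sn}\to X$, and it does the bookkeeping for each class of local rings, in particular the $\dvr$ case, where the defining condition involves an auxiliary $Y\in\Schft[X]$ that must actually be transported to $\Schft[X']$ (this genuinely needs finiteness of $X'\to X$ and is not addressed by the universal-property argument at all). One dependency worth being conscious of: your opening step, that a completely decomposed homeomorphism in $\Schft[S]$ is integral and hence finite, is exactly the Remark following Definition~\ref{defn:sn}, and your entire construction (the base change being a homeomorphism, $B^{\red}$ being a subintegral extension of $R$) collapses without it; read literally, ``homeomorphism which is completely decomposed'' does not force integrality for non-affine morphisms (a normalised nodal curve with one preimage of the node deleted maps to the nodal curve by a completely decomposed homeomorphism that is not finite, and for which the lemma's conclusion fails, since a valuation centred at the deleted branch does not lift). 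Since the paper itself asserts and uses this integrality, this is a shared hypothesis rather than a gap in your argument, but it is the one place where your proof is only as solid as the cited Remark.
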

\begin{proof}
Follows from the universal property of $(-)^{\sn}$ since valuation rings are normal.
\end{proof}

The category $\val$ should be seen as a fully functorial version of
the Riemann-Zariski space.

\begin{defn}\label{defn:RZ}
Let $X$ be an integral $S$-scheme of finite type with generic point $\eta$. As a set, the
Riemann-Zariski space $\RZ(X)$, called the ``Riemann surface'' in \cite[§~17,
p.~110]{SZ}, is the set of (not necessarily discrete) valuation rings over $X$
of the function field $k(X)$ (see also \cite[Before Rem.2.1.1, After Rem.2.1.2, Before Prop.2.2.1, Prop.2.2.1, Cor.3.4.7]{Tem11}).  We turn it into a topological space by using
as a basis the sets of the form  
\[ E(A')=\{R\in\RZ(X)| A'\subset R\}\]
where $\Spec(A)$ is an affine open of $X$, and $A'$ is a finitely generated sub-$A$-algebra of $k(X)$ (cf. \cite[Before Lem.3.1.1, Before Lem.3.1.8]{Tem11}). It has a canonical structure of locally ringed space induced by the assignment $U \mapsto \bigcap_{R \in U} R$ for open subsets $U \subseteq \RZ(X)$. One can equivalently define $\RZ(X)$ as the inverse limit of all proper birational morphisms $Y \to X$, taking the inverse limit in the category of locally ringed spaces. In particular, as a set it is the inverse limit of the underlying sets of the $Y$, equipped with the coarsest topology making the projections $\varprojlim Y \to Y$ continuous, and the structure sheaf is the colimit of the inverse images of the $\OO_Y$ along the projections $\varprojlim Y \to Y$.
\end{defn}

This topological space is
quasi-compact, in the sense that every open cover admits a finite subcover, see
\cite[Theorem~40]{SZ} for the case $S = \Spec(k)$, and \cite[Prop.3.1.10]{Tem11} for general $S$.

Note that the Riemann-Zariski space is functorial only for dominant morphisms.
Our category $\loc(X)$ above is the functorial version: it is the union of
the Riemann-Zariski-spaces of all integral $X$-schemes of finite type. 



\section{Presheaves on categories of valuation rings}\label{sec:presheaves}

\subsection{Generalitites}

We now introduce our main player.
We fix a base scheme $S$.

\begin{defn}Let $X/S$ be of finite type. Let $\Fh$ be a presheaf on
one of the categories of local rings $\loc(X)$ of Definition~\ref{defn:val} over $X$.

We define $\Fh_\loc(X)$
as a global section of the presheaf $\Spec(R) \mapsto \Fh(\Spec(R))$
on $\loc(X)$, i.e., as the projective limit
\[ \Fh_\loc(X)=\varprojlim_{\loc(X)}\Fh(\Spec(R))\]
over the respective categories. 
\end{defn}

\begin{rem} \label{rem:altDescription}
This means that an element of $\Fh_\loc(X)$ is defined 
as a system of elements $s_R\in\Fh(\Spec(R))$ indexed by objects
$\Spec(R) {\to} X \in \loc(X)$ which are compatible in the sense that for every morphism $\Spec(R) \to \Spec(R')$ in $\loc(X)$, we have $s_{R'}|_R = s_R$. 
Note that this is an abuse of notation, since the element $s_R$ does \emph{not only} depend
on $R$ but also on the structure map $\Spec(R)\to X$. Most of the time the
structure map will be clear from the context. 
\end{rem}

\begin{rema} \label{rema:altDesc}
We have the following equivalent definitions of $\Fh_\loc(X)$.
\begin{enumerate}
 \item $\Fh_\loc(X)$ is the equaliser of the canonical maps 
 \[ 
\Fh(X) = \eq\left ( \prod_{\substack{\Spec(R){\to}X\\ \in \loc(X)}} \Fh(R) 
\underset{d^1}{\stackrel{d^0}{\rightrightarrows}} 
\prod_{\substack{\Spec(R){\to}\Spec(R'){\to}X\\ \in \loc(X)}} \Fh(R) \right ) .
\]
In particular, since valuation rings are the ``local rings'' for the $\rh$-site, cf. \cite{GabKel}, the construction $\Fh_\val$ can be thought of as a naïve Godement sheafification (it differs in general from the Godement sheafification because colimits do not commute with infinite products).

 \item $\Fh_\loc$ is the (restriction to $\Schft[S]$ of the) right Kan extension along the inclusion $\iota: \loc \subseteq \Sch[S]$.
 \[ \Fh_\val = (\iota^!\Fh)|_{\Schft[S]}. \]

 \item \label{rema:altDesc:nattran} $\Fh_\loc(X)$ is the set of natural transformations
\[ \Fh_\loc(X) = \hom_{\PreShv(\loc)}(h^X, \Fh) \]
where $h^X = \hom_{\Sch[S]}(-, X)$.
\end{enumerate}
\end{rema}

\begin{lemma} Let $\Fh$ be a presheaf on $\loc$. Then
the assigment $X\mapsto \Fh_\loc(X)$ defines a presheaf $\Fh_\loc$ on $\Schft[S]$.
\end{lemma}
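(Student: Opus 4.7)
The most efficient route is to invoke the reformulation given in Remark~\ref{rema:altDesc}\eqref{rema:altDesc:nattran}:
\[
\Fh_\loc(X) \;=\; \hom_{\PreShv(\loc)}(h^X|_\loc, \Fh),
\]
where $h^X = \hom_{\Sch[S]}(-,X)$. Since $\hom_{\PreShv(\loc)}(-, \Fh)$ is contravariant, it suffices to produce a covariant functor $\Schft[S] \to \PreShv(\loc)$, $X \mapsto h^X|_\loc$. Given $f \colon X \to Y$ in $\Schft[S]$, the obvious candidate for $h^X|_\loc \to h^Y|_\loc$ is post-composition: $(\pi \colon \Spec(R) \to X) \mapsto (f \circ \pi \colon \Spec(R) \to Y)$. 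Associativity of composition then gives $(g f)_* = g_* f_*$ and $\mathrm{id}_{X*} = \mathrm{id}$ automatically.

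\textbf{Verifications.} The content of the argument is to check that this post-composition actually lands in $\loc(Y)$ when applied to objects of $\loc(X)$, for each variant of $\loc$. For $\valbig$ this is tautological. For $\shval$, $\valr{r}$, and the rank condition inside $\val$, the defining property depends only on the ring $R$, not on the structure morphism. For $\val$, one uses that $X$ and $Y$ are of finite type over the separated noetherian base $S$, so $f \colon X \to Y$ is itself of finite type; hence for the generic point $\eta$ of $\Spec(R)$, the field extension $\kappa(\pi(\eta))/\kappa(f\pi(\eta))$ is finitely generated, and summing transcendence degrees shows $\trdeg(\kappa(\eta)/\kappa(f\pi(\eta))) < \infty$. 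For $\dvr$, any $Y' \in \Schft[X]$ is also in $\Schft[Y]$ via $f$.

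\textbf{Explicit description of $f^*$.} Unwinding the abstract recipe, the pullback $f^* \colon \Fh_\loc(Y) \to \Fh_\loc(X)$ sends a compatible system $(s_R)_{\Spec(R) \to Y} \in \Fh_\loc(Y)$ to the compatible system $(f^*s)_{\pi : \Spec(R) \to X}$ defined by $(f^*s)_\pi = s_{f \circ \pi}$. Compatibility of $f^*s$ across morphisms in $\loc(X)$ is inherited from compatibility of $s$ in $\loc(Y)$.

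\textbf{Main obstacle.} Honestly, there is no serious obstacle: the lemma is a purely formal consequence of having a functor of index categories $\loc(X) \to \loc(Y)$ induced by $f$. The only point requiring a moment's thought is verifying that the various defining conditions on $\loc(-)$ survive post-composition with a finite type morphism, and the key ingredient there is the finite-type-ness of $f$ which itself comes from $X, Y \in \Schft[S]$ over a separated base.
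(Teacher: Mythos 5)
Your proposal is correct and follows essentially the same route as the paper: post-composition with $f$ defines a functor $\loc(X)\to\loc(Y)$, which induces the map of limits $f^*:\Fh_\loc(Y)\to\Fh_\loc(X)$, with functoriality coming from associativity of composition. The paper states this in one line; your extra verifications (that each variant of $\loc$ is stable under post-composition, using that $f$ is of finite type for the transcendence-degree condition in $\val$) are correct and merely make explicit what the paper leaves implicit.
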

\begin{proof}Consider a morphism $f:X\to Y$ of schemes of finite type
over $k$. Composition of $\Spec(R) \to X$ with $X \to Y$ defines a functor $\loc(X)\to \loc(Y)$ and hence a homomorphism of limits $f^*:\Fh_\loc(Y)\to\Fh_\loc(X)$. 
\end{proof}

\begin{rem} \label{rem:comparison}
We are going to show in Proposition~\ref{prop:omega_equal} that for $S=\Spec(k)$
with $k$ a perfect field, we have
\[ \Omega^n_\val=\Omega^n_\valbig=\Omega^n_{\valone}=\Omega^n_\shval.\]
In \cite{HKK}, we systematically studied the case of the category $\dvr$.
If every $X \in \Schft$ admits a proper birational morphism from a smooth $k$-scheme, we also have
\[ \Omega^n_\val=\Omega^n_\dvr \]
because both are equal to $\Omega^n_\cdh$ in this case. 
In positive characteristic, the only cases that we know $\Omega^n_\val(X) = \Omega^n_\dvr(X)$
unconditionally are if either $n=0$ (see Remark~\ref{rem:all_n=0}), $n=\dim X$ (see Proposition~\ref{prop:sdh_top}), $\dim X<n$ (in which case both are zero) or
if $\dim X \leq 3$.
\end{rem}

\begin{lemma}\label{lem:compare_big}
Let $\Fh$ be a presheaf on $\valbig$. Then $\Fh_\val=\Fh_\valbig$
as presheaves on $\Schft[S]$. 
A section over $X$ is uniquely determined by the value on the residue fields of $X$ and their valuation rings, that is, the maps $\Fh_\loc(X) \to \prod_{\rval(X)} \Fh(R)$ are injective for $\loc = \val, \valbig$ and all $X \in \Schft[S]$.
\end{lemma}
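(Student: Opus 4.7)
The plan is to prove both assertions simultaneously by constructing, for every $\Spec(R)\in\loc(X)$, a canonical morphism $\Spec(R)\to\Spec(R')$ in $\loc(X)$ with $R'\in\rval(X)$. Let $\eta\in\Spec(R)$ be the generic point, $x_0=\pi(\eta)\in X$, and set $R'=R\cap \kappa(x_0)$ inside $K=\Frac(R)$, where the inclusion $\kappa(x_0)\hookrightarrow K$ comes from the local homomorphism $\OO_{X,x_0}\to K$ on stalks. A direct check shows that $R'$ is a valuation ring of $\kappa(x_0)$ (or $\kappa(x_0)$ itself if $R$ is a field). The $X$-structure on $\Spec(R')$ is obtained from the following observation: writing $x_c=\pi(\m_R)$ for the image of the closed point, the local homomorphism $\OO_{X,x_c}\to R$ has kernel the prime $\p\subset\OO_{X,x_c}$ corresponding to $x_0$, and hence factors through $\OO_{X,x_c}/\p\hookrightarrow R$, whose field of fractions is $\kappa(x_0)$; thus its image lies in $R\cap \kappa(x_0)=R'$. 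This produces a local homomorphism $\OO_{X,x_c}\to R'$ that serves as the structure map and makes $R'\hookrightarrow R$ an $X$-morphism.

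From this, injectivity of $\Fh_\loc(X)\to\prod_{\rval(X)}\Fh(R)$ is immediate: for any $s\in\Fh_\loc(X)$, compatibility with the canonical $\Spec(R)\to\Spec(R')$ gives $s_R=s_{R'}|_R$, so $s$ is determined by its restriction to $\rval(X)$. In particular, the restriction map $\Fh_\valbig(X)\to\Fh_\val(X)$ is injective.

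For surjectivity of $\Fh_\valbig(X)\to\Fh_\val(X)$, I would invoke the cofinality criterion for limits: it suffices to check that for each $R\in\valbig(X)$, the comma category of pairs $(R_0,f\colon R_0\to R)$ with $R_0\in\val(X)$ is nonempty and connected. Nonemptiness comes from the first paragraph. For connectedness, given two such pairs $(R_1,f_1)$ and $(R_2,f_2)$, first replace each $R_i$ by $R_i/\ker(f_i)$ to reduce to the case that $f_i$ is injective; the quotient remains in $\val(X)$ since its structure map is inherited and the transcendence degree only drops. Then $K_i=\Frac(R_i)\hookrightarrow K$; set $K_3=K_1K_2\subset K$ and $R_3=R\cap K_3$, a valuation ring of $K_3$. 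By additivity of transcendence degree,
\[
\trdeg(K_3/\kappa(x_0))\leq \trdeg(K_1/\kappa(x_0))+\trdeg(K_2/\kappa(x_0))<\infty,
\]
and $R_3$ acquires an $X$-structure by exactly the same argument as in the first paragraph (applied now to the subfield $K_3\supseteq \kappa(x_0)$ in place of $\kappa(x_0)$), so $R_3\in\val(X)$. The inclusions $R_i\subseteq R_3\subseteq R$ then yield a zig-zag $(R_1,f_1)\leftarrow (R_3,f_3)\to (R_2,f_2)$ in the comma category, connecting the two objects.

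The subtlest point is verifying in the first paragraph that the image of $\OO_{X,x_c}\to R$ actually lies in $R'$, so that $R'$ inherits a well-defined $X$-structure; everything else is essentially formal. The same observation is used twice: directly in the construction of $R'$, and again with $K_3$ in place of $\kappa(x_0)$ to produce the common dominating object $R_3$ needed for the cofinality argument.
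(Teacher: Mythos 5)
Your proof is correct, and it rests on the same key construction as the paper's: to each $\Spec(R)\in\valbig(X)$ one attaches the valuation ring $R'=R\cap\kappa(x_0)$ of the residue field of the image $x_0$ of the generic point, with $X$-structure obtained from the factorisation of the local homomorphism $\OO_{X,x_c}\to R$ through $R\cap\kappa(x_0)$; the injectivity argument is then identical to the paper's. Where you genuinely diverge is in the surjectivity of $\Fh_\valbig(X)\to\Fh_\val(X)$. The paper sidesteps cofinality entirely: it defines the candidate section by the \emph{canonical} formula $t_R:=s_{R'}|_R$ (so no independence-of-choice issue arises) and then checks compatibility for each morphism $\Spec(R_1)\to\Spec(R_2)$ by a diagram chase through the auxiliary object $S=\kappa(\p_2')\cap R_1$, where $\p_2'=\ker(R_2'\to R_1)$. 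You instead invoke the initial-functor criterion, which shifts the burden to connectedness of the comma categories, settled via the compositum $R_3=R\cap K_1K_2$. The two auxiliary constructions ($S$ versus $R_3$) play the same role and both ultimately rest on Proposition~\ref{valuations}; your packaging is conceptually cleaner, the paper's is more economical. Two points you leave implicit are worth spelling out: (i) that $\Spec(R_3)\to\Spec(R_i)\to X$ agrees with the structure map of $\Spec(R_3)$ — this holds because both morphisms agree after precomposition with $\Spec(R)\to\Spec(R_3)$ and $R_3\hookrightarrow R$ is an injective \emph{local} homomorphism, so a morphism out of $\Spec(R_3)$ is determined by such a precomposition; and (ii) that after the quotient step the generic point of $\Spec(R_i)$ maps to $x_0$ (because $f_i$ is now injective), which is what makes $\trdeg(K_i/\kappa(x_0))$ finite. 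Neither is a gap, and your bookkeeping of the transcendence-degree conditions is at the same level of care as the paper's own.
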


\begin{proof}
We begin with the second statement. Suppose $s, t \in \Fh_\loc(X)$ are sections such that $s|_R = t|_R$ for all $R \in \rval(X)$. If $R\subseteq K$ is valuation ring, then by Proposition~\ref{valuations},
the intersection $R'=R\cap \kappa(x)\subseteq K$ is a valuation ring. 
Then the sections $s_R, t_R$ for
$\Spec(R)\to X$, cf. Remark~\ref{rem:altDescription}, agree by the compatibility condition $s_R = s_{R'}|_R = t_{R'}|_R = t_R$.

Now the first statement. Since we have a factorisation 
\[ \Fh_\valbig(X) \to \Fh_\val(X) \to \prod_{\rval(X)} \Fh(R),\]
 it follows that the first map is injective, and we only need to show it is surjective. %

Let $s\in\Fh_\val(X)$ be global section. Defining $t_R \stackrel{def}{=} s_{R'}|_R$, with $R$ and $R'$ as above, we get a candidate element $t = (t_R) \in \prod_{\valbig(X)} \Fh(R)$ which is potentially in $\Fh_\valbig(X)$. Let $\Spec(R_1){\to}\Spec(R_2)$ be a morphism in $\valbig(X)$. Let $R_1', R_2'$ be the valuation rings of residue fields of $X$ corresponding to $R_1, R_2$ as above, but since there is not necessarily a morphism $\Spec(R_1'){\to}\Spec(R_2')$, we also set $\p_2' = \ker(R_2' \to R_1)$ and $S = \kappa(\p_2') \cap R_1$, to obtain the following commutative diagram in $\valbig(X)$, with $R_1', R_2', S \in \val(X)$ by Proposition~\ref{valuations}.
\begin{equation} \label{diagramChase1}
 \xymatrix@R=0pt{
& & \Spec(R_1') \\
& \Spec(S) \ar[ur] \ar[dr] & \\
\Spec(R_1) \ar[ur] \ar[dr]  & & \Spec(R_2') \\
 & \Spec(R_2) \ar[ur]& 
} 
\end{equation}
Now the result follows from a diagram chase: We have $
t_{R_1} 
= s_{R_1'}|_{R_1} 
= s_{R_1'}|_{S}|_{R_1}
= s_{S}|_{R_1}
= s_{R_2'}|_{S}|_{R_1}
= s_{R_2'}|_{R_2}|_{R_1}
= t_{R_2}|_{R_1}
$.
\end{proof}

Recall that a presheaf on $\loc$ is torsion-free if it sends dominant morphisms to monomorphisms, see Definition~\ref{defn:torsion-free}.

\begin{lemma} \label{lemm:rsF}
Let $\loc \in \{\val,\valbig,\valone, \rval\}$.
Let $\Fh$ be a torsion-free presheaf on  $\loc(X)$. Then $\Fh_\loc(X)$ is canonically isomorphic to
\begin{equation} \label{equa:rsF}
\left \{ (s_x) \in \prod_{x \in X} \Fh(x) \middle | \begin{array}{c}
\textrm{ for every } T{\to}X \in \loc(X) \textrm{ there exists } \\
s_T \in \Fh(T) 
 \textrm{ such that } 
(s_x)|_{\prod_{t \in T} F(t)} = 
s_R|_{\prod_{t \in T} F(t)} 
\end{array} \right \}. 
\end{equation}
\end{lemma}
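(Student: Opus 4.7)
The strategy is to build a bijection between $\Fh_\loc(X)$ and the set in \eqref{equa:rsF} by letting torsion-freeness do all the real work.

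For the forward map, given a system $(s_R)_R \in \Fh_\loc(X)$, restriction to each residue field gives $s_x := s_{\kappa(x)} \in \Fh(\kappa(x))$, which makes sense because $\Spec(\kappa(x)) \to X$ lies in $\loc(X)$ in each of the four cases (fields are rank-zero valuation rings, and the transcendence/rank/residue-field condition is trivially satisfied). The condition in \eqref{equa:rsF} is then automatic: for any $T = \Spec(R) \in \loc(X)$ take $s_T := s_R$, and for each $t \in T$ with image $x \in X$ the factorisation $\Spec(\kappa(t)) \to \Spec(R) \to X$ also lies in $\loc(X)$, forcing $s_R|_{\kappa(t)} = s_{\kappa(t)} = s_x|_{\kappa(t)}$ by compatibility.

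For injectivity, observe that $\Frac(R) \in \loc(X)$ whenever $R \in \loc(X)$ (in each of the four cases the defining condition passes to the fraction field), so torsion-freeness yields $\Fh(R) \hookrightarrow \Fh(\Frac(R))$. The generic point of $\Spec(R)$ maps to some $x \in X$, giving an embedding $\kappa(x) \hookrightarrow \Frac(R)$, and the system's compatibility forces $s_R|_{\Frac(R)} = s_x|_{\Frac(R)}$. Hence $(s_R)$ is fully reconstructible from $(s_x)$.

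For surjectivity, start with $(s_x)$ in the set \eqref{equa:rsF}; the existence clause supplies, for every $R \in \loc(X)$, some candidate $s_R \in \Fh(R)$ verifying the condition. The crux of the proof---and really its only content---is to observe that such an $s_R$ is \emph{unique}: the condition applied at the generic point of $\Spec(R)$ pins down $s_R|_{\Frac(R)}$, and torsion-freeness upgrades this to uniqueness of $s_R$ itself. Compatibility of the resulting family along any morphism $\Spec(R_1) \to \Spec(R_2)$ in $\loc(X)$ is then automatic: a short diagram chase through $R_2 \to \kappa(t') \to \kappa(t)$ (for each $t \in \Spec(R_1)$ with image $t' \in \Spec(R_2)$ and common image $x \in X$) shows that $s_{R_2}|_{R_1}$ also satisfies the defining condition for $T = \Spec(R_1)$, so by uniqueness it equals $s_{R_1}$. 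Finally, the special case $T = \Spec(\kappa(x))$ of the condition shows that the constructed system restricts back to $(s_x)$.

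The only step that requires any genuine thought is this uniqueness of $s_R$, and once the torsion-freeness hypothesis is unpacked, everything else is bookkeeping. I do not anticipate any obstacle beyond verifying that $\Frac(R) \in \loc$ in each of the four cases, which is straightforward.
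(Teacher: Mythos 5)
Your proposal is correct and takes essentially the same approach as the paper: torsion-freeness gives injectivity of $\Fh_\loc(X)\to\prod_{x\in X}\Fh(x)$, and for surjectivity the existence clause supplies candidates whose uniqueness (again by torsion-freeness, applied at the generic point of $\Spec(R)$) reduces compatibility to a diagram chase. Your chase runs through the residue fields $\kappa(t')\to\kappa(t)$ of points of $T$ while the paper reuses its earlier diagram through auxiliary valuation rings of residue fields of $X$, but this is only a cosmetic reorganization of the same argument.
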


It is perhaps worth noting that the description in Equation~\eqref{equa:rsF} is basically the presheaf $\mathrm{rs\ }F$ from the proof of \cite[Prop.3.6.12]{Kel12}.


\begin{proof}By torsion-freeness, the projection $\Fh_\loc(X) \to \Fh_{\valr{0}}(X)$ is injective. By functoriality the map $\Fh_{\valr{0}}(X) \to \prod_{x \in X} \Fh(x)$ is injective.

Assume conversely we are given a system of $s_x$ as in Equation~\ref{equa:rsF}.  
As in the proof of the previous lemma, this gives us a candidate section
$t_R \in \Fh(\Spec(R))$ for all $\Spec(R)\in\loc(X)$. 
It remains to check compatibility of these sections.
Let $\Spec(R_1)\to\Spec(R_2)$ be a morphism in $\loc(X)$. The generic
point of $\Spec(R_1)$ maps to a point of $\Spec(R_2)$ corresponding
to a prime ideal $\p_2\subset R_2$. By torsion-freeness, we may
replace $R_1$ by its field of fractions and $R_2$ by $(R_2)_{\p_2}$.
In other words, $R_1$ is a field containing the residue field
of $R_2$. %
Now the same diagram chase as for Diagram~\ref{diagramChase1} works. Since $S = R_2' / \m$ and $x = \Spec(R_1')$ is the image of $\Spec(R_1)$ in $X$ we have (keeping in mind the condition of Equation~\ref{equa:rsF} above): $
t_{R_1} 
= s_{x}|_{R_1} 
= s_{x}|_{R_2'/\m}|_{R_1}
= s_{\Frac(R_2')}|_{R_2'/\m}|_{R_1}
= s_{\Frac(R_2')}|_{R_2}|_{R_1}
= t_{R_2}|_{R_1}
$.
\end{proof}


\subsection{Reduction to strictly henselian valuation rings}
The aim of this section is to establish that using strictly henselian local rings gives the same result:

\begin{prop}\label{prop:reduce_shval}Let $\Fh$ be a presheaf on $\val$ that commutes with filtered colimits and satifies the sheaf condition for the \'etale topology. 
Then the canonical projection morphism
\[ \Fh_\val(X)\to\Fh_\shval(X)\]
is an isomorphism.
\end{prop}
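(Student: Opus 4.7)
The plan is to prove both injectivity and surjectivity of the restriction map $\Fh_\val(X) \to \Fh_\shval(X)$ by passing through the strict henselisation $R^{sh}$ of each $R \in \val(X)$. First observe that $R^{sh}$ belongs to $\shval(X) \subseteq \val(X)$: by Corollary~\ref{coro:etaleValCover} it is a (strictly henselian) valuation ring, and since $\Frac(R^{sh})/\Frac(R)$ is algebraic, $\trdeg(\Frac(R^{sh})/k) = \trdeg(\Frac(R)/k) < \infty$. In particular the $\val(X)$-morphism $\Spec(R^{sh}) \to \Spec(R)$ is always available.

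The core ingredient I would establish is a \emph{descent lemma}: the canonical map $\Fh(R) \to \Fh(R^{sh})$ is injective with image the invariants under the natural action of $\Aut_R(R^{sh}) = \Gal(\kappa^s/\kappa)$. For this I write $R^{sh} = \varinjlim_i A_i$ as a filtered colimit over a cofinal system of faithfully flat étale $R$-algebras which are themselves valuation rings, using Corollary~\ref{coro:etaleValCover}\eqref{coro:etaleValCover:shval}, arranged so that cofinally $A_i/R$ is Galois with group $G_i$. The étale sheaf condition applied to each such cover yields $\Fh(R) = \Fh(A_i)^{G_i}$; passing to the filtered colimit and using that $\Fh$ commutes with filtered colimits then gives $\Fh(R) = \Fh(R^{sh})^{\Gal(\kappa^s/\kappa)}$.

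Granted this, injectivity is immediate: if $s \in \Fh_\val(X)$ restricts to $0$ on $\shval(X)$, then for each $R$ the compatibility of $s$ along $\Spec(R^{sh}) \to \Spec(R)$ forces $s_R|_{R^{sh}} = s_{R^{sh}} = 0$, so $s_R = 0$ by the descent lemma. For surjectivity, given $t \in \Fh_\shval(X)$ I define $s_R$ as the unique preimage of $t_{R^{sh}}$ under $\Fh(R) \hookrightarrow \Fh(R^{sh})$. This preimage exists because every $\sigma \in \Aut_R(R^{sh})$ is itself a morphism in $\shval(X)$, so compatibility of $t$ forces $\sigma^* t_{R^{sh}} = t_{R^{sh}}$ and the descent lemma applies. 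Compatibility of the family $(s_R)_R$ along an arbitrary morphism $\Spec(R_2) \to \Spec(R_1)$ in $\val(X)$ is then verified inside $\Fh(R_2^{sh})$: a compatible lift $\Spec(R_2^{sh}) \to \Spec(R_1^{sh})$ exists by the universal property of strict henselisation (after factoring $R_1 \to R_2$ through the localisation $(R_1)_{\phi^{-1}(\m_2)}$), and both $s_{R_1}|_{R_2}$ and $s_{R_2}$ map to $t_{R_2^{sh}}$ by compatibility of $t$, hence agree by injectivity.

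The main obstacle is the descent lemma, and more specifically the interplay of the étale sheaf condition with filtered colimits and Galois actions: one must arrange a cofinal subsystem in which $A_i/R$ is Galois \emph{within valuation rings}, and verify that invariants commute with the filtered colimit $\varinjlim_i \Fh(A_i) = \Fh(R^{sh})$. A safer alternative that avoids Galois machinery entirely is to take the equaliser form $\Fh(R) = \eq(\Fh(R^{sh}) \rightrightarrows \Fh(R^{sh} \otimes_R R^{sh}))$ obtained as the filtered colimit of $\Fh(R) = \eq(\Fh(A_i) \rightrightarrows \Fh(A_i \otimes_R A_i))$ for each étale cover $A_i$, and then verify directly that $t_{R^{sh}}$ satisfies the equaliser condition; Corollary~\ref{coro:flatUnramifiedValuation} keeps the relevant localisations of the $A_i \otimes_R A_i$ inside $\val$.
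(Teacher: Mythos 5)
Your proposal is correct and follows essentially the same route as the paper: the paper also reduces everything to the statement that $\Fh(R)\to\Fh(R^{sh})$ is injective with image the invariants under $\Aut$ of the strict henselisation, proved by writing $R^{sh}$ as a filtered colimit of étale $R$-algebras that are valuation rings (Corollary~\ref{coro:etaleValCover}) and combining the étale sheaf condition with commutation with filtered colimits; injectivity and surjectivity of $\Fh_\val(X)\to\Fh_\shval(X)$ are then deduced exactly as you do. Your explicit isolation of the descent lemma, and the equaliser formulation as a fallback, is if anything slightly more careful than the paper's own (rather terse) treatment of that step.
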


\begin{proof}
Let $s, t\in\Fh_\val(X)$ be sections such that the induced elements of
$\Fh_\shval(X)$ agree. Let $x\in X$ be a point with residue field $\kappa$.
The separable closure $\kappa^s$ of $\kappa$ is in the category
$\shval$. By assumption
\[ \Fh(\kappa^s)=\varinjlim_{\lambda/\kappa}\Fh(\kappa)\]
where $\lambda$ runs through the finite extensions of $\kappa$ contained in $\kappa^s$.
The vanishing of $s_{\kappa^s}$ implies that there is one such $\lambda$ with
$s_\lambda=0$. As $\lambda \subseteq \kappa^s$, the morphism $\Spec(\lambda){\to}\Spec (\kappa)$ is \'etale. As a consequence
of \'etale descent, we know that the map $\Fh(\kappa)\to\Fh(\lambda)$ is injective, hence $s_\kappa=t_\kappa$. The same argument also applies to a valuation ring
$R$ of $\kappa$ and its strict henselisation, viewed as the colimit of Equation~\eqref{equa:strictHenselisation}, cf. Corollary~\ref{coro:etaleValCover}\eqref{coro:etaleValCover:shval}.
 Hence
the morphism in the statement is injective.

Now let $t\in\Fh_\shval(X)$. For any $\Spec(R) \to X$ in $\val(X)$ with strict henselisation $R^{sh}$, let $G = \Gal(\Frac(R^{sh})  / \Frac(R))$. Since $\Frac(R) = \Frac(R^{sh})^G$, and $R = R^{sh} \cap \Frac(R)$, we have $R = (R^{sh})^{G}$. In particular, the element $t_{R^{sh}}$ lifts to $\Fh(R) \subseteq \Fh(R^{sh})$, as $t_{R^{sh}}$ must be compatible with every morphism $\Spec(R^{sh}) \to \Spec(R^{sh})$ in $\shval(X)$. In this way, we obtain an element $t_R$ for every $\Spec(R) \to X$ in $\val(X)$. We want to know that these form a section of $\Fh_{\val}(X)$. But compatibility with morphisms $\Spec(R) \to \Spec(R')$ of $\val(X)$ follows from the definition of the $t_R$, the fact that strict henselisations are functorial \cite[Tag 08HR]{stacks-project}, and the morphisms $\Fh(R) \to \Fh(R^{sh})$ being injective, which we just proved.
%
\end{proof}

\subsection{Reduction to rank one}

The aim of this section is to establish that using rank one valuation rings gives the same result:

\begin{prop}\label{prop:reduce_rk_one}Let $\Fh$ be a torsion-free presheaf on $\val$. Assume that for every valuation
ring $R$ in $\val$ and prime ideal  $\p$ the diagram
\[ \xymatrix{
\Fh(R) \ar@{^{(}->}[r] \ar[d] & \Fh(R_\p) \ar[d] \\
\Fh(R / \p) \ar@{^{(}->}[r] & \Fh(\Frac(R / \p))
} \]
is cartesian.
Then the natural restriction
\[ \Fh_\val \to\Fh_{\valone}\]
is an isomorphism.
\end{prop}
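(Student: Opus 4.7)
\medskip
\noindent\textbf{Proof plan.}

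For injectivity, I would use Lemma~\ref{lemm:rsF}. Indeed, if $s, t \in \Fh_\val(X)$ have the same image in $\Fh_{\valone}(X)$, then in particular $s_{\kappa(x)} = t_{\kappa(x)}$ for every point $x \in X$ (since residue fields are rank $0$ valuation rings, hence in $\valone$). But Lemma~\ref{lemm:rsF} applies by torsion-freeness and shows that any section of $\Fh_\val(X)$ is determined by its values at residue fields, so $s = t$.

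For surjectivity, given $u \in \Fh_{\valone}(X)$, I will construct a compatible family $(\tilde u_R)_{\Spec(R) \in \val(X)}$ extending $u$ by induction on the rank $r$ of $R$. The bound $\rank R \leq \trdeg(\Frac(R)/k) < \infty$ from Proposition~\ref{valuations} makes this an ordinary induction. For $r \leq 1$, set $\tilde u_R := u_R$. For $r \geq 2$, let $\p \subset R$ be the \emph{unique} minimal nonzero prime (recall the primes of $R$ are totally ordered). Then $R_\p$ has rank $1$ and $R/\p$ has rank $r-1$; both lie in $\val(X)$ (for $R/\p$, I would check transcendence degree finiteness using Proposition~\ref{valuations} applied to $R_\p$). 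By induction $\tilde u_{R/\p}$ is defined, and $\tilde u_{R_\p} = u_{R_\p}$ is given. I claim both restrict to the same element of $\Fh(\Frac(R/\p)) = \Fh(R_\p/\p R_\p)$: this is the restriction of $u_{x_\p}$ to $\Frac(R/\p)$, where $x_\p \in X$ is the image of the point $\p \in \Spec(R)$; for $\tilde u_{R_\p}$ this is the compatibility already present in $u$, and for $\tilde u_{R/\p}$ it is part of the inductive hypothesis (namely, that $\tilde u_{R/\p}$ restricts at the generic point of $\Spec(R/\p)$ to $u_{x_\p}|_{\Frac(R/\p)}$). The Cartesian square hypothesis then produces a unique $\tilde u_R \in \Fh(R)$ restricting to both, and by construction $\tilde u_R|_{\kappa_R(\q)} = u_{x_\q}|_{\kappa_R(\q)}$ for every prime $\q \subseteq R$ (either $\q = 0$, handled by $R_\p$, or $\q \supseteq \p$, handled by $R/\p$).

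It remains to verify functoriality: for every morphism $f : \Spec(R_1) \to \Spec(R_2)$ in $\val(X)$, one needs $\tilde u_{R_2}|_{R_1} = \tilde u_{R_1}$ in $\Fh(R_1)$. By torsion-freeness both sides inject into $\Fh(\Frac(R_1))$, and I would mimic the diagram-chase at the end of the proof of Lemma~\ref{lemm:rsF}: reducing to the case where $R_1$ is the fraction field of the original $R_1$ and $R_2$ has been replaced by its localisation at the prime $\q = \ker(R_2 \to R_1)$, the comparison becomes $u_{x_\q}|_{R_1} = u_{x_1}|_{R_1}$, and these are equal because $x_1 = x_\q$ in $X$ (the generic point of $\Spec(R_1)$ factors through the closed point of $\Spec(R_2)$ in the reduced situation). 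The identities $\tilde u_{R_i}|_{\kappa_{R_i}(\q_i)} = u_{x_{\q_i}}|_{\kappa_{R_i}(\q_i)}$ established above reduce everything to compatibilities already present in $u$.

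The main obstacle is getting the inductive hypothesis right: one must carry through the induction not just the existence of $\tilde u_R$ but the full compatibility with each residue field of $\Spec(R)$, since this is precisely what both (i) permits applying the Cartesian square at the next stage, and (ii) makes the functoriality check collapse to a point-level statement about $u$. Once the compatibility at every prime of $R$ is carried along inductively, both existence in the Cartesian square and functoriality follow formally from torsion-freeness.
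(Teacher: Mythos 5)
Your proposal is correct and follows essentially the same route as the paper: induction on the rank, gluing a section over a rank-$r$ valuation ring $R$ from its values on $R_\p$ and $R/\p$ via the Cartesian-square hypothesis, and using torsion-freeness to reduce all uniqueness and compatibility checks to fields. The only (cosmetic) differences are that the paper organises the induction as a chain of surjections $\Fh_{\val^{\leq r}}\to\Fh_{\val^{\leq r-1}}$ and allows an arbitrary non-extremal prime $\p$ (proving independence of the choice afterwards), whereas you fix the minimal nonzero prime and carry the residue-field compatibility explicitly through the induction.
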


\begin{proof}
Recall that $\val^{\leq r}$ is defined to be the subcategory of $\val$ using
only valuations of rank at most~$r$, and $\Fh_{\val^{\leq r}}$ denotes the presheaf obtained using only 
$\val^{\leq r}$.
There are canonical morphisms
\[ \Fh_{\val} \to \Fh_{\val^{\leq r}} \to \Fh_{\val^{\leq r-1}} \]
for all $r \geq 1$. By torsion free-ness these are all subpresheafs of $\Fh_{\val^{\leq 0}}$, and so the two morphisms above are monomorphisms.
Moreover, $\Fh_\val=\bigcap_r\Fh_{\val^{\leq r}}$ because by definition, all fields in
$\val$ are of finite transcendence degree over $k$ and hence all valuation rings
have finite rank, see Proposition~\ref{valuations}.
 Hence it suffices
 to show that $\Fh_{\leq r}\to\Fh_{\leq r-1}$ is an epimorphism for all $r$.

Let $\p$ be a prime ideal of a valuation ring $R$.  
By Corollary~\ref{coro:is_valuation} both $R_\p$ and $R / \p$ are valuation rings and if $\p$ is not maximal or zero, then $R_\p$ and $R / \p$ are of rank smaller than $R$. Indeed, $\rank R = \rank R_\p + \rank R / \p$ since the rank is equal to the Krull dimension and the set of ideals, and in particular prime ideals, of a valuation ring is totally ordered, see Proposition~\ref{prop:val_ordered}. 

Let $t \in \Fh_{\val^{\leq r-1}}(X)$ be a section. When $r \geq 2$, we can choose a canonical candidate $s \in \prod_{R\in\val^{\leq r}} \Fh(\Spec(R))$ for an element of $\Fh_{\val^{\leq r}}(X)$ in the preimage of $t$: for every valuation ring of rank $r$, take $\p$ to be any prime ideal with $R_\p, R/\p \in \val^{\leq r-1}$ and construct a section over $R$ using the cartesian square of the assumption.
 Since the morphisms $\Fh(\Spec(R)) \to \Fh(\Spec(R_\p))$ are monomorphisms, the choice of $\p$ does not matter. 

It remains to check that this candidate section $s \in \prod_{\val^{\leq r}} \Fh(R)$  is actually a section of $\Fh_{\val^{\leq r}}(X)$. I.e., for any $X$-morphism of valuation rings $\Spec(R') \to \Spec(R)$ with $R$ or $R'$ (or both) of rank $r$, we want to know that the element $s_R$ restricts to $s_{R'}$. The morphism $\Fh(R') \to \Fh(\Frac(R'))$ is injective, so it suffices to consider the case when $R'$ is some field, $L$. Then $R \to L$ factors as
\[ R \to R / \p \to \Frac(R / \p) \to L \]
where $\p$ is the prime $\p = \Ker(R \to L)$.  That $s_R$ is sent to $s_{R / \p}$ comes from the independence of the choice of $\p$ that we used to construct $s$. For the same reason, $s_{R / \p}$ is sent to $s_{\Frac(R / \p)}$. Finally, both $\Frac(R / \p)$ and $L$, being fields, are of rank zero, and so $s_{\Frac(R / \p)} = t_{\Frac(R / \p)}$ is sent to $s_{L} = t_L$ since $t$ is already a section of $\Fh_{\val^{\leq r-1}}$.
\end{proof}

\begin{coro}
For any scheme $Y$, writing $h^Y$ for the presheaf $\hom_{\Sch}(-, Y)$, the canonical maps are isomorphisms
\[ h^Y_\valbig \cong h^Y_\val \cong h^Y_{\valone} \cong h^Y_\shval. \]
\end{coro}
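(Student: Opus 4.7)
The plan is to derive the three isomorphisms by applying, in turn, Lemma~\ref{lem:compare_big}, Proposition~\ref{prop:reduce_rk_one}, and Proposition~\ref{prop:reduce_shval} to the presheaf $\Fh = h^Y$. The first gives $h^Y_\valbig \cong h^Y_\val$ at once, since Lemma~\ref{lem:compare_big} imposes no conditions on the presheaf beyond being defined on $\valbig$. The remaining two isomorphisms reduce to verifying standard properties of representable functors on $\Sch$.

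For the isomorphism $h^Y_\val \cong h^Y_{\valone}$, I would check the two hypotheses of Proposition~\ref{prop:reduce_rk_one}. Torsion-freeness of $h^Y$ is precisely the valuative criterion: the restriction $\hom(\Spec R, Y) \hookrightarrow \hom(\Spec \Frac(R), Y)$ is injective whenever $Y$ is separated. The required cartesian square is a form of Ferrand glueing: for any prime $\p$ of a valuation ring $R$, the scheme $\Spec(R)$ is the pushout $\Spec(R/\p) \sqcup_{\Spec \Frac(R/\p)} \Spec(R_\p)$ in $\Sch$ by \cite[Thm.5.1]{Fer03} (compare the proof of Lemma~\ref{lemm:sncdpsubcan}), and applying the contravariant functor $\hom(-, Y)$ turns this pushout of schemes into the desired pullback of Hom-sets.

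For $h^Y_\val \cong h^Y_\shval$, I would verify the two hypotheses of Proposition~\ref{prop:reduce_shval}. The étale sheaf condition for $h^Y$ follows because $h^Y$ is even an fpqc sheaf on $\Sch$, by classical descent. For commuting with filtered colimits, one uses that in \eqref{equa:strictHenselisation} the transition maps $R \to A$ are étale and local, so choosing an affine open of $Y$ containing the image of the closed point of $\Spec(R^{sh})$ reduces the claim to the identity $\hom(B, \varinjlim A) = \varinjlim \hom(B, A)$ for $B$ the coordinate ring of that affine open, which holds when $Y$ is locally of finite presentation.

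The main obstacle is that two of the hypothesis checks implicitly require $Y$ to be separated (for torsion-freeness) and locally of finite presentation (for commuting with filtered colimits). For the statement as written for ``any scheme $Y$'' one should either add these hypotheses or reduce to the good case by a Zariski-local argument on $Y$, noting that the target $\hom$-sets assemble from those of affine opens and that the constructions $(-)_\val$, $(-)_\shval$, etc., only involve morphisms from local schemes. In the intended application Proposition~\ref{prop:representable}, the target already lies in $\Schft[S]$ with $S$ Nagata and is separated, so both hypotheses are automatic.
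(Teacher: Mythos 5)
Your proof is correct and follows the paper's own argument exactly: the paper likewise cites Lemma~\ref{lem:compare_big}, Proposition~\ref{prop:reduce_rk_one} (with the same observation that $\Spec(R) = \Spec(R/\p) \amalg_{\Spec(\Frac(R/\p))} \Spec(R_\p)$ by Ferrand, so that $\hom(-,Y)$ sends the pushout to the required cartesian square), and Proposition~\ref{prop:reduce_shval}. Your caveat about the hypotheses on $Y$ is well taken --- the paper handles torsion-freeness by building separatedness into $\Schft[S]$ and quietly assumes local finite presentation for condition (Co) of Remark~\ref{rema:hYalso} --- but this does not change the substance of the argument.
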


\begin{proof}
The first isomorphism is Corollary~\ref{lem:compare_big}. The second one follows from Proposition~\ref{prop:reduce_rk_one}---note that $\Spec(R) = \Spec(R/\p) \amalg_{\Spec(\Frac(R/\p))} \Spec(R_\p)$ by \cite[Thm.5.1]{Fer03}. The last one follows from Proposition~\ref{prop:reduce_shval}.
\end{proof}

\subsection{The case of differential forms}

Now we show that the previous material applies to the case of differential forms:

\begin{prop}\label{prop:omega_equal}
Let $S=\Spec(k)$ with $k$ perfect field.
The canonical morphisms
\[ \begin{xy}\xymatrix{
&&\Omega^n_\shval\\
\Omega^n_\valbig \ar@/^3ex/[rru] \ar@/_3ex/[rrd] \ar[r]_{\cong}^{\textrm{\footnotesize{Lem{.}\ref{lem:compare_big}}}} &
\Omega^n_\val \ar[ru]^\cong_{{Prop.\ref{prop:reduce_shval}}} \ar[rd]_\cong^{Prop.\ref{prop:reduce_rk_one}} &\\
&&\Omega^n_{\valone}
}\end{xy}\]
are isomorphisms of presheaves on $\Schft$.
\end{prop}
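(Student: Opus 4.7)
The plan is to apply the three cited results---Lemma~\ref{lem:compare_big}, Proposition~\ref{prop:reduce_shval}, and Proposition~\ref{prop:reduce_rk_one}---to the presheaf $\Fh = \Omega^n$, reducing the problem to verifying the respective hypotheses in each case. The isomorphism $\Omega^n_\valbig \cong \Omega^n_\val$ is then immediate from Lemma~\ref{lem:compare_big}, which has no extra hypothesis to check.

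For the isomorphism $\Omega^n_\val \cong \Omega^n_\shval$ coming from Proposition~\ref{prop:reduce_shval}, the two hypotheses are that $\Omega^n$ commutes with filtered colimits of rings and that it satisfies the étale sheaf condition. The first is the standard identity $\Omega^n_{\varinjlim_i R_i / k} \cong \varinjlim_i \Omega^n_{R_i/k}$ for Kähler differentials; the second follows from the classical fact that $\Omega^n_{R'/k} \cong R' \otimes_R \Omega^n_{R/k}$ for an étale map $R \to R'$, combined with faithfully flat descent for quasi-coherent modules.

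For the isomorphism $\Omega^n_\val \cong \Omega^n_{\valone}$ coming from Proposition~\ref{prop:reduce_rk_one}, the two hypotheses are (a) torsion-freeness of $\Omega^n$ on $\val$, which is exactly Theorem~\ref{thm:hypV}, and (b) that for every valuation ring $R$ appearing in $\val$ and every prime $\p \subset R$, the square
\[ \xymatrix{
\Omega^n_{R/k} \ar[r] \ar[d] & \Omega^n_{R_\p/k} \ar[d] \\
\Omega^n_{(R/\p)/k} \ar[r] & \Omega^n_{\Frac(R/\p)/k}
} \]
is cartesian. The hard part will be verifying (b). The plan starts from the observation that $R \cong R_\p \times_{\Frac(R/\p)} R/\p$ already at the level of rings: for $a \in \p$ and $s \in R \setminus \p$ one cannot have $s/a \in R$ (otherwise $s = (s/a)\cdot a \in \p$), so $a/s \in R$ by the valuation-ring property, giving $\p R_\p \subseteq R$ and hence the pullback description. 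From this ring-level pullback I would then deduce the corresponding statement for $\Omega^n$, either by reducing via $\Omega^n \cong \wedge^n \Omega^1$ to the case $n = 1$ and handling one-forms directly, or by an element-level argument using that $\Omega^n_{R_\p/k} \cong R_\p \otimes_R \Omega^n_{R/k}$ (so a compatible pair $(\omega_1,\omega_2)$ admits a common-denominator preimage modulo~$\p$) combined with the conormal sequence for $R \to R/\p$ and torsion-freeness to pin down the actual preimage in $\Omega^n_{R/k}$.
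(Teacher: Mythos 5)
Your overall architecture is exactly the paper's: reduce to the three cited results, observe that $\Omega^n$ commutes with filtered colimits and is an \'etale sheaf (for Proposition~\ref{prop:reduce_shval}), invoke Theorem~\ref{thm:hypV} for torsion-freeness, and then concentrate all the work in the cartesian square needed for Proposition~\ref{prop:reduce_rk_one}. Your ring-level argument that $\p R_\p\subseteq R$ (hence $R\cong R_\p\times_{\Frac(R/\p)}R/\p$) is essentially the paper's Lemma~\ref{lemm:openclosedGlueingValRings}, and your sketch for $n=1$ --- tensor the ring square with the flat module $\Omega^1(R)$, then use the conormal sequence --- is exactly what the paper does.

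The gap is in the passage from $n=1$ to general $n$. The first route you propose, ``reducing via $\Omega^n\cong\wedge^n\Omega^1$ to the case $n=1$,'' does not work as stated: exterior powers do not commute with fibre products of modules, so knowing the square is cartesian for $\Omega^1$ gives nothing directly for $\Omega^n$. The subtlety is that $\Omega^n(R/\p)$ is not $\bigwedge^n(\Omega^1(R)\otimes_R R/\p)$ but a quotient of it by terms coming from $\p/\p^2$ via the conormal sequence, and one must show that the kernel of $\Omega^n(R)/\p\to\Omega^n(R/\p)$ surjects onto (in fact maps isomorphically to) the kernel of $\Omega^n(R_\p)/\p\to\Omega^n(\Frac(R/\p))$. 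The paper's Lemma~\ref{lem:omega_cartesian} does this by writing $\Omega^1(R)$ as a filtered union of finitely generated (hence free, by torsion-freeness over a valuation ring) submodules $N$, choosing a splitting of $\pi^{-1}\overline{N}\twoheadrightarrow\overline{N}$, and computing the kernels of $\bigwedge^n\pi$ and $\bigwedge^n\theta$ explicitly via the resulting direct sum decompositions, using that the common kernel $X$ is already a $\Frac(R/\p)$-vector space so that tensoring over $R/\p$ and over $\Frac(R/\p)$ agree. Your second alternative (common denominators plus the conormal sequence plus torsion-freeness) points in the right direction but does not engage with this kernel comparison, which is the actual content of the hardest step; as written, the proof is incomplete precisely there.
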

\begin{proof}
The presheaf $\Omega^n$ on $\valbig$ is an étale sheaf and commutes with direct limits.
Hence we may apply Proposition~\ref{prop:reduce_shval} in order to show the comparison to $\Omega^n_\shval$.

For the final isomorphism we want to apply Proposition~\ref{prop:reduce_rk_one}. The rest of this section is devoted to checking the necessary cartesian diagram. 
\end{proof}

\begin{lemma} \label{lemm:openclosedGlueingValRings}
Let $R$ be a valuation ring, $\p$ a prime ideal.
Then the diagram
\[ \xymatrix{
R \ar@{^{(}->}[r] \ar[d] & R_\p \ar[d] \\
R / \p \ar@{^{(}->}[r] & R_\p / \p R_\p \cong \Frac(R / \p)
} \]
is cartesian and the canonical $R$-module morphism $\p \to \p R_\p$ is an isomorphism.
\end{lemma}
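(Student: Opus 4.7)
The plan is to first establish the auxiliary identity $\p R_\p = \p$ as ideals of $R$ (not merely of $R_\p$); this is the heart of the lemma and immediately yields the claim that $\p \to \p R_\p$ is an isomorphism. The key input is the valuation ring property: given $r \in \p$ and $s \in R \setminus \p$, we have $s/r \notin R$ (otherwise $s = (s/r) r \in \p$), so by Proposition~\ref{prop:val_ordered} applied to $R$ we get $r/s \in R$; then $r = (r/s)s$ with $r \in \p$ and $s \notin \p$ forces $r/s \in \p$ since $\p$ is prime. Hence every element of $\p R_\p$ already lies in $\p$, and the reverse inclusion is obvious.

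Using this, I would identify $R_\p/\p R_\p$ with $\Frac(R/\p)$: an element $r/s \in R_\p$ with $s \notin \p$ maps to $\bar r/\bar s \in \Frac(R/\p)$ (which makes sense since $\bar s \neq 0$), and one checks this is a well-defined surjective ring homomorphism with kernel $\p R_\p$.

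Next I would verify the pullback property. The canonical map $R \to P := (R/\p) \times_{\Frac(R/\p)} R_\p$ is injective because $R \hookrightarrow R_\p$ is (as $R$ is a domain). For surjectivity, take an element $(\bar a, r/s) \in P$ with $a \in R$, $r \in R$, $s \in R \setminus \p$; the compatibility in $\Frac(R/\p)$ reads $\bar a \bar s = \bar r$, i.e., $as - r = p$ for some $p \in \p$. Then in $R_\p$ we have
\[ \frac{r}{s} = a - \frac{p}{s}, \]
and by the key identity $\p R_\p = \p$ established above, $p/s \in \p \subseteq R$. Therefore $b := a - p/s$ is an element of $R$; its image in $R_\p$ is $r/s$ and its image in $R/\p$ is $\bar a$, so $b \mapsto (\bar a, r/s)$. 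This exhibits the pair as coming from $R$.

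The only real obstacle is Step~1, the equality $\p R_\p = \p$, which fails for general local rings; everything else is then a short diagram chase. Once that identity is in hand, the whole statement reduces to the observation that we can always ``clear denominators within $\p$'' using the valuation ring property.
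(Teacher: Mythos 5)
Your proposal is correct and follows essentially the same route as the paper: both arguments hinge on the key identity $\p R_\p = \p$, proved via the valuation-ring dichotomy ($x\in R$ or $x^{-1}\in R$, equivalently comparability of principal ideals), and then deduce the cartesian property by clearing the denominator of an element of $\p R_\p$ inside $R$. The only cosmetic difference is that the paper phrases the dichotomy by comparing $v(a)$ and $v(s)$, and your citation of Proposition~\ref{prop:val_ordered} could more directly be the definition of a valuation ring, but the content is identical.
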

\begin{proof}
We have to check that an element $a/s$ of $R_\p$ is in $R$ if its reduction modulo
$\p R_\p$ is in $R/\p$. This amounts to showing that $\p=\p R_\p$: if there is $b \in R$ which agrees with $a / s$ mod $\p R_\p$, then $b - a/s \in \p R_\p$. But if $\p=\p R_\p$, then $b - a/s \in \p \subset R$, i.e., $a/s \in R \subseteq R_\p$. 

Let $x=a/s\in\p R_\p$ with $a\in \p$ and $x\in R-\p$. Let $v$ be the valution of $R$. We compare
$v(a)$ and $v(s)$. If we had $v(s)\geq v(a)$, then $x^{-1}\in R$ and hence
$s=x^{-1}a\in \p$ because $\p$ is an ideal. This is a contradiction. Hence $v(s)<v(a)$ and
$x\in R$. Now we have the equation $sx=a$ in $R$, hence $sx\in \p$. As
$\p$ is a prime ideal and $s\notin \p$, this implies $x\in \p$.
\end{proof}

\begin{lemma} Let $R$ be a $k$-valuation ring, $\p$ a prime ideal. Then the
diagram
\[ \xymatrix{
\Omega^1(R) \ar@{^{(}->}[r] \ar@{->>}[d] & \Omega^1(R_\p) \ar@{->>}[d] \\
\Omega^1(R / \p) \ar@{^{(}->}[r] & \Omega^1(\Frac(R / \p))
} \]
is cartesian.
\end{lemma}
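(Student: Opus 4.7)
The plan is to identify the two vertical kernels explicitly using the conormal sequence for K\"ahler differentials, and then to use the crucial input from Lemma~\ref{lemm:openclosedGlueingValRings} --- namely $\p=\p R_\p$ as $R$-submodules of $R_\p$ --- to show these kernels coincide as subgroups of $\Omega^1(R_\p)$.

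First I would check that the two vertical maps are indeed surjective: every generator $d\bar a\in\Omega^1(R/\p)$ admits the lift $da\in\Omega^1(R)$ for any lift $a\in R$ of $\bar a$, and the same argument works on the right using $R_\p/\p R_\p\cong\Frac(R/\p)$ (Lemma~\ref{lemm:openclosedGlueingValRings}). The injectivity of the horizontal arrows is Theorem~\ref{thm:hypV}. Granting surjectivity on the left, the cartesian property is equivalent to the statement that the two vertical kernels agree: given $\omega\in\Omega^1(R_\p)$ and $\bar\omega\in\Omega^1(R/\p)$ with the same image in $\Omega^1(\Frac(R/\p))$, lift $\bar\omega$ to $\omega''\in\Omega^1(R)$ via surjectivity, subtract, and reduce to showing that any $\omega\in\Omega^1(R_\p)$ mapping to $0$ in $\Omega^1(\Frac(R/\p))$ already lies in $\Omega^1(R)$ (and maps to $0$ in $\Omega^1(R/\p)$).

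For the kernel computation I would invoke the standard conormal sequence: for any surjection of $k$-algebras $A\twoheadrightarrow B$ with kernel $I$, the kernel of $\Omega^1_{A/k}\to\Omega^1_{B/k}$ is the submodule $I\,\Omega^1_{A/k}+d(I)$, where $d(I)=\{di:i\in I\}$. Applying this to $R\twoheadrightarrow R/\p$ yields the left kernel $\p\,\Omega^1(R)+d\p$, while applying it to $R_\p\twoheadrightarrow R_\p/\p R_\p=\Frac(R/\p)$ yields the right kernel $\p R_\p\cdot\Omega^1(R_\p)+d(\p R_\p)$.

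The main point, and the last step, is the identification of these two kernels inside $\Omega^1(R_\p)$. Since $\p=\p R_\p$ as subsets of $R_\p$, clearly $d(\p R_\p)=d\p$. For the other summand I would use the standard identification $\Omega^1(R_\p)=S^{-1}\Omega^1(R)$ (with $S=R\setminus\p$). Any element of $\p R_\p\cdot\Omega^1(R_\p)$ is a finite sum of terms $p\cdot(\omega/s)=(p/s)\,\omega$ with $p\in\p R_\p$, $\omega\in\Omega^1(R)$, $s\in S$; but Lemma~\ref{lemm:openclosedGlueingValRings} gives $p/s\in\p R_\p=\p$, so the term already lies in $\p\,\Omega^1(R)$. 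Hence both kernels equal $\p\,\Omega^1(R)+d\p\subseteq\Omega^1(R)\subseteq\Omega^1(R_\p)$, and the diagram is cartesian.

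The step I expect to require the most care is verifying that division by $s\in R\setminus\p$ preserves the two generating classes $\p\,\Omega^1(R)$ and $d\p$; but this is exactly what the non-noetherian input $\p=\p R_\p$ encodes, together with the $\Z$-linearity of $d$ and the Leibniz rule $a\,dp=d(ap)-p\,da$, which keeps $R$-multiples of $d\p$ inside $d\p+\p\,\Omega^1(R)$.
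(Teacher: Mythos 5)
Your proof is correct. It rests on the same two pillars as the paper's argument --- the conormal (second fundamental) exact sequence and the identity $\p=\p R_\p$ from Lemma~\ref{lemm:openclosedGlueingValRings} --- but organises them differently. The paper first tensors the cartesian square of rings with $\Omega^1(R)$, using that a torsion-free module over a valuation ring is flat, and then splices in the two conormal sequences, concluding with a diagram chase in which the surjectivity of $\p/\p^2\to\p R_\p/\p^2R_\p$ is the decisive point. You instead compute the two vertical kernels outright, as $\p\,\Omega^1(R)+d(\p)$ and $\p R_\p\,\Omega^1(R_\p)+d(\p R_\p)$, and identify them inside $\Omega^1(R_\p)$ using $\Omega^1(R_\p)=S^{-1}\Omega^1(R)$ together with $\p R_\p=\p$. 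This bypasses the flatness argument entirely, although torsion-freeness (Theorem~\ref{thm:hypV}) is still load-bearing for you: it gives the injectivity of $\Omega^1(R)\to\Omega^1(R_\p)$, hence uniqueness in the fibre product. The points that need care in your route are exactly the ones you flag: that $\p\,\Omega^1(R)+d(\p)$ really is the full kernel (it is an $R$-submodule by the Leibniz rule, as you note), and that $\p R_\p\cdot\Omega^1(R_\p)\subseteq\p\,\Omega^1(R)$ after clearing denominators --- which is precisely where $\p=\p R_\p$ does its work. Both are handled correctly, so the two proofs are equally valid; yours is arguably the more direct of the two.
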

\begin{proof} The $R$-module $\Omega^1(R)$ is flat because it is torsion-free over a valuation ring. 
We tensor the diagram of the last lemma with the flat $R$-module $\Omega^1(R)$ 
and obtain the cartesian diagram
\[\xymatrix{
\Omega^1(R) \ar@{^{(}->}[r] \ar[d] & \Omega^1(R_\p) \ar[d] \\
\Omega^1(R)\tensor_R R / \p \ar@{^{(}->}[r] & \Omega^1(R_\p)\tensor_{R_\p}R_\p / \p R_\p
} \]
In the next step we use the fundamental exact sequence for differentials of
a quotient \cite[Theorem~58, p.~187]{Mat} and obtain the following diagram with exact columns. The top horizontal arrow is an isomorphism, and in particular a surjection, because $\p=\p R_\p$. 
\begin{equation} \label{equa:diagramChaseSESmonic}
\xymatrix{
\p/\p^2\ar[r]^{\textrm{epi}} \ar[d]& \p R_\p/\p^2R_\p\ar[d]\\
\Omega^1(R)\tensor_R R / \p \ar@{^{(}->}[r]^-{\textrm{monic}} \ar@{->>}[d]_{\textrm{epi}} & \Omega^1(R_\p)\tensor_{R_\p}R_\p / \p R_\p\ar[d]\\
\Omega^1(R/\p)\ar@{^{(}->}[r]\ar[d] &\Omega^1(R_\p/\p R_\p)\ar[d]\\
0&0
} 
\end{equation}
A small
diagram chase now shows that the second square is cartesian. Putting the two
diagrams together, we get the claim.
\end{proof}

\begin{lemma}\label{lem:omega_cartesian}
Let $R$ be a $k$-valuation ring, $\p$ a prime ideal. Then the diagram
\[ \xymatrix{
\Omega^n(R) \ar[r] \ar[d] & \Omega^n(R_\p) \ar[d] \\
\Omega^n(R / \p) \ar[r] & \Omega^n(\Frac(R / \p))
} \]
is cartesian for all $n\geq 0$.
\end{lemma}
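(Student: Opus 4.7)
The plan is to mimic the proof of the $n=1$ case, tensoring with the flat module $\Omega^n(R)$ instead of $\Omega^1(R)$. First I observe that $\Omega^n(R) = \bigwedge^n_R \Omega^1(R)$ is flat: $\Omega^1(R)$ is torsion-free by Theorem~\ref{thm:hypV}, hence flat over the valuation ring $R$, and exterior powers of flat modules are flat (for instance by Lazard's theorem, since $\bigwedge^n$ commutes with filtered colimits and sends finite free modules to finite free modules). Tensoring the cartesian square from Lemma~\ref{lemm:openclosedGlueingValRings} with $\Omega^n(R)$ then yields a cartesian square of $R$-modules
\[ \xymatrix{
\Omega^n(R) \ar@{^{(}->}[r] \ar[d] & \Omega^n(R_\p) \ar[d] \\
\Omega^n(R)\tensor_R R/\p \ar@{^{(}->}[r] & \Omega^n(R_\p) \tensor_{R_\p} R_\p/\p R_\p
} \]
and my goal reduces to comparing the bottom row with $\Omega^n(R/\p) \to \Omega^n(\Frac(R/\p))$.

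Next I would write the relevant kernels explicitly. Setting $V = \Omega^1(R)\tensor_R R/\p$ and $W = \im(\p/\p^2 \to V)$, the standard description of the kernel of $\bigwedge^n$ applied to a surjection (Bourbaki, Alg\`ebre, Ch.~III) combined with the fundamental exact sequence for $\Omega^1$ of a quotient identifies $K_n := \ker(\Omega^n(R)\tensor R/\p \twoheadrightarrow \Omega^n(R/\p))$ with the image of $W \tensor_{R/\p} \bigwedge^{n-1} V$ in $\bigwedge^n V$. Let $L_n$ denote the analogous kernel on the bottom right of the tensored square. The crux of the argument is the identification $K_n = L_n$, and I expect this to be the main obstacle, since $\Omega^1(R)\tensor R/\p \to \Omega^1(R_\p)\tensor \Frac(R/\p)$ is a localization rather than an isomorphism and so does not a priori carry $K_n$ onto $L_n$.

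The plan for this key step is to show that $K_n$ is in fact naturally a $\Frac(R/\p)$-module, whence $K_n = K_n \tensor_{R/\p} \Frac(R/\p) = L_n$. By Lemma~\ref{lemm:openclosedGlueingValRings}, $\p = \p R_\p$ as $R$-ideals, so $\p/\p^2$ is annihilated by $\p R_\p$ and is a $\Frac(R/\p) = R_\p/\p R_\p$-vector space. Flatness of $V$ over the valuation ring $R/\p$ (hence torsion-freeness) allows the division operation on $\p/\p^2$ to descend uniquely to $W \subseteq V$, making $W$ a $\Frac(R/\p)$-module; and flatness of $\bigwedge^n V$ then propagates the divisibility from $W$ through the wedge product to $K_n$. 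Once $K_n = L_n$ is established, a diagram chase identical in shape to the one in the $n=1$ case finishes the proof: given $(\alpha, \beta_0)$ in the putative fibre product, I would lift $\beta_0$ to $\tilde\beta \in \Omega^n(R)\tensor R/\p$, observe that the discrepancy between the images of $\tilde\beta$ and $\alpha$ in the bottom right lies in $L_n = K_n$, adjust $\tilde\beta$ by the corresponding element of $K_n$, and invoke cartesianness of the tensored square to produce the required element of $\Omega^n(R)$.
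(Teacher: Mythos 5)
Your proof is correct, and its skeleton coincides with the paper's: both arguments first establish the auxiliary cartesian square with $\Omega^n(R)\tensor_R R/\p$ and $\Omega^n(R_\p)\tensor_{R_\p}\Frac(R/\p)$ along the bottom, and both then reduce the remaining comparison to matching the kernels of the two vertical surjections onto $\Omega^n(R/\p)$ and $\Omega^n(\Frac(R/\p))$, with the decisive input in each case being that the degree-one kernel (the image of $\p/\p^2$) is a $\Frac(R/\p)$-vector space because $\p=\p R_\p$. For the first square your direct tensoring of Lemma~\ref{lemm:openclosedGlueingValRings} with the flat module $\Omega^n(R)$ is exactly the paper's own $n=1$ argument and works verbatim, whereas the paper instead writes $\Omega^1(R)$ as a union of finitely generated (hence free) submodules $N$ and passes to the colimit. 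Where you genuinely diverge is in the exterior-power step: the paper restricts the surjections $\pi,\theta$ to the preimages of the free submodules $\overline N\subseteq\Omega^1(R/\p)$, chooses a splitting of $\pi$ using projectivity of $\overline N$, and compares $\bigwedge^n(X\oplus\overline N)$ with $\bigwedge^n(X\oplus\overline N_\p)$ summand by summand; you instead invoke the universal description of $\ker\bigwedge^n(u)$ for a surjection $u$ as the image of $W\tensor\bigwedge^{n-1}V$ (valid with no hypotheses on the modules) and observe that the image of a $\Frac(R/\p)$-vector space under an $R/\p$-linear map into a torsion-free $R/\p$-module is again a $\Frac(R/\p)$-subspace, whence $K_n=K_n\tensor_{R/\p}\Frac(R/\p)=L_n$. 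Your route avoids the reduction to finitely generated submodules, the cube diagram, and the choice of splittings, at the cost of quoting the Bourbaki description of the kernel; the paper's route is more hands-on but longer. In a final write-up you should isolate the small lemma you use twice (for a domain $S$ with fraction field $F$, an $S$-linear map from an $F$-module to a torsion-free $S$-module has image an $F$-submodule, by uniqueness of division) and note that the resulting $F$-structure on $K_n$ is the restriction of that of $\bigwedge^n V_\p$, which is what identifies $K_n$ with $L_n$ as submodules of $\bigwedge^n V_\p$; with that spelled out there is no gap.
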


\begin{proof}We have already done the cases $n=0,1$. 
We want to go from $n=1$ to general $n$ by taking exterior powers. 
We write $\Omega^1(R)$ as the union of its finitely generated sub-$R$-modules
\[ \Omega^1(R)=\bigcup N. \]
We write $\overline{N}$ for the image of $N$ in $\Omega^1(R/\p)$. Note
that
\begin{gather*} 
\Omega^1(R/\p)=\bigcup \overline{N}\\
\Omega^1(R_\p)=\Omega^1(R)_\p=\bigcup N_\p\\ 
\Omega^1(R_\p/\p )=\Omega^1(R/\p)_\p =\bigcup \overline{N}_\p
\end{gather*}

The module $N$ is a torsion free finitely generated module over the valuation
ring $R$, hence free. The $R_\p, R / \p, R_\p / \p$ modules $N_\p, N / \p, N_\p/ \p$ are therefore also free, and the same is true for all the exterior powers. So for all $n \geq 0$ we get
the following cartesian diagram: 
\[\xymatrix{
\bigwedge^n N\ar[d]\ar[r]&\bigwedge^nN_\p\ar[d]\\
\bigwedge^n(N/\p )\ar[r]&\bigwedge^n (N_\p/\p )
}\]
Note that if the rank of $N$ is one, and $n = 0$, this is just the cartesian diagram from Lemma~\ref{lemm:openclosedGlueingValRings}. Note also that at this stage we are working with $N / \p$ and $N_\p / \p $, instead of $\overline{N}$ and $\overline{N}_\p$.

Passing to the direct limit, we have established as a first step that the
diagram
\[\xymatrix{
\Omega^n(R)\ar[d]\ar[r]&\Omega^n(R_\p)\ar[d]\\
\Omega^n(R)/\p \ar[r]&\Omega^n(R_\p) / \p 
}\]
is cartesian.

Let $\pi:\Omega^n(R)/\p\to \Omega^n(R/\p)$ and $\theta:\Omega^n(R_\p)/\p \to \Omega^n(R_\p/\p R_\p)$ be the natural maps.  We want to show that 
\[\xymatrix{
\Omega^n(R)/\p\ar[d]\ar[r]&\Omega^n(R_\p)/\p\ar[d]\\
\Omega^n(R/\p) \ar[r]&\Omega^n(R_\p/\p) 
}\]
is also cartesian.
Let $\pi^{-1}\overline{N} \subseteq \Omega^1(R) / \p$ be the preimage of
$\overline{N}$. Similarly, let $\theta^{-1}\overline{N}_\p \subseteq \Omega^1(R_\p) / \p$ be the preimage of $\overline{N}_\p$ by $\theta: \Omega^1(R_\p)/\p \to \Omega^1(R_\p/\p)$. In particular, we have the following cube, for which the two side squares are cartesian by definition, the front square is the cartesian square from Diagram~\eqref{equa:diagramChaseSESmonic} on page~\pageref{equa:diagramChaseSESmonic}, and a diagram chase then shows that the back square is also cartesian. Note, that the lower and upper faces are probably not cartesian, but this does not affect the argument.
\[\xymatrix@!=12pt{
\pi^{-1}\overline{N} \ar@{^{(}->}[rr] \ar@{->>}[dd]_\pi \ar@{^{(}->}[dr] && \theta^{-1}\overline{N}_\p \ar@{->>}[dd]_(0.3){\theta}|\hole \ar@{^{(}->}[dr] \\
&\Omega^1(R)/\p \ar@{^{(}->}[rr] \ar@{->>}[dd] && \Omega^1(R_\p) / \p \ar@{->>}[dd] \\
\overline{N}\ar@{^{(}->}[rr]|\hole \ar@{^{(}->}[dr] && \overline{N}_\p \ar@{^{(}->}[dr] \\
&\Omega^1(R/\p) \ar@{^{(}->}[rr] && \Omega^1(R_\p / \p)
}\]
We claim that the back square stays cartesian when passing to higher exterior powers.
We will do this by comparing the kernels of $\bigwedge^n\pi$ and
$\bigwedge^n \theta$, cf. the diagram chase of Diagram~\eqref{equa:diagramChaseSESmonic} on page~\pageref{equa:diagramChaseSESmonic}. More precisely, to show that the higher exterior powers of the back square are cartesian, it suffices to show that $\Ker \bigwedge^n\pi \to \Ker \bigwedge^n\theta$ is a surjection. We will show that it is an isomorphism.

Note that since the back face is cartesian, and the horizontal morphism is a monomorphism, we have $\ker(\pi) \cong \ker(\theta)$. Let $X$ be this common kernel.
The module $\overline{N}\subset\Omega^1(R / \p)$ is torsion free and finitely generated over the valuation ring $R / \p$, and therefore it is free, and in particular, projective. Hence $\pi$ admits  a splitting $\sigma$.

The map $\iota = \sigma \otimes_{R / \p} R_\p / \p$ is then
a splitting of $\theta$ compatible with $\sigma$.
In particular, we have compatible decompositions
\[ \pi^{-1}\overline{N}  =X\oplus \overline{N}, \qquad \theta^{-1}\overline{N}_\p =X \oplus \overline{N}_\p.\]
The $X$'s are the same due to the square being cartesian. 
Hence
\begin{gather*}
\bigwedge_{R / \p}^n\pi^{-1}\overline{N} =\bigoplus_{i=0}^n\bigwedge_{R / \p}^i X \underset{R / \p}{\otimes} \bigwedge_{R / \p}^{n-i}\overline{N} \qquad  
\\
\bigwedge^n_{R_\p / \p}\theta^{-1}\overline{N}_\p =\bigoplus_{i=0}^n\bigwedge_{R_\p / \p}^i X \underset{R_\p / \p}{\otimes} \bigwedge_{R_\p / \p}^{n-i}\overline{N}_\p  \qquad  
\end{gather*}
The kernels of $\bigwedge^n\pi$ and $\bigwedge^n\pi_F$ are given by the analogous  sums but indexed by $i = 1, \dots, n$. Note that $X$ is an $R_\p / \p = \Frac(R / \p)$-vector space.
Since $R_\p / \p = \Frac(R / \p)$, if $A,B$ are $R_\p / \p$-vector spaces, then
\[ A\tensor_{R / \p} B=A\tensor_{R_\p / \p} B\]
hence
\[\bigwedge^i_{R / \p}X=\bigwedge^i_{R_\p / \p}X\]
and finally
\[ \left ( \bigwedge^i_SX \right ) \tensor_S \overline{N} 
= \left ( \bigwedge^i_SX \right ) \tensor_F F \tensor_S \overline{N} 
= \left ( \bigwedge^i_FX \right ) \tensor_F \overline{N}_\p.\]
where $S = R / \p$ and $F = \Frac(R / \p) = R_\p / \p$. There are similar formulas for higher exterior powers of $\overline{N}_\p$ and $\overline{N}$. 
So we have shown that $\ker \bigwedge^n\pi \to \ker \bigwedge^n\theta$ is an isomorphism as claimed.
\end{proof}


\begin{rem}This finishes the proof of Proposition~\ref{prop:omega_equal}.
\end{rem}


\section{Descent properties of $\Omega^n_\val$}\label{sec:descent}

Our presheaf of interest, the presheaf $\Omega^n$, is a sheaf for the \'etale topology. This has far reaching consequences. 

\begin{rema} \label{rema:hYalso}
We signal that we have written $\Omega^n$ everywhere because this is our main object of study, but everything in this section is valid for any presheaf $\Fh$ on $\langle \val(S), \Schft[S] \rangle$, (the full subcategory of $S$-schemes whose objects are those of $\val(S)$ and $\Schft[S]$), satisfying:
\begin{enumerate}
 \item[(Co)] $\Fh$ commutes with filtered colimits.
 \item[(Et)] $\Fh$ satifies the sheaf condition for the \'etale topology.
\end{enumerate}
For example, if $Y$ is any scheme, then $\Fh(-) = h^Y(-) = \hom(-, Y)$ satisfies these conditions.
\end{rema}

\begin{prop}
Suppose that $\Fh$ is a presheaf on $\shval(X)$. Then $\Fh_\shval$ is an $\eh$-sheaf. Similarly, $\Fh_\val$ is an $\rh$-sheaf for any presheaf $\Fh$ on $\val(X)$.
\end{prop}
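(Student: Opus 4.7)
The plan is to use the cover refinement of Remark~\ref{rem:defi:rhetc}\eqref{rem:defi:rhetc:refine} to split the verification into two independent sheaf conditions: descent along $\cdp$-morphisms $p \colon Y \to X$, and descent along étale (resp.\ Zariski) coverings $\{U_i \to X\}$. In both cases the crucial input is the valuative lifting property of Proposition~\ref{prop:eh_val}: every morphism $\Spec(R) \to X$ in $\shval(X)$ (resp.\ $\val(X)$) lifts through any $\eh$- (resp.\ $\rh$-) covering of $X$, since $\cdp$-morphisms are in particular $\eh$- and $\rh$-covers.

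For the $\cdp$-descent, given $t \in \Fh_\shval(Y)$ satisfying the cocycle condition $p_1^\ast t = p_2^\ast t$ on $Y \times_X Y$, I would build a candidate $s \in \prod_{\pi \in \shval(X)} \Fh(R)$ by choosing, for each $\pi \colon \Spec(R) \to X$, a lift $\tilde\pi \colon \Spec(R) \to Y$ and setting $s_\pi := t_{\tilde\pi}$. Independence of the chosen lift is forced by the cocycle condition, since any two lifts combine into a morphism $\Spec(R) \to Y \times_X Y$ along which the two pullbacks of $t$ agree. Compatibility of $s$ with a morphism $\Spec(R') \to \Spec(R)$ in $\shval(X)$ is then handled by first choosing a lift of $\pi$ and defining the lift of the precomposition by composing with $\Spec(R') \to \Spec(R)$: this reduces the required compatibility of $s$ to the (already known) compatibility of $t$ in $\shval(Y)$. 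Injectivity of $\Fh_\shval(X) \to \Fh_\shval(Y)$ is immediate from the same existence-of-lifts argument. The identical argument proves $\cdp$-descent for $\Fh_\val$ with respect to $\rh$-covers.

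For the étale (resp.\ Zariski) descent, any $\Spec(R) \to X$ in $\shval(X)$ (resp.\ $\val(X)$) factors through some $U_i$: for Zariski covers because $\Spec(R)$ is local, and for étale covers because $R$ being strictly henselian forces the base-change cover $\coprod_i U_i \times_X \Spec(R) \to \Spec(R)$ to admit a section. Gluing a matching family then amounts to assigning $s_\pi := t^{(i)}_\pi$ for any $i$ through which $\pi$ factors, with independence of $i$ controlled by the matching condition on $U_i \times_X U_j$. The main obstacle I expect is the compatibility-with-morphisms step in the $\cdp$-case, because lifts are non-canonical and chosen independently at each object of $\shval(X)$; the lift-$R$-first-then-restrict-to-$R'$ trick described above is precisely what forces the construction to be functorial in $\shval(X)$.
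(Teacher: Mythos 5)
Your proof is correct and is essentially the paper's argument: the valuative lifting property of Proposition~\ref{prop:eh_val}, a choice of lift for each object of $\shval(X)$, independence of the lift via the cocycle condition, and compatibility via the lift-the-target-then-restrict trick. The only difference is that the paper applies Proposition~\ref{prop:eh_val} to an arbitrary $\eh$-cover $U\to X$ in one stroke, so the preliminary splitting into $\cdp$- and étale descent via Remark~\ref{rem:defi:rhetc} (which in any case needs the easy separatedness check to be legitimate) is unnecessary.
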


\begin{rem}
If we had defined a category $\hval$ of hensel valuation rings, we could also have said that $\Fh_\hval$ is a $\cdh$-sheaf for any presheaf $\Fh$ on $\hval(X)$.
\end{rem}

\begin{proof}
We only give the $\shval(X), \Fh_\shval, \eh$ proof, as the same proof works for $\val(X), \Fh_\val, \rh$. 
Let $U \to X$ be an $\eh$-cover. The map $\Fh_\shval(X) \to \Fh_\shval(U)$ is injective because by Proposition~\ref{prop:eh_val} every morphism $\Spec(R) \to X$ from a strictly henselian valuation ring factors through $U$.

Now suppose that $s \in \Fh_\shval(U)$ satisfies the sheaf condition for the cover $U\to X$. Let $f:\Spec(R)\to X$ be in $\shval(X)$. By choosing a lifting $g:\Spec(R) \to U \to X$, we obtain a candidate section $s_g\in\Fh(\Spec(R))$. We claim that it is independent of the choice of lift. Let $g'$ be a second lift. The pair
$(g,g')$ defines a morphism $\Spec(R)\to U\times_XU$. By assumption,
$s$ is in the kernel of  $\Fh_\shval(U) \xrightarrow{pr_1 - pr_2} \Fh_\shval(U \times_X U)$. In particular $s_g=s_{g'}\in\Fh(\Spec(R))$ in the $(g,g')$-component.
Let $f_1:\Spec(R_1)\to X$ and $f_2:\Spec(R_2)\to X$ be in $\loc(X)$ and
$\Spec(R_1)\to\Spec(R_2)$ an $X$-morphism. The choice
of a lifting $g_2:\Spec(R_2)\to U$ also induces a lifting $g_1$.
The section $s_{g_2}$ restricts to $s_{g_1}$, hence $s_{f_2}$ restricts to
$s_{f_1}$. Our candidate components define an element of $\Fh_\val(X)$.
\end{proof}

\begin{coro}Let $k$ be a perfect field.
The presheaf $\Omega^n_\val$ is an $\eh$-sheaf on $\Schft$. More generally, $\Fh_\val$ is an $\eh$-sheaf on $\Schft[S]$ for any presheaf $\Fh$ on $\val(S)$ which commutes with filtered colimits and satifies the sheaf condition for the \'etale topology.
\end{coro}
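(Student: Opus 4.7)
The plan is essentially to chain together two results already established earlier in the section. The proposition immediately preceding this corollary shows that $\Fh_\shval$ is always an $\eh$-sheaf (for any presheaf $\Fh$ on $\shval(S)$), and Proposition~\ref{prop:reduce_shval} shows that whenever a presheaf $\Fh$ on $\val$ commutes with filtered colimits and satisfies étale descent, the canonical restriction $\Fh_\val \to \Fh_\shval$ is an isomorphism of presheaves on $\Schft[S]$.

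First I would apply Proposition~\ref{prop:reduce_shval} to the given $\Fh$: the two hypotheses (commutation with filtered colimits and the étale sheaf condition) are exactly the input needed, so the canonical map
\[ \Fh_\val(X) \longrightarrow \Fh_{\shval}(X) \]
is an isomorphism for every $X \in \Schft[S]$. In particular, this applies to $\Omega^n$ since $\Omega^n$ commutes with filtered colimits of rings and is an étale sheaf.

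Then I would invoke the previous proposition, applied to the restriction of $\Fh$ to $\shval(S) \subseteq \val(S)$, to conclude that $\Fh_\shval$ is an $\eh$-sheaf on $\Schft[S]$. Since being an $\eh$-sheaf is preserved under isomorphism of presheaves, it follows that $\Fh_\val$ is an $\eh$-sheaf on $\Schft[S]$. The special case $\Fh = \Omega^n$ then gives the first statement of the corollary.

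There is really no obstacle here: everything is set up so that the corollary is just a matter of combining the hypotheses through the two named results. The substantive work was done in proving Proposition~\ref{prop:reduce_shval} (reducing to strictly henselian valuation rings) and in the $\shval$ version of the preceding proposition (using Proposition~\ref{prop:eh_val} to lift along $\eh$-covers). Thus the proof can be kept to essentially one or two sentences.
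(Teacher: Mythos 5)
Your proposal is correct and is exactly the paper's argument: the paper's proof reads ``Follows from $\Omega^n_\val=\Omega^n_\shval$, see Proposition~\ref{prop:reduce_shval},'' relying implicitly on the immediately preceding proposition that $\Fh_\shval$ is always an $\eh$-sheaf. You have simply made both ingredients explicit, including the verification that $\Omega^n$ (and any $\Fh$ as in the statement) satisfies the hypotheses of Proposition~\ref{prop:reduce_shval}.
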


\begin{proof}
Follows from $\Omega^n_\val = \Omega^n_\shval$, see Proposition~\ref{prop:reduce_shval}.
\end{proof}

This immediately implies:

\begin{cor} \label{cor:injective}
The map of presheaves
$\Omega^n\to\Omega^n_\val$ on $\Schft$ induces maps of presheaves
\begin{equation} \label{canonicalMapEhVal}
\Omega^n_\rh \to \Omega^n_\cdh \to \Omega^n_\eh\to\Omega^n_\val.
\end{equation}
More generally, this is true for any presheaf $\Fh$ as in Remark~\ref{rema:hYalso}.
\end{cor}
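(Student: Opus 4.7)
The proof is essentially formal and has no real obstacle; all the content was absorbed into the preceding corollary asserting that $\Omega^n_\val$ is an $\eh$-sheaf. The plan has three short steps.

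First, I would construct the canonical unit map $\Omega^n \to \Omega^n_\val$ of presheaves on $\Schft[S]$ (and more generally $\Fh \to \Fh_\val$ in the setting of Remark~\ref{rema:hYalso}). For any $X \in \Schft[S]$ and $\omega \in \Omega^n(X)$, the system $(f^*\omega)_{f:\Spec(R)\to X}$ indexed by $\val(X)$ is automatically compatible and therefore defines an element of the projective limit $\Omega^n_\val(X)$. Functoriality in $X$ is immediate from the functoriality of pullback. Alternatively, one can invoke Remark~\ref{rema:altDesc}\eqref{rema:altDesc:nattran}: the desired natural transformation corresponds to a system of maps $h^X|_{\val} \to \Omega^n|_{\val}$, and each $\omega \in \Omega^n(X)$ determines such a map by $(f: \Spec(R) \to X) \mapsto f^*\omega$.

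Second, I would note that by Remark~\ref{rem:defi:rhetc} the topologies satisfy $\rh \subseteq \cdh \subseteq \eh$, so every $\eh$-sheaf is a $\cdh$-sheaf and every $\cdh$-sheaf is an $\rh$-sheaf. Combined with the preceding corollary, this shows that $\Omega^n_\val$ is a sheaf for all three topologies; likewise $\Omega^n_\eh$ is a $\cdh$-sheaf and an $\rh$-sheaf, and $\Omega^n_\cdh$ is an $\rh$-sheaf.

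Finally, the chain \eqref{canonicalMapEhVal} is produced by the universal property of sheafification applied iteratively. The map $\Omega^n \to \Omega^n_\cdh$ factors uniquely through $\Omega^n_\rh$, because $\Omega^n_\cdh$ is an $\rh$-sheaf; the map $\Omega^n \to \Omega^n_\eh$ factors through $\Omega^n_\cdh$, because $\Omega^n_\eh$ is a $\cdh$-sheaf; and the map $\Omega^n \to \Omega^n_\val$ constructed in Step~1 factors through $\Omega^n_\eh$, because $\Omega^n_\val$ is an $\eh$-sheaf. Uniqueness of these factorisations guarantees that the resulting composites reproduce the unit maps on the original presheaf $\Omega^n$, which is all that is needed. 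Exactly the same argument, with $\Fh$ in place of $\Omega^n$, handles the general presheaf case.
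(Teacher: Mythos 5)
Your proposal is correct and is exactly the argument the paper intends: the paper offers no written proof beyond the phrase ``This immediately implies,'' following the corollary that $\Omega^n_\val$ (resp.\ $\Fh_\val$) is an $\eh$-sheaf, and the intended content is precisely your combination of the nesting $\rh\subseteq\cdh\subseteq\eh$ with the universal property of sheafification applied to the unit map $\Omega^n\to\Omega^n_\val$.
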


We find it worthwhile to restate the following theorem from \cite{HKK}. The original statement is for $\Omega^n$ and $S = \Spec(k)$, but one checks directly that the proof works for any Zariski sheaf and the relative Riemann-Zariski space, Definition~\ref{defn:RZ}, of \cite{Tem11}.

\begin{thm}[{\cite[Theorem~A.3]{HKK}}] \label{theo:A3}
For any presheaf $\Fh$ on $\Schft[S]$ the following are equivalent.
\begin{enumerate}
 \item \label{theo:A3:1} cf.\cite[Hyp.H]{HKK} For every integral $X \in \Schft[S]$ and $s, t \in \Fh(X)$ such that $s|_U = t|_U$ for some dense open $U \subseteq X$, there exists a proper birational morphism $X' \to X$ with $s|_{X'} = t|_{X'}$.
 \item cf.\cite[Hyp.V]{HKK} For any $R \in \val(S)$ with fraction field $K$ the morphism $\Fh'(R) \to \Fh'(K)$ is injective. That is, $\Fh'$ is torsion-free, Definition~\ref{defn:torsion-free}.
\end{enumerate}
Here $\Fh'$ is the presheaf sending $R \in \val(S)$ to the colimit $\Fh'(R) = \varinjlim_{(R \downarrow \Schft[S])} \Fh(Y)$ over factorisations $\Spec(R) \to Y \to S$ through $Y \in \Schft[S]$. Clearly, $(\Omega^n)' = \Omega^n$, and more generally, for any presheaf $\Fh$ as in Remark~\ref{rema:hYalso} one has $\Fh' = \Fh$.
\end{thm}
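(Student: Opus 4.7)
Both implications rest on understanding $\Fh'(R) = \varinjlim_{(R \downarrow \Schft[S])} \Fh(Y)$ as a filtered colimit. The indexing category $(R \downarrow \Schft[S])$ is cofiltered: since $S$ is noetherian, any two factorisations $Y_1, Y_2$ admit the common refinement $Y_1 \times_S Y_2 \in \Schft[S]$, and equalisers of parallel morphisms in $\Schft[S]$ remain closed subschemes in $\Schft[S]$. Consequently, two classes in $\Fh'(R)$ are equal if and only if, after representing both by $\tilde{s}, \tilde{t} \in \Fh(Y)$ on a common $Y$, there exists $Y' \to Y$ in the slice over $\Spec(R)$ with $\tilde{s}|_{Y'} = \tilde{t}|_{Y'}$.

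\emph{Direction $(1) \Rightarrow (2)$.} Given $R \in \val(S)$ with $K = \Frac(R)$ and $s, t \in \Fh'(R)$ having equal image in $\Fh'(K)$, I would represent $s, t$ by $\tilde{s}, \tilde{t} \in \Fh(Y)$ on a common factorisation $\Spec(R) \to Y$ and, replacing $Y$ by the reduced closure of the image of $\Spec(K) \to Y$, arrange $Y$ integral and $\Spec(R) \to Y$ dominant. The agreement in $\Fh'(K)$, after using equalisers, produces a dominant $Y_3 \to Y$ with $\tilde{s}|_{Y_3} = \tilde{t}|_{Y_3}$. The crucial step is to propagate this equality to a dense open $U \subseteq Y$: the plan is to exploit generic flatness of $Y_3 \to Y$ together with a Nagata compactification of $Y_3$ into a proper birational modification of $Y$, after which $(1)$ can be invoked iteratively. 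Once $\tilde{s}|_U = \tilde{t}|_U$ is established, $(1)$ yields a proper birational $Y' \to Y$ with $\tilde{s}|_{Y'} = \tilde{t}|_{Y'}$, and the valuative criterion of properness uniquely extends the dominant $\Spec(R) \to Y$ to $\Spec(R) \to Y'$, giving $s = t$ in $\Fh'(R)$.

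\emph{Direction $(2) \Rightarrow (1)$.} Let $s, t \in \Fh(X)$ agree on a dense open $U \subseteq X$ with $X$ integral. For every valuation ring $R \in \RZ(X)$ of $k(X)$ dominating $X$, the map $\Spec(\Frac R) \to X$ factors through $U$, so $s|_R$ and $t|_R$ coincide in $\Fh'(\Frac R)$; by $(2)$ they also agree in $\Fh'(R)$, yielding a factorisation $\Spec(R) \to Y_R \to X$ in $\Schft[S]$ with $s|_{Y_R} = t|_{Y_R}$. Choosing $Y_R = \Spec(A_R)$ affine inside an affine neighbourhood of the centre of $R$, the basic opens $E(A_R) \subseteq \RZ(X)$ cover $\RZ(X)$, and quasi-compactness of the Riemann--Zariski space extracts a finite subcover by $Y_{R_1}, \ldots, Y_{R_n}$. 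The principal obstacle is then to assemble these into a single proper birational $X' \to X$ on which $s|_{X'} = t|_{X'}$; my approach is to Nagata-compactify each $Y_{R_i} \hookrightarrow \overline{Y}_{R_i} \to X$ to a proper birational modification (restricting to the main component to ensure $k(\overline{Y}_{R_i}) = k(X)$), form a common proper birational $X'' \to X$ dominating every $\overline{Y}_{R_i}$ by iterated fibre products and main-component extraction, and then further blow up $X''$ to $X' \to X''$ so that each composition $X' \to \overline{Y}_{R_i}$ factors through the open $Y_{R_i}$. The termination of this last step---the hardest part---is guaranteed by a second quasi-compactness argument on $\RZ(X'')$ combined with the cofinal system of proper birational modifications. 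On the resulting $X'$, both $s|_{X'}$ and $t|_{X'}$ are pullbacks of $s|_{Y_{R_i}} = t|_{Y_{R_i}}$, and so they agree.
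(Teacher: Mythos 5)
Your overall skeleton (filtered colimits over $(R\downarrow\Schft[S])$, quasi-compactness of $\RZ(X)$, Nagata compactification, the valuative criterion) is the right one, but in both directions the step you yourself flag as "the hardest part" is not merely unjustified --- as stated it is false.

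In $(1)\Rightarrow(2)$, the plan to "propagate this equality to a dense open $U\subseteq Y$" cannot work: from $\tilde s|_{Y_3}=\tilde t|_{Y_3}$ with $Y_3\to Y$ merely dominant one cannot deduce $\tilde s|_U=\tilde t|_U$ for any dense open $U\subseteq Y$, because for a general presheaf $\Fh(U)\to\Fh(V)$ need not be injective along a dominant map (e.g.\ a presheaf vanishing above some dimension satisfies (1) but not this), and generic flatness of $Y_3\to Y$ is a statement about schemes, not about $\Fh$. Moreover the Nagata compactification of $Y_3$ over $Y$ is proper but \emph{not} birational unless $k(Y_3)=k(Y)$. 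The repair is to never descend to $Y$: replace $Y_3$ by the integral closure $Z$ of the image of $\Spec(K)$, compactify $Z\hookrightarrow \bar Z\to Y$ with $\bar Z$ integral, apply hypothesis (1) to $\bar Z$ itself with dense open $Z$ to get a proper birational $\bar Z'\to\bar Z$ on which $\tilde s=\tilde t$, and lift $\Spec(R\cap k(Z))$ (hence $\Spec(R)$) to $\bar Z'$ by the valuative criterion; $\bar Z'$ is then the required object of $(R\downarrow\Schft[S])$.

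In $(2)\Rightarrow(1)$, the final assembly is impossible as proposed: any proper birational $X'\to X''$ is surjective, so the composite $X'\to\overline{Y}_{R_i}$ is surjective and cannot factor through the proper open subset $Y_{R_i}$; no quasi-compactness argument on $\RZ(X'')$ can produce such an $X'$. What is actually available (compare the proof of Proposition~\ref{lemm:rhValEpi}) is that, with $V$ the closure of $\Spec(k(X))$ in $\overline{Y}_{R_1}\times_X\cdots\times_X\overline{Y}_{R_n}$, the preimages $V_i=V\times_{\overline{Y}_{R_i}}Y_{R_i}$ form a Zariski open \emph{cover} of $V$, since every point of $V$ is the centre of some valuation ring of $k(X)$ containing some $A_{R_i}$; then $s|_{V_i}=t|_{V_i}$ for each $i$, and one concludes $s|_V=t|_V$ by Zariski-separatedness of $\Fh$. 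This last glueing input cannot be dispensed with, which is exactly why the discussion preceding the theorem says the proof works "for any Zariski sheaf" even though the statement is phrased for presheaves; your write-up never invokes any separatedness of $\Fh$, and for a genuinely arbitrary presheaf the argument does not close.
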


\begin{cor}\label{cor:injective}
The maps $\Omega^n_\rh \to \Omega^n_\cdh \to \Omega^n_\eh\to\Omega^n_\val$ are injective. More generally, the same is true for any presheaf $\Fh$ as in Remark~\ref{rema:hYalso}.
\end{cor}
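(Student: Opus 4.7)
The plan is to derive all three injectivities from a uniform argument pivoting on Theorem~\ref{theo:A3} and the torsion-freeness of $\Omega^n$ on $\val$ established in Theorem~\ref{thm:hypV}. Since $\Omega^n_\val$ is an $\eh$-sheaf, and therefore also a $\cdh$- and $\rh$-sheaf, the canonical map $\Omega^n \to \Omega^n_\val$ factors uniquely through each sheafification $\Omega^n_\tau$ for $\tau \in \{\rh, \cdh, \eh\}$, producing the chain in the statement. It suffices to prove that $\Omega^n_\tau \to \Omega^n_\val$ is injective for each such $\tau$: then $\Omega^n_\rh \to \Omega^n_\cdh$ is injective because its composition with the injection $\Omega^n_\cdh \to \Omega^n_\val$ is injective, and similarly $\Omega^n_\cdh \to \Omega^n_\eh$ is injective.

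Fix $\tau$ and a section $s \in \Omega^n_\tau(X)$ whose image in $\Omega^n_\val(X)$ vanishes. By the construction of the $\tau$-sheafification I may choose a $\tau$-cover $Y \to X$ and a form $\tilde s \in \Omega^n(Y)$ representing $s|_Y$; then $\tilde s$ also vanishes in $\Omega^n_\val(Y)$. Lemma~\ref{lemm:rsF} exhibits $\Omega^n_\val(Y)$ as a subset of $\prod_{x \in Y}\Omega^n(\kappa(x))$, so the component of $\tilde s$ at every generic point $\eta$ of $Y$ is zero in $\Omega^n(\kappa(\eta))$. Because $\Omega^n$ commutes with filtered colimits of rings, $\Omega^n(\kappa(\eta)) = \varinjlim_{U \ni \eta}\Omega^n(U)$, so there is a dense open $U_\eta \subseteq Y$ containing $\eta$ on which $\tilde s$ vanishes; the union $U = \bigcup_\eta U_\eta$ is then a dense open of $Y$ with $\tilde s|_U = 0$.

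At this point Theorem~\ref{theo:A3} finishes the job. By Theorem~\ref{thm:hypV} the presheaf $\Omega^n$ is torsion-free on $\val(S)$, and since $(\Omega^n)' = \Omega^n$ this is Hypothesis V; therefore Hypothesis H also holds for $\Omega^n$. Applying it componentwise to $\tilde s$ and $0 \in \Omega^n(Y)$, which agree on the dense open $U$, and taking disjoint unions over the irreducible components of $Y$, one obtains a proper birational morphism $Y' \to Y$ with $\tilde s|_{Y'} = 0$. Such a morphism is a $\cdp$-morphism, hence a $\tau$-cover for each of $\tau \in \{\rh, \cdh, \eh\}$, so the composite $Y' \to Y \to X$ is a $\tau$-cover on which $s$ pulls back to $0$. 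Consequently $s = 0$ in $\Omega^n_\tau(X)$.

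The main obstacle is the reduction to the generic points: converting pointwise vanishing in $\Omega^n_\val$ into vanishing on a Zariski open of $Y$ uses crucially that $\Omega^n$ commutes with filtered colimits, after which Theorem~\ref{theo:A3} promotes this to vanishing on a proper birational refinement. The generalisation to any presheaf $\Fh$ as in Remark~\ref{rema:hYalso} proceeds verbatim, provided one also assumes that $\Fh$ is torsion-free on $\val(S)$; this is automatic for representable presheaves $\Fh = h^Y$ with $Y$ separated by the valuative criterion of separatedness.
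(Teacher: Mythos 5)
Your overall strategy is the same as the paper's: reduce injectivity of $\Omega^n_\tau \to \Omega^n_\val$ to the equivalence of Hypotheses H and V in Theorem~\ref{theo:A3}, with torsion-freeness supplied by Theorem~\ref{thm:hypV}. The reduction to a representative $\tilde s \in \Omega^n(Y)$ on a $\tau$-cover, and the passage from vanishing in $\Omega^n_\val(Y)$ to vanishing at generic points and hence on a dense open, are all fine.

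There is, however, a genuine gap at the final step: you assert that the proper birational morphism $Y' \to Y$ produced by Hypothesis~H ``is a $\cdp$-morphism, hence a $\tau$-cover.'' This is false in positive characteristic. A $\cdp$-morphism must be completely decomposed over \emph{every} point of the target, and a proper birational morphism only guarantees this over the generic points. The paper itself exhibits a counterexample: Proposition~\ref{prop:counterexample} constructs a normal variety $X$, a blow-up $Y \to X$ (proper and birational) and a point $\xi \in X$ such that every point of the fibre $Y_\xi$ has residue field inseparable, hence nontrivial, over $\kappa(\xi)$. Consequently $\tilde s|_{Y'} = 0$ does not by itself imply that $s$ dies after pulling back along a $\tau$-cover. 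The missing ingredient --- which is exactly the ``by noetherian induction'' clause in the paper's proof --- is to let $Z \subseteq Y$ be the closed locus where $Y' \to Y$ fails to be an isomorphism, note that $\tilde s|_Z$ still vanishes in $\prod_{z \in Z}\Omega^n(\kappa(z))$ (you have this, since Lemma~\ref{lemm:rsF} gives vanishing at \emph{all} points of $Y$, not just the generic ones), apply the inductive hypothesis on dimension to obtain a $\cdp$-morphism $Z' \to Z$ killing $\tilde s|_Z$, and then observe that $Y' \sqcup Z' \to Y$ \emph{is} a $\cdp$-morphism, hence a cover for all three topologies. With that induction inserted, your argument closes.
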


\begin{proof} 
Cf. \cite[Cor.5.10, Prop.5.12]{HKK}. It suffices to show that for all any $X \in \Schft[S]$ and any $\omega, \omega' \in \Omega^n(X)$ with $\omega|_{\prod_{x \in X} \Omega^n(x)} = \omega'|_{\prod_{x \in X} \Omega^n(x)}$ there is a $\cdp$-morphism $X' {\to}X$ with $\omega|_{X'} = \omega'|_{X'}$. By noetherian induction, it suffices to show that Theorem~\ref{theo:A3}\eqref{theo:A3:1} is satisfied. But by Theorem~\ref{theo:A3}, this is equivalent to being torsion-free.
%
%
\end{proof}
In order to prove surjectivity, we need a strong compactness property.

\begin{lemma} \label{lemm:valRingConstOne}
Let $(Y_i)_{i\in I}$ be a filtered system of noetherian topological spaces. 
Then there is a system of irreducible closed subsets $Z_i \subseteq Y_i$
such that $\overline{\pi_{ij}(Z_i)} = Z_j$.
\end{lemma}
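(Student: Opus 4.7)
The plan is a three-step reduction: (1) replace each $Y_i$ by a closed subspace so that every transition map has dense image; (2) use a Tychonoff-style compactness argument to select a compatible system of irreducible components; (3) re-stabilize within these components to upgrade the inclusion given by (2) into the required equality. The hard part, which is the whole reason for step (3), is that the finiteness/compactness argument of step (2) naturally produces $\overline{\pi_{ij}(C_i)} \subseteq C_j$ but not equality, so a second noetherian stabilization is needed.

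For step (1), set $W_i := \bigcap_{j \geq i} \overline{\pi_{ji}(Y_j)}$. This is a filtered family of closed subsets in the noetherian space $Y_i$, so by DCC there is a smallest element, and thus $W_i = \overline{\pi_{ji}(Y_j)}$ for some (hence every larger) $j \geq i$. Choosing a common stabilization index above any pair $i \geq j$, one checks that $\pi_{ij}$ restricts to $W_i \to W_j$ and that $\overline{\pi_{ij}(W_i)} = W_j$. We may therefore replace each $Y_i$ by $W_i$ and assume $\overline{\pi_{ij}(Y_i)} = Y_j$ for all $i \geq j$. For step (2), each $Y_i$ now has finitely many irreducible components $\mathrm{Irr}(Y_i)$, and density forces every component of $Y_j$ to be of the form $\overline{\pi_{ij}(C)}$ for some $C \in \mathrm{Irr}(Y_i)$. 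Defining $\tau_{ij}\colon\mathrm{Irr}(Y_i)\to\mathrm{Irr}(Y_j)$ to send $C$ to the unique component of $Y_j$ containing the irreducible closed subset $\overline{\pi_{ij}(C)}$ yields a surjective, functorial inverse system of nonempty finite sets. The standard argument (finite intersection property applied inside the compact discrete product $\prod_i \mathrm{Irr}(Y_i)$, made possible because $I$ is filtered) produces a compatible family $(C_i) \in \varprojlim \mathrm{Irr}(Y_i)$; by construction $\overline{\pi_{ij}(C_i)} \subseteq C_j$ for all $i \geq j$.

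This last inclusion may however be strict, and step (3) removes this defect. Since $\pi_{ij}$ sends $C_i$ into $C_j$, we reapply step (1) to the restricted inverse system $(C_i)$: set $Z_i := \bigcap_{j \geq i} \overline{\pi_{ji}(C_j)}$, which is a filtered intersection of irreducible closed subsets of the noetherian space $C_i$ (each $\overline{\pi_{ji}(C_j)}$ is irreducible as the closure of the image of the irreducible $C_j$). By DCC it equals $\overline{\pi_{j_0 i}(C_{j_0})}$ for some $j_0 \geq i$ and all $j \geq j_0$, so $Z_i$ is irreducible, closed in $Y_i$, and nonempty. For $i \geq j$, choosing $k$ large enough that $Z_i = \overline{\pi_{ki}(C_k)}$ and $Z_j = \overline{\pi_{kj}(C_k)}$ simultaneously, one obtains $\overline{\pi_{ij}(Z_i)} = \overline{\pi_{ij}(\pi_{ki}(C_k))} = \overline{\pi_{kj}(C_k)} = Z_j$, as required.
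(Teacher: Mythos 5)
Your steps (1) and (3) are sound, and step (3) correctly isolates the need for a second noetherian stabilisation to upgrade the inclusion $\overline{\pi_{ij}(C_i)} \subseteq C_j$ to an equality; this is exactly the second half of the paper's argument. The gap is in step (2), where you define $\tau_{ij}$ by sending $C$ to ``the unique component of $Y_j$ containing $\overline{\pi_{ij}(C)}$''. That component need not be unique: an irreducible closed subset may lie in the intersection of several irreducible components. A concrete example, which survives your step (1): let $Y_1 = L_1 \cup L_2$ be two irreducible curves crossing at a point $p$, let $Y_2 = L_1 \sqcup L_2 \sqcup \{q\}$ with $\pi_{21}$ the identity on the two curves and $q \mapsto p$, and let the system be constant equal to $Y_2$ above level $2$. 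All transition maps have dense image, yet $\overline{\pi_{21}(\{q\})} = \{p\} \subseteq L_1 \cap L_2$ is contained in both components of $Y_1$, so $\tau_{21}$ is undefined on the component $\{q\}$.

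This is not cosmetic. Resolving the ambiguity by an arbitrary choice destroys the compatibility $\tau_{jk}\circ\tau_{ij}=\tau_{ik}$, so $\varprojlim_i \mathrm{Irr}(Y_i)$ is not defined; and replacing the function $\tau_{ij}$ by the relation ``$\overline{\pi_{ij}(C)}\subseteq D$'' does not rescue the compactness argument, because the finite-intersection-property step fails (from $\overline{\pi_{ki}(C_k)}\subseteq C_i$ and $\overline{\pi_{kj}(C_k)}\subseteq C_j$ one cannot conclude $\overline{\pi_{ij}(C_i)}\subseteq C_j$, since $C_i$ may be strictly larger than $\overline{\pi_{ki}(C_k)}$). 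Indeed it is not clear that a compatible thread of irreducible \emph{components} exists at all; the lemma only asks for irreducible closed subsets, and this extra freedom is essential. The paper's proof replaces $\mathrm{Irr}(Y_i)$ by the larger finite set $F_i$ of irreducible components of all multiple intersections $V_{m_1}\cap\dots\cap V_{m_k}$ of irreducible components of $Y_i$, precisely so that the transition maps between these finite sets can be defined (sending an element to a smallest member of $F_j$ containing its image); the thread produced by the limit of finite nonempty sets then consists of elements of the $F_i$, not of components. That enlargement of the finite index sets is the missing idea; once it is supplied, your steps (1) and (3) do assemble into a complete proof.
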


\begin{proof}
To every $i$ we attach a finite set $F_i$ as follows:
Let $V_1,\dots,V_n$ be the irreducible components of $Y_i$. Let
$F_i$ be the set of the irreducible components of all multiple intersections
$V_{m_1}\cap\dots\cap V_{m_k}$ for all $1\leq k\leq n$ and all choices
of $m_j$. We define a transition map $\pi_{ij}$ by mapping an element 
$y\in F_{i}$ to the smallest element of $F_{j}$ containing its image. This defines a filtered system of non-empty finite sets. Its projective limit is non-empty by \cite[Tag 086J]{stacks-project}. Let $(W_i)_{i\in I}$ be an element of the limit.

Now for each $j \in I$, consider the partially ordered set of closures of images $\{ \overline{\pi_{ij}(W_i)} \in Y_j : {i \leq j} \}$. We claim that there is an $i_j$ such that $\overline{\pi_{i'j}(W_{i'})} = \overline{\pi_{i_jj}(W_{i_j})}$ for all $i' \leq i_j$. Indeed, if not, then we can construct a strictly decreasing sequence of closed subsets of $Y_j$, contradicting the fact that it is a noetherian topological space. Define $Z_j = \overline{\pi_{i_jj}(W_{i_j})} \subseteq Y_j$ for such an $i_j$. It follows from our definitions of the $Z$ that for every $i, j \in I$ with $i \leq j$, we have $\overline{\pi_{ij}(Z_i)} = Z_j$.
\end{proof}

Let $X$ be integral and
$Y\to X$ proper birational. Fix $\omega, \omega' \in \Omega^n_\val(X)$ (or in $\Fh_\val(X)$ if we are using an $\Fh$ as in Remark~\ref{rema:hYalso}) and define $Y^{\omega \neq \omega'} \subset Y$ to be the subset of points $y \in Y$ for which $\omega_y \neq \omega'_y$. We view it as a topological space with the induced topology. 
\begin{lemma}\label{lem:topology}
The topological space $Y^{\omega \neq \omega'}$ is noetherian. 
In addition, every irreducible closed subset of $Y^{\omega \neq \omega'}$ has a unique generic point.
\end{lemma}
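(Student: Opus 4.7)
The first assertion will be immediate: $Y$ is a noetherian topological space (being of finite type), and any subspace $Y' \subseteq Y$ of a noetherian space is again noetherian, via the order-preserving injection $C \mapsto \overline{C}$ from closed subsets of $Y'$ into closed subsets of $Y$ (where $\overline{C}$ denotes closure in $Y$), combined with the identity $C = \overline{C} \cap Y'$ for any closed $C \subseteq Y'$.

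For the second assertion my plan is as follows. Fix an irreducible closed $Z \subseteq Y^{\omega\neq\omega'}$ and let $\overline{Z}\subseteq Y$ denote its closure in $Y$. This $\overline{Z}$ is automatically irreducible, and since $Y$ is sober it admits a unique generic point $\eta\in Y$. Because $Z$ is closed in the subspace topology we have $Z=\overline{Z}\cap Y^{\omega\neq\omega'}$; hence it suffices to prove $\eta\in Y^{\omega\neq\omega'}$. Once that is established, $\eta$ is a generic point of $Z$, and any second generic point of $Z$ would have the same closure in $Y$ and so coincide with $\eta$ by sobriety of $Y$.

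To show $\eta\in Y^{\omega\neq\omega'}$ I will argue by contradiction. Assume $\omega_\eta=\omega'_\eta$ in $\Omega^n_{\kappa(\eta)/k}$, pick any $z\in Z$, equip $\overline{Z}$ with its reduced induced scheme structure, and consider the noetherian local domain $A=\OO_{\overline{Z},z}$, with fraction field $\kappa(\eta)$ and residue field $\kappa(z)$. By the classical theorem on dominating valuations, choose a $k$-valuation ring $R\subseteq\kappa(\eta)$ dominating $A$; the resulting composite $\Spec R\to \Spec A\to Y$ is an object of $\val(Y)$. Combining the explicit description of $\omega_R,\omega'_R\in\Omega^n(R)$ from Lemma~\ref{lemm:rsF} with torsion-freeness on valuation rings (Theorem~\ref{thm:hypV}), the equality $\omega_\eta=\omega'_\eta$ in $\Omega^n(\Frac R)$ upgrades to $\omega_R=\omega'_R$ in $\Omega^n(R)$. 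Reducing modulo $\m_R$ and invoking the compatibility condition of Lemma~\ref{lemm:rsF} (the closed point of $\Spec R$ maps to $z$) then yields $\omega_z|_{R/\m_R}=\omega'_z|_{R/\m_R}$ in $\Omega^n_{(R/\m_R)/k}$.

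The main obstacle will be the final step: concluding $\omega_z=\omega'_z$ in $\Omega^n_{\kappa(z)/k}$ requires injectivity of the pullback $\Omega^n_{\kappa(z)/k}\to \Omega^n_{(R/\m_R)/k}$ along the residue field extension, which can fail for inseparable extensions. My plan to handle this is to exploit the freedom in choosing $R$: by a composite-valuation construction one can arrange $R/\m_R$ to be purely transcendental, hence separable, over $\kappa(z)$, at which point the required injectivity is standard. This will yield $\omega_z=\omega'_z$, contradicting $z\in Y^{\omega\neq\omega'}$. In the generality of Remark~\ref{rema:hYalso} (for instance $\Fh=h^Y$ with $Y$ separated), this obstacle disappears altogether, since $\Fh$ applied to a field extension is automatically injective: a morphism out of the spectrum of a field is determined by its image point and the induced field homomorphism.
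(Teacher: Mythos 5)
Your reduction to showing that the generic point $\eta$ of $\overline{Z}$ lies in $Y^{\omega\neq\omega'}$, and the mechanism you use---dominate $\OO_{\overline{Z},z}$ by a valuation ring $R$, use torsion-freeness (Theorem~\ref{thm:hypV}) to upgrade $\omega_\eta=\omega'_\eta$ to $\omega_R=\omega'_R$, then reduce modulo $\m_R$ and invoke the compatibility of Lemma~\ref{lemm:rsF}---are correct and in the spirit of the paper. The genuine gap is in your proposed resolution of the obstacle you yourself flag. You claim that for an \emph{arbitrary} $z\in Z$ one can arrange, by a composite-valuation construction, that $R/\m_R$ is purely transcendental (hence separable) over $\kappa(z)$. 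This is false: Proposition~\ref{prop:counterexample} of this very paper exhibits a \emph{normal} variety and a point $\xi$ such that \emph{every} valuation ring of the function field whose special point lies over $\xi$ has residue field inseparable over $\kappa(\xi)$, and the remark following it notes that $\Omega^1(k(\xi))\to\Omega^1(k(\xi'))$ then fails to be injective. Since $\overline{Z}$ can be an arbitrary integral closed subscheme of $Y$ and $z$ an arbitrary point of it, your final step cannot be carried out as stated.

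The repair is cheap. You do not need $\omega_z=\omega'_z$ at \emph{every} $z\in Z$; a \emph{single} such point already contradicts $Z\subseteq Y^{\omega\neq\omega'}$. As $Z$ is dense in $\overline{Z}$ and $k$ is perfect, the regular locus of $\overline{Z}_{\red}$ is a dense open meeting $Z$; at a regular point $z$ the $\m$-adic order function on $A=\OO_{\overline{Z}_{\red},z}$ is a genuine discrete valuation (because $\mathrm{gr}_{\m}(A)$ is a polynomial ring over $\kappa(z)$, hence a domain), its valuation ring dominates $A$, and its residue field is purely transcendental over $\kappa(z)$; your argument then closes. This matches the structure of the paper's proof, which likewise only propagates the equality $\omega_\eta=\omega'_\eta$ to a dense open $U\subseteq \overline{Z}_{\red}$ and then uses that $U\cap Z\neq\varnothing$. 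Your closing observation that the obstacle disappears for representable $\Fh$ is correct, but it does not help for $\Omega^n$, which is the case the lemma is actually invoked for.
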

\begin{proof}The topological space of $Y$ is noetherian. Subspaces of noetherian topological spaces with the induced topology are noetherian (easy exercise), hence $Y^{\omega \neq \omega'}$ is noetherian. 
Let $Z^{\omega \neq \omega'}\subseteq Y^{\omega \neq \omega'}$ be irreducible. Note its
closure $Z\subseteq Y$ is also irreducible. Let $\eta$ be the generic point
of $Z$ in the scheme $Y$. Assume $\eta\notin Z^{\omega \neq \omega'}$. By definition this means $\omega_\eta = \omega'_\eta$. Hence, since $\Omega^n$ commutes with filtered colimits, $\omega = \omega'$ also on some open dense $U \subseteq Z$ of the reduction of $Z$. As $Z^{\omega \neq \omega'}$ is dense in $Z$, the intersection $U \cap Z^{\omega \neq \omega'}$ is non-empty implying the existence of a point $y \in Y^{\omega \neq \omega'}$ for which $\omega_y = \omega'_y$, and contradicting the definition of $Y^{\omega \neq \omega'}$. Hence $\eta\in Z^{\omega \neq \omega'}$ and we have found a generic point. Uniqueness follows easily from uniqueness of generic points in $Y$. 
\end{proof}

We are implicitly using the Riemann-Zariski space of $X$, see Definition~\ref{defn:RZ}, in the following proof.

\begin{lemma} \label{lem:kill_torsion}
Let $X$ be integral, $\omega, \omega' \in\Omega^n_\val(X)$ such that $\omega_R = \omega'_{R}$ for $X$-valuation rings $R$ of $k(X)$. Then there exists a proper birational $Y\to X$ such that
$\omega|_Y= \omega'|_Y \in\Omega^n_\val(Y)$. More generally, this is true for a presheaf $\Fh$ as in Remark~\ref{rema:hYalso}.
\end{lemma}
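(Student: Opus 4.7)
The plan is to argue by contradiction, extracting a valuation ring of $k(X)$ from the Riemann--Zariski space that witnesses the contradiction via the compatibility structure of $\Omega^n_\val$-sections.

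Suppose that $Y^{\omega \neq \omega'}$ is non-empty for every proper birational $Y \to X$. First I note that if $Y' \to Y$ is proper birational and $y' \mapsto y$, then compatibility of $\omega, \omega' \in \Omega^n_\val$ along $\Spec(\kappa(y')) \to \Spec(\kappa(y))$ means $\omega_{y'}$ is the image of $\omega_y$, so $\omega_y = \omega'_y$ forces $\omega_{y'} = \omega'_{y'}$. Hence $(Y^{\omega \neq \omega'})_{Y/X}$ forms an inverse system, and by Lemma~\ref{lem:topology} each term is a non-empty noetherian topological space with unique generic points in irreducible closed subsets. Applying Lemma~\ref{lemm:valRingConstOne} yields compatible irreducible closed subsets $Z_Y \subseteq Y^{\omega \neq \omega'}$ with $\overline{\pi_{Y'Y}(Z_{Y'})} = Z_Y$; letting $\eta_Y$ denote the unique generic point of $Z_Y$, the compatibility forces $\pi_{Y'Y}(\eta_{Y'}) = \eta_Y$.

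The compatible family $(\eta_Y)_Y$ then defines a point of the Riemann--Zariski space $\RZ(X) = \varprojlim_Y Y$ of Definition~\ref{defn:RZ}. Its structure-sheaf stalk is $R = \varinjlim_Y \OO_{Y, \eta_Y}$, which is an $X$-valuation ring of $k(X)$ with residue field $R/\m_R = \varinjlim_Y \kappa(\eta_Y)$; by Proposition~\ref{valuations} both $R$ and $R/\m_R$ lie in $\val(X)$. By hypothesis applied to $R$ we have $\omega_R = \omega'_R$, and compatibility of $\Omega^n_\val$-sections along the closed immersion $\Spec(R/\m_R) \hookrightarrow \Spec R$ yields $\omega_{R/\m_R} = \omega'_{R/\m_R}$ in $\Omega^n(R/\m_R)$.

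To close the contradiction, I use that $\Omega^n$ commutes with filtered colimits: $\Omega^n(R/\m_R) = \varinjlim_Y \Omega^n(\kappa(\eta_Y))$, and compatibility identifies $\omega_{R/\m_R}$ with the image of $\omega_{\eta_Y}$ under $\Omega^n(\kappa(\eta_Y)) \to \Omega^n(R/\m_R)$ for each $Y$, and similarly for $\omega'$. The equality in the filtered colimit then forces the existence of some $Y' \to Y$ in the system such that the images of $\omega_{\eta_Y}$ and $\omega'_{\eta_Y}$ already agree in $\Omega^n(\kappa(\eta_{Y'}))$. But by compatibility these images are $\omega_{\eta_{Y'}}$ and $\omega'_{\eta_{Y'}}$, so $\omega_{\eta_{Y'}} = \omega'_{\eta_{Y'}}$, contradicting $\eta_{Y'} \in Z_{Y'} \subseteq Y'^{\omega \neq \omega'}$. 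The generalization to an $\Fh$ as in Remark~\ref{rema:hYalso} is immediate, since only the commutation with filtered colimits and the compatibility structure on $\Fh_\val$ are used. The step requiring the most care is the identification of $R = \varinjlim_Y \OO_{Y, \eta_Y}$ as a valuation ring of $k(X)$ over $X$ and the description of its residue field as $\varinjlim_Y \kappa(\eta_Y)$; this is classical Riemann--Zariski theory but I would cite it carefully from Definition~\ref{defn:RZ} and the standard references.
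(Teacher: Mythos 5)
Your proof is correct and follows essentially the same route as the paper's: contradiction, Lemma~\ref{lemm:valRingConstOne} applied to the filtered system of non-empty noetherian spaces $Y^{\omega\neq\omega'}$, generic points via Lemma~\ref{lem:topology} giving a point of $\RZ(X)$ whose stalk is an $X$-valuation ring $R$ of $k(X)$, and the filtered-colimit description of $\Omega^n(R/\m_R)$ to transport the disagreement to the residue field and contradict $\omega_R=\omega'_R$. You spell out a few steps the paper leaves implicit (that the $Y^{\omega\neq\omega'}$ genuinely form an inverse system, and that the compatible irreducible subsets force compatibility of their generic points), which is a welcome addition rather than a deviation.
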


\begin{proof}
Suppose the contrary---that $Y^{\omega \neq \omega'}$ is non-empty for all proper birational maps
$Y\to X$.
The $Y$'s and hence also their subspaces $Y^{\omega \neq \omega'}$  form a filtered system of
noetherian topological spaces. 
By Lemma~\ref{lemm:valRingConstOne}, if all $Y^{\omega \neq \omega'}$ are non-empty, then there exists a strictly compatible system of irreducible subspaces of the $Y^{\omega \neq \omega'}$'s. Their generic points (which exist and are unique by Lemma~\ref{lem:topology}) define
a point 
$y = (y_i) \in \varprojlim Y$ for which $\omega_{k(y_i)} \neq \omega'_{k(y_i)}$ for all $y_i$. 
But $\Omega^n$ commutes with cofiltered limits of rings, so $\omega|_{k(y)} \neq \omega'|_{k(y)}$ where $k(y)$ is the residue field of the valuation ring $\varinjlim \OO_{Y_i, y_i}$ in the locally ringed space 
$\varprojlim Y$. But this is an $X$-valuation ring $R$ of $k(X)$ with
 $\omega|_{R/\m} \neq \omega'|_{R/\m}$. This contradicts the assumption that $\omega_R = \omega'_R$.
\end{proof}

\begin{prop} \label{lemm:rhValEpi}
The map $\Omega^n_\rh\to\Omega^n_\val$ is surjective. More generally, $\Fh_\rh\to\Fh_\val$ is surjective for any presheaf $\Fh$ as in Remark~\ref{rema:hYalso}.
\end{prop}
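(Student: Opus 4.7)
The plan is to proceed by noetherian induction on $\dim X$, with $\dim X < 0$ trivial. For the inductive step I first reduce to $X$ integral via the cdp cover $\coprod X_i^{\red} \to X$ by reduced irreducible components; compatibility of the resulting lifts on the pairwise intersections $X_i \cap X_j$ (of strictly smaller dimension) is automatic from Corollary~\ref{cor:injective} and the inductive hypothesis. Assuming $X$ integral of dimension $d$, the hypothesis that $\Fh$ commutes with filtered colimits lets me find a dense open $U \subseteq X$ and $\omega \in \Fh(U)$ with $\omega|_\eta = s|_\eta$ at the generic point $\eta$. Using Lemma~\ref{lemm:rsF} together with torsion-freeness (Theorem~\ref{thm:hypV} for $\Omega^n$; the valuative criterion of separatedness for representable $h^Y$), I conclude $\omega = s|_U$ inside $\Fh_\val(U)$: both sections are determined by their values on residue-field valuation rings, and for each such $R$ both $\omega_R$ and $s_R$ lie in $\Fh(R)\hookrightarrow \Fh(k(X))$ with common image $\omega|_\eta = s|_\eta$.

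Setting $Z = X \setminus U$ with its reduced structure, $\dim Z < d$, so by induction there is an rh-cover $\{W_\alpha \to Z\}$ with lifts $\tau_\alpha \in \Fh(W_\alpha)$ of $s|_{W_\alpha}$. I choose any proper birational $\pi\colon Y \to X$ that is an isomorphism over $U$ (for instance $Y=X$, or a blow-up along $Z$); the morphism $Y \amalg Z \to X$ is then a cdp morphism, hence an rh-cover, and refining its $Z$-component by the $W_\alpha$ produces an rh-covering family of $X$ involving $Y$ and the $W_\alpha$. The remaining step is to produce an $\Fh$-lift of $s|_Y$ on a further rh-cover of $Y$: on the dense open $\pi^{-1}(U) \cong U \subseteq Y$ the lift $\omega$ already works, while on the exceptional locus $E = \pi^{-1}(Z) \subsetneq Y$, which has $\dim E < d$, the inductive hypothesis supplies an rh-covering lift. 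To glue these, I would apply Lemma~\ref{lem:kill_torsion}: torsion-freeness guarantees that an appropriate candidate extension of $\omega$ agrees with $s|_Y$ on every $Y$-valuation ring of $k(X)$ (since $s_R \in \Fh(R)\hookrightarrow \Fh(k(X))$ has common image $\omega|_\eta$), so Lemma~\ref{lem:kill_torsion} produces a further proper birational $Y' \to Y$ on which the candidate equals $s|_{Y'}$ in $\Fh_\val(Y')$. Combined with the rh-covering lifts on $E$ and on $Z$, this yields the required rh-covering lift of $s$ over $X$.

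The main obstacle is precisely this combining step---passing from pointwise agreement on all valuation rings (which torsion-freeness forces) to a single globally defined $\Fh$-section on some proper birational modification. Here Lemma~\ref{lem:kill_torsion}, together with the quasi-compact, noetherian behaviour of the Riemann--Zariski system exploited in Lemmas~\ref{lemm:valRingConstOne} and~\ref{lem:topology}, plays the role that resolution of singularities would play in characteristic zero. The extension to a general $\Fh$ as in Remark~\ref{rema:hYalso} presents no additional difficulty: only the hypotheses (Co) and (Et) are used, and torsion-freeness of $h^Y$ for separated $Y$ is exactly the valuative criterion of separatedness.
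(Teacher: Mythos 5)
There is a genuine gap, and it sits exactly at the heart of the argument. Your step ``I conclude $\omega = s|_U$ inside $\Fh_\val(U)$'' does not follow from agreement at the generic point. Lemma~\ref{lemm:rsF} says a section of $\Fh_\val(U)$ is determined by its values at \emph{all} points of $U$ (equivalently, on all residue-field valuation rings in $\rval(U)$), not only by its value at $\eta$. Agreement at $\eta$ controls, via torsion-freeness, the components $s_R$ for valuation rings $R$ with $\Frac(R)=k(X)$, but it does not control $s_x\in\Fh(\kappa(x))$ at non-generic $x$: the map $\Fh(\kappa(x))\to\Fh(R/\m)$ for a field extension is not injective in general (e.g.\ $\Omega^1(k(t))\to\Omega^1(k(t^{1/p}))$), and indeed $\Omega^n_\val$ has nonzero torsion sections, which vanish at $\eta$ yet are nonzero. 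The correct statement is precisely Lemma~\ref{lem:kill_torsion}: agreement at the generic point forces agreement only after a further \emph{proper birational modification}, not on a dense open.

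The second, related, gap is that the ``appropriate candidate extension of $\omega$'' to $Y$ is never constructed, and constructing it is the crux of the proposition. To run the abstract blow-up sequence for $E\to Y\to X\supseteq Z$ you need a class in $\Fh_\rh(Y)$ defined on \emph{all} of some proper birational $Y\to X$ and representing $s|_Y$; your $\omega$ lives only on the open $\pi^{-1}(U)$, and one cannot glue a section on an open subset with a section on its closed complement by $\rh$-descent (the pair $\{\pi^{-1}(U),E\}$ is not a covering for which a gluing sequence is available). The paper produces the global candidate by a quasi-compactness argument on the Riemann--Zariski space: each $R\in\RZ(X)$ admits a finite-type birational chart $\Spec(A_R)\subseteq K$ on which $s$ is algebraically represented; finitely many such charts $A_1,\dots,A_n$ cover $\RZ(X)$; Lemma~\ref{lem:kill_torsion} is applied on each $\Spec(A_i)$ to make the representative agree with $s$ in $\Fh_\val$; Nagata compactification and taking the closure of $\Spec(K)$ in the fibre product of the compactifications yields a single proper birational $V\to X$ covered by Zariski opens $V_i$ carrying compatible representatives, which glue by Zariski descent. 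Only then does the final blow-up-square-plus-induction step (which you do have) apply. Without some substitute for this compactness argument, your induction has nothing to glue.
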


\begin{proof}
As both are $\rh$-sheaves, we may work $\rh$-locally. In particular, without
loss of generality $X$ is integral with function field $K$.
Choose $\omega=(\omega_R)_R\in\Omega^n_\val(X)$. 

We start with an $X$-valuation ring $R\subset K$, i.e., $R\in\RZ(X)$. The form $\omega_R$ is already
defined on some ring $A_R$ of finite type over $X$. Let
$\widetilde{\omega}_R$ be this class.
For all $R'$ in the Zariski-open subset of $\RZ(X)$ defined by $A_R$ (i.e.,
$A_R\subset R'$), we have $\omega_{R'}=\widetilde{\omega}_R$ because
$\Omega^n(R')$ is torsion free and the equality holds in
$\Omega^n(K)$.

As $\RZ(X)$ is quasi-compact, we may cover it by finitely many
of these and obtain a finite set of rings $A_i\subset K$ which
are of finite type over $X$. On each of these we are given
a differential form $\omega_{A_i} \in \Omega^n(A_i)$ inducing all $\omega_R$ for
$R\in\RZ(X)$ containing $A_i$. The forms $\omega|_{A_i}$ and $\omega_{A_i}$ in $\Omega^n_\val(\Spec(A_i))$ agree in $\Omega^n_\val(K)$. By Lemma~\ref{lem:kill_torsion} there exists
a blow-up $W_i\to \Spec(A_i)$ such that $\omega_{A_i}|_{W_i} = \omega|_{W_i}$.
Hence, $\omega|_{W_i}$ is represented by a class $\omega_{W_i}$ in
$\Omega^n_\rh(W_i)\subset\Omega^n_\val(W_i)$.

By Nagata compactification, there is a factorization 
\[W_i \to \bar{W}_i \to X\]
where the first map is a dense open immersion and the second is proper and birational. 
Let $V$ be the closure of $\Spec(K)$ in $\bar{W}_1\times_X\dots\times_X \bar{W}_n$. The canonical morphism $V \to X$ is proper and birational. Every point of $V$ is dominated by a valuation ring of $K$, \cite[7.1.7]{EGAII}. There is necessarily at least one of the $A_i$ contained in it, and so the base changes $V_i = W_i \times_{\bar{W}_i} V$ form an open cover of $V$. 
Let $\omega_{V_i}\in\Omega^n_\rh(V_i)$ be the restriction of $\omega_{W_i}$.

We have
$\omega_{V_i}=\omega_{V_j}$ in $\Omega^n_\val(V_i\cap V_j)$ (and hence $\Omega^n_\rh(V_i\cap V_j)$, Corollary~\ref{cor:injective}) because both agree with the restriction of $\omega$. By Zariski-descent, the $\omega_{V_i}$ glue to
a global differential form $\omega_V\in\Omega^n_\rh(V)$ representing $\omega|_V$ in
$\Omega^n_\rh(V)\subset\Omega^n_\val(V)$.
%

Now let $Z\subset X$ be the exceptional locus of $V \to X$ and
let $E\subset V$ its preimage. By induction on the dimension there is
a class $\omega_Z$ in $\Omega^n_\rh(Z)$ mapping to $\omega|_Z\in\Omega^n_\val(Z)$. We know that $\omega_Z$ and $\omega_Y$ agree on $E$ because $\Omega^n_\rh(E) \to \Omega^n_\val(E)$ is injective and both represent
$\omega|_E$. By $\rh$-descent this gives a class
$\omega_X\in\Omega^n_\val(X)$ mapping to $\omega$.
\end{proof}

\begin{thm}\label{thm:compare}
The canonical morphisms of presheaves on $\Schft$
\[ \Omega^n_\rh\to\Omega^n_\cdh\to\Omega^n_\eh\to\Omega^n_\val\]
are isomorphisms. More generally, these are isomorphisms for any presheaf $\Fh$ as in Remark~\ref{rema:hYalso}. Moreover,
\[ \Omega^n(X)=\Omega^n_\val(X)\]
when $S = \Spec(k)$ for $X$ smooth.
\end{thm}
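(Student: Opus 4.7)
My plan is to prove Theorem~\ref{thm:compare} by synthesising the injectivity and surjectivity results that have already been established in this section. Concretely, Corollary~\ref{cor:injective} shows that each of the three canonical maps
\[ \Omega^n_\rh \to \Omega^n_\cdh \to \Omega^n_\eh \to \Omega^n_\val \]
is a monomorphism of presheaves on $\Schft$, and Proposition~\ref{lemm:rhValEpi} shows that the composite $\Omega^n_\rh \to \Omega^n_\val$ is an epimorphism. A trivial categorical observation then finishes it: if a composition of monomorphisms is epic, then each factor must itself be both monic and epic, hence an isomorphism. This gives the isomorphism statement for $\Omega^n$.

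For the generalisation to any presheaf $\Fh$ satisfying conditions (Co) and (Et) in Remark~\ref{rema:hYalso}, no additional argument is needed: the authors have carefully stated both Corollary~\ref{cor:injective} and Proposition~\ref{lemm:rhValEpi} in precisely that generality, so the same two-line synthesis applies verbatim with $\Fh$ in place of $\Omega^n$.

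For the last claim, $\Omega^n(X) = \Omega^n_\val(X)$ when $X \in \Schft$ is smooth over the perfect field $k$, I will reduce to the already established identification $\Omega^n_\cdh \cong \Omega^n_\val$ just proved, together with the smooth comparison result $\Omega^n(X) = \Omega^n_\cdh(X)$ for smooth $X$ over a perfect field, which is item~(1) of the main theorem of the introduction and the content of \cite{HKK}. Composing these two identifications yields the claim.

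The main obstacle has already been overcome elsewhere, specifically in the surjectivity argument of Proposition~\ref{lemm:rhValEpi}, which required the quasi-compactness of the Riemann--Zariski space, Nagata compactification to glue the locally defined forms, and noetherian induction on dimension to handle the exceptional locus via $\rh$-descent. At the point of the theorem itself, all non-trivial work is already packaged, and what remains is genuinely only bookkeeping.
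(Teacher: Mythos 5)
Your proposal is correct and matches the paper's own proof essentially verbatim: the paper combines the injectivity of Corollary~\ref{cor:injective} with the surjectivity of the composite from Proposition~\ref{lemm:rhValEpi}, and settles the smooth case by citing \cite[Theorem~5.11]{HKK} (the statement that $\Omega^n(X)\cong\Omega^n_\rh(X)$ for smooth $X$), exactly as you do.
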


\begin{proof}
Corollary~\ref{cor:injective} says that the maps are injective. As the composition
is surjective, they are all isomorphisms. The smooth case is then a consequence of \cite[Theorem~5.11]{HKK}, which says that $\Omega(X) \cong \Omega_\rh^n(X)$ for smooth $X$. 
\end{proof}

\begin{rem}This still leaves open whether $\Omega^n_\eh\to\Omega^n_\dvr$
is an isomorphism. 
Weak resolution of singularities would imply that this is an isomorphism in general, see \cite[Proposition~5.13]{HKK}.
\end{rem}

\section{Quasi-coherence}\label{sec:quasi-coherence}
The sheaves $\Omega^n_\eh|_X$ are obviously sheaves of $\Oh_X$-modules.
We want to show that they are coherent. The main step is actually quasi-coherence.
\subsection{Quasi-coherence}
\begin{lemma}
Let $A$ be a finite type $k$-algebra, $f\in A$ not nilpotent, and $\omega\in\Omega^n_\val(A)$ be such that
$\omega|_{A_f}=0$. Then $f\omega=0$.
Moreover, the map 
\[ \Omega^n_\val(A)_f\to \Omega^n_\val(A_f)\]
is injective.
\end{lemma}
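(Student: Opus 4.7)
The plan is to reduce to checking things pointwise on residue fields, using the concrete description of $\Omega^n_\val$ provided by Lemma~\ref{lemm:rsF} together with torsion-freeness (Theorem~\ref{thm:hypV}).

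First I would prove the first assertion. Let $X = \Spec(A)$. By Lemma~\ref{lemm:rsF} (applied to the torsion-free presheaf $\Omega^n$), the canonical map
\[ \Omega^n_\val(A)\hookrightarrow \prod_{x \in X}\Omega^n(\kappa(x))\]
is injective, so in order to show $f\omega = 0$ it suffices to check that $f\cdot \omega_x = 0$ in $\Omega^n(\kappa(x))$ for every point $x\in X$. For this I would split into two cases according to whether $f\in \m_x$ or not. If $f(x) = 0$, then $f$ maps to $0\in \kappa(x)$ and so $f\cdot\omega_x = 0$ trivially. If $f(x) \neq 0$, then $x$ lies in the open subscheme $\Spec(A_f)\subseteq X$ and the structure morphism $\Spec(\kappa(x))\to X$ factors through $\Spec(A_f)$; by compatibility of the system defining $\omega$ and the hypothesis $\omega|_{A_f}=0$, we get $\omega_x = 0$, hence $f\cdot\omega_x = 0$.

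Next, the second assertion follows formally from the first. A class in $\Omega^n_\val(A)_f$ is represented by $\omega/f^m$ with $\omega\in\Omega^n_\val(A)$; its image in $\Omega^n_\val(A_f)$ is $\omega|_{A_f}\cdot f^{-m}$. If this image vanishes, then (since $f$ is a unit in $A_f$) already $\omega|_{A_f} = 0$, so by the first part $f\omega = 0$ in $\Omega^n_\val(A)$, and therefore $\omega/f^m = f\omega/f^{m+1} = 0$ in the localisation.

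The main (mild) obstacle is ensuring the pointwise criterion is legitimate, i.e., invoking the right description of $\Omega^n_\val$; once Lemma~\ref{lemm:rsF} is in hand, both assertions are essentially a one-step pointwise verification, with no compatibility check needed since the identity $0$ is trivially compatible across all valuation rings.
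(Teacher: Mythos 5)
Your proof is correct and follows essentially the same route as the paper: reduce to residue fields via torsion-freeness and Lemma~\ref{lemm:rsF}, split on whether $f$ vanishes at the point, and then deduce injectivity of the localisation map formally from the first assertion using that $f$ acts invertibly on $\Omega^n_\val(A_f)$. No gaps.
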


Note this would follow directly from quasi-coherence of $\Omega^n_\val|_{\Spec(A)}$. We want to prove it directly in order to show quasi-coherence down the line.

\begin{proof}
By torsion-freeness it suffices to show that $f \omega_{x} = 0$ for all residue field $\kappa$ of $\Spec(A)$, Lemma~\ref{lemm:rsF}. It suffices to consider the case $f|_\kappa \neq 0$. But then $A \to \kappa$ factors through $A_f$, and the claim follows from the assumption $\omega|_{A_f}=0$.

We now turn to injectivity. Let $\omega/f^N$ be in the kernel of
\[ \Omega^n_\val(A)_f\to \Omega^n_\val(\Spec A_f).\]
This means that $\omega_{R}/f^N=0$ for every $R \in \val(A_f)$.
As $f$ is invertible in this ring, this implies $\omega_R=0$.
That is, $\omega$ satisfies the assumption of the first assertion. 
Hence $f \omega=0$ in $\Omega^n_\val(A)$ and this implies $\omega/f^N=0$
in the localization.
\end{proof}

In order to proceed, we need a lemma from algebraic geometry.

\begin{lemma}\label{lem:envelope_exists}
Let $U\subset X$ be an open immersion of integral schemes, and let $V\to U$ be a $\cdp$-morphism with integral connected components. Then there is a cartesian diagram
\[ \begin{CD} V@>j>> Y\\
              @VVV @VVpV\\
              U@>>> X
\end{CD}\]
with $p$ a $\cdp$-morphism and $j$ an open immersion.
\end{lemma}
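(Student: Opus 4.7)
The plan is to apply Nagata compactification componentwise and then glue in the complement of $U$.

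Write $V = \coprod_{i=1}^n V_i$, where the $V_i$ are the integral connected components of $V$ (finite in number since $V$ is of finite type over the Noetherian scheme $U$). Each composition $V_i \to U \to X$ is separated and of finite type, so by Nagata compactification there is a factorization $V_i \hookrightarrow \bar{V}_i \to X$ with $V_i \hookrightarrow \bar{V}_i$ a dense open immersion and $\bar{V}_i \to X$ proper. Replacing $\bar{V}_i$ by the reduced scheme-theoretic image of $V_i$, we may assume $\bar{V}_i$ is integral.

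The key claim is that $\bar{V}_i \times_X U = V_i$ as schemes. The base change $\bar{V}_i|_U := \bar{V}_i \times_X U$ is an open subscheme of $\bar{V}_i$ containing $V_i$. Since $V_i \to U$ is proper and $\bar{V}_i|_U \to U$ is separated (base change of a proper map), the open inclusion $V_i \hookrightarrow \bar{V}_i|_U$ is also closed. As $V_i$ is dense in $\bar{V}_i$, it remains dense in the open subscheme $\bar{V}_i|_U$; being both closed and dense it must agree with $\bar{V}_i|_U$ as topological spaces. Since both schemes are reduced (open immersions preserve reducedness), the identification holds as schemes.

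Finally, set
\[ Y = \Bigl(\coprod_{i=1}^n \bar{V}_i\Bigr) \sqcup (X \setminus U)_{\mathrm{red}}, \]
with the obvious morphism $p: Y \to X$. This is proper, as each $\bar{V}_i \to X$ is proper and $(X \setminus U)_{\mathrm{red}} \to X$ is a closed immersion. It is completely decomposed: points of $U$ lift through $\coprod \bar{V}_i$ using that $V \to U$ is a $\cdp$-morphism, and points of $X \setminus U$ lift through $(X \setminus U)_{\mathrm{red}}$. Finally, by the key claim, $Y \times_X U = V \sqcup \emptyset = V$, and $V \hookrightarrow Y$ is the preimage of the open $U \subseteq X$, hence an open immersion $j$.

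The main obstacle is the scheme-theoretic identification $V_i = \bar{V}_i \times_X U$, rather than merely a topological one; this is where properness of $V_i \to U$ is essential, forcing the open $V_i$ to also be closed in $\bar{V}_i|_U$ and so to fill it out entirely.
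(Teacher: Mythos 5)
Your proof is correct and follows essentially the same route as the paper: componentwise Nagata compactification, the observation that $V_i \hookrightarrow \bar V_i\times_X U$ is simultaneously a dense open immersion and proper (hence an isomorphism), and adjoining $X\setminus U$ to restore complete decomposability over the complement. The paper phrases the key step as ``$V'\to U\times_X Y'$ is both proper and a dense open immersion,'' which is exactly your argument.
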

\begin{proof}
Let $V'$ be an irreducible component of $V$. We factor $V'\to U\to X$
as a (dense) open immersion $V'\to Y'$ followed by a proper map. It is easy
to see that $V'=Y'\times_XU$ because the map $V'\to U\times_XY'$
is both proper and a (dense) open immersion.
We define $Y$ as the disjoint union of all $Y'$ and $X\setminus U$.
\end{proof}

\begin{prop} \label{prop:integralQc}
Let $X$ be an integral $k$-scheme of finite type. Then the restriction to the small Zariski site $\Omega^n_\rh|_{X_\Zar}$ is quasi-coherent.\end{prop}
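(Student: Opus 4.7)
The plan is to verify the local criterion for quasi-coherence. Since the statement is Zariski-local on $X$, I reduce to the case $X = \Spec(A)$ with $A$ an integral finitely generated $k$-algebra, and it suffices to show that the localisation map $\Omega^n_\rh(A)_f \to \Omega^n_\rh(A_f)$ is an isomorphism for every $f \in A$. The previous lemma already supplies injectivity, so the whole task is surjectivity: given $\omega \in \Omega^n_\rh(A_f) = \Omega^n_\val(A_f)$ (via Theorem~\ref{thm:compare}), I want to produce some $N$ and a class $\tilde\eta \in \Omega^n_\rh(A)$ with $\tilde\eta|_{\Spec(A_f)} = f^N \omega$.

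First I would represent $\omega$ by a Kähler form on a $\cdp$-cover. Since $\Omega^n_\rh$ is the $\rh$-sheafification of the Zariski sheaf $\Omega^n$, and Remark~\ref{rem:defi:rhetc}(\ref{rem:defi:rhetc:refine}) refines every $\rh$-cover as a Zariski cover on top of a $\cdp$-morphism, after reducing and passing to irreducible components I obtain a $\cdp$-morphism $V \to \Spec(A_f)$ with $V$ a disjoint union of integral schemes, together with a form $\omega_V \in \Omega^n(V)$ lifting $\omega|_V$. Applying Lemma~\ref{lem:envelope_exists} componentwise then produces a $\cdp$-morphism $p : Y \to \Spec(A)$ such that $V = p^{-1}(\Spec(A_f))$ sits inside $Y$ as a dense open of each proper-birational component, the only extra component being $V(f) \subset \Spec(A)$.

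Next, coherence of $\Omega^n_{Y/k}$ on the noetherian scheme $Y$ gives some $N$ for which $(p^*f)^N \omega_V$ extends to an $\eta \in \Omega^n(Y)$; by bumping $N$ up by one I arrange $\eta$ to be divisible by $p^*f$ on every proper-birational (hence integral) component of $Y$, since there $p^*f$ is a non-zero divisor. On the extra $V(f)$-component of $Y$ I set $\eta = 0$. I would then check that the image $\bar\eta \in \Omega^n_\rh(Y)$ satisfies the $\cdp$-descent equaliser condition on $Y \times_{\Spec(A)} Y$, which by Lemma~\ref{lemm:rsF} reduces to agreement of the two pullbacks at every residue field: above $\Spec(A_f)$ the two pullbacks agree automatically because $\omega$ is a global section, and above $V(f)$ both pullbacks vanish thanks to the divisibility arrangement (or the vanishing on the $V(f)$-component). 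Descent for the $\rh$-sheaf $\Omega^n_\rh$ then produces the desired $\tilde\eta$ on $\Spec(A)$ restricting to $f^N \omega$, and surjectivity follows.

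The main obstacle is engineering $\eta$ so that the two pullbacks agree on the boundary of $Y \times_{\Spec(A)} Y$ lying over $V(f)$, where $V \times_{\Spec(A_f)} V$ is no longer dense inside $Y \times_{\Spec(A)} Y$. Over the open part the agreement is automatic from $\omega$ being a global section, but over the boundary the two restrictions could a priori take incompatible non-zero values. The trick is to multiply by one extra factor of $f$, forcing divisibility of $\eta$ by $p^*f$ on the dominant components so that both boundary pullbacks are forced to vanish.
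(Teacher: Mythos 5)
Your overall architecture matches the paper's: reduce to surjectivity of $\Omega^n_\rh(A)_f \to \Omega^n_\rh(A_f)$, represent $\omega$ on a cover, extend a power $f^N\omega$ across $V(f)$ using coherence of $\Omega^n_Y$ on a compactification supplied by Lemma~\ref{lem:envelope_exists}, throw in one extra factor of $f$ to kill everything over $V(f)$, and descend (your direct Čech check on $Y\times_X Y$ via Lemma~\ref{lemm:rsF} is a workable variant of the paper's abstract blow-up sequence $0\to\Omega^n_\rh(X)\to\Omega^n_\rh(Y)\oplus\Omega^n_\rh(Z)\to\Omega^n_\rh(E)$ applied to the class $(f\omega_Y,0)$).

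However, there is a genuine gap at the very first step, and it is exactly where the paper spends most of its effort. You assert that, after refining as in Remark~\ref{rem:defi:rhetc}\eqref{rem:defi:rhetc:refine}, you ``obtain a $\cdp$-morphism $V\to\Spec(A_f)$ \ldots together with a form $\omega_V\in\Omega^n(V)$ lifting $\omega|_V$.'' The refinement only gives you a cover of the form $\coprod V_i \to V \to U$ with $V\to U$ a $\cdp$-morphism and $\{V_i\to V\}$ a Zariski cover, and genuine Kähler forms $\omega_i\in\Omega^n(V_i)$. These do \emph{not} automatically glue to a form on $V$: the differences $\omega_i-\omega_j$ vanish only in $\Omega^n_\rh(V_i\cap V_j)$, and since $\Omega^n\to\Omega^n_\rh$ is not injective (there is torsion, cf.\ \cite[Example 3.6]{HKK}), this does not force $\omega_i=\omega_j$ in $\Omega^n(V_i\cap V_j)$. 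One cannot invoke a statement like $\Fh_\rh=\Fh_\cdp$ for Zariski sheaves here either, because that requires $\rh$-separatedness, which $\Omega^n$ lacks. The paper repairs this by choosing further $\cdp$-morphisms $W_{ij}\to V_i\cap V_j$ on which $\omega_i-\omega_j$ vanishes as an honest Kähler form, extending these to proper envelopes $\widetilde{W}_{ij}\to V$ via Lemma~\ref{lem:envelope_exists}, passing to the fibre product $\widetilde{W}=\prod_V\widetilde{W}_{ij}$, and only then gluing the restricted forms along the Zariski cover $\{\widetilde{W}_i\to\widetilde{W}\}$ to obtain $\omega_{\widetilde{W}}\in\Omega^n(\widetilde{W})$ representing $\omega|_{\widetilde{W}}$. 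Your proof needs this intermediate construction (or an equivalent) before the coherence-and-extension step can begin; as written, the object $\omega_V\in\Omega^n(V)$ you extend does not exist.
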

\begin{proof}
It suffices to show that for every integral ring $A$ and $f \in A$, the canonical morphism $\Omega^n_\rh(A)_f \to \Omega^n_\rh(A_f)$ is an isomorphism. By the last lemma and because $\Omega^n_\rh=\Omega^n_\val$, the map is injective. 

To show that it is surjective, it suffices to check that for every 
\[ \omega\in\Omega^n_\rh(A_f) \]
 there is $N$ such that $f^N\omega$ lifts to $\Omega^n_\rh(A)$. We put
$X=\Spec (A)$, $U=\Spec( A_f)$.
There is an $\rh$-cover $\{V_i\to U\}_{i=1}^m$ such that $\omega$ is represented
by an algebraic differential form on $\coprod V_i$. The strategy is to show that, up to multiplication by $f$, the form $\omega$ is actually representable on a $\cdp$-morphism of $U$, and then descend it to $U$. Lemma~\ref{lem:envelope_exists} is key.

We can choose the cover in the form
\begin{equation} \label{rhNormalForm}
 V_i\to V\to U
 \end{equation}
with $\pi:V\to U$ a $\cdp$-morphism, $V$ reduced and $V_i\to V$ open immersions. Moreover, we may assume that $V$ is disjoint union of its irreducible components and that they are birational over their image in $U$ (because we will want to apply Lemma~\ref{lem:envelope_exists}).

We will now construct the following cartesian squares whose vertical morphisms are proper envelopes, and horizontal ones are open immersions.
\[ \xymatrix{
\widetilde{W}_i \cap \widetilde{W}_j \ar[r] \ar[d] & \widetilde{W}_i \ar[d] \ar[r] & \widetilde{W} \ar[d] \\
\underset{\omega_i - \omega_j = 0 \  \in\  \Omega^n(W_{ij})}{W_{ij}} \ar[r] \ar[d] &\ar[r] \ar[d] & \widetilde{W}_{ij} \ar[d]^{p_{ij}} \\
V_i \cap V_j \ar[r] & \underset{\omega_i \ \in\  \Omega^n(V_i)}{V_i}  \ar[r] & V
} \]

Let $\omega_i\in\Omega^n(V_i)$
be the representing form. Since $\omega_i$ came from an element $\omega \in \Omega_\rh^n(U)$, the differences $\omega_i -\omega_j$ vanish in $\Omega_\rh^n(V_i \times_U V_j)$ by the exact sequence
\[ 0 \to \Omega_\rh^n(U) \to \bigoplus \Omega_\rh^n(V_i) \to \bigoplus \Omega_\rh^n(V_i \times_U V_j), \]
and hence vanish in $\Omega_\rh^n(V_i \times_V V_j) = \Omega_\rh^n(V_i \cap V_j)$. Consequently, there is
an $\rh$-cover of $V_i\cap V_j$ on which $\omega_i-\omega_j$ vanishes as a section of the presheaf $\Omega^n$. We can assume that this cover is again of the form \eqref{rhNormalForm}. Since $\Omega^n$ is Zariski separated, we find that there is a $\cdp$-morphism
$W_{ij}\to V_i\cap V_j$ that $\omega_i-\omega_j$ vanishes on $W_{ij}$.
By Lemma \ref{lem:envelope_exists} there is a commutative diagram
\[\begin{CD}
  W_{ij}@>j>> \widetilde{W}_{ij}\\
  @VVV     @VV{p_{ij}} V\\
 V_i\cap V_j@>>> V
\end{CD}
\]
with an open immersion $j$ and a proper envelope ${p_{ij}}$. 
Let $\widetilde{W} = \prod_V \widetilde{W}_{ij}$ be the fibre product of all $\widetilde{W}_{ij}$ over $V$. Hence $\widetilde{W}\to V$ is a proper envelope which factors through all $p_{ij}$.
Let $\widetilde{W}_i$ be the preimage of $V_i$ in $\widetilde{W}$. Now consider the above big diagram. The differences of the restrictions $\omega_i - \omega_j$ vanish in $\Omega^n(\widetilde{W}_i \cap \widetilde{W}_j)$, and $\{\widetilde{W}_i \to \widetilde{W}\}$, being the pullback of the Zariski cover $\{V_i \to V\}$ is also a Zariski cover. Hence, we can lift the restrictions $\widetilde{\omega}_i \in \Omega^n(\widetilde{W}_i)$ to a section 
\[ \omega_{\widetilde{W}} \in \Omega^n(\widetilde{W}). \]
We find that $\omega \in \Omega_{\rh}^n(U)$ and $\omega_{\widetilde{W}} \in \Omega^n(\widetilde{W})$ agree in $\Omega^n_\rh(\widetilde{W})$ since $\{\widetilde{W}_i \to \widetilde{W}\}$ is a Zariski cover, and they agree on $\widetilde{W}_i$. In other words, at this point, we have shown that $\omega|_{\widetilde{W}}$ is in the image of $\Omega^n \to \Omega^n_\rh$.

Again by Lemma \ref{lem:envelope_exists} there is a cartesian diagram
\[\begin{CD}
     \widetilde{W}@>j>> Y\\
    @VVV @VVp V\\
     U@>>> X
\end{CD}\]
with $j$ an open immersion and $p$ a proper envelope. 
The open subset $U$ in $X$ is by definition the
complement of $V(f)$, hence the same is true for $\widetilde{W}$ in $Y$. As
$\Omega^n_Y$ is coherent, this implies that there is $N$ such that
$f^N\omega_{\widetilde{W}}$ extends to $Y$. 

Let $\omega_Y$ be an extension of $f^N\omega_{\widetilde{W}}$ 
to $\Omega^n(Y)$.  Let $Z=V(f)\subset X$ and $E$ its preimage in $Y$. 
Consider the exact sequence
\[ 0\to\Omega^n_\rh(X)\to\Omega^n_\rh(Y)\oplus \Omega^n_\rh(Z)\to\Omega^n_\rh(E). \]
The class $(f\omega_Y,0)$ maps to $f\omega_Y|_E$. As $f=0$ in all of $E$, this
equals zero. Hence there is a class 
$\omega_{X}' \in \Omega_\rh^n(X)$ such that $\omega_X'|_{\widetilde{W}} = f^{N + 1}\omega_{\widetilde{W}}$ in $\Omega_\rh^n(\widetilde{W})$. Recall that two paragraphs ago we mentioned that $\omega|_{\widetilde{W}} = \omega_{\widetilde{W}}$ in $\Omega_\rh^n(\widetilde{W})$. So in fact, we know that $\omega_X'|_{\widetilde{W}} = f^{N + 1}\omega|_{\widetilde{W}}$ in $\Omega_\rh^n(\widetilde{W})$. But $\widetilde{W} \to U$ is a $\cdp$-morphism, so it follows that $\omega_X'|_{U} = f^{N + 1}\omega$ in $\Omega_\rh^n(U)$.
\end{proof}

\subsection{Coherence}

\begin{thm}\label{thm:coheren}
Let $X$ be of finite type over $k$. Then $\Omega^n_\rh|_{X_\Zar}$ is coherent.
\end{thm}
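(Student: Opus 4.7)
The plan is to argue by Noetherian induction on $\dim X$, reducing successively to $X$ reduced, then integral, then normal, after which coherence will follow by identifying $\Omega^n_\rh|_X$ with the coherent reflexive hull of the Kähler differentials. The base case $\dim X = 0$ is immediate: $X$ is then a finite disjoint union of spectra of finite separable extensions of $k$, and $\Omega^n_\rh|_X$ is finite-dimensional (in fact zero for $n \geq 1$ by perfectness of $k$).

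For the inductive step, assuming coherence for all $Y \in \Schft[k]$ of strictly smaller dimension, I would first reduce to $X$ reduced via Lemma~\ref{lem:sn-descent} applied locally on $X$. If $X$ is not irreducible, let $X_1$ be one of its irreducible components and $X_2$ the union of the rest; then $X_1 \sqcup X_2 \to X$ is a $\cdp$-morphism and $E := X_1 \cap X_2$ satisfies $\dim E < \dim X$. The $\rh$-descent exact sequence on $X_\Zar$
\[ 0 \to \Omega^n_\rh|_X \to \Omega^n_\rh|_{X_1} \oplus \Omega^n_\rh|_{X_2} \to \Omega^n_\rh|_E \]
(with outer terms pushed forward to $X$), combined with the inductive hypothesis applied to $E$ and a secondary induction on the number of irreducible components, reduces us to the case $X$ integral.

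For $X$ integral, the normalisation $\pi \colon \tilde X \to X$ is finite since $k$ is Nagata (Proposition~\ref{prop:sn}). With $Z \subsetneq X$ the non-normal locus and $E = \pi^{-1}(Z)$, both of dimension strictly less than $\dim X$, one checks that $\tilde X \sqcup Z \to X$ is a $\cdp$-morphism, so the analogous abstract blow-up exact sequence---together with coherence-preservation under finite pushforward and the inductive hypothesis applied to $Z$ and $E$---will reduce the problem to coherence of $\Omega^n_\rh|_{\tilde X}$ on the normal integral $\tilde X$. Thus I may assume $X$ is normal and integral.

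The heart of the argument will be the normal integral case. Perfectness of $k$ implies that a normal variety is smooth in codimension one, so the open immersion $j \colon X^{\mathrm{sm}} \hookrightarrow X$ has closed complement of codimension $\geq 2$, and $\Omega^n_\rh|_{X^{\mathrm{sm}}} = \Omega^n_{X^{\mathrm{sm}}/k}$ is locally free (Theorem~\ref{thm:compare}). I plan to show that the restriction map
\[ \Omega^n_\rh|_X \hookrightarrow j_* \Omega^n_{X^{\mathrm{sm}}/k} = (\Omega^n_{X/k})^{**} \]
is injective with coherent target. The identification with the reflexive hull is standard on the $S_2$-scheme $X$ (both sides being reflexive $\OO_X$-modules agreeing on the codimension $\geq 2$ open $X^{\mathrm{sm}}$), and double duals of coherent sheaves on noetherian schemes are coherent; combined with quasi-coherence (Proposition~\ref{prop:integralQc}), this realises $\Omega^n_\rh|_X$ as a quasi-coherent subsheaf of a coherent sheaf on the noetherian $X$, and hence coherent. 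The hard part will be injectivity: given $\omega \in \Omega^n_\rh(U)$ with $\omega|_{U \cap X^{\mathrm{sm}}} = 0$, I need $\omega_x = 0$ for every $x \in U \cap X^{\sing}$. Using $\Omega^n_\rh = \Omega^n_\val$ (Theorem~\ref{thm:compare}) and Lemma~\ref{lemm:rsF}, I can pick a valuation ring $R$ of $k(X)$ dominating $\OO_{X,x}$ (by Zorn applied to valuation rings of $k(X)$ dominating the local ring); its generic point lies in $X^{\mathrm{sm}}$, so the torsion-freeness $\Omega^n(R) \hookrightarrow \Omega^n(k(X))$ of Theorem~\ref{thm:hypV}, combined with the compatibility conditions defining $\Omega^n_\val$, will force $\omega_R = 0$ and hence $\omega_x = 0$ after restriction to the special fibre. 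This is the step where perfectness of $k$ enters decisively, via Theorem~\ref{thm:hypV}.
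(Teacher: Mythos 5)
Your reductions (to reduced, then integral via the component decomposition, then normal via the normalisation and the abstract blow-up sequence, with induction on dimension) match the paper's, and the quasi-coherence input from Proposition~\ref{prop:integralQc} enters the same way. The gap is in the heart of your argument for normal $X$: the claimed injectivity of $\Omega^n_\rh|_X \to j_*\Omega^n_{X^{\mathrm{sm}}}$. Your chain is: $\omega$ vanishes at the generic point, torsion-freeness (Theorem~\ref{thm:hypV}) gives $\omega_R = 0$ for a valuation ring $R \subseteq k(X)$ dominating $\OO_{X,x}$, and the compatibility condition gives $\omega_x|_{R/\m} = 0$ in $\Omega^n(R/\m)$. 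But to conclude $\omega_x = 0$ in $\Omega^n(\kappa(x))$ you need $\Omega^n(\kappa(x)) \to \Omega^n(R/\m)$ to be injective, and in positive characteristic it need not be: the residue field extension $\kappa(x) \subseteq R/\m$ can be inseparable. Proposition~\ref{prop:counterexample} exhibits a \emph{normal} variety $X$ and a point $\xi$ such that \emph{every} $X$-valuation ring of $k(X)$ centred at $\xi$ has inseparable residue field extension, and the remark following it shows the induced map on $\Omega^1$ of residue fields can fail to be injective. So no choice of $R$ rescues the step; this is precisely the pathology that makes $\Omega^n_\cdh$ fail to be torsion-free in general, and it is not excluded by normality.

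The paper's proof does not claim injectivity. It takes the map $\Omega^n_\rh|_X \to j_*\Omega^n_{X^{\mathrm{sm}}}$, notes its image is coherent (a quasi-coherent subsheaf of a coherent sheaf), and controls the kernel $\cK$ separately: by Lemma~\ref{lemm:rsF} a section of $\Omega^n_\val$ is determined by its values at all points, so $\Omega^n_\val(X) \to \Omega^n_\val(X^{\mathrm{sm}}) \times \Omega^n_\val(Z)$ is injective (where $Z$ is the singular locus), whence $\cK$ embeds into $i_{Z*}(\Omega^n_\rh|_Z)$. The latter is coherent by the induction on dimension, so $\cK$ is a quasi-coherent subsheaf of a coherent sheaf, hence coherent, and $\Omega^n_\rh|_X$ is an extension of two coherent sheaves. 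Replacing your injectivity claim by this kernel bound repairs the argument; as written, the injectivity step is a genuine gap.
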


\begin{proof}
We will write $\Omega^n_\rh|_{X} = \Omega^n_\rh|_{X_\Zar}$ for briefness. 
Let $i:X_\red\to X$ be the reduction. We have $i_*(\Omega^n_\rh|_{X_\red}) =\Omega^n_\rh|_X$. Hence he may assume that $X$ is reduced. Let
$X=X_1\cup\dots\cup X_N$ be the decomposition into reduced irreducible components.
Let $i_j:X_j\to X$ and $i_{jl}:X_j\cap X_l\to X$ be the closed immersions.
By $\rh$-descent, we have an exact sequence of sheaves of $\OO_X$-modules
\[ 0\to \Omega^n_\rh|_X\to\bigoplus_{j=1}^Ni_{j*}(\Omega^n_\rh|_{X_j}) \to \bigoplus_{j,l=1}^Ni_{jl*}(\Omega^n_\rh|_{X_j\cap X_l})\]
By induction on the dimension it suffices to consider the irreducible case.

Let $X$ be integral with function field $K$. Let $\widetilde{X}$ be its normalization. The map $\pi:\widetilde{X}\to X$ is an isomorphism outside some
closed proper subset $Z\subset X$. Let $E$ be its preimage in $\widetilde{X}$.
From the blow-up sequence we obtain
\[ 0\to \Omega^n_\rh|_X\to \pi_*(\Omega^n_\rh|_{\widetilde{X}}) \oplus i_{Z*}(\Omega^n_\rh|_Z) \to \pi_*i_{E*}(\Omega^n_\rh|_E)\ .\]
Hence by induction on the dimension, we may assume that $X$ is normal.
We have shown in Proposition~\ref{prop:integralQc}, the sheaf $\Omega^n_\rh|_X$ is quasi-coherent in the integral case.
Let $j:X^{sm}\to X$ be the inclusion of the smooth locus with closed complement $Z$. It is of codimension
at least $2$. Hence $j_*\Omega^n_{X^{sm}}$ is coherent. 
By Theorem~\ref{thm:compare}, $\Omega^n_{X^{sm}} \cong \Omega^n_\rh|_{X^{sm}}$ so we have a map
$\Omega^n_\rh|_X \to j_*\Omega^n_{X^{sm}}$. Its image is coherent because it is a quasi-coherent subsheaf of a coherent sheaf. Its kernel $\cK$ is also quasi-coherent. We claim that it is a subsheaf of $i_{Z*}(\Omega^n_\rh|_Z)$. It suffices to prove that the canonical composition $\cK(U) \to \Omega^n_\rh|_X(U) \to i_{Z*}(\Omega^n_\rh|_Z)(U)$ is a monomorphism for all open $U \subseteq X$. Replacing $X$ with $U$, it suffices to consider the case $U = X$. Then this morphism is canonically identified with the morphism $\ker \biggl ( \Omega^n_\val(X) \to \Omega^n_\val(X^{sm}) \biggr ) \to \Omega^n_\val(Z)$, which is injective because $\Omega^n_\val(X) \to \Omega^n_\val(X^{sm}) \times \Omega^n_\val(Z)$ is injective, Lemma~\ref{lemm:rsF}. %
By induction on the dimension we can assume that
$i_{Z*}\Omega^n_\rh|_Z$ is also coherent, so the the kernel $\cK$ is coherent as well.
As a quasi-coherent extension of two coherent sheaves the sheaf $\Omega^n_\rh|_X$ is coherent.
\end{proof}

\subsection{Torsion}

We return to the question of torsion forms. 
As in \cite{HKK}, we denote by $\tor\ \Omega^n_\eh(X)$ the submodule of torsion sections, i.e., those vanishing on some dense open subset.
There is an obvious source of torsion classes: Let $f:Y\to X$ be proper birational with exceptional locus $Z\subset X$ and preimage $E\subset Y$. Any
$\omega\in\Ker(\Omega^n_\eh(Z)\to\Omega^n_\eh(E))$ gives rise to a torsion
class on $X$ by the blow-up sequence. By \cite[Example~5.15]{HKK} this kernel can indeed be nonzero. We have established in Lemma~\ref{lem:kill_torsion} that all torsion classes arise in this way.

\begin{prop}
Let $X$ be of finite type over $k$. Then:
\begin{enumerate}
\item The presheaf $\Th_X:U\mapsto\tor\ \Omega^n_\rh(U)$ is
a coherent sheaf of $\Oh_X$-modules on $X_\Zar$. 
\item There is a proper birational
morphism $f:Y\to X$ such that 
\[ \Th_Y=\Ker(\Omega^n_\rh|_{X_\Zar}\to f_*\Omega^n_\rh|_{Y_\Zar}).\]
\end{enumerate} 
\end{prop}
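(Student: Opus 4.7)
My plan is to establish (2) first, from which (1) follows immediately: the map $\Omega^n_\rh|_{X_\Zar}\to f_*\Omega^n_\rh|_{Y_\Zar}$ is a morphism of coherent $\OO_X$-modules (the target is coherent by Theorem~\ref{thm:coheren} and the fact that proper pushforward preserves coherence), so its kernel is coherent. The easy inclusion $\ker(\Omega^n_\rh|_X\to f_*\Omega^n_\rh|_Y)\subseteq \Th_X$ is clear: if $f$ is birational, there is a dense open $U\subseteq X$ with $f^{-1}U\xrightarrow{\sim} U$; any section $\omega$ over $V\subseteq X$ with $f^*\omega=0$ then satisfies $\omega|_{V\cap U}=0$, so $\omega$ is torsion. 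So the content of (2) is producing an $f$ with $\Th_X\subseteq \ker(\Omega^n_\rh|_X\to f_*\Omega^n_\rh|_Y)$.

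Since $\Omega^n_\rh$ is an $\rh$-sheaf, it is insensitive to the nilpotent structure, so I may assume $X$ reduced. If $X=X_1\cup\dots\cup X_N$ is the irreducible decomposition, then $\coprod X_i\to X$ is an $\rh$-cover; by the resulting descent sequence, if each $f_i\colon Y_i\to X_i$ has the desired property, then $f=\coprod f_i\colon\coprod Y_i\to X$ does as well (a compatible tuple of sections lies in the kernel iff each component does, and a compatible tuple of torsion sections is a torsion section). Thus I reduce to $X$ integral. Now $\Omega^n_\rh|_X$ is coherent by Theorem~\ref{thm:coheren}, and its torsion subsheaf $\Th_X$ is a coherent subsheaf (standard on a reduced noetherian scheme, giving part (1) in passing). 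Cover $X$ by finitely many affine opens $U_\alpha$; on each, $\Th_X(U_\alpha)$ is finitely generated, say by $\omega_{\alpha,1},\dots,\omega_{\alpha,m_\alpha}$. Each generator is a torsion section of $\Omega^n_\val$ on the integral scheme $U_\alpha$, hence vanishes at the generic point of $U_\alpha$; by Theorem~\ref{thm:hypV} it therefore vanishes on every $U_\alpha$-valuation ring of $k(X)$. So Lemma~\ref{lem:kill_torsion}, applied with $\omega'=0$, provides a proper birational $Y_{\alpha,i}\to U_\alpha$ killing $\omega_{\alpha,i}$; taking fibre products gives a single proper birational $Y_\alpha\to U_\alpha$ that kills every generator of $\Th_X(U_\alpha)$.

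To glue, I follow exactly the pattern in the proof of Proposition~\ref{lemm:rhValEpi}: Nagata-compactify each $Y_\alpha\to X$ to obtain a proper birational $\bar Y_\alpha\to X$ with $Y_\alpha\hookrightarrow\bar Y_\alpha$ a dense open immersion; after replacing $\bar Y_\alpha$ by the closure of $Y_\alpha$, we may arrange $Y_\alpha=\bar Y_\alpha\times_XU_\alpha$ (an open immersion with proper target over $U_\alpha$ is clopen). Let $Y$ be the closure of $\Spec(k(X))$ inside $\bar Y_1\times_X\dots\times_X\bar Y_N$, so that $Y\to X$ is proper and birational and each $Y\times_XU_\alpha$ factors through $Y_\alpha$. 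It remains to verify $\Th_X|_{U_\alpha}\subseteq \ker(\Omega^n_\rh|_X\to f_*\Omega^n_\rh|_Y)|_{U_\alpha}$ as subsheaves on the affine $U_\alpha$. Both are quasi-coherent subsheaves of the quasi-coherent $\Omega^n_\rh|_{U_\alpha}$, and by construction the inclusion holds on global sections; hence it holds as subsheaves on $U_\alpha$, and since the $U_\alpha$ cover $X$, globally on $X$. The main obstacle is precisely this gluing step: making sure the Nagata compactifications can be combined into one proper birational $Y\to X$ that restricts suitably over each $U_\alpha$. This is the same technical difficulty overcome in Proposition~\ref{lemm:rhValEpi}, and the argument there transplants with essentially no change.
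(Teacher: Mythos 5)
Your proposal is correct and follows essentially the same route as the paper: coherence of $\Th_X$ reduces to the observation that torsion sections are exactly those vanishing at generic points (which localises well), and part (2) is obtained by killing finitely many generators over each affine via Lemma~\ref{lem:kill_torsion}, then combining the resulting proper birational maps by Nagata compactification and the closure of the generic point in the fibre product, exactly as in the paper and in Proposition~\ref{lemm:rhValEpi}. The only cosmetic difference is that you derive (1) from (2) plus the standard coherence of torsion subsheaves, whereas the paper proves quasi-coherence of $\Th_X$ directly first; also note that in forming $Y_\alpha$ one should take the closure of the generic point in the fibre product of the $Y_{\alpha,i}$ to keep it birational, as you do in the subsequent global step.
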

\begin{proof}
By the similar reductions as the first paragraph of the proof of Theorem~\ref{thm:coheren} it suffices to show coherence for $X$ integral, and since quasi-coherent subsheaves of coherent sheaves are coherent, it suffices to show quasi-coherence for $X$ integral.
 
Let $X=\Spec(A)$, $U=\Spec(A_f)$. We have to show that
$\Th_X(U)=\Th_X(X)_f$. Let $\omega\in\Th_X(U)\subset\Omega^n_\rh|_{X_\Zar}(U)=
(\Omega^n_\rh(X))_f$. Hence $\omega$ is of the form
$\tilde{\omega}/f^N$ with $\tilde{\omega} \in\Omega^n_\rh(X)$. By assumption, $\omega$ vanishes at the generic point of $U$, which is equal to the generic point of $X$. Hence  the same is true for $\tilde{\omega}$. This finishes the proof of coherence. 

Any form vanishing on a blow-up is torsion, i.e., $\Ker(\Omega^n_\rh|_{X_\Zar}{\to} f_*\Omega^n_\rh|_{Y_\Zar}) \subseteq \Th_Y$ for any proper birational $f:Y {\to} X$, and so our job is to find a $Y$ for which this inclusion is surjective. By Lemma~\ref{lem:kill_torsion}, we already know the existence of such a $Y$ for every single global torsion class. If $X$ is affine, the module $\Th_X(X)$ is finitely generated over
$\Oh(X)$. We can find a proper birational $Y \to U$ killing the generators and
hence all of $\Th_X(X)$, and by coherence even all of $\Th_X$. If $X$ is not affine, let $X=U_1\cup\dots \cup U_m$ be an affine cover, and $Y_i \to U_i$ proper birational morphisms killing all of $\Th_{U_i}$. By Nagata compactification, there is a proper birational morphism $\overline{Y_i} \to X$ such that $Y_i = U_i {\times_X} \overline{Y_i}$. Let $V$ be the closure of $\Spec(k(X))$ in $\overline{Y_1} {\times_X} \dots {\times_X} \overline{Y_m}$. It is equipped with the open cover $\{ U_i \times_X V \to V\}$. Moreover, each $U_i \times_X V \to V \to X$ factors through $Y_i$, so $\Th_X = \cup_{i = 1}^m \Ker(\Omega^n_\rh|_{X_\Zar}{\to} f_*\Omega^n_\rh|_{(U_i \times_X V)_\Zar})$. Then by Zariski descent we deduce that $\Th_X = \Ker(\Omega^n_\rh|_{X_\Zar}{\to} f_*\Omega^n_\rh|_{(V)_\Zar})$.
\end{proof}

It now becomes an interesting question to understand whether a given
$X$ admits such a blow-up $Y\to X$ such that there is a point $\xi$ in the exceptional locus over which all residue fields of $Y_\xi$ are inseparable over $\kappa(\xi)$. This does not
happen in the smooth case: \emph{any} blowup of a regular scheme is completely decomposed (unconditionally), \cite[Prop.2.12]{HKK}. In the example in \cite[Example 3.6]{HKK} the point
$\xi$ had codimension $1$ and $Y$ was the normalisation. it induced a purely inseparable field extension of $k(\xi)$.

One might wonder if assuming $X$ is normal is enough to avoid this pathology. Let us show that it is not.

\begin{prop} \label{prop:counterexample}
There exists a normal variety $X$ over a perfect field $k$ of positive characteristic $p$, a point $\xi \in X$  and a blow-up $Y\to X$ such that
for every point in the fibre $Y_\xi\to \xi$ the residue field extension
is inseparable.

In particular, for every $X$-valuation ring $R$ of $k(X)$ sending its special point to $\xi$, the field extension 
$k(\xi) \subseteq R / m$ is inseparable.
\end{prop}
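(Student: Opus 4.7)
The plan is to exhibit an explicit example: a normal $3$-fold $X$ (e.g.\ a suitable hypersurface $X = \mathrm{Spec}\, k[x_1,\ldots,x_n]/(f)$ whose defining equation carries the characteristic-$p$ Frobenius pattern), a point $\xi$ of codimension $2$ whose closure is the $1$-dimensional singular locus of $X$, and an explicit blow-up $Y \to X$ of an ideal $I \subset R$ supported along $\overline{\{\xi\}}$. Normality of $X$ would be verified by Serre's $R_1 + S_2$ criterion: since $X$ is a hypersurface it is Cohen--Macaulay, so $S_2$ holds, and if the singular curve has codimension $2$ we get $R_1$ as well. The residue field $\kappa(\xi)$ is then a non-perfect function field (e.g.\ $\kappa(\xi)=k(t)$ for some coordinate $t$), which is crucial since perfect residue fields admit no purely inseparable extensions.

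Next I would compute the fibre $Y_\xi$ chart by chart. The goal is that the Rees algebra $R[It]$, reduced modulo the maximal ideal $\mathfrak{m}_\xi$ of $\OO_{X,\xi}$, carries a homogeneous degree-$p$ relation of the form $T_i^p - \bar t\, T_j^p = 0$, with $\bar t \in \kappa(\xi) \setminus \kappa(\xi)^p$. Such a relation cuts out a closed subscheme in some $\mathbb{P}^n_{\kappa(\xi)}$ whose points have residue fields containing $\kappa(\xi)(\bar t^{1/p})$, hence purely inseparable over $\kappa(\xi)$. The source of this Rees relation should be the leading term of $f$ itself: writing $f$ as $z^p - \bar t\, x^p + (\text{higher-order corrections involving a third generator like }w)$ yields, after formal manipulation in the Rees algebra, a relation $T_z^p - y\, T_x^p - w\, T_w^p = 0$, and reducing modulo $\mathfrak{m}_\xi$ wipes out the last term.

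The hard part, and the main obstacle, is arranging that \emph{every} point of $Y_\xi$ has inseparable residue field over $\kappa(\xi)$, not just the generic one. There is a fundamental tension from normality of $X$: if one naively asks for a Rees relation $f^p - \bar t g^p = 0$ with just two generators, then $f/g$ is integral over $\OO_{X,\xi}$ (satisfies $X^p - \bar t = 0$), forcing $f/g \in \OO_{X,\xi}$ by normality and thereby making $I$ principal at $\xi$, trivialising the blow-up. To circumvent this, I would use at least three generators in $I$ so that the integrality argument fails, and I would check that the resulting inseparable hypersurface in $\mathbb{P}^n_{\kappa(\xi)}$ has no $\kappa(\xi)$-rational points (e.g.\ by ruling out $[0{:}0{:}1]$-type points through an auxiliary Rees relation that forces the "missing" coordinate to vanish whenever the others do). Once this is arranged, the valuation ring statement follows directly: given an $X$-valuation ring $R$ of $k(X)$ whose special point maps to $\xi$, the valuative criterion of properness applied to the proper birational $Y\to X$ produces a unique lift $\mathrm{Spec}(R) \to Y$; the image of the special point lies in $Y_\xi$, with residue field some $\kappa(y)$ sitting inside $R/\mathfrak{m}$, and since $\kappa(y)/\kappa(\xi)$ is purely inseparable, so is the intermediate extension $R/\mathfrak{m}$ over $\kappa(\xi)$.
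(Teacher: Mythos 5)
Your strategy is the right one and in fact tracks the paper's actual proof closely: the paper takes the hypersurface $X = \Spec\bigl(k[s,t,x,y,z]/(z^p - sx^p - ty^p)\bigr)$ (normal by exactly the $R_1+S_2$ argument you sketch), lets $\xi$ be the generic point of the singular locus $V(x,y,z)\cong \mathbb{A}^2_{s,t}$ with $\kappa(\xi)=k(s,t)$, blows up the three-generator ideal $(x,y,z)$, and computes the fibre $E_\xi$ chart by chart; the final paragraph is the same valuative-criterion argument you give. Your observation that a two-generator relation $z^p = t x^p$ would force $z/x$ into $\OO_{X,\xi}$ by normality and trivialise the blow-up is a genuine insight that explains why the equation needs two independent parameters $s,t$ (rather than ``higher-order corrections'').

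However, as written the proposal has a real gap at the decisive step. First, no concrete $f$ is produced and the chart computation is not carried out, so the existence claim is not actually established. Second, and more importantly, the verification criterion you propose for the fibre --- that the degree-$p$ hypersurface in $\mathbb{P}^n_{\kappa(\xi)}$ has \emph{no $\kappa(\xi)$-rational points} --- is strictly weaker than what the proposition asserts. A point of $E_\xi$ whose residue field is a nontrivial \emph{separable} extension of $\kappa(\xi)$ is not a rational point, yet it would falsify the conclusion. The paper has to work harder here: in the chart $T_x\neq 0$ the fibre is $\Spec\bigl(k(s,t)[u,v]/(v^p - s - tu^p)\bigr)$, which is fibred over $\mathbb{A}^1_{k(s,t)}$ via $u$, and for \emph{each} point $\theta$ of that $\mathbb{A}^1$ one checks that either $s+tu^p$ is not a $p$-th power in $\kappa(\theta)$ (so the residue field of any point above $\theta$ contains $(s+tu^p)^{1/p}$, hence is inseparable over $k(s,t)$), or else $\theta$ is cut out by the irreducible polynomial $u^p + s/t$ and $\kappa(\theta)$ is already purely inseparable over $k(s,t)$. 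Some point-by-point argument of this kind is unavoidable, and ``no rational points'' does not substitute for it. Your auxiliary-relation idea for excluding the coordinate points is fine, but it only handles rational points, not separable closed points.
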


\begin{proof}
Our variety is
\[ X = \Spec \left ( \frac{k[s, t, x, y, z]}{z^p - s x^p - t y^p} \right ), \]
from \cite[Example 3.5.10]{SV}. Let $Y$ be the blowup of this variety at the
 ideal $(x, y, z)$. The blowup $Y$ admits an open affine covering by affine schemes with rings
\[ 
\frac{k[s, t, \tfrac{x}{z}, \tfrac{y}{z}, z]}{1 - s (\tfrac{x}{z})^p - t (\tfrac{y}{z})^p}, \qquad  
\frac{k[s, t, \tfrac{x}{y}, y, \tfrac{z}{y}]}{(\tfrac{z}{y})^p - s (\tfrac{x}{y})^p - t}, \qquad  
\frac{k[s, t, x, \tfrac{y}{x}, \tfrac{z}{x}]}{(\tfrac{z}{x})^p - s - t (\tfrac{y}{x})^p},
\]
and the intersections of these open affines with the exceptional fibre $E$ are
\[ 
\frac{k[s, t, \tfrac{x}{z}, \tfrac{y}{z}]}{1 - s (\tfrac{x}{z})^p - t (\tfrac{y}{z})^p}, \qquad  
\frac{k[s, t, \tfrac{x}{y}, \tfrac{z}{y}]}{(\tfrac{z}{y})^p - s (\tfrac{x}{y})^p - t}, \qquad  
\frac{k[s, t, \tfrac{y}{x}, \tfrac{z}{x}]}{(\tfrac{z}{x})^p - s - t (\tfrac{y}{x})^p},
\]
respectively, all lying over the singular locus 
$V(x, y, z) = \Spec(k[s, t]) \subseteq X$. Our point $\xi$ is the generic point $\xi = \Spec(k(s, t))$. Every point of the fibre $E_\xi$ has residue field an inseparable extension of $k(\xi)$: Consider for example the fibre of the right most affine for concreteness (all three fibres are isomorphic up to an automorphism of $k(s, t)$). It factors as
\[ \Spec\left ( \frac{k(s, t)[\tfrac{y}{x}, \tfrac{z}{x}]}{(\tfrac{z}{x})^p - s - t (\tfrac{y}{x})^p} \right ) \to \Spec(k(s, t)[\tfrac{y}{x}]) \to \Spec(k(s, t)). \]
Any residue field $k(\zeta)$ of a point $\zeta$ of the left most affine scheme is a finite field extension of the residue field $k(\theta)$ of a point $\theta \in \AA^1_{k(s, t)}$ in the middle. This latter $k(\theta)$ is generated over $k(s, t)$ by the image of $\tfrac{y}{x}$. This finite field extension $k(\zeta) / k(\theta)$ is purely inseparable so long as $s+t(\tfrac{y}{x})^p$ is nonzero in $k(\theta)$. If $k(\theta) = k(s, t, \tfrac{y}{x})$, then clearly this is the case. If $k(\theta) = k(s, t)[\tfrac{y}{x}] / f(\tfrac{y}{x})$ for some irreducible polynomial $f(\tfrac{y}{x}) \in k(s, t)[\tfrac{y}{x}]$, then $s+t(\tfrac{y}{x})^p$ is nonzero if and only if $(\tfrac{y}{x})^p = -\tfrac{s}{t}$ mod $f(\tfrac{y}{x})$. But since $(\tfrac{y}{x})^p  {+} \tfrac{s}{t}$ is irreducible in $k(s, t)[\tfrac{y}{x}]$, this would imply $f(\tfrac{y}{x}) = (\tfrac{y}{x})^p  {+} \tfrac{s}{t}$, in which case the subextension $k(\zeta) / k(\theta) / k(s, t)$ is already purely inseparable.

Now the blowup $Y \to X$ is birational and proper, and so any $X$-valuation ring $R$ of $k(X)$ is uniquely a $Y$-valuation ring of $k(Y)$, and if the special point of $R$ is sent to $\xi$, then the lift sends this special point to some point $\eta \in E_\xi$. But any field extension which contains an inseparable field extension is inseparable, and $k(\xi) \to R / \m$, contains $k(\xi) \to k(\eta)$, hence, $k(\xi) \to R / \m$ is inseparable.
\end{proof}

\begin{rema}
Let us also observe that the rightmost open affine contains the point $\xi' = V(\tfrac{y}{x})$ of $E_\xi$ with residue field $k(s, t)[\tfrac{z}{x}] / ((\tfrac{z}{x})^p {-} s) \cong k(s^{1/p}, t)$. In particular, we have produced a blowup $Y \to X$, a point $\xi \in X$, and a point $\xi' \in Y$ over it for which $\Omega^1(k(\xi)) \to \Omega^1(k(\xi'))$ is neither injective, nor zero.
\end{rema}
\subsection{The \'etale case}
We recall:
\begin{defn}Let $X\in\Schft$. A presheaf $\Fh$ of $\Oh$-modules on the small \'etale site $X_\et$ is
called \emph{coherent} if for all $X'$ \'etale over $X$, the sheaf $\Fh|_{X'_\Zar}$ is a coherent sheaf for the Zariski-topology
and, in addition, for all $\pi:X_1\to X_2$ \'etale, the natural map
\[ \left(\pi^*\Fh|_{X_{2,\Zar}}\right)\tensor_{\pi^*\Oh_{X_2}}\Oh_{X_1}\to \Fh|_{X_{1,\Zar}}\]
is an  isomorphism.
\end{defn}
The left hand side is nothing but the pull-back in the category of quasi-coherent sheaves. We reserve the notation $\pi^*$ for the pull-back of abelian sheaves.

\begin{prop} \label{prop:etcoh}
For all $X\in\Schft$, the sheaf $\Omega^n_\eh|_{X_\et}$ is
coherent. 
\end{prop}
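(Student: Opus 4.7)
The plan is to verify the two conditions that define a coherent sheaf on the small étale site $X_\et$.

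The first condition, that $\Omega^n_\eh|_{X'_\Zar}$ is Zariski-coherent for every étale $X' \to X$, is immediate: since $X' \in \Schft$, this is Theorem~\ref{thm:coheren}.

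The second condition requires that for every étale $\pi : X_1 \to X_2$ of schemes étale over $X$, the canonical comparison
\[
\alpha \colon \pi^*\bigl(\Omega^n_\eh|_{X_{2,\Zar}}\bigr) \otimes_{\pi^*\OO_{X_2}} \OO_{X_1} \to \Omega^n_\eh|_{X_{1,\Zar}}
\]
is an isomorphism. Both source and target are coherent Zariski sheaves on $X_1$ (the first by standard properties of quasi-coherent pullback, the second by Theorem~\ref{thm:coheren}), so $\alpha$ is an isomorphism iff it is one on stalks. Using that $\Omega^n_\eh$ commutes with filtered colimits, the question reduces to the following algebraic statement: for every étale morphism $A \to B$ of finitely generated $k$-algebras, the canonical map $\Omega^n_\eh(A) \otimes_A B \to \Omega^n_\eh(B)$ is an isomorphism.

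To prove this, I would proceed in two stages. First, the analogue for Zariski localisations was essentially established inside the proof of Proposition~\ref{prop:integralQc}: it is shown there that $\Omega^n_\eh(A)_f \cong \Omega^n_\eh(A_f)$ for every $f \in A$, which handles the Zariski case. Second, to pass to the étale case I would exploit the fact that an étale morphism is Nisnevich-locally a Zariski open immersion, together with the fact that $\Omega^n_\eh$ is in particular a Nisnevich sheaf (being an $\eh$-sheaf). Combined with the coherence of the sheaves involved, this should allow us to deduce the desired étale compatibility from the Zariski case.

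The main obstacle is the interaction between the tensor product and Nisnevich descent: tensor product commutes with filtered colimits but not with arbitrary limits, so one must use coherence together with the finite presentation of the relevant modules to turn a Nisnevich descent statement for $\Omega^n_\eh$ itself into a descent statement for the tensor product $\Omega^n_\eh(A) \otimes_A (-)$. An alternative and perhaps cleaner strategy is to argue directly on étale stalks: at a point $x_1 \in X_1$ with image $x_2 \in X_2$, the strict henselisations coincide, $\OO^{sh}_{X_1,\bar{x}_1} \cong \OO^{sh}_{X_2,\bar{x}_2}$, by étaleness of $\pi$ and a compatible choice of separable closures of residue fields; both étale stalks are then identified with $\Omega^n_\eh(\OO^{sh}_{X_i,\bar{x}_i})$, and the required isomorphism on Zariski stalks follows by faithful flat descent along $\OO_{X_i,x_i} \to \OO^{sh}_{X_i,\bar{x}_i}$.
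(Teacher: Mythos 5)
Your reduction to the affine statement ``$\Omega^n_\eh(A)\otimes_A B\to\Omega^n_\eh(B)$ is an isomorphism for étale $A\to B$'' is fine, and the Zariski case is indeed available from Proposition~\ref{prop:integralQc}/Theorem~\ref{thm:coheren}. But neither of your two routes from there actually closes the argument. The obstacle you yourself name --- that $-\otimes_A B$ does not commute with the limits implicit in Nisnevich/$\eh$-descent --- is left unresolved; ``use coherence and finite presentation'' is not a proof, and the structural input ``an étale morphism is Nisnevich-locally a Zariski open immersion'' is not correct as stated (e.g.\ $t\mapsto t^2$ on $\mathbb{G}_m$ is not Zariski-locally an open immersion even where the residue field extension is trivial; the true statement only produces open-immersion \emph{sections} after auxiliary pointed étale base changes on the target, and compatibility of $\Omega^n_\eh$ with those base changes is exactly what is being proven). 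The alternative stalkwise route is circular for the same reason: the identification of the étale stalk of $\Omega^n_\eh|_{X_\et}$ at $\bar x$ with $(\Omega^n_\eh|_{X_\Zar})_x\otimes_{\OO_{X,x}}\OO^{sh}_{X,\bar x}$ \emph{is} the stalkwise form of the compatibility condition you are trying to verify, and the further identification of that stalk with ``$\Omega^n_\eh(\OO^{sh}_{X,\bar x})$'' would require $\Omega^n_\eh=\Omega^n_\val$ to commute with the filtered limit of schemes $\Spec(\OO^{sh}_{X,\bar x})=\varprojlim U$; only $\Omega^n$ itself is known to do so, and $\Omega^n_\val$ is defined as a limit over valuation rings, which need not commute with filtered colimits.

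The paper avoids all of this by never descending to stalks. Fix the single étale surjection $\pi:X'\to X$ of affines and work with the \emph{presheaf} morphism $Y\mapsto\bigl(\Omega^n(Y)\otimes_{\OO(X)}\OO(X')\to\Omega^n(Y\times_XX')\bigr)$, which is an isomorphism by ordinary base change for Kähler differentials. Because $\OO(X)\to\OO(X')$ is flat, the functor $-\otimes_{\OO(X)}\OO(X')$ is exact and commutes with all colimits, hence commutes with $\eh$-sheafification; this is the one-line resolution of your ``main obstacle.'' Finally, a cofinality argument on covers (every $\eh$-cover of $Y\times_XX'$ is refined by the pullback of an $\eh$-cover of $Y$) identifies $(\Omega^n(-\times_XX'))_\eh(Y)$ with $\Omega^n_\eh(Y\times_XX')$, and evaluating at $Y=X$ gives the claim. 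If you want to salvage your approach, you essentially have to reproduce these three steps.
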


\begin{proof}
We already know coherence for the Zariski-topology. By replacing
$X_2$ by $X$, it suffices to check the condition for all \'etale
morphisms $X'\to X$. Both sides are sheaves for the Zariski-topology, hence
if suffices to consider the case where $X$ and $X'$ are both affine
and $\pi: X'\to X$ is surjective. (Indeed, first assume $X$ and $X'$ affine, then cover the image of $X'$ by affines $U_i$ and replace $X$ by $U_i$ and
$X'$ by the preimage of $U_i$.)
We fix such a $\pi$.

We need to show that
\begin{equation}\label{eq:todo} \Omega^n_\eh(X)\tensor_{\Oh(X)}\Oh(X')\to  \Omega^n_\eh(X')\end{equation}
is an isomorphism.
Note that the $\Omega^n$ analogue of this assertion holds.

{\it Step 1:} Consider the morphism of presheaves on the category of separated schemes of finite type over $X$
\begin{equation}\label{eq:quasi}Y\mapsto  \left(\Omega^n(Y)\tensor_{\Oh(X)}\Oh(X')\to \Omega^n(Y\times_XX')\right).\end{equation}
We claim that it is an isomorphism. Indeed: Both sides are sheaves
for the Zariski-topology (the left hand side because $\Oh(X)\to\Oh(X')$ is
flat). Hence it suffices  to consider the
case where $Y$ is affine. Let $Y'=Y\times_XX'$. It is also affine. Then the
map (\ref{eq:quasi}) identifies as
\[ \Omega^n(Y)\tensor_{\Oh(X)}\Oh(X')=\Omega^n(Y)\tensor_{\Oh(Y)}\Oh(Y')\to \Omega^n(Y'),\]
hence it is an isomorphism.

{\it Step 2:} We now sheafify the morphism of presheaves with respect to the $\eh$-topology. As
$\Oh(X)\to\Oh(X')$ is
flat, it commutes with sheafification, and  we get an isomorphism of $\eh$-sheaves
\begin{equation}\label{eq:ehstep2}
 \Omega^n_\eh\tensor_{\Oh(X)}\Oh(X')\to (\Omega^n(\cdot\times_XX'))_\eh.
\end{equation}

{\it Step 3:}
We claim that
\begin{equation}\label{eq:ehstep3} (\Omega^n(\cdot\times_X X'))_\eh(Y)=\Omega^n_\eh(Y\times_XX').\end{equation}
It suffices to show that every $\eh$-cover of
$Y'=Y\times_XX'$ can be refined by the pull-back of an $\eh$-cover of $Y$.
Let $Z\to Y'$ be an $\eh$-cover. The composition $Z\to Y'\to Y$ is
also an $\eh$-cover. The pull-back $Z\times_YZ\to Z\to Y'$ is
the required refinement.

Combining the isomorphisms \eqref{eq:ehstep2} and \eqref{eq:ehstep3} in the case $Y = X$ gives the desired isomorphism \eqref{eq:todo}.
\end{proof}

\section{Special cases} \label{sec:specialCases}
The cases of forms of degree zero or top degree are easier to handle than
the general case. In this section we study these special cases.

\subsection{$0$-differentials}

This section is about the sheafification of the presheaf $\Omega^0 = \OO$. For expositional reasons we work over a field. For more general bases, and more general representable presheaves, see Section~\ref{ssec:representable}.
 
In contrast to the general case, $\OO_\val$ is torsion-free.

\begin{lemma} \label{lemm:OvalTorsionFree}
For every $X \in \Schft$, every dense open immersion $U \subseteq X$ in $\Schft$ induces an inclusion 
$\Oh_\val(X) \subseteq \Oh_\val(U)$.
\end{lemma}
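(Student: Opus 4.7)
The plan is to use the description of $\OO_\val$ provided by Lemma~\ref{lemm:rsF}. The presheaf $\OO$ on $\val$ is torsion-free, since for any valuation ring $R$ the natural map $R \hookrightarrow \Frac(R)$ is injective. Hence Lemma~\ref{lemm:rsF} identifies $\OO_\val(X)$ with the set of compatible families $(s_x)_{x \in X}$ with $s_x \in \kappa(x)$, and the restriction $\OO_\val(X) \to \OO_\val(U)$ sends such a family to $(s_x)_{x \in U}$. It therefore suffices to show: if $s_x = 0$ for every $x \in U$, then $s_x = 0$ for every $x \in X$.

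Fix $x \in X \setminus U$ and let $X_i$ be an irreducible component of $X$ (with reduced scheme structure) containing $x$, with generic point $\eta_i$. Since $U$ is dense and open, it contains every generic point of $X$, so $\eta_i \in U$; in particular $s_{\eta_i} = 0$ by hypothesis. By the classical existence theorem for valuation rings, cf.\ \cite[Ch.VI §1, n.2]{Bou64}, there exists a valuation ring $R$ of $\kappa(\eta_i)$ dominating the local ring $\OO_{X_i, x}$. Since $X$ is of finite type over $k$, the transcendence degree $\trdeg(\kappa(\eta_i)/k)$ is finite, so the resulting morphism $\Spec(R) \to X$ lies in $\val(X)$; by construction the generic point of $\Spec(R)$ maps to $\eta_i$, the closed point maps to $x$, and the residue field $R/\m_R$ is a field extension of $\kappa(x)$.

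Now unpack the compatibility condition of Equation~\eqref{equa:rsF}. The section $s_R \in R$ restricts along the inclusion $R \hookrightarrow \kappa(\eta_i)$ to $s_{\eta_i} = 0$, and since $R \hookrightarrow \Frac(R)$ is injective we deduce $s_R = 0$. Applying the compatibility to the quotient $R \twoheadrightarrow R/\m_R$ forces $s_{R/\m_R} = 0$, and applying it once more to the injection $\kappa(x) \hookrightarrow R/\m_R$ shows that the image of $s_x$ in $R/\m_R$ is $0$. Injectivity of $\kappa(x) \hookrightarrow R/\m_R$ then gives $s_x = 0$, as required. The only non-formal input is the classical existence of a dominating valuation ring; everything else is direct manipulation of the Lemma~\ref{lemm:rsF} description.
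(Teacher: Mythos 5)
Your proof is correct and uses essentially the same ingredients as the paper's: the pointwise description of $\Oh_\val$ from Lemma~\ref{lemm:rsF} via torsion-freeness, the fact that a dense open contains all generic points, the existence of a valuation ring of the function field of a component dominating the local ring at any point, and the compatibility condition at the generic and special points of that valuation ring. The only organizational difference is that the paper first reduces to the integral case via the decomposition into irreducible components and $\cdh$-descent, whereas you handle an arbitrary $X$ directly by choosing a component through each point; both routes are fine.
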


\begin{proof}
First suppose it is true for irreducible schemes, and let
$X_1,\dots,X_n$ be the irreducible components of $X$. Let
$U_1,\dots,U_n$ be their intersections with $U$. Each $U_i$ is dense in
$X_i$. Let $ f, g \in \OO_\val(X)$ such that $f|_U= g|_U$. Then $f|_{U_i}=g|_{U_i}$. 
By the irreducible case, $f|_{X_i}=g|_{X_i}$. By $\cdh$-descent
\[ \OO_\val(X) \subseteq \OO_\val(X_i)\]
and hence $f=g$.

Now consider $X$ irreducible. Since $\OO_\val(X) = \OO_\val(X_\red)$ we can assume $X$ is integral. Consider the description of Lemma~\ref{lemm:rsF}.
 If two sections $(s_x), (t_x) \in \OO_\val(X)$ are equal on a dense open, then $s_\eta = t_\eta$ where $\eta$ is the generic point of $X$.
 Consequently, for any valuation ring $R$ of the form $\eta \to \Spec(R) \to X$, the lifts $s_{\Spec(R)}, t_{\Spec(R)}$ also equal, as well as their images in 
$R / m$, and from there we deduce that $s_x = t_x$ where $x = $ image $\Spec(R/\m) \to X$.  
But for every point $x\in X$ there is a valuation
ring $R_x\subset k(X)$ such that the special point of $\Spec(R)$ maps
to $x$, \cite[7.1.7]{EGAII}.
\end{proof}

Recall the notion of the seminormalisation of a variety, see Definition~\ref{defn:sn}.

\begin{prop} \label{prop:ehSN}
Let $Y \in \Schft$. 
Then the canonical morphism
\[ \OO_{\val}(Y) 
\cong
\OO(Y^\sn)
\]
is an isomorphism. The same is true for $\OO_\rh$, $\OO_\cdh$, and $\OO_\eh$ in place of $\OO_{\val}$.
\end{prop}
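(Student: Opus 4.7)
\medskip

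\noindent\textbf{Proof plan.} By Theorem~\ref{thm:compare} we already have $\OO_\rh = \OO_\cdh = \OO_\eh = \OO_\val$, so the task reduces to exhibiting a natural isomorphism $\OO_\val(Y) \cong \OO(Y^\sn)$. First I would remove the seminormalisation from the source: since $\OO_\val$ is an $\rh$-sheaf and $Y^\sn \to Y$ is a completely decomposed homeomorphism, Lemma~\ref{lem:sn-descent} (or directly Lemma~\ref{lem:val_sn}) gives $\OO_\val(Y) = \OO_\val(Y^\sn)$. It therefore suffices to prove that for a seminormal $Y \in \Schft$ the canonical comparison map $\OO(Y) \to \OO_\val(Y)$ is an isomorphism. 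Both sides are Zariski sheaves, so I reduce to the case $Y = \Spec(A)$ affine (hence reduced, and with $A$ Nagata so the normalisation $\widetilde{Y} = \Spec(\widetilde{A})$ is finite over $Y$).

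Injectivity of $\OO(Y) \to \OO_\val(Y)$ is immediate because $Y$ is reduced: the composition with the evaluation at generic points $\OO(Y) \to \prod_i k(\eta_i)$ is already injective.

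For surjectivity, let $s \in \OO_\val(Y)$. By Lemma~\ref{lemm:rsF} this is a compatible system $(s_x)_{x \in Y}$ with $s_x \in k(x)$. The values at the generic points $\eta_1, \dots, \eta_n$ of the components assemble into an element $\sigma = (s_{\eta_i}) \in \prod_i k(\eta_i) = Q(A)$. For any rank-one valuation ring $R$ of some $k(\eta_i)$ viewed in $\rval(Y)$, the section $s_R \in R$ restricts to $\sigma$ at the generic point, and torsion-freeness of $\OO$ on $\val$ forces $s_R = \sigma$ in $R \subseteq k(\eta_i)$. Hence $\sigma$ lies in every such $R$; taking the intersection of all $Y$-valuation rings of $k(\eta_i)$, component by component, yields $\sigma \in \widetilde{A}$. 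Put $B = A[\sigma] \subseteq \widetilde{A}$.

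The heart of the proof is to show that $A \subseteq B$ is subintegral, i.e., that $\Spec(B) \to \Spec(A)$ is a completely decomposed homeomorphism with trivial residue field extensions; by Lemma~\ref{lemm:seminormalProperties}\eqref{lemm:seminormalProperties:subInt} this then forces $B = A$ and hence $\sigma \in A$. Fix $x \in Y$ and any lift $\tilde{y} \in \widetilde{Y}$ of $x$, and choose a valuation ring $R \subset k(\eta_i)$ over $Y$ dominating $\OO_{\widetilde{Y},\tilde{y}}$ (such $R$ exists by the usual Zorn's lemma argument and lies in $\val(Y)$ since $k(\eta_i)$ has finite transcendence degree over $k$). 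The special point of $\Spec(R)$ maps to $\tilde{y}$ in $\widetilde{Y}$ and to $x$ in $Y$, and the compatibility condition for the section $s$ gives that the image of $s_R = \sigma$ in $R/\m$ equals the image of $s_x \in k(x)$ under $k(x) \hookrightarrow k(\tilde{y}) \hookrightarrow R/\m$. Since these inclusions are injective, this forces $\sigma(\tilde{y}) \in k(\tilde{y})$ to lie in the subfield $k(x)$ and to equal $s_x$. Consequently every point of $\widetilde{Y}$ over $x$ gives the same prime ideal of $B$ (namely $\ker(B \to k(x),\ \sigma \mapsto s_x)$) with residue field exactly $k(x)$, so $\Spec(B) \to \Spec(A)$ is indeed subintegral. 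Thus $\sigma \in A$, and the image of $\sigma$ in $\OO_\val(Y)$ agrees with $s$ at all generic points, hence with $s$ everywhere by Lemma~\ref{lemm:OvalTorsionFree}.

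The main obstacle is the subintegrality step: it is where the abstract compatibility of the system $(s_R)_R$ at higher-rank valuation rings is converted into concrete geometric information about the fibres of $\Spec(A[\sigma]) \to \Spec(A)$. Everything else is formal descent and torsion-freeness; once subintegrality is in hand, seminormality of $Y$ closes the argument.
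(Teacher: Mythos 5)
Your proof is correct, and the overall strategy coincides with the paper's: reduce via Lemma~\ref{lem:val_sn} and Zariski descent to showing $\OO(Y)=\OO_\val(Y)$ for $Y=\Spec(A)$ seminormal affine, exhibit a subintegral extension inside $Q(A)$, and conclude with Lemma~\ref{lemm:seminormalProperties}\eqref{lemm:seminormalProperties:subInt}. Where you genuinely diverge is in how subintegrality is established. The paper treats the whole ring $A_\val=\OO_\val(\Spec A)$ at once: it shows $A\subseteq A_\val\subseteq\prod_{\p\ \mathrm{minimal}}A_\p$ is integral by intersecting valuation rings, and then proves $\hom_k(A_\val,\kappa)\to\hom_k(A,\kappa)$ is bijective for all relevant residue fields $\kappa$, which forces it to construct and then pin down sections of that map using valuation rings dominating points of the (a priori unwieldy) scheme $\Spec(A_\val)$. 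You instead argue one section at a time: for a fixed $s$ you form the finite subring $B=A[\sigma]\subseteq\widetilde{A}$ generated by the generic-point values and control the fibres of $\Spec(B)\to\Spec(A)$ concretely, via lying-over from the normalisation together with the compatibility condition of Lemma~\ref{lemm:rsF} applied to a valuation ring dominating $\OO_{\widetilde{Y},\tilde{y}}$. This buys you a finite, noetherian intermediate object and entirely avoids analysing $A_\val$ itself; the price is the extra (but easy) final step, which you supply, that $\sigma\in A$ recovers all of $s$ and not merely its generic components. Two cosmetic points: if you intersect only rank-one valuation rings you should note that DVRs already cut out $\widetilde{A}$ (it is a finite product of Krull domains), or else intersect over all $Y$-valuation rings so as to quote \cite[Tag 090P]{stacks-project} directly; and Lemma~\ref{lemm:OvalTorsionFree} is stated for dense opens rather than generic points, so in the last step you are really reusing its proof (every point is the specialisation of a valuation ring centred at a generic point) rather than its statement.
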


\begin{rema} \label{rema:ehSN}
The below proof works for a general noetherian base scheme $S$. We (the authors) do not know if the seminormalisation $A^{\sn}$ is always noetherian in this setting, but the definition of $\OO_{\val}(A^{\sn})$ is, clearly, still valid, and the careful reader will not that the proof below still works, regardless.
\end{rema}

\begin{proof}
Both $\OO_\val(-)$ and $\OO((-)^{\sn})$ are invariant under $(-)_\red$, and are Zariski sheaves, so it suffices to consider the case where $Y$ is reduced and affine. Let $Y = \Spec(A)$. As $\Spec(A^{\sn}) {\to} \Spec(A)$ is a completely decomposed homeomorphism, $\val(A^{\sn}) \to \val(A)$ is an equivalence of categories, so $\OO_{\val}(A) \to \OO_{\val}(A^{\sn})$ is an isomorphism. Hence, it suffices to show that if $A$ is a seminormal ring, then $\OO(A) = \OO_{\val}(A)$. Define $A_\val = \OO_\val(\Spec(A))$. By Lemma~\ref{lemm:seminormalProperties}\eqref{lemm:seminormalProperties:subInt} it suffices to show that $A \to A_\val$ is subintegral.

First we show that it is integral. By the argument in Lemma~\ref{lemm:OvalTorsionFree}, or by its statement and the comparison of Theorem~\ref{thm:compare}, both of the canonical morphisms $A \to A_\val \to \prod_{\p \textrm{ minimal}} A_\p$ is an embedding into a product of fields, \cite[Tag 00EW]{stacks-project}.
Since $\Spec(A)$ is a noetherian topological space, there are finitely many of them. 
Now by the definition of $A_\val$, the image of $A_\val$ in each $A_\p$ is contained in any valuation ring of $A_\p$ containing the image of $A$. Since the normalisation is the intersection of these valuation rings, \cite[Tag 090P]{stacks-project}, it follows that the extension $A \subseteq A_\val$ is integral. 

To show that $\Spec(A_\val) \to \Spec(A)$ is a completely decomposed homeomorphism, it suffices to show that for all fields $\kappa$ which are residue fields of $A$ or $A_\val$, 
\begin{equation} \label{equa:AvalAhom}
\hom_k(A_\val,  \kappa) \to \hom_k(A, \kappa)
\end{equation}
is an isomorphism. 

Surjectivity. We construct a section of the map of sets \eqref{equa:AvalAhom}. For every morphism $\phi: A \to \kappa$ in $\val(A)$ to a residue field $\kappa$ of $A$, there is a canonical extension $\phi: A \stackrel{\iota}{\to} A_\val \stackrel{\pi_{\phi}}{\to} \kappa$ making the triangle commute: just take $\pi_\phi: \varprojlim_{\phi':A \to R' \in \val(A)} R' \to \kappa$ to be the projection to the $\phi$th component of the limit. For $\kappa$ a general field, take $\pi_\phi$ to be the map associated to the residue field corresponding to $\phi$. 

Injectivity. We show that the section $\phi \mapsto \pi_\phi$ we have just constructed is surjective. That is, for an arbitrary residue field $\psi: A_\val \to \kappa$, we claim that $\psi = \pi_{A{\to}A_\val{\to}\kappa}$. If $\psi \circ \iota$ is the canonical map $A \to A_\p$ to one of the fractions fields of an irreducible component of $\Spec(A)$, then there is a unique lift because $A \subseteq A_\val \subseteq  \prod_{\p \textrm{ minimal}} A_\p$, as we observed above. For a general residue field of $A_\val$, there is an $A_\val$-valuation ring $R \subseteq A_\p$ of the fraction field $A_\p$ of any irreducible component containing $\Spec(\kappa) \in \Spec(A_\val)$ whose special point maps to $\Spec(\kappa)$, \cite[7.1.7]{EGAII}, (for the non-noetherian version, cf. \cite[Tag 00IA]{stacks-project}). So we have the commutative diagram
\[ \xymatrix{
R / \m & \ar[l] R \ar[r] & A_\p \\
\kappa \ar[u] & \ar[l]_{\psi}  A_\val \ar[u] \\
} \]
As we have just discussed, the map $A_\val \to A_\p$ must be the unique extension $\pi_{A {\to} A_\val {\to} A_\p}$ of the canonical $A \to A_\p$. As $R \to A_\p$ is injective, the map $A_\val \to R$ must also be the canonical $\pi_{A{\to}A_{\val}{\to}R}$, and therefore $A_\val \to R / \m$ is $\pi_{A{\to}A_{\val}{\to}R{\to}R/\m}$, and by injectivity of $\kappa \to R / \m$, we conclude $\psi = \pi_{A{\to}A_\val{\to}\kappa}$,

The claim about $\OO_{\rh}, \OO_{\cdh}, \OO_{\eh}$ follows from Theorem~\ref{thm:compare}.
\end{proof}

Recall the $\sdh$-topology introduced in \cite[Section~6.2]{HKK}.
It is generated by \'etale covers and those proper surjective maps that
are separably decomposed, i.e., any point has a preimage such that the residue field extension is finite and separable. By de Jong's theorem on alterations, see \cite{deJ96}, every $X\in\Schft$ is $\sdh$-locally smooth. However, $\sdh$-descent fails for differential forms, \cite[Proposition 6.6]{HKK}. The situation is better in degree zero.

\begin{prop}\label{prop:sdh_val}
Let $k$ be perfect. Then
$\Oh_\val = \Oh_\sdh$.
\end{prop}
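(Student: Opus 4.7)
The plan is to prove that $\OO_\val$ is already an $\sdh$-sheaf, from which the equality $\OO_\val = \OO_\sdh$ follows: by Theorem~\ref{thm:compare} we have $\OO_\val = \OO_\eh$, and since $\cdp$-morphisms are separably decomposed, the $\eh$-topology is coarser than the $\sdh$-topology, giving a canonical map $\OO_\val \to \OO_\sdh$. Once $\OO_\val$ is known to be an $\sdh$-sheaf, the universal property of $\sdh$-sheafification produces an inverse, yielding the desired isomorphism.

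Every $\sdh$-covering admits a refinement by a composition $W \to V \to X$ with $W \to V$ étale and $V \to X$ separably decomposed proper surjective (sdp), analogous to Remark~\ref{rem:defi:rhetc}\eqref{rem:defi:rhetc:refine} for the $\eh$-topology. Since $\OO_\val$ already satisfies étale descent (as an $\eh$-sheaf), it suffices to verify descent along an sdp morphism $f: V \to X$. The key geometric claim is that any such $f$ becomes a $\cdp$-morphism after an étale base change: there exists an étale cover $\{X_i \to X\}$ such that each $V \times_X X_i \to X_i$ is a $\cdp$-morphism. The construction proceeds by noetherian induction: at the generic points of $X$, the finite separable residue extensions witnessing the sdp condition are trivialised by étale extensions (adjoining roots of the corresponding separable minimal polynomials), and one iterates over closed complements. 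Granting this, $\sdh$-descent along $f: V \to X$ follows formally, via a double-equaliser diagram chase, from the combination of étale descent for $\{X_i \to X\}$ and $\cdh$-descent for each $V \times_X X_i \to X_i$, both available since $\OO_\val = \OO_\eh = \OO_\cdh$.

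The main obstacle is the geometric claim that sdp morphisms become $\cdp$ étale-locally on the target. Generically it is straightforward---trivialise the separable residue extension by an appropriate étale extension---but globalising requires careful noetherian stratification, and one must check both that the resulting étale cover refines the sdp condition at every new point and that properness is preserved under base change. An alternative route, perhaps cleaner, is to use $\OO_\val(X) = \OO(X^\sn)$ from Proposition~\ref{prop:ehSN} and verify the equaliser sequence
\[
\OO(X^\sn) = \eq\bigl(\OO(V^\sn) \rightrightarrows \OO((V \times_X V)^\sn)\bigr)
\]
directly, reducing the problem (after replacing $V$ and $X$ by $V^\sn$ and $X^\sn$) to a statement about seminormalisations of fibre products of sdp morphisms between seminormal schemes, and exploiting the fact that seminormal subintegrally closed rings are preserved under finite equalisers.
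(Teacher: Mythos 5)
Your reduction to showing that $\OO_\val$ is an $\sdh$-sheaf, and further to descent along a single proper separably decomposed (sdp) morphism, matches the paper's strategy. But the key geometric claim you then rely on --- that an sdp morphism $V \to X$ becomes a $\cdp$-morphism after some étale base change $\{X_i \to X\}$ --- is false, and this is not a fixable technicality: it is precisely the phenomenon that makes the $\sdh$-topology delicate. If the claim held, every $\sdh$-cover would be refinable by an $\eh$-cover, so the two topologies would have the same sheaves and in particular $\Omega^n_\sdh = \Omega^n_\eh$ for all $n$; this contradicts the failure of $\sdh$-descent for differential forms (\cite[Prop.~6.6]{HKK}, quoted in the paper just before the statement you are proving). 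Concretely, take $X = \Spec k[s,t]$, $V = \Spec k[s,t,z]/(z^p - tz - s)$ and $V' = V \sqcup V(t) \to X$. This is sdp ($z^p - tz - s$ is separable wherever $t \neq 0$, and the component $V(t)$ handles the locus $t = 0$), but no étale $X_j \to X$ whose image meets the generic point $\xi$ of $V(t)$ can make the base change completely decomposed: splitting the generic fibre forces $k(V) \subseteq \kappa(\eta_j)$ for a generic point $\eta_j$ of $X_j$, hence $z \in \OO_{X_j, x_j}$ for $x_j$ over $\xi$ by integral closedness, hence $s^{1/p} = \bar z \in \kappa(x_j)$, contradicting that $\kappa(x_j)/k(s)$ is separable. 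The obstruction is that a separable extension of function fields can specialise to an inseparable extension of residue fields, and étale covers of the base cannot see this. Your proposed ``noetherian stratification'' cannot repair this, because any étale map hitting a special point is open and therefore must also deal with the generic point.

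The paper's actual proof avoids this entirely by working pointwise with the explicit description of $\OO_\val(X)$ from Lemma~\ref{lemm:rsF}: given a cocycle $f$ on $Y \to X$ sdp, one defines $g_{\kappa(x)}$ for each $x \in X$ by choosing a separable preimage $y$, passing to a Galois closure $\lambda/\kappa(x)$ containing $\kappa(y)$, and using the cocycle condition to see that $f_\lambda$ is Galois-invariant, hence lies in $\kappa(x)$; the integrality and specialisation conditions are then checked by extending each $X$-valuation ring $R \subseteq \kappa(x)$ to a valuation ring $S \subseteq \kappa(y)$ with $R = S \cap \kappa(x)$ and invoking the valuative criterion of properness. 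The point where $n = 0$ is essential (and where your argument would also have to confront the difficulty) is that an element of $S \cap \kappa(x)$ automatically lies in $R$ and restricts compatibly to $R/\m_R$, whereas for $\Omega^n$ one would need $R/\m_R \to S/\m_S$ to be separable, which cannot be arranged. Your alternative sketch via $\OO(X^{\sn})$ and ``finite equalisers of seminormal rings'' is not developed enough to assess, but as stated it does not engage with this specialisation problem, which is where all the content lies.
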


\begin{proof}
The $\sdh$-topology is stronger than the $\eh$-topology, and we know $\OO_\val {=} \OO_\eh$, Theorem~\ref{thm:compare}. Hence, we have a canonical morphism 
$\OO_\val \to \Oh_\sdh$, and an isomorphism $(\OO_\val)_\sdh \cong \OO_\sdh$. If we can show that $\OO_\val$ is already an $\sdh$ sheaf, we are done.
The topology is generated by proper separably decomposed morphisms and \'etale covers. We already know that $\OO_\val$ is an \'etale sheaf. Hence
it suffices to show: If $Y \to X$ is a proper $\sdh$-cover, which generically is finite and separable, then
\[ 0\to \Oh_\val(X)\to\Oh_\val(Y)\to\Oh_\val(Y\times_X Y)\]
is exact.

Recall that since $\OO$ is torsion-free on valuation rings, $\Oh_\val(X) \to \prod_{x \in X} \kappa(x)$ is injective, Lemma~\ref{lemm:rsF}. For $y\in Y$ with image $x\in X$, the induced map $\kappa(x)\to\kappa(y)$ is injective as a map of fields. As $Y\to X$ is surjective, this implies that 
$\Oh_\val(X)\to\Oh_\val(Y)$ is injective.

Let $f\in\Oh_\val(Y)$ be in the kernel of the second map.
We want to define an element $g\in\Oh_\val(X)$ and start with the 
component
$g_x\in \kappa(x)$ for $x\in X$. Let $y\in Y$ be a preimage of $X$ such
that the residue field extension $\kappa(y) / \kappa(x)$ is finite and 
separable. Let $\lambda / \kappa(x)$ be a finite Galois extension containing $\kappa(y)$. We have a canonical map $l: \Spec(\lambda) \to Y$, and any 
$\kappa(x)$-automorphism $\sigma$ of $\lambda$ gives us a second map $l \circ \sigma$, and this pair of maps define some $(l, l \circ \sigma): \Spec(\lambda) \to Y\times_XY$. By the assumption that $f$ is a cocycle, $f_l = (\pi_1^*f)_{(l,l\sigma)} = (\pi_2^*f)_{(l,l\sigma)} = f_{l\sigma}$ in $\lambda$. That is,
 $f_l \in \lambda$ is $\Gal(\lambda / \kappa(x))$-invariant, and therefore, 
actually lies in $\kappa(x) \subseteq \lambda$. %
We define $g_{\kappa(x)}:=f_{\lambda}$.

Note that another consequence of the cocycle condition is that $g_{\kappa(x)}$ is independent of the choice of $y$, even without assuming separability or 
finite. For any other $y'$ over $x$, we can choose an extension $K / \kappa(x)$ containing both $\kappa(y)$ and $\kappa(y')$, leading to a map 
$\Spec(K) \to Y \times_X Y$, to which we can apply the cocycle condition to 
find that $f_{y} = f_{y'}$ in $K$ via the chosen embeddings, and therefore 
they also agree in $\kappa(x)$.

It remains to show that the tupple $(g_{\kappa(x)})_{x\in X}$ defines a
section of $\Oh_\val(X)$. We continue with the criterion of Lemma~\ref{lemm:rsF}. 
Let $R\subset \kappa(x)$ be a valuation ring over $X$.
Let $y\in Y$ again be a preimage of $x$. There is a valuation ring $S\subset\kappa(y)$ such that $R=S\cap \kappa(x)$, \cite[Ch.VI §3, n.3, Prop.5]{Bou64}. By the valuative criterion for properness, $S$ is a $Y$-valuation on $\kappa(y)$. As $f \in \Oh_\val(Y)$, the element $g_{\kappa(x)} = f_{\kappa(y)}$ is in $S \subseteq \kappa(y)$, but it is also in $\kappa(x)$, so $g_{\kappa(x)}$ is in $R \subseteq \kappa(x)$. Therefore, let us write it as $g_R$. Let $x_0$ (resp. $y_0$) be the image of $\Spec(R/\m_R) \to X$ (resp. $\Spec(S/\m_S) \to Y)$. To finish, we must show that $g_{R}$ agrees with $g_{x_0}$ in $R / \m$. But 
$g_{R}|_{R/\m_R}|_{S/\m_S} = g_{R}|_S|_{S/\m_S} = f_S|_{S/\m_S} = f_{S/\m_S} = f_{y_0}|_{S/\m_S} = g_{x_0}|_{y_0}|_{S/\m_S} = g_{x_0}|_{R/\m_R}|_{S/\m_S}$ 
and $R/\m_R \to S/\m_S$ is injective, so $g_R|_{R/\m_R} = g_{x_0}|_{R/\m_R}$.
\end{proof}

\begin{rema}
Let us point out where the above proof breaks for $\Omega^n$. The argument for injectivity is actually valid because we can choose the preimage $y$ of $x$ to be separable. The construction of each $g_{\kappa(x)}$ is fine, as well as independence of the choice of $y$ 
used in the construction. However, for $y$ over $x$ which are not separable, we cannot necessarily check that $g_{x}|_{y} = f_{y}$. Choosing $y / x$ 
separable in the last paragraph, we can show that each $g_{\kappa(x)}$ lifts to any $X$-valuation ring of $\kappa(x)$, but we cannot ensure that 
$R/\m_R \to S/\m_S$ is separable, nor its image $x_0 \to y_0$, so we cannot check that we have a well-defined section.

In fact, not being able to control this kind of ramification is precisely why the $\sdh$-topology is not suitable for working with differential forms, cf. \cite[Example 6.5]{HKK}.

On the other hand, Proposition~\ref{prop:sdh_val} is valid for any representable presheaf $h^Y$ for any scheme $Y$. Moreover, using the same proof, we can show that $\Omega^n_\val(X) {=} \Omega^n_\sdh(X)$ whenever $\dim X \leq n$. 
\end{rema}

\begin{prop} \label{prop:sdh_dvr}
$\Oh_\sdh=\Oh_\dvr$.
\end{prop}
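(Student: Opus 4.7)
The plan is to factor the equality through the intermediate presheaf $\Oh_\val$: by Proposition~\ref{prop:sdh_val} we already have $\Oh_\sdh = \Oh_\val$, so it suffices to prove $\Oh_\val = \Oh_\dvr$. The inclusion of full subcategories $\dvr(X) \subseteq \val(X)$ induces a natural restriction map $\Oh_\val(X) \to \Oh_\dvr(X)$, and the goal is to show this is an isomorphism.

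Injectivity is the easy direction. Every residue field $\kappa(x)$ of a point $x \in X$ lies in $\dvr(X)$, realised as the local ring $\Oh_{\overline{\{x\}}, x}$ at the (regular, codimension-zero) generic point of the finite-type closure $\overline{\{x\}} \subseteq X$. By Lemma~\ref{lemm:rsF} any $\Oh_\val$-section is determined by its values $s_x \in \kappa(x)$, so a section restricting to zero on $\dvr(X)$ is itself zero.

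For surjectivity, given $t \in \Oh_\dvr(X)$, set $s_x := t_{\kappa(x)}$ and verify the Lemma~\ref{lemm:rsF} compatibility. Since $\Oh$ commutes with filtered colimits and satisfies étale descent, the same reduction as in Proposition~\ref{prop:omega_equal} lets us check the conditions only for rank-one $X$-valuation rings. The most conceptual approach is then to invoke Proposition~\ref{prop:ehSN} to translate the question into showing $\Oh(X^{\sn}) = \Oh_\dvr(X)$: the inclusion $\Oh(X^{\sn}) \subseteq \Oh_\dvr(X)$ is immediate because every $R \in \dvr(X)$ is normal, hence seminormal, so the structure map $\Spec(R) \to X$ factors through $X^{\sn}$; for the reverse inclusion, restricting $t$ to DVRs at codimension-one regular points of the normalisation $\widetilde{X} \to X$ places $t$ in $\bigcap_v R_v = \Oh(\widetilde{X})$ by Krull's theorem, after which it remains to show that $\Oh(X^{\sn}) \subseteq \Oh_\dvr(X) \subseteq \Oh(\widetilde{X})$ is a subintegral extension.

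The main obstacle is this subintegrality step: one must show, for every pair of points $y_1, y_2 \in \widetilde{X}$ lying over a common $x \in X$, that the images of $t$ in $\kappa(y_1)$ and $\kappa(y_2)$ both equal the image of $s_x = t_{\kappa(x)}$. This should be enforced by choosing suitable DVRs in $\dvr(X)$ whose special fibres meet the $y_i$, and invoking the cocycle compatibilities in $\Oh_\dvr$ together with Lemma~\ref{lemm:seminormalProperties}(\ref{lemm:seminormalProperties:subInt}). Alternatively, one may short-circuit the whole argument by appealing directly to Remark~\ref{rem:all_n=0}, which establishes $\Omega^0_\val = \Omega^0_\dvr$ by essentially the same mechanism.
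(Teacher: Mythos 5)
Your overall strategy---reduce via Proposition~\ref{prop:sdh_val} to proving $\Oh_\val=\Oh_\dvr$---is legitimate, and the injectivity half is fine, but the surjectivity half contains the entire difficulty of the statement and you have not proved it. Two specific problems. First, the appeal to ``the same reduction as in Proposition~\ref{prop:omega_equal}'' does not help: that proposition reduces $\val$ to $\valone$ (rank $\leq 1$ valuation rings), but a rank-one valuation ring is in general neither discrete nor essentially of finite type, so it is nowhere near being an object of $\dvr(X)$. The gap between $\dvr$ and $\val$ is precisely the gap that Remark~\ref{rem:comparison} flags as open for $\Omega^n$ with $n\geq 1$; nothing in Section~3 closes it formally. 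Second, the step you yourself call ``the main obstacle''---showing that a $\dvr$-compatible family satisfies the agreement condition at all pairs of points of $\widetilde X$ over a common $x\in X$, i.e.\ that $\Oh_\dvr(X)\subseteq\Oh(X^{\sn})$ is subintegral---is exactly the content of the proposition, and ``this should be enforced by choosing suitable DVRs'' is not an argument. (It is plausibly completable by chains of codimension-one specialisations through successive normal models, but that work is not done.) Finally, the proposed short-circuit via Remark~\ref{rem:all_n=0} is circular: that remark begins with ``Hence'' and is a summary drawn from Proposition~\ref{prop:sdh_dvr} itself.

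For comparison, the paper avoids any direct confrontation between DVRs and general valuation rings. It observes that the proof of Proposition~\ref{prop:sdh_val} goes through verbatim with discrete valuation rings (the auxiliary valuation ring $S$ dominating $R$ in $\kappa(y)$ can be chosen discrete when $R$ is), so $\Oh_\dvr$ is itself an $\sdh$-sheaf. Since every $X\in\Schft$ is $\sdh$-locally smooth by de Jong's alterations, the map of $\sdh$-sheaves $\Oh_\sdh\to\Oh_\dvr$ need only be checked on smooth varieties, where both sides equal $\Oh(X)$ (the right-hand side by \cite[Remark~4.3.3]{HKK}). If you want to keep your route, you must supply a genuine proof that $\Oh_\dvr(X)=\Oh(X^{\sn})$; as written, the proposal assumes what it needs to prove.
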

\begin{proof}The same arguments as in the last proof show that
$\Oh_\dvr$ has $\sdh$-descent. (In the above notation: if $
R$ is a discrete valuation ring, then $S$ can also be chosen as a discrete
valuation ring).
As pointed out before, any $X\in\Schft$ is smooth locally for the
$\sdh$-topology. Hence it suffices to compare the values on smooth
varieties. In this case we have on the one hand $\Oh_\sdh(X)=\Oh_\val(X)=\Oh(X)$,
on the other hand $\Oh_\dvr(X)=\Oh(X)$ by see \cite[Remark~4.3.3]{HKK}. 
\end{proof}

\begin{rem}\label{rem:all_n=0}Hence we have
\[ \Oh_\rh = \Oh_\cdh = \Oh_\eh = \Oh_\val = \Oh_\dvr = \Oh_\sdh.\]
However, in positive characteristic $\Oh_\h\neq \Oh_\val$ because $\Oh_\val$ does not have descent for Frobenius covers. Cf. Proposition~\ref{prop:ehSN}, Remark~\ref{rem:OhIsPerfectClosure}.
\end{rem}

The following property is well-known for the ordinary structure sheaf under the assumption that $Y$ is normal. It will be useful in connection with cohomological descent questions, cf. \cite{HK}.

\begin{prop}\label{prop:conn_iso}Let $Y'\to Y$ be a $\cdp$-morphism in $\Schft$ with geometrically connected 
fibres. Then
\[ \Oh_\eh(Y)\to \Oh_\eh(Y')\]
is an isomorphism.
\end{prop}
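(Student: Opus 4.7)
The plan is to combine the identifications $\Oh_\eh = \Oh_\val = \Oh((-)^{\sn})$ (Theorem~\ref{thm:compare} and Proposition~\ref{prop:ehSN}) with $\eh$-descent. Since $Y' \to Y$ is a $\cdp$-morphism, hence an $\eh$-cover, $\eh$-descent for $\Oh_\eh$ gives the equalizer
\[ \Oh_\eh(Y) = \eq\bigl( \Oh_\eh(Y') \rightrightarrows \Oh_\eh(Y' \times_Y Y') \bigr), \]
so injectivity is automatic, and it remains to show that every $f \in \Oh_\eh(Y')$ satisfies the cocycle condition $\pi_1^* f = \pi_2^* f$.

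By Lemma~\ref{lemm:rsF} applied to $\Oh_\val$, this equality can be verified on residue fields: for $z \in Y' \times_Y Y'$ with projections $y_1', y_2' \in Y'$ lying over a common $y \in Y$, I will show $f_{y_1'}|_{\kappa(z)} = f_{y_2'}|_{\kappa(z)}$. The essential input is the geometry of the fibre: the scheme $F := ((Y'_y)_{\red})^{\sn}$ is proper, reduced, seminormal, and connected over $\kappa(y)$, because reduction and seminormalization are homeomorphisms on underlying topological spaces so connectedness is inherited from the fibre $Y'_y$. Hence $K := H^0(F, \Oh_F)$ is a finite field extension of $\kappa(y)$, and the \emph{geometric} connectedness hypothesis forces $K \otimes_{\kappa(y)} \overline{\kappa(y)}$ to have no nontrivial idempotents, so $K/\kappa(y)$ is purely inseparable.

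Viewing $f$ as a morphism $(Y')^{\sn} \to \AA^1_k$ via Proposition~\ref{prop:ehSN}, and using the universal property of seminormalization to lift the natural $F \to Y'$ to $F = F^{\sn} \to (Y')^{\sn}$, the pullback of $f$ is a ``constant'' $c \in \Oh(F) = K$, and $f_{y'} = c|_{\kappa(y')}$ for every point $y' \in F$, in particular for $y_1', y_2'$. The two composites
\[ K \to \kappa(y_i') \to \kappa(z) \qquad (i = 1, 2) \]
are both $\kappa(y)$-algebra homomorphisms from the purely inseparable extension $K/\kappa(y)$ into the field $\kappa(z)$; such a homomorphism is unique whenever it exists, so the composites coincide, giving $f_{y_1'}|_{\kappa(z)} = c|_{\kappa(z)} = f_{y_2'}|_{\kappa(z)}$, as required. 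The principal obstacle is using the geometric-connectedness hypothesis to extract pure inseparability of $K/\kappa(y)$; the remaining steps---the descent sequence, the residue-field description, and uniqueness of embeddings of purely inseparable extensions---are essentially formal.
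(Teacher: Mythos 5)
Your plan is correct, but it takes a genuinely different route from the paper. The paper argues by induction on $\dim Y$: it feeds the normalisation $\widetilde{Y}\to Y$ into the abstract blow-up sequence to reduce to the case $Y$ normal, and then applies Stein factorisation to $\widetilde{Y}'^{\sn}\to\widetilde{Y}$, using that a finite, birational, completely decomposed morphism onto a normal scheme is an isomorphism. You instead verify the \v{C}ech cocycle condition for the $\eh$-cover $Y'\to Y$ directly on residue fields via Lemma~\ref{lemm:rsF}, the key input being that $K=H^0(F,\OO_F)$ for $F=((Y'_y)_{\red})^{\sn}$ is a purely inseparable (in fact trivial) extension of $\kappa(y)$. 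Your route avoids both the dimension induction and Stein factorisation, and isolates exactly where connectedness of the fibres enters; moreover, since $Y'\to Y$ is completely decomposed, the fibre $F$ has a point with residue field $\kappa(y)$, so once $K$ is known to be a field you get $K=\kappa(y)$ outright and only need plain (not geometric) connectedness. One small point to tighten: to deduce that $K\otimes_{\kappa(y)}\overline{\kappa(y)}$ has no nontrivial idempotents from geometric connectedness of $Y'_y$, you need $F_{\overline{\kappa(y)}}$ (not merely $F$) to be connected, i.e., you need $F\to Y'_y$ to be a \emph{universal} homeomorphism rather than just a homeomorphism; this is true (finite completely decomposed homeomorphisms are integral and radicial), but ``reduction and seminormalisation are homeomorphisms'' as stated does not quite deliver it. The rational-point observation above sidesteps this entirely.
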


\begin{proof}
We use induction on the dimension of $Y$. Note the hypotheses are preserved by all base-changes along $Y$. Without loss of generality, all schemes are assumed to be reduced. If $\dim Y = -1$ (i.e., $Y = \varnothing$), then $Y' = \varnothing$ and the proposition follows from $\OO_\eh(\varnothing) = 0$. In dimension $\geq 0$, let $\pi:\widetilde{Y}$ be the normalisation of $Y$. Let $Z\subset Y$ be the locus  where
$\pi$ fails to be an isomorphism and $E=\pi^{-1}Z$ the preimage. 
Let $\widetilde{Y}'$, $Z'$ and $E'$ be the basechanges to $Y'$. We have a commutative
diagram of blow-up sequences
\[\begin{xy}\xymatrix{
0\ar[r]&\Oh_\eh(Y')\ar[r]&\Oh_\eh(\widetilde{Y}')\oplus \Oh_\eh(Z')\ar[r]&\Oh_\eh(E')\\
0\ar[r]&\Oh_\eh(Y)\ar[r]\ar[u]&\Oh_\eh(\widetilde{Y})\oplus \Oh_\eh(Z)\ar[r]\ar[u]&\Oh_\eh(E)\ar[u]
}\end{xy}\]
By the induction hypothesis, it now suffices to prove $\Oh_\eh(\widetilde{Y}) \cong \Oh_\eh(\widetilde{Y}')$. 
By Proposition~\ref{prop:ehSN} $\OO_\eh((-)^\sn) = \OO((-)^\sn)$, 
so it suffices, in fact, to show that $\OO(\widetilde{Y}'^{\sn}) \to \OO(\widetilde{Y})$ is an isomorphism. 

Since $\widetilde{Y}'^{\sn} \to \widetilde{Y}'$ is a proper homeomorphism, we are dealing with a proper surjective morphism $\widetilde{Y}'^{\sn} \to \widetilde{Y}$ to a normal scheme. In this situation, Stein factorisation \cite[Prop.4.3.1]{EGAIII1} gives us a factorisation $\widetilde{Y}'^{\sn} \to W \to \widetilde{Y}$ such that $\OO(\widetilde{Y}'^{\sn}) \cong \OO(W)$ and such that $W \to \widetilde{Y}$ is finite. 
Since $\widetilde{Y}'^{\sn} \to \widetilde{Y}$ has connected fibres, so does $W \to \widetilde{Y}$ \cite[Cor.4.3.3]{EGAIII1}. We also deduce that because $\widetilde{Y}'^{\sn}$ is reduced so is $W$, and because $\widetilde{Y}'$ is completely decomposed, so is $\widetilde{Y}'^{\sn} \to \widetilde{Y}$ and
 therefore $W \to \widetilde{Y}$ also. In particular, since $W \to \widetilde{Y}$ is completely decomposed and $W$ reduced, the fibre over the generic point of $\widetilde{Y}$ must be an isomorphism. Replacing $W$ with its normalisation, $\widetilde{W} \to \widetilde{Y}$, we have a finite birational morphism between normal schemes. This can only be an isomorphism, so $W \to \widetilde{Y}$ was an isomorphism, and $\OO(W) = \OO(\widetilde{Y})$. 

To summarise, $\OO_\eh(\widetilde{Y}') \cong \OO_\eh(\widetilde{Y}'^{\sn}) \cong \OO(\widetilde{Y}'^{\sn}) \cong \OO(W) \cong \OO(\widetilde{Y})$.
\end{proof}

\subsection{Top degree differentials}
Recall the notion of a birational morphism of schemes in the non-reduced case from Section~\ref{ssec:not}

\begin{prop} \label{prop:top} 
Let $X/k \in \Schft$ be of dimension at most $d$. 
\begin{enumerate}
\item $\Omega^d_\cdh(X) $ is a birational invariant, i.e., it remains unchanged under proper surjective birational morphisms. 
\item \label{prop:top:3} We have
\[ \Omega^d_\cdh(X)=\varinjlim_{X' \to X}\Omega^d(X')\]
where the colimit is over proper surjective birational morphisms $X' \to X$.
\item
Elements of $\Omega^d_\cdh(X)=\Omega^d_\val(X)$ are determined by their value on the total ring of fractions $Q(X)$, and the integrality condition only needs to be tested
on valuations of the function fields. In particular, it is torsion-free. 
\item More precisely, if $X$ is irreducible of dimension $d$, then, cf. Lemma~\ref{lemm:rsF},
\begin{equation} \label{equa:Omegaddimdintegral} 
\Omega^d_\val(X) = \bigcap_{\Spec(k(X)) {\to} \Spec(R) {\to}X \in \rval(X)} \Omega^d(R) 
\end{equation}
In general, if $X=X_1\cup \dots\cup X_N$ is the decomposition into irreducible
component, then
\[ \Omega^d_\val(X)=\bigoplus_{i=1}^N\Omega^d_\val(X_i).\]
\[\Omega^d_\val(X)=\Omega^d_\val(\widetilde{X}).\]

\end{enumerate}
\end{prop}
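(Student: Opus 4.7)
The plan is to prove (4) first, deduce (3) immediately, and then use these to handle (1) and (2) in that order.

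The central observation is that for any field $F/k$ with $\trdeg(F/k) < d$ the module $\Omega^d_{F/k}$ vanishes, since $k$ is perfect. Combined with Proposition~\ref{valuations}, this yields two vanishings when $\dim X \leq d$: any point $x \in X$ of positive codimension in its irreducible component satisfies $\Omega^d(\kappa(x)) = 0$, and any $R \in \val(X)$ of $\rank R \geq 1$ has $\trdeg(R/\m) \leq d - \rank R \leq d-1$, hence $\Omega^d(R/\m) = 0$.

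For (4) in the irreducible case, I would apply Lemma~\ref{lemm:rsF} to the torsion-free presheaf $\Omega^d$ (Theorem~\ref{thm:hypV}). A compatible system $(s_x)_{x \in X}$ must have $s_x = 0$ for $x \neq \eta$ by the first vanishing, so it is determined by $s_\eta \in \Omega^d(\kappa(\eta))$; the compatibility $s_R|_{R/\m} = s_x|_{R/\m}$ at the special point of $R$ becomes vacuous by the second. The remaining integrality condition then reads $s_\eta \in \Omega^d(R) \subseteq \Omega^d(\kappa(\eta))$ for every $R$ over $X$ with $\Frac(R) = \kappa(\eta)$. Iterating the cartesian square of Lemma~\ref{lem:omega_cartesian} along a chain of primes in $R$, together with the same vanishings, gives $\Omega^d(R) = \Omega^d(R_\p)$ for $\p$ of height one, so rank-one valuation rings already cut out the intersection. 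For reducible $X$ every $\Spec(R) \to X$ factors uniquely through one irreducible component (as $\Spec(R)$ is irreducible), yielding the direct sum. Statement (3) follows at once from (4).

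For (1), let $f : X' \to X$ be proper, surjective, and birational. Its components dominate those of $X$ through isomorphisms on residue fields of generic points, and the valuative criterion for properness shows every morphism $\Spec(R) \to X$ from a valuation ring of a function field lifts uniquely through $f$. Hence both sides of (4) are indexed by the same data and $\Omega^d_\val(X) = \Omega^d_\val(X')$. For (2), injectivity of $\varinjlim_{X'} \Omega^d(X') \to \Omega^d_\val(X)$ follows from torsion-freeness, since both sides embed into $\prod_i \Omega^d(\kappa(\eta_i))$. For surjectivity, I would replay the argument of Proposition~\ref{lemm:rhValEpi}: quasi-compactness of the Riemann--Zariski space reduces to finitely many finite-type $X$-affines on which $\omega$ is representable, Lemma~\ref{lem:kill_torsion} produces blow-ups where $\omega$ becomes an actual algebraic form, and Nagata compactification together with the closure of $\Spec(k(X))$ in an appropriate fiber product assembles a single proper birational $X' \to X$ carrying $\omega$. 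The Zariski-gluing and $\rh$-descent steps that complicate the general case simplify dramatically here, since the exceptional locus has dimension $<d$ and therefore contributes nothing in $\Omega^d$. The main obstacle is precisely this final assembly---upgrading the $\cdp$-cover produced in Proposition~\ref{lemm:rhValEpi} into a genuine proper birational modification---and the dimension-driven vanishings are what make the gluing conditions automatic.
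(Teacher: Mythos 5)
Your treatment of (4), (3) and (1) is sound and essentially the paper's: (4) and (3) come from Lemma~\ref{lemm:rsF} together with the vanishing of $\Omega^d$ on fields of transcendence degree $<d$ and torsion-freeness on valuation rings (Theorem~\ref{thm:hypV}); your rank-one refinement via Lemma~\ref{lem:omega_cartesian} is correct but not needed for the statement. For (1) the paper instead reads the isomorphism off the abstract blow-up sequence, using that $\Omega^d_\cdh$ vanishes on the exceptional locus and its preimage; your derivation from the explicit description (4) plus the valuative criterion is an acceptable variant.

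The gap is in the surjectivity half of (2). After the Riemann--Zariski compactness argument and Lemma~\ref{lem:kill_torsion} you have a proper birational $V\to X$ with an open cover $\{V_i\}$ and honest algebraic forms $\omega_{V_i}\in\Omega^d(V_i)$ representing $\omega$. To land in $\varinjlim_{X'}\Omega^d(X')$ you must glue these to an element of $\Omega^d(V')$ for some proper birational $V'\to X$, and for that you need $\omega_{V_i}-\omega_{V_j}=0$ in $\Omega^d(V_i\cap V_j)$ itself, not merely in $\Omega^d_\val(V_i\cap V_j)$. What you actually know is only the latter: the difference vanishes at the generic point, so it is a torsion form, i.e.\ an element of $\ker\bigl(\Omega^d\to\Omega^d_\val\bigr)$, and such kernels are nonzero in general even in top degree (already $\Omega^1$ of the cusp has torsion). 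The dimension-driven vanishings do trivialise the final $\rh$-descent along the exceptional locus, but they say nothing about this kernel, so the gluing is not ``automatic''. The fix is one more round of the same machinery: apply Lemma~\ref{lem:kill_torsion} to each difference on $V_i\cap V_j$, extend the resulting blow-ups to proper birational modifications of $V$ by Nagata compactification, and pass to the closure of the generic point in their fibre product. The paper avoids the issue altogether: it sets $\widetilde{\Omega}^d(X)=\varinjlim_{X'\to X}\Omega^d(X')$, shows this presheaf is torsion-free (Theorem~\ref{theo:A3} with Theorem~\ref{thm:hypV}), hence vanishes in dimension $<d$, is a Zariski sheaf with descent for abstract blow-up squares, hence an $\rh$-sheaf; the universal property then gives $\Omega^d_\rh\to\widetilde{\Omega}^d$, birational invariance of $\Omega^d_\rh$ gives a map back, and the composites are checked to be the identity at generic points using torsion-freeness. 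Either adopt that argument or supply the missing gluing step.
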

\begin{proof} 
Note that $\Omega^d_\cdh=\Omega^d_\val$ vanishes on schemes of dimension less that $d$. Hence the first statement is immediate from the sequence
for abstract blow-up squares.

The third statement follows from Lemma~\ref{lemm:rsF}, the fact that $\Omega^n(K) = 0$ for $n >\mathrm{trdeg}(K/k)$, and the valuative criterion for properness.
The explicit formula is immediate from this.

For the second statement consider
\[ X \mapsto \widetilde{\Omega}^d(X):=\varinjlim_{X' \to X}\Omega^d(X').\]
By definition this is a birational invariant. 
We claim that $\widetilde{\Omega}^d$ is torsion-free. Note that $X'$ can always be refined by the disjoint union of its irreducible components with their reduced structure. Let $\omega$ be a torsion element of $\widetilde{\Omega}^d(X)$. It is represented by a differential form on some $X_1\to X$. After restriction to some further $X_2\to X_1$ it vanishes on a dense open subset. Then there is a proper birational morphism $X_3\to X_2$ such
that $\omega|_{X_3}=0$. This was shown in  \cite[Theorem~A.3]{HKK}
(for a recap see Theorem~\ref{theo:A3} combined with Theorem~\ref{thm:hypV}).
Hence $\omega=0$ in the direct limit. 

By torsion-freeness, we have $\widetilde{\Omega}^d(X)=0$ if the dimension of
$X$ is less than $d$. Hence $\widetilde{\Omega}^d$ is a presheaf on the category
of $k$-schemes of dimension at most $d$. It is Zariski-sheaf because
$\Omega^d$ is. It has descent for abstract blow-up squares by birational invariance and vanishing in smaller dimensions. Hence it is an $\rh$-sheaf. By the
universal property, there is a natural map
\[ \Omega^d_\rh \to \widetilde{\Omega}^d.\]
The map 
\[ \widetilde{\Omega}^d(X) = \varinjlim_{X'\to X}\Omega^d(X') \to \varinjlim_{X' \to X}\Omega^d_{\rh}(X') = \Omega^d_{\rh}(X) \]
induces a natural map in the other direction. We check that they are inverse to each other. Both sheaves are torsion-free, hence it suffices to consider
generic points where it is true. 
\end{proof}

\begin{rema}
Note that the description in Equation~(\ref{equa:Omegaddimdintegral}) can also be interpreted as the global sections on the Riemann-Zariski space $\Gamma(\RZ(X), \Omega^d_{RZ(X)/k})$.
\end{rema}

In the smooth case, this gives a formula involving only ordinary differential forms.

\begin{coro} \label{coro:mainDim}
Let $X$ be a smooth $k$-scheme of dimension $d$. Then 
\[ \Omega^d(X) = \varinjlim_{X' \to X} \Omega^d(X') \]
where the colimit is over proper birational morphisms $X' \to X$.
\end{coro}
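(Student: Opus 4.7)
The corollary is a direct consequence of two results already in the paper, so my plan is essentially to compose two identifications. First I would invoke Theorem~\ref{thm:compare}, which gives $\Omega^d(X) = \Omega^d_\val(X) = \Omega^d_\cdh(X)$ for any smooth $k$-scheme $X$. Then I would apply Proposition~\ref{prop:top}\eqref{prop:top:3} to the smooth $d$-dimensional scheme $X$, yielding
\[ \Omega^d_\cdh(X) = \varinjlim_{X' \to X} \Omega^d(X'), \]
with the colimit taken over proper surjective birational morphisms $X' \to X$. Composing these two isomorphisms gives the desired formula.

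The only minor cosmetic point to address is that the corollary states the colimit over ``proper birational morphisms'' whereas Proposition~\ref{prop:top} phrases it as ``proper surjective birational morphisms''. By the convention fixed in Section~\ref{ssec:not}, a birational morphism $X' \to X$ induces a bijection on irreducible components and isomorphisms on residue fields at generic points; combined with properness, this forces $X' \to X$ to be surjective (the image is closed and contains every generic point). So the two indexing systems agree, and no additional argument is needed.

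Since everything reduces to citing \ref{thm:compare} and \ref{prop:top}, there is no genuine obstacle; the only thing worth emphasising in the writeup is that the smoothness hypothesis on $X$ is used \emph{only} through Theorem~\ref{thm:compare} (to identify $\Omega^d(X)$ with $\Omega^d_\cdh(X)$), while Proposition~\ref{prop:top} handles the right-hand side without any regularity assumption. This makes the corollary a clean illustration of how the main comparison theorem transports a birational-style statement, natural on the $\cdh$-side, back to a statement purely about Kähler differentials on smooth schemes.
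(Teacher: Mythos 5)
Your proposal is correct and is exactly the paper's intended argument: the introduction explicitly states that the corollary is obtained by combining the smooth comparison $\Omega^d(X)=\Omega^d_\cdh(X)$ of Theorem~\ref{thm:compare} with the top-degree formula of Proposition~\ref{prop:top}. Your side remark that properness plus the birationality convention of Section~\ref{ssec:not} forces surjectivity, so the two indexing categories coincide, is also correct and harmless to include.
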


\begin{prop}\label{prop:sdh_top}
On the category of $k$-schemes of dimension at most $d$, we have
\[ \Omega^d_\val = \Omega^d_\eh=\Omega^d_\sdh=\Omega^d_\dvr.\]
\end{prop}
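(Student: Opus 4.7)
The plan decomposes into three identifications. The first, $\Omega^d_\val = \Omega^d_\eh$, is an immediate specialisation of Theorem~\ref{thm:compare} to the case $n = d$, so there is nothing to prove there.

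For $\Omega^d_\val = \Omega^d_\sdh$, my strategy is to establish $\sdh$-descent directly for $\Omega^d_\val$ on schemes of dimension at most $d$, reusing almost verbatim the proof of Proposition~\ref{prop:sdh_val}. Since $\Omega^d_\val$ is already an \'etale sheaf and the $\sdh$-topology is generated by \'etale covers together with proper separably decomposed morphisms, it suffices to verify exactness of
\[ 0 \to \Omega^d_\val(X) \to \Omega^d_\val(Y) \to \Omega^d_\val(Y\times_X Y) \]
for a proper surjective cover $Y \to X$ which is generically finite and separable. Proposition~\ref{prop:top} provides the inputs that made the $n=0$ proof work: a section of $\Omega^d_\val$ is determined by its values at generic points, and integrality need only be tested on valuation rings of the function field(s). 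Injectivity follows because at each generic point $x \in X$ we may choose a preimage $y \in Y$ with $\kappa(y)/\kappa(x)$ finite separable, so that $\Omega^d(\kappa(x)) \hookrightarrow \Omega^d(\kappa(y))$. For the cocycle condition, given a cocycle $f \in \Omega^d_\val(Y)$ I would descend $f_y$ to $\Omega^d(\kappa(x))$ using Galois invariance in a Galois closure $\lambda/\kappa(x)$ of $\kappa(y)/\kappa(x)$, exactly as in Proposition~\ref{prop:sdh_val}. Integrality of the descended form on a valuation ring $R \subset \kappa(x)$ over $X$ is then obtained by choosing a valuation ring $S \subset \kappa(y)$ over $Y$ with $R = S \cap \kappa(x)$, via \cite[Ch.VI §3, n.3, Prop.5]{Bou64}, applying the valuative criterion of properness to get $\Spec(S) \to Y$, and invoking integrality of $f$ on $S$. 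The remark following Proposition~\ref{prop:sdh_val} explicitly anticipates that this argument goes through unchanged when $\dim X \leq n$.

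For $\Omega^d_\sdh = \Omega^d_\dvr$, my plan mirrors Proposition~\ref{prop:sdh_dvr}. First, $\Omega^d_\dvr$ inherits $\sdh$-descent from the same argument as above, because if $R \subset \kappa(x)$ is a DVR then along the finite separable extension $\kappa(y)/\kappa(x)$ the extending valuation ring $S \subset \kappa(y)$ can also be taken to be a DVR. By de Jong's theorem on alterations, every $X \in \Schft$ of dimension at most $d$ is smooth $\sdh$-locally, so it suffices to compare $\Omega^d_\val$ and $\Omega^d_\dvr$ on smooth varieties of dimension at most $d$. On such a smooth $X'$, Theorem~\ref{thm:compare} gives $\Omega^d_\val(X') = \Omega^d(X')$, and $\Omega^d_\dvr(X') = \Omega^d(X')$ by \cite[Remark~4.3.3]{HKK}, completing the chain of equalities.

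The main obstacle I anticipate is the integrality step in the $\sdh$-descent verification: ensuring that a section descended generically via Galois invariance satisfies the integrality condition at \emph{every} $X$-valuation ring, not merely at generic points. What makes this work in top degree---and what \emph{fails} in intermediate degrees, as witnessed by \cite[Example~6.5]{HKK}---is the torsion-freeness of $\Omega^d_\val$ recorded in Proposition~\ref{prop:top}, which allows one to bootstrap from the generic statement to the integral one without being derailed by inseparability pathologies in the residue field extensions $R/\mathfrak{m}_R \to S/\mathfrak{m}_S$.
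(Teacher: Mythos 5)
Your proposal is correct and follows the paper's own (very terse) proof exactly: the paper simply notes that the first equality is Theorem~\ref{thm:compare} and that ``the same proofs as for Proposition~\ref{prop:sdh_val} and Proposition~\ref{prop:sdh_dvr} work,'' which is precisely what you spell out, with the right inputs cited. The only slight imprecision is in your closing diagnosis: what rescues the compatibility check at the special points is not torsion-freeness per se but the vanishing of $\Omega^d$ on the residue fields $R/\m$, which have transcendence degree $<d$ by Proposition~\ref{valuations}; this is what underlies the statement of Proposition~\ref{prop:top} that integrality need only be tested on valuations of the function fields --- a statement you do correctly invoke in the body of the argument.
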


\begin{proof}	
The first isomorphism is Theorem~\ref{thm:compare}. For the other two, the same proofs as for Proposition~\ref{prop:sdh_val} and Proposition~\ref{prop:sdh_dvr} work.
\end{proof}

\subsection{Representable sheaves}\label{ssec:representable}

Note that $\Omega^0 = \OO = \hom(-, \AA^1)$. In this section we extend our results
to all representable sheaves over a general noetherian base $S$. 
We will use the following notation for representable presheaves on
$\Schft[S]$.
\[ h^Y(-) = \hom_{\Sch[S]}(-, Y), \qquad Y \in \Schft[S]. \]
Note that this presheaf satisfies the properties of Remark~\ref{rema:hYalso}.
Notice also that the $h^Y$ are torsion free in the sense of Definition~\ref{defn:torsion-free}---this is exactly the valuative criterion for separatedness.

\begin{lemm} \label{lemm:cdpushout}
Suppose that the noetherian base scheme $S$ is Nagata. %
Let $X' {\to} X$ be a finite completely decomposed surjective morphism in $\Sch[S]$, and suppose that $X'$ is seminormal. Then the coequalisers
\[ 
\coeq(X' {\times_X} X' \rightrightarrows X') = C, \qquad
\coeq((X' {\times_X} X')^\sn \rightrightarrows X') = D, \]
exist in $\Sch[S]$, we have $D = X^{\sn}$ and the canonical morphisms $D \to C \to X$ are finite completely decomposed \emph{homeo}morphisms.
\end{lemm}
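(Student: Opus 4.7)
My plan is to reduce to the affine case, construct $C$ and $D$ as spectra of explicit equaliser rings, verify the coequaliser property, and identify $D$ with $X^\sn$. Since $X' \to X$ is finite hence affine, I cover $X$ by affines $\Spec(A)$ whose preimages in $X'$ are affines $\Spec(A')$ with $A \to A'$ finite; by Lemma~\ref{lemm:seminormalProperties}\eqref{lemm:seminormalProperties:localise}, $A'$ is seminormal (hence reduced). Both the fibre product and $(-)^\sn$ commute with this localisation, so everything is local on $X$. I set
\[ B := \eq\bigl(A' \rightrightarrows A' \otimes_A A'\bigr), \qquad \widetilde{B} := \eq\bigl(A' \rightrightarrows (A' \otimes_A A')^\sn\bigr), \]
subrings of $A'$ containing the image of $A$. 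By the $\Spec \dashv \Gamma$ adjunction these are the coequalisers of the respective pairs of projections in affine schemes; once I show $\Spec(B) \to \Spec(A)$ is a homeomorphism, $\Spec(B)$ is affine over $X$ and the affine coequaliser is automatically a coequaliser in $\Sch[S]$ (any morphism from $X'$ equalising the pair descends uniquely via the quotient topology). The pieces glue; similarly for $\widetilde{B}$.

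The main technical step is that $\Spec(B) \to \Spec(A)$ is a finite completely decomposed homeomorphism; the argument for $\widetilde{B}$ is identical. Finiteness and integrality of $A \to B$ are clear because $B \subseteq A'$ and $A \to A'$ is finite with $A$ noetherian. For any $\p \in \Spec(A)$, complete decomposition of $\Spec(A') \to \Spec(A)$ gives a prime $\p' \subset A'$ over $\p$ with $\kappa(\p') = \kappa(\p)$; then $\q := \p' \cap B$ is a prime of $B$ over $\p$ satisfying $\kappa(\p) \subseteq \kappa(\q) \subseteq \kappa(\p') = \kappa(\p)$, giving surjectivity and the residue-field statement. For injectivity on fibres, if $\p_1', \p_2' \subset A'$ both lie over $\p$, then the nonzero ring $\kappa(\p_1') \otimes_{\kappa(\p)} \kappa(\p_2')$ provides a prime $\mathfrak{r} \subset A' \otimes_A A'$ with $p_i^{*}(\mathfrak{r}) = \p_i'$ for $i = 1, 2$; for any $b \in B$ the equaliser condition $p_1(b) = p_2(b)$ yields $b \in \p_1' \iff b \in \p_2'$, so $\p_1' \cap B = \p_2' \cap B$. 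Combined with surjectivity of $\Spec(A') \to \Spec(B)$ (from integrality), this forces bijectivity. An integral morphism is closed, so a closed continuous bijection is a homeomorphism.

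To conclude $D = X^\sn$, observe that $\widetilde{B}$ is seminormal by Lemma~\ref{lemm:seminormalProperties}\eqref{lemm:seminormalProperties:limits} as an equaliser of the seminormal rings $A'$ and $(A' \otimes_A A')^\sn$, and reduced as a subring of $A'$. The structural map $A_\red \hookrightarrow \widetilde{B}$ is subintegral by the previous paragraph, so the universal property Lemma~\ref{lemm:seminormalProperties}\eqref{lemm:seminormalProperties:universal} produces a unique factorisation $A_\red \to A^\sn \to \widetilde{B}$; the second map is integral and bijective on spectra with trivial residue-field extensions. Injectivity of $A^\sn \to \widetilde{B}$ follows because its kernel would correspond to a proper closed subset of $\Spec(A^\sn)$, which the surjective $\Spec(\widetilde{B}) \to \Spec(A^\sn)$ cannot avoid, forcing the kernel to lie in the nilradical of the reduced $A^\sn$. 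Hence $A^\sn \hookrightarrow \widetilde{B}$ is a subintegral extension of the seminormal ring $A^\sn$, which must be an isomorphism by Lemma~\ref{lemm:seminormalProperties}\eqref{lemm:seminormalProperties:subInt}. Globalising gives $D \cong X^\sn$, and $D \to C$ is a finite completely decomposed homeomorphism by two-out-of-three from $D \to X$ and $C \to X$. The principal obstacle is the injectivity step on spectra, together with the technical point that $A$ need not be reduced; both are handled by everything relevant on the $B$-side being reduced since $A'$ is.
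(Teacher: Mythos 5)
Your proof is correct, and for the key step it takes a genuinely different route from the paper's. The paper builds both coequalisers in one stroke as locally ringed spaces via Ferrand's description (set coequaliser, quotient topology, equaliser of pushforward structure sheaves), reads off the homeomorphism property from finiteness, and then identifies $D$ with $X^{\sn}$ by recycling the already-proved sheaf-theoretic computation $\OO_{\rh}(U)=\OO(U^{\sn})$ of Proposition~\ref{prop:ehSN}: since $X'\to X$ is an $\rh$-cover and $X'$, $(X'\times_X X')^{\sn}$ are seminormal, the equaliser sheaf defining $\OO_D$ is literally $\OO_{X^{\sn}}$ on every open, and $C$ is a scheme because $\OO_X\subseteq\OO_C\subseteq\OO_{X^{\sn}}$. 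You instead argue affine-locally and by hand: lying over plus the prime $\mathfrak{r}$ above a pair $(\p_1',\p_2')$ supplied by $\kappa(\p_1')\otimes_{\kappa(\p)}\kappa(\p_2')\neq 0$ give that $\Spec(B)\to\Spec(A)$ is a finite completely decomposed homeomorphism, and you then identify $\widetilde{B}$ with $A^{\sn}$ purely through Swan's lemmas --- $\widetilde{B}$ is seminormal as a limit of seminormal rings (which is exactly why the statement seminormalises $X'\times_X X'$), so $A^{\sn}\to\widetilde{B}$ is a subintegral extension and hence an isomorphism by Lemma~\ref{lemm:seminormalProperties}\eqref{lemm:seminormalProperties:subInt}. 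What your route buys is logical independence from Proposition~\ref{prop:ehSN} and the valuation-ring machinery behind it; what the paper's buys is brevity. Two points deserve an explicit line in your write-up: invoking Lemma~\ref{lemm:seminormalProperties}\eqref{lemm:seminormalProperties:subInt} requires the extension to sit inside $Q(A^{\sn})$, which holds because $A^{\sn}$ is module-finite over $A$ ($S$ Nagata), hence noetherian, and because your homeomorphism statement matches up the minimal primes and their residue fields of $A^{\sn}$ and of the reduced ring $\widetilde{B}$; and the finiteness of $B$ and $\widetilde{B}$ over $A$ uses noetherianity of $A$ (a submodule of a finite module), which is harmless in the intended finite-type-over-Nagata context but should be said.
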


\begin{proof}
Using the description \cite[Sco.4.3]{Fer03}, one easily constructs the coequaliser in the category of locally ringed spaces by taking the coequaliser in the category of sets, equipping it with the quotient topology, and the equaliser of the direct images of the structure sheaves. Using this description, one readily deduces from $X' \to X$ being finite that $D \to C \to X$ are homeomorphisms. 
Note that $X^{\sn} \to X$ is also a homeomorphism. Now since $X' \to X$ is an $\rh$-cover, it follows from Proposition~\ref{prop:ehSN}, Remark~\ref{rema:ehSN}, that $\OO(X^{\sn}) = \eq(\OO(X') \rightrightarrows \OO((X' {\times_X} X')^\sn)$. The same holds for any open $U \subseteq X$. That is, the canonical 
morphism $D \to X^{\sn}$ of locally ringed spaces is an isomorphism on topological spaces, \emph{and} structure sheaves. In other words, it is an isomorphism. 
Finally, note that we have a canonical inclusion of sheaves $\OO_X \subseteq \OO_C \subseteq \OO_{X^{\sn}}$. For any open affine $U \subseteq X$ of $X$, it 
follows that $\Spec(\OO_{X^{\sn}}(U)) \to \Spec(\OO_{C}(U)) \to \Spec(\OO_{X}(U))$ are homeomorphisms on topological spaces. Hence, $C$ is a scheme.
\end{proof}

\begin{prop}\label{prop:representable}
Suppose that the noetherian base scheme $S$ is Nagata. %
Then for every $X, Y \in \Schft[S]$ the canonical morphisms
\begin{equation} \label{equa:asgddbgas}
h^Y_\rh(X)=
h^Y(X^{\sn})
\end{equation}
are isomorphisms. 
The natural maps
\begin{equation} \label{equa:jksadf}
 h^Y_\rh\to h^Y_\cdh\to h^Y_\eh\to h^Y_\sdh\to h^Y_\val 
 \end{equation}
are isomorphisms of presheaves on $\Schft[S]$.
\end{prop}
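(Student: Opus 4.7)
The proposition consists of two claims. For the chain \eqref{equa:jksadf}: the representable presheaf $h^Y$ is an étale sheaf and commutes with filtered colimits (since $Y \in \Schft[S]$ is of finite presentation over the noetherian base $S$), so it satisfies the hypotheses (Co) and (Et) of Remark~\ref{rema:hYalso}. Theorem~\ref{thm:compare} then immediately yields $h^Y_\rh \cong h^Y_\cdh \cong h^Y_\eh \cong h^Y_\val$. The remaining isomorphism $h^Y_\val \cong h^Y_\sdh$ follows because the proof of Proposition~\ref{prop:sdh_val}, that $\OO_\val$ satisfies $\sdh$-descent, adapts verbatim to any representable $h^Y$, as explicitly noted in the remark immediately after that proposition; this exhibits $h^Y_\val$ as already being an $\sdh$-sheaf and so closes the chain.

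For equation \eqref{equa:asgddbgas}, since $h^Y_\rh$ is an $\rh$-sheaf and $X^{\sn} \to X$ is a completely decomposed homeomorphism, Lemma~\ref{lem:sn-descent} gives $h^Y_\rh(X) \cong h^Y_\rh(X^{\sn})$; it therefore suffices to show $h^Y(X) \to h^Y_\val(X)$ is bijective when $X = X^{\sn}$ is seminormal. Injectivity is immediate from separatedness of $Y/S$: two morphisms $f, g \colon X \to Y$ equal in $h^Y_\val(X)$ agree on every residue field $\kappa(x)$ (by restriction to rank-zero valuation rings), in particular on the generic point of each integral component of the reduced scheme $X$; separatedness then forces $f|_{X_i} = g|_{X_i}$, and hence $f = g$.

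For surjectivity, I argue by induction on $\dim X$; the case $\dim X = 0$ (where $X^{\sn}$ is a disjoint union of spectra of fields) is trivial. Reducing Zariski-locally and via the irreducible-component decomposition, I reduce to $X$ affine integral seminormal. Consider the normalisation $\widetilde X \to X$, which is finite under the Nagata hypothesis; letting $Z \subset X$ denote the non-normal locus (with reduced structure) and $E = (\widetilde X \times_X Z)_{\red}$, the square $(E, Z, \widetilde X, X)$ satisfies the hypotheses of Lemma~\ref{lemm:sncdpsubcan}, which identifies $X$ with the pushout $Z \sqcup_E \widetilde X$ in $\Sch[S]$ using the seminormality of $X$. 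Consequently $h^Y(X) = h^Y(Z) \times_{h^Y(E)} h^Y(\widetilde X)$, while $\rh$-descent for the $\rh$-sheaf $h^Y_\val$ over the associated abstract blow-up square gives $h^Y_\val(X) = h^Y_\val(Z) \times_{h^Y_\val(E)} h^Y_\val(\widetilde X)$. The inductive hypothesis, combined with Lemma~\ref{lem:sn-descent} to pass through seminormalisations when $Z$ or $E$ fail to be seminormal, identifies the $Z, E$ contributions. The remaining term $h^Y(\widetilde X) = h^Y_\val(\widetilde X)$ is the normal case, handled directly using the characterisation $\OO_{\widetilde X, x} = \bigcap R$ over valuation rings of the function field dominating $\OO_{\widetilde X, x}$: a compatible family of morphisms $\Spec R \to Y$ prescribed by an element of $h^Y_\val(\widetilde X)$ assembles---first for affine $Y = \Spec B$ into a ring map $B \to \OO_{\widetilde X, x}$, then for general separated $Y$ by gluing via the separatedness---into a scheme morphism $\widetilde X \to Y$. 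The main obstacle is this normal case together with the careful comparison of the two fibre products when $Z$ and $E$ themselves lack seminormality.
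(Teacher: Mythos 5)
Your handling of the chain \eqref{equa:jksadf} is fine and essentially the paper's: Theorem~\ref{thm:compare} for $\rh,\cdh,\eh,\val$, and the remark following Proposition~\ref{prop:sdh_val} for $\sdh$. For \eqref{equa:asgddbgas}, however, you take a genuinely different route. The paper never touches the conductor square: it shows $h^Y_\red$ is $\rh$-separated, writes $h^Y_\rh(X)$ as a colimit of \v{C}ech $\check{H}^0$'s over $\cdp$-covers, pushes a cocycle on $X'\to X$ down to $X''=\uSpec p_*\OO_{X'}$, and then uses Lemma~\ref{lemm:cdpushout} to replace $X''$ by a completely decomposed homeomorphism over $X$, of which $X^{\sn}$ is the initial one. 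Your architecture (injectivity from separatedness; induction on dimension; Ferrand pushout along the conductor square via Lemma~\ref{lemm:sncdpsubcan}; reduction to the normal case) is a legitimate, more classical alternative, and the reductions do go through --- though the fibre-product description $h^Y_\val(X)=h^Y_\val(Z)\times_{h^Y_\val(E)}h^Y_\val(\widetilde X)$ for a sheaf of \emph{sets} needs a word about why the $\widetilde X\times_X\widetilde X$ term of the \v{C}ech condition is implied by the $E$-term.

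The genuine gap is exactly where you flag ``the main obstacle'': the normal case $h^Y(\widetilde X)\to h^Y_\val(\widetilde X)$ is asserted, not proved, for non-affine $Y$. For affine $Y=\Spec(B)$ your argument works and in fact collapses to Proposition~\ref{prop:ehSN}: $h^{\Spec(B)}_\val(\widetilde X)=\hom(B,\varprojlim R)=\hom(B,\OO_\val(\widetilde X))=\hom(B,\OO(\widetilde X))$ since normal implies seminormal. But for general separated $Y$, ``gluing via the separatedness'' does not suffice: separatedness gives \emph{uniqueness} of a gluing, whereas what is missing is the \emph{existence} of an open cover $\widetilde X=\bigcup U_j$ on which the section lands in affines $V_j\subseteq Y$. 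Concretely, a section $s\in h^Y_\val(\widetilde X)$ defines a map of sets $\phi:\widetilde X\to Y$, $x\mapsto s_{\kappa(x)}(\mathrm{pt})$, and one checks with valuation rings that $\phi$ preserves specialisations and that $s_R(\p)=\phi(\textrm{image of }\p)$ for every $\Spec(R)\to\widetilde X$ in $\val(\widetilde X)$; but specialisation-stability of $\phi^{-1}(Y\setminus V_j)$ does not make it closed (one also needs constructibility), so the openness of $\phi^{-1}(V_j)$ is precisely the point at issue and is not addressed. One way to close it: Theorem~\ref{thm:compare} already gives $h^Y_\val(\widetilde X)=h^Y_\rh(\widetilde X)$ (surjectivity being Proposition~\ref{lemm:rhValEpi}, proved for all $\Fh$ as in Remark~\ref{rema:hYalso} via quasi-compactness of the Riemann--Zariski space), so the normal case reduces to descending a cocycle along a $\cdp$-cover refined by an integral $X'_0\to\widetilde X$ proper birational; there $\pi_*\OO_{X'_0}=\OO_{\widetilde X}$ by normality, and the topological factorisation is automatic because $\pi$ is a closed surjection. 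Without some such argument the induction has no base and the proof is incomplete.
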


\begin{proof}
We claim that $h^Y_\red$ is $\rh$-separated where
\[ h^Y_\red(-) = h^Y((-)_\red). \]
Let $f,g\in h^Y_\red(X)$ with
$f=g$ in $h^Y_\rh(X)$. In particular, $f_\eta=g_\eta$ at all generic points of $X$. But as $X_\red$ is reduced, this implies $f=g$. Since $h^Y_\red$ is a Zariski sheaf, in light of the factorisation of Remark~\ref{rem:defi:rhetc}\eqref{rem:defi:rhetc:refine}, we have 
\[ h^Y_\rh(X) = h^Y_\cdp(X) \]
when $X$ is reduced, cf. \cite[Prop.3.4.8(3)]{Kel12}, and hence, in general, as both $h^Y_\rh$ and $h^Y_\cdp$ are unchanged by reducing the structure sheaf.


By $\cdp$-separatedness of $h^Y_\red$ we have 
\begin{equation} \label{equa:lkhja}
h^Y_{\cdp}(X) = \varinjlim_{\cdp} \check{H}^0(X' / X, h^Y_\red) 
\end{equation}
where the colimit is over all $\cdp$-covers $p: X' {\to} X$. We claim that 
\begin{equation} \label{equa:lkhjagqwre}
\varinjlim_{\textrm{comp.dec.homeo.}} \check{H}^0(X' / X, h^Y_\red)  \to \varinjlim_{\cdp} \check{H}^0(X' / X, h^Y_\red) 
\end{equation}
is an isomorphism, where the first colimit is over completely decomposed homeomorphisms. Let $p: X' \to X$ be a $\cdp$-cover. For such a cover, define $X'' = \uSpec p_*\OO_{X'}$. Since $q: X' \to X''$ is proper, the topological space of $X''$ is the quotient of the topological space $X'$, via this morphism. Hence, any morphism $f' \in \eq(h^Y_\red(X') \to h^Y_\red(X' \times_Y X')$ factors through $X''$ as a a morphism of \emph{topological spaces}. But we have $q_* \OO_{X'} = \OO_{X''}$ by construction, and therefore $f$ comes from some $f'' \in h^Y_\red(X'')$. Then, since $(X'{\times_X}X')_\red \to (X''{\times_X}X'')_\red$ is dominant with reduced source, we actually have $f''\in \check{H}^0(X'' / X, h^Y_\red)$. Replacing $X''$ by $(X'')^\sn$ (this is where we use the assumption $S$ Nagata), we are in the situation of Lemma~\ref{lemm:cdpushout}, and find that $f''|_{(X'')^{\sn}}$ comes from some $g \in h^Y_{\red}(D)$ for some completely decomposed homeomorphism $D \to X$. As $(D{\times}_XD)_\red = D_\red$, we have $g \in \check{H}^0(D/X, h^Y_\red)$, so we have shown that \eqref{equa:lkhjagqwre} is surjective. It is clearly injective, as any refinement $X'' \to X' \to X$ of a completely decomposed homeomorphism by a $\cdp$-morphism is dominant. Hence, \eqref{equa:lkhjagqwre} is an isomorphism.

Finally, it follows from Lemma~\ref{lemm:seminormalProperties}\eqref{lemm:seminormalProperties:subInt} that $X^\sn \to X$ is an initial object in the category of completely decomposed homeomorphisms to $X$. So 
\[ h^Y_\cdp(X) = \varinjlim_{\textrm{comp.dec.homeo.}} \check{H}^0(X' / X, h^Y_\red) =\check{H}^0(X^{\sn} / X, h^Y_\red) = h^Y(X^{\sn}). \]

The isomorphisms \eqref{equa:jksadf}, except for $h_{\sdh}^Y$, are Theorem~\ref{thm:compare}. Injectivity of $h_{\sdh}^Y \to h_{\val}^Y$ has the same proof as injectivity of $h_{\eh}^Y \to h_{\val}^Y$, cf. the proof of Corollary~\ref{cor:injective}.

\end{proof}


\begin{rem}\ 
\begin{enumerate}
\item\label{rem:OhIsPerfectClosure}
This is analogous to the comparison,
$h^Y_{\h}(X)=h^Y(X^\sn)$ in characteristic zero, see \cite[Proposition~4.5]{HJ}, and \cite[Section~3.2]{Voe96}.

\item In fact, in general we have $h^Y_{\h}(X)=h^Y(X^\wn)$, where $Y^{\wn}$ is the absolute weak normalisation \cite[Def.B.1]{Ryd10}.
 For noetherian reduced schemes in pure positive characteristic, $Y^{\wn}$ is the perfect closure of $Y$ in $\prod_{i = 1}^n K_i^{a}$ the product of the
 algebraic closures of the function fields of its irreducible components. 
This holds much more generally: it is true for any algebraic space $Y$ locally
 of finite presentation, \cite[Thm.8.16]{Ryd10}.
\end{enumerate}
\end{rem}

In particular, the categories of representable $\rh$-, $\cdh$-, and $\eh$-sheaves on $\Schft[S]$ agree.

\begin{coro}[{cf. \cite[Thm.3.2.9]{Voe96}}]
Suppose the noetherian scheme $S$ is Nagata. 
The category of representable $\rh$-sheaves on $\Schft[S]$ is a localisation of the category $\Schft[S]$ with respect to completely decomposed homeomorphisms. In other words, it is obtained by formally inverting morphisms of the form $X_{\red} {\to} X$, then formally inverting subintegral extensions.
\end{coro}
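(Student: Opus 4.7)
The plan is to first identify the category of representable $\rh$-sheaves with the category of seminormal $S$-schemes in $\Schft[S]$, and then identify the latter with the localisation of $\Schft[S]$ at completely decomposed homeomorphisms.

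For the first identification I would use Proposition~\ref{prop:representable} directly: it gives, by Yoneda,
\[ \hom_{\Shv_\rh}(h^X_\rh, h^Y_\rh) = h^Y_\rh(X) = \hom_{\Schft[S]}(X^\sn, Y). \]
Combined with the universal property of seminormalisation (which expresses $(-)^\sn$ as a right adjoint to the inclusion of seminormal schemes into $\Schft[S]$), this shows both that $h^Y_\rh \cong h^{Y^\sn}_\rh$ (so every representable $\rh$-sheaf is represented by a seminormal scheme), and that $\hom(h^X_\rh, h^Y_\rh) = \hom(X^\sn, Y^\sn)$ whenever $X, Y$ are seminormal. Hence $Y \mapsto h^Y_\rh$ is fully faithful on seminormal schemes, giving an equivalence between the full subcategory of seminormal schemes and the category of representable $\rh$-sheaves.

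For the localisation statement, the functor $(-)^\sn : \Schft[S] \to \textrm{SemiNormalSch}_S$ is a reflection (left adjoint to the inclusion, when we view $X^\sn \to X$ as the unit). A standard property of reflective localisations says that the target is equivalent to the localisation of the source at the class of morphisms that become isomorphisms under the reflector. So it suffices to show that this class is generated (under localisation) by the completely decomposed homeomorphisms. One direction is clear: if $f : X' \to X$ is a completely decomposed homeomorphism then $f^\sn$ is an isomorphism, by the uniqueness of universal objects (both $X'^\sn$ and $X^\sn$ enjoy the same universal property with respect to $X$). Conversely, if $f^\sn$ is an isomorphism, then in the localisation at completely decomposed homeomorphisms we can replace $X$ and $Y$ by $X^\sn$, $Y^\sn$ (since $Z^\sn \to Z$ is such a homeomorphism by Proposition~\ref{prop:sn}), and then $f$ itself becomes an iso. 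No step here should be a real obstacle — the heart of the result is packaged into Proposition~\ref{prop:representable}.

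Finally, for the last clause of the corollary, I would factor an arbitrary completely decomposed homeomorphism $X' \to X$ as
\[ X' \xleftarrow{\textrm{red}} X'_{\red} \xrightarrow{\textrm{subint.}} X_{\red} \xrightarrow{\textrm{red}} X, \]
where the outer maps are of the form $Z_\red \to Z$ and the middle map is a subintegral extension of reduced schemes (it is dominant, hence injective on rings, and remains a completely decomposed homeomorphism). Consequently the localisation at completely decomposed homeomorphisms coincides with the iterated localisation obtained by first inverting all nilpotent thickenings $Z_\red \to Z$ and then inverting all subintegral extensions, which is the desired form.
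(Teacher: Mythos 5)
Your argument is correct, but it takes a genuinely different route from the paper's. The paper works inside the localisation itself: after factoring through $(-)_{\red}$, it verifies that subintegral extensions of reduced schemes form a multiplicative system, so that the calculus of fractions computes $\hom_{\mathcal{S}^{-1}(\Schft[S])_{\red}}(X,Y)=\varinjlim_{X'\to X\in\mathcal{S}}\hom(X',Y)=\hom(X^{\sn},Y)$, and then matches this with $\hom(h^X_{\rh},h^Y_{\rh})$ via Proposition~\ref{prop:representable}. You instead identify \emph{both} sides with the full subcategory of seminormal schemes: the sheaf side by full faithfulness of $Y\mapsto h^Y_{\rh}$ on seminormal schemes (again Proposition~\ref{prop:representable} plus Yoneda), and the localisation side by the general principle that a (co)reflective subcategory is equivalent to the localisation of the ambient category at the morphisms inverted by the (co)reflector. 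Your route avoids checking the Ore conditions, at the price of invoking that abstract localisation theorem and of not producing the explicit filtered-colimit description of hom-sets which the paper's argument yields for free.

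Two points should be repaired in the write-up, though neither is fatal. First, a variance slip: since $X^{\sn}\to X$ goes \emph{from} the seminormal scheme \emph{to} $X$, it is the counit of an adjunction in which $(-)^{\sn}$ is \emph{right} adjoint to the inclusion, i.e.\ the seminormal schemes form a coreflective, not reflective, subcategory; you assert "right adjoint" and "left adjoint \dots unit" in consecutive sentences. The localisation statement you need still holds for coreflective subcategories (apply the reflective case to opposite categories), so the argument survives, but as written it is self-contradictory. Second, your justification that a completely decomposed homeomorphism $f\colon X'\to X$ has $f^{\sn}$ an isomorphism ("both enjoy the same universal property with respect to $X$") silently uses that $X^{\sn}\to X$ factors uniquely through $X'$, i.e.\ that $X^{\sn}\to X$ is initial among completely decomposed homeomorphisms over $X$; this is exactly what the paper deduces from Lemma~\ref{lemm:seminormalProperties}\eqref{lemm:seminormalProperties:subInt} (seminormal rings admit no nontrivial subintegral extensions), and you should cite that rather than appeal to a bare uniqueness of universal objects. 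With those two repairs, and noting that the Nagata hypothesis is what guarantees $X^{\sn}\in\Schft[S]$ (Proposition~\ref{prop:sn}) so that the coreflection and your zigzag through $X^{\sn}$, $Y^{\sn}$ stay inside the category, the proof is complete.
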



\begin{proof}
Certainly, the functor $X \mapsto h^X_{\rh}$ factors through the localisation functor $(-)_{\red}: \Schft[S] \to (\Schft[S])_{\red}$. Now, it is straightforward to check that the class $\mathcal{S}$ of subintegral extensions of reduced schemes are a multiplicative system:
\begin{enumerate}
 \item $\mathcal{S}$ is closed under composition.
 \item For every $t :Z {\to} Y$ in $\mathcal{S}$ and $g:X{\to}Y$ in $(\Schft[S])_{\red}$ there is an $s: W {\to} X$ in $\mathcal{S}$ and $f: W{\to}Z$ in $(\Schft[S])_{\red}$ with $gs = tf$.
 \item If $f, g: X {\rightrightarrows} Y$ are parallel morphisms in $(\Schft[S])_{\red}$, then the following are equivalent:
 \begin{enumerate}
 \item $sf = sg$ for some $s \in \mathcal{S}$ with source $Y$.
 \item $ft = gt$ for some $t \in \mathcal{S}$ with target $X$.
\end{enumerate}
\end{enumerate}
(In fact, the latter two conditions are equivalent to $f {=} g$ in this case).

Since $\mathcal{S}$ is a multiplicative system, the hom sets in the localisation $\mathcal{S}^{-1}(\Schft[S])_{\red}$ are calculated by the formula 
\[ \hom_{\mathcal{S}^{-1}(\Schft[S])_{\red}}(X, Y) = \varinjlim_{X'{\to}X \in \mathcal{S}}\hom(X', Y)=\hom(X^\sn,Y),\]
 cf.~\cite{GZ67}, \cite[Thm.10.3.7]{Wei95}. But, by Proposition~\ref{prop:representable}, this is equal to $\hom_{\Shv_\rh(\Schft[S])}(h_{\rh}^X, h_{\rh}^Y)$. 
\end{proof}


\begin{coro}[{cf. \cite[Thm.3.2.10]{Voe96}}] \label{coro:hcdhleftAdjoint}
Suppose our notherian base scheme $S$ is Nagata. 
Let $\Shv_{\rh}^{\mathrm{rep}}(\Schft[S]) \subseteq \Shv_{\rh}(\Schft[S])$ denote the full subcategory of representable $\rh$-sheaves. The Yoneda functor $h^-_{\rh}: \Schft[S] \to \Shv_{\rh}^{\mathrm{rep}}(\Schft[S])$ admits a left adjoint. The counit of the adjunction is the seminormalisation $X^\sn \to X$. In particular, for any schemes $X, Y \in \Schft[S]$ with $X$ seminormal, one has 
\[ \hom(h_{\rh}^X, h_{\rh}^Y) = \hom_{\Schft[S]}(X, Y). \]
\end{coro}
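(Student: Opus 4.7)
The plan is to deduce the corollary directly from Proposition~\ref{prop:representable}, which already packages all the required computations. On objects, I define $L \colon \Shv_\rh^{\mathrm{rep}}(\Schft[S]) \to \Schft[S]$ by $L(h^X_\rh) := X^\sn$. To see this is well-defined on isomorphism classes, if $h^X_\rh \cong h^{X'}_\rh$ then Proposition~\ref{prop:representable} yields natural isomorphisms $\hom_{\Schft[S]}(X^\sn, Z) = h^Z(X^\sn) = h^Z_\rh(X) \cong h^Z_\rh(X') = \hom_{\Schft[S]}((X')^\sn, Z)$ for all $Z \in \Schft[S]$, whence $X^\sn \cong (X')^\sn$ by the Yoneda lemma.

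To extend $L$ to a functor and obtain the adjunction on the nose, let $\mathcal{N}$ denote the full subcategory of seminormal schemes in $\Schft[S]$. Fully faithfulness of the restriction $h^-_\rh|_\mathcal{N}$ is immediate from Proposition~\ref{prop:representable}: for $X, Y$ seminormal, $\hom(h^X_\rh, h^Y_\rh) = h^Y(X^\sn) = h^Y(X) = \hom(X, Y)$. Essential surjectivity onto $\Shv_\rh^{\mathrm{rep}}$ follows since the seminormalisation induces an isomorphism $h^{X^\sn}_\rh \xrightarrow{\sim} h^X_\rh$ (both evaluated at $Z \in \Schft[S]$ yield $\hom(Z^\sn, X^\sn) = \hom(Z^\sn, X)$ by the universal property of $(-)^\sn$). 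Pick a quasi-inverse $G \colon \Shv_\rh^{\mathrm{rep}} \to \mathcal{N}$ satisfying $G(h^X_\rh) = X^\sn$, and set $L := i \circ G$ where $i \colon \mathcal{N} \hookrightarrow \Schft[S]$ is the inclusion. The adjunction bijection
\[ \hom_{\Schft[S]}(L(h^X_\rh), Y) = \hom_{\Schft[S]}(X^\sn, Y) = h^Y(X^\sn) = h^Y_\rh(X) = \hom_{\Shv_\rh}(h^X_\rh, h^Y_\rh) \]
then follows from Proposition~\ref{prop:representable} combined with the classical adjunction $(-)^\sn \dashv i$, and is natural in both variables.

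Finally, the counit $\epsilon_X \colon L(h^X_\rh) = X^\sn \to X$ at $X \in \Schft[S]$ corresponds under the adjunction bijection to $\mathrm{id}_{h^X_\rh}$. Tracing through the identifications, this identity sits in $\hom(h^X_\rh, h^X_\rh) = h^X_\rh(X) = h^X(X^\sn)$, and the element it corresponds to is visibly the canonical seminormalisation map $X^\sn \to X$. The ``in particular'' clause follows by specialising the adjunction bijection to $X$ seminormal, using $X^\sn = X$. Since the corollary is essentially a formal repackaging of Proposition~\ref{prop:representable}, there is no genuine obstacle; the only mildly delicate point is assembling $L$ coherently as a functor, and this is what the equivalence-of-categories detour above handles cleanly.
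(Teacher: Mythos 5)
Your proposal is correct and rests on exactly the same chain of identifications as the paper's (one-line) proof, namely $\hom(h^X_\rh,h^Y_\rh)=h^Y_\rh(X)=h^Y(X^\sn)=\hom(X^\sn,Y)$ via the Yoneda lemma and Proposition~\ref{prop:representable}. The additional scaffolding you supply—constructing $L$ carefully through the equivalence with the subcategory of seminormal schemes and identifying the counit—is a legitimate and welcome elaboration of details the paper leaves implicit, but it is not a different argument.
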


\begin{proof}
We have $\hom_{\Shv_{\rh}}(h_{\rh}^X, h_{\rh}^Y) = h_{\rh}^Y(X) = h^Y(X^{\sn}) = \hom(X^\sn, Y)$.
\end{proof}

\section{Future directions}\label{sec:future}

\subsection{Relation to Berkovich spaces}

Berkovich \cite{berkovich} introduced a generalisation of rigid geometry in terms of seminorms. The sheaves $\Omega^n_{\valone}$ seem to be connected to Berkovich spaces. We review a very small part of the theory.

%
%

Recall that a \emph{multiplicative nonarchimedian norm} on a field $K$ is a group homomorphism $\av: K^* \to \RR_{>0}$ (the latter equipped with multiplication) such that $ |f + g| \leq \max(|f|, |g|)$. This is usually extended to a map of sets $\av: K \to \RR_{\geq 0}$ by setting $\av[0] = 0$.

\begin{key}
The set of multiplicative nonarchimedian norms on a field $K$ is the same as the set of pairs $(R, \av)$ where $R$ is a valuation ring of $K$, and $\av: K^* / R^* \to \RR_{>0}$ is an injective group homomorphism. Under this bijection, the ring $R$ corresponds to $\av^{-1}[0, 1]$. Since $\RR_{>0}$ has no convex subgroups, such valuation rings $R$ necessarily have rank 1 (or rank 0 if $R = K$).
\end{key}
\begin{proof}Obvious.\end{proof}

Let $K$ be a field equipped with a multiplicative nonarchimedian norm $||\cdot||: K^* \to \RR_{> 0}$. If $X$ is a $K$-variety, the \emph{Berkovich space} $X^\an$ of $X$, as a set, consists of pairs $(x, \av)$ where $x$ is a point of $X$, and $\av: \kappa(x) \to \RR_{>0}$ is a multiplicative nonarchimedian norm extending $||\cdot||$. This set is equipped with a structure of locally ringed space such that the projection $\pi: X^\an \to X; (x, \av) \mapsto x$ is a morphism of locally ringed spaces.

On the other hand, recall that Lemma~\ref{lemm:rsF} described $\Omega_\val^n(X)$ as
\begin{equation} \label{equa:rsFOmega}
\left \{ (s_x) \in \prod_{x \in X} \Omega^n(x) \middle | \begin{array}{c}
\textrm{ for every } k\textrm{-val.ring } R \subseteq k(x) \textrm{ of rank } \leq 1 
\textrm{ we have } \\
s_x \in \Omega^n(R) 
 \textrm{ and } 
s_x|_{R/\m} = s_y|_{R/\m}
\textrm{ where } y = \image(\Spec(R/\m)) 
\end{array} \right \}. 
\end{equation}

In particular, every section $s \in \Omega^n_{\val}(X)$ gives a function $X^\an \to \amalg_{(x, \av)} \Omega^n(\Hs(x))$ such that the image of $(x, |\cdot|)$ lands in the corresponding component. Here, $\Hs(x) = \widehat{\kappa(x)}$ is the completion of the normed field $\kappa(x)$. Similarly, we could apply $\Omega^n$ to the structure sheaf of the locally ringed space $X^\an$, and obtain a ring morphism $\Omega^n_{X^\an/K}(X^\an) \to \amalg_{(x, \av)} \Omega^n(\Hs(x))$.

\begin{ques}
Is the image of one of 
$\Omega^n_{X^\an/K}(X^\an) {\to} \amalg_{(x, \av)} \Omega^n(\Hs(x))$ 
and 
$\Omega^n_{\val}(X) {\to} \amalg_{(x, \av)} \Omega^n(\Hs(x))$ 
contained in the image of the other?
Does $\Omega^n_\val(X) {=} \Omega^n_{\cdh}(X)$ have an intrinsic description in terms of analytic spaces?
\end{ques}

\subsection{$F$-singularities and reflexive differentials} \label{sec:FsingReflexive}

Recall that reflexive differentials are defined as the double dual $\Omega^{[n]}_X = (\Omega^{n}_X)^{**}$. One of the results of \cite{HJ} is that on a 
klt base space $X$, $\cdh$-differentials recover reflexive differential forms; $\Omega_{X}^{[n]}(X) = \Omega^n_{\cdh}(X)$. 

On the other hand, there is an 
active area of research in positive characteristic birational geometry studying singularities defined via the Frobenius which are analogues of singularities 
arising in the minimal model program. These former are called $F$-singularities; log terminal, log canonical, rational, du Bois, correspond to $F$-regular, 
$F$-pure/$F$-split, $F$-rational, $F$-injective, respectively, cf. \cite[Remark 17.11]{Sch10}. Under this dictionary, Kawamata log terminal corresponds to 
strongly $F$-regular, \cite[Corollary 17.10]{Sch10}.

Consequently, we arrive at the following question.

\begin{ques}[Blickle]
If a normal scheme $X$ is strongly $F$-regular, do we have $\Omega_{X}^{[n]}(X) \cong \Omega^n_{\val}(X)$?
\end{ques}

In the special case $n = d_X = \dim X$, we have 
\[ \Omega^{d_X}_\val(X) 
= \varprojlim_{R \in \val(X)} \Omega^{d_X}(R) 
= \varprojlim_{R \in \val(X), R \subseteq k(X)} \Omega^{d_X}(R) 
= \Omega^{d_X}_{RZ(X)}(RZ(X)), \] 
and so the question becomes:

\begin{ques} \label{ques:blickTopDegree}
If a normal scheme $X$ of dimension $d_X$ is strongly $F$-regular, do we have $\Omega_{X}^{[d_X]}(X) \cong \varinjlim_{X' \to X} \Omega^{d_X}_{X'}(X')$? Here the colimit is over proper birational morphisms $X' \to X$.
\end{ques}

\begin{rema}
Under the assumption of resolution of singularities, Question~\ref{ques:blickTopDegree} is true: being strongly $F$-regular implies being pseudo-rational, which means (by definition) that for any proper birational morphism $\pi: X' \to X$ the direct image $\pi_{*}\omega_{X'}$ of the canonical dualizing sheaf $\omega_{X'}$ of $X'$ is $\omega_X$, the canonical dualizing sheaf of $X$. If $X'$ is smooth over the base, we have $\omega_{X'} = \Omega^{d_X}_{X/k}$. On the other hand, we also have $\Omega^{[d_X]}_X = \omega_X$. So if we restrict the colimit to those $X'$ which are smooth, we have
\[ \Omega^{[d_X]}_X 
= \omega_X 
= \varinjlim_{\substack{X' \to X \\ X' \mathrm{ smooth}}} \omega_{X'}(X') 
= \varinjlim_{\substack{X' \to X \\ X' \mathrm{ smooth}}} \Omega^{d_X}_{X'}(X'). 
 \]
Under the assumption of resolution of singularities, colimit over $X'$ smooth is the same as the colimit over all $X'$. 
\end{rema}

We remark that an alternative description of reflexive differentials when $X$ is klt of characteristic zero is given in \cite{GKP14} by $\Omega^{[p]}_X \cong \pi_*\Omega^p_{X'}$ where $\pi: X' \to X$ is a log resolution, and that this implies an isomorphism $\Omega^{[d_X]}_X(X) \cong \Omega^{d_X}_{\val}(X)$ (in characteristic zero) where $d_X = \dim X$.

Let us list some facts about strongly $F$-regular schemes that seem relevant to the current discussion. To begin with, a ring $R$ is \emph{strongly $F$-regular} if and only if its test ideal is equal to $R$, \cite[Proposition 16.9]{Sch10}. This implies that $R$ is Cohen-Macauley, and in particular, that the sheaf $\omega_R$ is a dualising object in the derived category.

About test ideals: It is shown in \cite{BST} that in positive characteristic, the test ideal of a normal variety $X$ over a perfect field can be defined as the intersection of the images of certain trace maps, with the intersection taken over all generically finite proper separable maps $\pi: Y \to X$ with $Y$ regular. Here, the trace map comes from the trace morphism $\pi_!\pi^! \omega_X^\bullet \to \omega_X^\bullet$. Moreover, there exists a $Y \to X$ in the indexing set, whose image agrees with this intersection.

So in the affine case the above question becomes:

\begin{ques}
Let $X$ be a normal affine variety over a perfect field, with $\Q$-Cartier canonical divisor. 
Let $\iota: X_\reg \to X$ be the inclusion of the regular locus.
Suppose that there exists a generically finite proper separable morphism $\pi: Y \to X$ with regular source such that 
\[ \OO_X \cong \image \biggl ( \pi_*\OO_Y(K_Y {-} \pi^*K_X) \stackrel{\textrm{trace}}{\longrightarrow} K(X) \biggr ). \]
Is the canonical morphism
\[ \Omega_{\val}^n(X) \to \Omega_{\val}^n(X_\reg) \]
an isomorphism?
\end{ques}

In this formulation, we have used that for any normal scheme $X$ with regular locus $\iota: X_\reg \to X$ one has $\Omega^{[n]}_X = \iota_*\Omega_{X_\reg}^n$. %
Since we know that $\Omega^n = \Omega^n_{\val}$ on regular schemes, we obtain the description $\Omega^{[n]}_X(X) = (\iota_*\Omega_{X_\reg}^n)(X) = \Omega_{X_\reg}^n({X_\reg}) = \Omega^n_{\val}({X_\reg})$.


\bibliographystyle{alpha}
\bibliography{val}

\end{document}